\renewcommand{\Re}{\mathop{\rm Re}\nolimits}
\renewcommand{\Im}{\mathop{\rm Im}\nolimits}
\theoremstyle{plain} \newtheorem{theorem}{Theorem}[section]
\newtheorem{lemma}[theorem]{Lemma}
\newtheorem{proposition}[theorem]{Proposition}
\newtheorem{corollary}[theorem]{Corollary} \theoremstyle{definition}
\newtheorem{definition}[theorem]{Definition} \theoremstyle{remark}
\newtheorem{remark}[theorem]{Remark} 
\newcommand{\R}{{\mathbb R}}
\newcommand{\Z}{{\mathbb Z}}
\newcommand{\N}{{\mathbb N}}
\newcommand{\Sc}{{\mathcal S}}
\newcommand{\resto}{{\mathcal R}} \def\im{{\rm i}}
\newcommand{\C}{\mathbb{C}} 
\def \im{{\rm i}}
\def \rmi{{\rm i}}
\def\({\left(}
\def\){\right)}
\def\<{\left\langle}
\def\>{\right\rangle}
\numberwithin{equation}{section}
\begin{document}

  \title{ On   stabilization   of small solutions   in the nonlinear Dirac equation with a trapping potential }

 \author {Scipio Cuccagna and Mirko Tarulli}

 \maketitle
\begin{abstract}   We consider  a Dirac operator with short range potential
 and with eigenvalues. We add a nonlinear   term and we show that the
   small standing waves of the corresponding nonlinear Dirac equation
   (NLD) are attractors for small solutions of the NLD. This extends
   to the NLD results already known for the Nonlinear Schr\"odinger Equation (NLS).\end{abstract}

\vskip+0.5cm
{\bf Keywords:} Nonlinear Dirac equation, standing waves. \vskip+0.5cm

\section{Introduction}\label{sec:intr}
We consider
\begin{equation}\label{eq:NLS}
\left\{\begin{matrix}
\rmi u_t -H u +g(u\overline{u} )\beta u=0, \text{ with }  (t,x) \in \R \times \R^3 ,\\
u(0,x)=u_0(x),
\end{matrix}\right.
\end{equation}
where for   $\mathscr{M}>0$  and for a potential $V(x)$ we have  
\begin{equation}\label{Eq:op}
 H = D_\mathscr{M} +V, \,
\end{equation}
where   $D_\mathscr{M}=-\rmi \sum _{j=1}^3\alpha_j\partial_{x_j}+\mathscr{M}\beta$,
with for $j=1,2,3$,
\begin{equation*}\index{$\alpha_j$}\index{$\beta$}\index{$\sigma_j$}
\alpha _j=  \begin{pmatrix}  0 &
\sigma _j  \\
\sigma_j & 0
\end{pmatrix} \, , \, \beta =  \begin{pmatrix}  I _{\C^2} &
0 \\
0 & -I _{\C^2}
 \end{pmatrix}
\, , \,
\sigma _1=\begin{pmatrix}  0 &
1  \\
1 & 0
 \end{pmatrix} \, ,
\sigma _2= \begin{pmatrix}  0 &
\rmi   \\
-\rmi  & 0
 \end{pmatrix}  \, ,\,
\sigma _3=\begin{pmatrix}  1 &
0  \\
0 & -1
 \end{pmatrix} .
\end{equation*}
The unknown $u$ is   $\C^4$-valued. Given two vectors of $\C^4$,
$uv:=u\cdot v$ is the inner product in $\C^4$,
${v}^{\ast}$ is the complex conjugate, $u\cdot  {v}^{\ast}$ is the
hermitian product in $\C^4$, which we write as $uv^\ast =u\cdot
{v}^{\ast}$. We set
$ \overline{u}:=\beta {u}^{\ast}$,
so that $u\overline{u} =u\cdot \beta  {u}^{\ast}$\index{$u\overline{u} $}.

We introduce the Japanese bracket $\langle x \rangle := \sqrt{1+|x| ^2} $ and the spaces
defined by the following norms:
\begin{align} & L^{p ,s}(\R ^3, \C^4 ) \text{  defined with  }
  \| u \| _{L^{p,s}(\R ^3, \C^4 ) }  :=  \|   \langle x \rangle ^s   u \| _{L^p (\R ^3, \C^4)} ;   \nonumber \\
 &  H^k(\R ^3, \C^4  ) \text{  defined with  }
  \| u \| _{H^k(\R ^3, \C^4  ) }  :=  \|   \langle x \rangle ^k   \mathcal F (u) \| _{L^2 (\R ^3, \C^4   )},
 \text{   where  $\mathcal F$ is the classical Fourier}
 \nonumber\\
 & \text{transform (see for instance \cite{Du})};
   \nonumber\\& H ^{k,s}(\R ^3, \C^4  ) \text{  defined with  }
  \| u \| _{H^{k,s}(\R ^3, \C^4   ) }  :=  \|   \langle x \rangle ^s    {u} \| _{H^k(\R ^3, \C^4 )}  ;  \nonumber\\& \Sigma _k :=L^{2 ,k}(\R ^3, \C^4 )\cap   H^k(\R ^3, \C^4 )  \text{   with  }  \| u \|^2 _{\Sigma _k} = \| u \| _{H^{k }(\R ^3, \C^4  ) } ^2+ \| u \|^2 _{L^{2,k}(\R ^3, \C^4   ) }.\label{eq:Sigma}
 \end{align}
For  $\mathbf{f},\mathbf{g}\in L^2(\R ^3, \C ^4)$ consider   the bilinear map
\begin{equation} \label{eq:Inner Product}
\left \langle  \mathbf{f}   , \mathbf{g}   \right \rangle =
\int _{\R ^3}   \mathbf{f}(x)  \mathbf{g}(x)   dx  = \int _{\R ^3}   \mathbf{f}(x)  \cdot \mathbf{g}(x)   dx .
\end{equation}
We assume the following.
\begin{itemize}
  \item[(H1)]
$g(0)=0$, $g\in C^\infty(\R,\R)$.

\item[(H2)] $V\in \mathcal{S}(\R^3,S_4(\C))$ with $S_4(\C)$ the set of
self--adjoint $4\times 4$ matrices and $\mathcal{S}(\R^3, \mathbb{E})$ the space of Schwartz functions from $\R^3$  to $ \mathbb{E}$, with the latter a Banach space on $\C$.

\item[(H3)]
$\sigma_p(H)=\{e_1<e_2< e_3 \cdots < e_n \} \subset (-\mathscr{M},\mathscr{M})$.   Here we assume that all the eigenvalues have multiplicity 1.
 Each point  $ \tau =\pm \mathscr{M}$  is  neither an eigenvalue nor a resonance (that is, if $(D _{\mathscr{M}} +  V)u=\tau u$  with $u\in  C^\infty$ and     $|u(x)|\le C|x|^{-1} $ for a fixed $C$, then $u=0$).

\item[(H4)] There is an $N\in \N$ with $N > (\mathscr{M}+|e_1|) (\min \{ e_i -e_j : i>j \}) ^{-1}$
such that
if  $\mu \cdot \mathbf{e} :=
\mu _1 {e} _{ 1} +\dots +\mu _n {e} _{n}$ then
 \begin{align}
& \text{  $\mu\in \Z^n$  with
  $|\mu| \leq 4N +6$   } \Rightarrow |\mu \cdot \mathbf{e} |\neq \mathscr{M},   \label{h41}\\&  (\mu - \nu )\cdot \mathbf{e}=0 \text{ and }  |\mu |=|\nu |\le 2N+3 \Rightarrow \mu =\nu . \label{h42}
\end{align}

\item[(H5)]

Consider the set $M_{min}$ defined in \eqref{eq:setMhat}  and for any $(\mu , \nu )\in M_{min}$    the function $G _{\mu   \nu }(x)$ (see the proof of Lemma \ref{lem:conditional4.2} or also later in the introduction the effective hamiltonian),  $\widehat{G }_{\mu \nu} (\xi
)$ the  distorted Fourier transform  associated to $ H$ (see \eqref{eq:restrDir} and Sect. \ref{app:Pformula} for more details)    of
   $G _{\mu   \nu }(x)$ and  the sphere  $S_{\mu \nu}=\{ \xi \in \R^3: |\xi |^2 +\mathscr M^2=|(\nu - \mu )\cdot \mathbf{e} |^2 \}$.
   Then we assume that  for any
   $(\mu , \nu )\in M_{min}$ the
   restriction of  $\widehat{G }_{\mu \nu} $ on the sphere $S_{\mu \nu}$ is   $ \widehat{G }_{\mu \nu}| _{S_{\mu \nu}}\neq 0$.

\end{itemize}

To each  $e_j$   we associate an eigenfunction $\phi _j$. We choose them such that  $ \Re\langle  \phi _j, {\phi }_k^{*}\rangle  =\delta _{jk} $.
To each  $\phi _j$ we associate   nonlinear bound states.

\begin{proposition}[Bound states]\label{prop:bddst}
Fix $j\in \{ 1,\cdots,n\}$. Then
$\exists a _0>0$ such that\ $\forall z_j \in     B_\C ( 0, a _0 )$, there is a unique  $  Q_{jz_j } \in \mathcal{S}(\R^3, \C ^4) := \cap _{t\ge 0}\Sigma _t (\R^3, \C ^4)$, such that
\begin{equation}\label{eq:sp}
\begin{aligned}
&H Q_{jz_j } + g(Q_{jz_j }\overline{Q}_{jz_j } )\beta Q_{jz_j }= E_{jz_j }Q_{jz_j },\\&
Q_{jz_j }=  z_j\phi_j + q_{j z_j}, \ \langle q_{j z_j}, {\phi}_j^{*}\rangle =0,
\end{aligned}
\end{equation}
and such that we have for any $r\in \N$:
 \begin{itemize}
\item[(1)]    $(q_{jz_j },E_{jz_j }) \in C^\infty ( B_\C ( 0, a _0 ), \Sigma _r\times \R )$;   we have $q_{jz_j } =  z_j \widehat{q}_{j  } (|z_j|^2)$ , with
$ \widehat{q}_{j  } (t^2 ) =t ^2\widetilde{q}_{j }(t^2)$,   $\widetilde{q}_{j } (t ) \in C^\infty (   ( - {a _0 }^{2}, {a _0 }^{2}), \Sigma _r (\R ^3, \C ^4 ) )$  and  $E_{jz_j }  =E_{j } (|z_j|^2)$ with $E_{j } (t ) \in C^\infty (   ( - {a _0 }^{2}, {a _0 }^{2}),   \R  )$;
\item[(2)]    $\exists$ $C >0$ such that
$\|q_{jz_j }\|_{\Sigma _r} \leq C |z_j|^3$, $|E_{jz_j }-e_j|<C | z_j|^2$.

\end{itemize}

\end{proposition}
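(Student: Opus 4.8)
The plan is to construct the bound states $Q_{jz_j}$ by a Lyapunov–Schmidt reduction / implicit function theorem argument, exploiting that for $z_j=0$ the equation is linear and $e_j$ is a simple isolated eigenvalue. Let me sketch this.

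First, decompose the equation. Write $Q = z\phi_j + q$ with $\langle q,\phi_j^*\rangle = 0$, and $E = e_j + \mathcal{E}$. The equation $HQ + g(Q\overline Q)\beta Q = EQ$ becomes, after substituting and using $H\phi_j = e_j\phi_j$,
$$(H - e_j)q - \mathcal{E}(z\phi_j + q) + g((z\phi_j+q)\overline{(z\phi_j+q)})\beta(z\phi_j+q) = 0.$$
Let $P_j$ be the Riesz spectral projection onto $\phi_j$ (more precisely, $P_j u = \langle u,\phi_j^*\rangle\phi_j$ using the bilinear pairing, which works because $\Re\langle\phi_j,\phi_k^*\rangle=\delta_{jk}$; one should check the normalization carefully since $H$ is not self-adjoint w.r.t. $\langle\cdot,\cdot\rangle$ but w.r.t. the Hermitian product). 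Project the equation with $1-P_j$ and with $P_j$ to get a coupled system: the $(1-P_j)$-part determines $q$ as a function of $(z,\mathcal{E})$ because $H-e_j$ is invertible on $\mathrm{Ran}(1-P_j)$ with bounded inverse on each $\Sigma_r$; the $P_j$-part is a scalar equation that determines $\mathcal{E}$.

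Second, run the implicit function theorem. The map $F(q,\mathcal{E},z)$ given by the pair of projected equations is smooth from $\Sigma_r\times\R\times B_\C(0,a_0)$ (suitably complexified, or treating $z=|z|e^{i\theta}$ and using the $U(1)$ gauge covariance $Q_{jz_j} = e^{i\theta}Q_{j|z_j|}$ to reduce to real $z$) into $\Sigma_r\times\R$, thanks to (H1)–(H2) and the fact that multiplication by Schwartz $V$ and the nonlinear Nemytskii operator $q\mapsto g(q\overline q)\beta q$ preserve $\Sigma_r$ and are smooth there (for $r$ large enough that $\Sigma_r\hookrightarrow L^\infty$, i.e. $r>3/2$; the general $r$ then follows by elliptic-type regularity / bootstrapping using that $(H-e_j)^{-1}(1-P_j)$ gains weights). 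At $z=0$ we have $q=0$, $\mathcal{E}=0$, and the linearization in $(q,\mathcal{E})$ is $((H-e_j)|_{\mathrm{Ran}(1-P_j)}\,,\, -\,\cdot\,\phi_j)$ up to lower order, which is an isomorphism. Hence for $|z_j|<a_0$ there is a unique small solution $(q_{jz_j},E_{jz_j})$ depending smoothly on $z_j$, and since all data are Schwartz the solution lies in $\cap_r\Sigma_r = \mathcal{S}$.

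Third, extract the finer structure in (1) and (2). The gauge covariance under $z_j\mapsto e^{i\theta}z_j$ forces $q_{jz_j} = z_j\widehat q_j(|z_j|^2)$ and $E_{jz_j}=E_j(|z_j|^2)$ — one checks that $e^{-i\theta}Q_{j,e^{i\theta}z_j}$ solves the same equation with the same constraint, so by uniqueness it equals $Q_{j|z_j|\cdot(z_j/|z_j|)}$... more cleanly, $Q_{j,e^{i\theta}z_j} = e^{i\theta}Q_{jz_j}$, giving the stated form; smoothness of $t\mapsto\widehat q_j(t)$, $E_j(t)$ in $t=|z_j|^2$ follows from smoothness in $z_j$ together with this symmetry (a standard lemma that a smooth function invariant in the appropriate sense factors through $|z_j|^2$). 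For the bounds in (2): since $g(0)=0$ and the quadratic term $g(Q\overline Q)\beta Q$ is cubic in $Q$ to leading order, feeding $Q = z\phi_j + O(|z|^3)$ into the $(1-P_j)$-equation gives $q_{jz_j} = -(H-e_j)^{-1}(1-P_j)\big(g(|z|^2\phi_j\overline{\phi_j})\beta z\phi_j\big) + \text{h.o.t.} = O(|z|^3)$ in $\Sigma_r$, and the $P_j$-equation then gives $E_{jz_j}-e_j = \langle g(Q\overline Q)\beta Q,\phi_j^*\rangle/\langle Q,\phi_j^*\rangle = O(|z|^2)$. The additional factorization $\widehat q_j(t^2)=t^2\widetilde q_j(t^2)$ just records that the leading term of $q$ is $z|z|^2\times(\text{smooth})$.

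**The main obstacle** I expect is not the abstract IFT but the bookkeeping around the non-self-adjointness of $H$ with respect to the bilinear form $\langle\cdot,\cdot\rangle$: one must set up the spectral projections and the normalization $\Re\langle\phi_j,\phi_k^*\rangle=\delta_{jk}$ so that the Fredholm alternative for $H-e_j$ on the complement is clean, and verify that the nonlinearity $g(Q\overline Q)\beta Q$ interacts correctly with the $\beta$-twisted pairing $\overline Q = \beta Q^*$ so that real eigenvalues $E_{jz_j}\in\R$ are produced (this uses (H1), $g$ real-valued, and the symmetry $\overline{(\beta Q)} = $ appropriate). The other place requiring care is propagating the regularity/decay to all $\Sigma_r$ simultaneously to land in $\mathcal{S}$, but this is routine once $V\in\mathcal{S}$ and the resolvent $(H-e_j)^{-1}(1-P_j)$ is shown to map $\Sigma_r\to\Sigma_r$ for every $r$.
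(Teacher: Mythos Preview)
Your approach is correct and is precisely the standard implicit function theorem / Lyapunov--Schmidt argument that the paper has in mind; the paper does not give a proof at all, merely remarking that the result is elementary and that the only non-trivial point---the independence of $a_0$ from $r$---can be handled by routine arguments as in the Appendix of \cite{CM1}. Your sketch covers this point under ``propagating the regularity/decay to all $\Sigma_r$ simultaneously,'' so you are in agreement with the paper.

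One small correction to your discussion of obstacles: $H$ \emph{is} self-adjoint with respect to the Hermitian product $\langle u,v^*\rangle$ on $L^2$ (by (H2) the potential $V$ is self-adjoint-matrix-valued, and $D_\mathscr{M}$ is self-adjoint), so the spectral projection $P_j$ is the ordinary orthogonal projection and $(H-e_j)^{-1}(1-P_j)$ is bounded in the usual way. The bilinear pairing $\langle\cdot,\cdot\rangle$ without the conjugate is just a notational device used elsewhere in the paper; here everything is standard self-adjoint spectral theory, and the reality of $E_{jz_j}$ follows immediately from the fact that $g$ is real-valued and the nonlinearity preserves the appropriate symmetry. So the obstacle you flag is not actually present.
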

 For the   $\Sigma_r$  see  \eqref{eq:Sigma}. The only non-elementary
  point in   Prop. \ref{prop:bddst} is the independence of $a_0$ with respect of $r$  (which
  strictly speaking is not necessary in this paper),
  which can be proved with routine arguments as in the      Appendix of \cite{CM1}.

\begin{definition}\label{def:contsp}
Let    $b _0 >0$ be sufficiently small so that  for $z:=(z_1,...,z_n)$ with
$z \in B_{\C^{n }}(0,b _0) $ then  $Q_{j z_j}$  exists for all   $j\in\{ 1,\cdots,n\}$. Set
$z_j = z_{j R}+ \im z_{j I},$ for $z_{j R}, z_{j I}\in \R$ and
$D_{jA} :=\frac{\partial}{\partial z_{j A}}, $ for $A=R,I$.  Then we set
\begin{equation}\label{eq:contsp}
\begin{aligned}
\mathcal{H}_c[z]&:=\left\{\eta\in L^2(\R ^3, \C ^4) ;\  \Re \<\im \,  {\eta}^*,D _{jR} Q_{j z_j}\>=\Re \<\im \, {\eta}^* ,D_{jI}Q_{j z_j}\>=0 \text{ for all $j$}\right\} ,
\end{aligned}
\end{equation}
where in  particular we have
\begin{equation}\label{eq:contsp1}
\begin{aligned}
\mathcal{H}_c[0]  =\left\{\eta\in  L^2(\R ^3, \C ^4) ;\    \<   {\eta}^*,\phi _j\>  =0\text{ for all $j$}\right\}.
\end{aligned}
\end{equation}
We denote by $P_c$ the orthogonal projection of $ L^2(\R ^3, \C ^4)$ onto $\mathcal{H}_c[0]$.
\end{definition}

We will prove the following theorem.

\begin{theorem}\label{thm:small en}   Assume $(\mathrm{H1})$--$(\mathrm{H5})$.  Then there exist  $\epsilon _0 >0$ and $C>0$ such that  for $\epsilon =\| u (0)\| _{H^4}<\epsilon _0  $ then  the  solution  $u(t)$ of  \eqref{eq:NLS} exists for all times and  can be written uniquely  for all times as
 \begin{equation}\label{eq:small en1}
\begin{aligned}&    u(t)=\sum_{j=1}^nQ_{j z_j(t)}+\eta (t), \text{ with $\eta (t) \in
\mathcal{H}_c[z(t)]$},
\end{aligned}
\end{equation}
such that  there exist  a unique $j_0$, a
$\rho  _+\in [0,\infty )^n$ with $\rho_{+j}=0$ for $j\neq j_0$
such that
  $| \rho  _+ | \le C  \| u (0)\| _{H^4}  $, an $\eta _+\in H^4 (\R ^3, \C ^4)$
with $\|  \eta _+\| _{L^\infty}\le C  \| u (0)\| _{H^4}$
 and such that  we have

\begin{equation}\label{eq:small en3}
\begin{aligned}&      \lim_{t\to +\infty}\| \eta (t )-
e^{-\im tD_{\mathscr{M}} }\eta  _+     \|_{H^4 (\R ^3, \C ^4)}=0 , \\&
 \lim_{t\to +\infty} |z_j(t)|  =\rho_{+j}  .
\end{aligned}
\end{equation}
 Furthermore we have $\eta = \widetilde{\eta} +A(t,x) $  such
    that,
    \begin{equation}\label{eq:numbers1}
\begin{aligned}&    \text{for preassigned $p_0>2$, for all $p\ge p_0$ and for } \frac{2}{p}=\frac{3}{2}\left (1-\frac{2}{ q} \right),
\end{aligned}
\end{equation}
  we have
    \begin{equation}\label{eq:small en2}
\begin{aligned}&     \| z \| _{L^\infty _t( \mathbb{R}_+ )}+ \|
\widetilde{ {\eta}}   \| _{L^p_t( [0,\infty ),B^{ 4 -\frac{2}{p}} _{q,2} (\R ^3, \C ^4)} \le C \| u (0)\| _{H^4(\R ^3, \C ^4)}   \ , \\& \| \dot z _j +\im  e_{ j }z_j \|  _{L^\infty _t( \mathbb{R}_+ ) } \le C  \| u (0)\| _{H^4(\R ^3, \C ^4)}^2\
\end{aligned}
\end{equation}
  (for the  Besov spaces  $B^{ k} _{p,q}   $  see Sect. \ref{section:set up})
and such that $A(t,\cdot )\in \Sigma _4 $  for all $t\ge 0$, with
\begin{equation}\label{eq:small en4}
\begin{aligned}&      \lim_{t\to +\infty}\| A(t,\cdot )   \|_{\Sigma _4 }=0 .
\end{aligned}
\end{equation}

\end{theorem}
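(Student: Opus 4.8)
The plan is to follow the now-standard scheme for asymptotic stability of small solutions near the ground-state manifold of a dispersive equation with a trapping potential, adapting the Nonlinear Schr\"odinger arguments (as in \cite{CM1} and related works of Soffer--Weinstein, Bambusi--Cuccagna, Gang--Sigal, etc.) to the Dirac setting. First I would set up coordinates: using the modulation equations implicit in Definition \ref{def:contsp}, decompose $u(t)=\sum_j Q_{jz_j(t)}+\eta(t)$ with $\eta(t)\in\mathcal H_c[z(t)]$, derive the ODE for $z(t)$ and the PDE for $\eta(t)$, and use the implicit function theorem to see this decomposition is well defined and smooth for small data. The linearized operator around the sum of bound states is a small perturbation of $D_\mathscr M$ restricted to the continuous spectral subspace, so one gets Strichartz and local smoothing estimates for the propagator from the corresponding estimates for $e^{-\im t D_\mathscr M}$ together with the distorted Fourier transform of $H$ (hypotheses (H2)--(H3) and the absence of endpoint resonances guarantee these).

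Next I would implement the \emph{normal form} reduction. Expanding the nonlinearity in powers of $z,\bar z,\eta$, the resonant monomials $z^\mu\bar z^\nu$ are those with $\mu\cdot\mathbf e-\nu\cdot\mathbf e$ equal to (or close to) a point of $\pm\mathscr M+[\mathscr M,\infty)$, i.e.\ those that can couple the discrete modes to radiation; (H4) ensures finitely many such monomials and the nondegeneracy \eqref{h42}, while (H1)'s smoothness lets us carry the expansion to order $N$. Performing successive near-identity changes of variables $z\mapsto z+\text{(polynomial)}$, $\eta\mapsto\eta+\text{(polynomial with radiation tails)}$, one removes all non-resonant terms and arrives at the \emph{effective Hamiltonian}: the surviving terms are exactly the $G_{\mu\nu}$ associated to the minimal resonant set $M_{min}$ of \eqref{eq:setMhat}. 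Crucially (H5), the Fermi-golden-rule condition $\widehat G_{\mu\nu}|_{S_{\mu\nu}}\neq 0$, makes the resulting effective equations for $\rho_j:=|z_j|^2$ strictly monotone: all but one $\rho_j$ decay and feed energy irreversibly into the continuous part. This is the step that produces the single surviving index $j_0$ and the limits $|z_j(t)|\to\rho_{+j}$ in \eqref{eq:small en3}.

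Then I would close the estimates by a continuity/bootstrap argument on a maximal interval $[0,T)$. The bootstrap norms are the Strichartz norm $\|\widetilde\eta\|_{L^p_tB^{4-2/p}_{q,2}}$ for the genuinely dispersive part, $\|z\|_{L^\infty_t}$, and weighted $\Sigma_4$ norms for the part $A(t,x)$ generated by the normal-form substitution (which is localized because it comes from convolving $z^\mu\bar z^\nu$ with resolvent kernels of $H$). Plugging the decomposition into Duhamel's formula, using the dispersive estimates, the local-smoothing smallness of $z(t)$ produced by the Fermi golden rule (giving the quadratic bound $\|\dot z_j+\im e_j z_j\|\lesssim\|u(0)\|^2$ in \eqref{eq:small en2}), and the algebra of Besov spaces for the nonlinearity, one upgrades all the norms by a factor, so $T=\infty$. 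Scattering then follows: $e^{\im tD_\mathscr M}\widetilde\eta(t)$ is Cauchy in $H^4$, defining $\eta_+$, while $A(t,\cdot)\to0$ in $\Sigma_4$ because its source $z^\mu\bar z^\nu\in L^1_t$ once $\rho_j\to\rho_{+j}$ with the non-$j_0$ components integrable. The $L^\infty$ bounds on $\eta_+$ and $\rho_+$ are then consequences of the uniform-in-time Strichartz and energy bounds.

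The main obstacle I expect is two-fold and intertwined. First, the \emph{normal form} in the Dirac case is considerably more delicate than for NLS: $D_\mathscr M$ is not positive, its continuous spectrum is $(-\infty,-\mathscr M]\cup[\mathscr M,\infty)$, so one must carefully track which frequency combinations $\mu\cdot\mathbf e\pm\mathscr M$ land in the continuous spectrum on \emph{both} branches, and the spinor structure (the $\beta$ matrix, the fact that $\overline u=\beta u^*$) makes the bookkeeping of the $G_{\mu\nu}$ and the verification that the transformations stay in $\Sigma_r$ genuinely heavier. Second, proving the \emph{dispersive decay} needed for the Strichartz estimates for $e^{-\im t(D_\mathscr M+V)}P_c$ — in particular ruling out the endpoints $\pm\mathscr M$ as resonances being enough, and getting the $L^1\to L^\infty$ type bounds uniformly — requires the distorted Fourier transform machinery of Section \ref{app:Pformula} and is where hypothesis (H3) does the real work; the interplay of this with the weighted estimates controlling $A(t,x)$, and with the Fermi golden rule quantity $\widehat G_{\mu\nu}|_{S_{\mu\nu}}$ (which is precisely a restriction of a distorted Fourier transform to a sphere), is the technical heart of the paper.
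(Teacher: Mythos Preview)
Your outline is essentially the paper's strategy: modulation coordinates, normal-form reduction to isolate the resonant couplings $G_{\mu\nu}$, Fermi golden rule to get $L^2_t$ control on the monomials $z^{\mu}\bar z^{\nu}$, a bootstrap on Strichartz and smoothing norms, and then scattering. The dispersive-theory concerns you flag as the main obstacle are in fact quoted from Boussa\"id \cite{boussaid,Boussaid2,boussaidcuccagna} rather than reproved here.

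There is, however, one genuine missing idea. The modulation coordinates $(z,\eta)$ with $\eta\in\mathcal H_c[z]$ that you start from are \emph{not} Darboux coordinates: the symplectic form $\Omega$ of \eqref{eq:Omega} is complicated when written in these variables, and the equations of motion are not simply $\im\dot z_j=\partial_{\bar z_j}\mathcal H$, $\im\dot\eta=\nabla_{\eta^*}\mathcal H$. Without canonical coordinates the homological equations in the Birkhoff step are not diagonal, and, more seriously, the key structural fact --- that the same $G_{\mu\nu}$ appear as source in the $\eta$-equation and as coefficients in the $z_j$-equations --- is not visible; it is precisely this Hamiltonian symmetry that makes the FGR expression in \eqref{fgrintr} a sum of non-negative terms $|z^{\mu}\bar z^{\nu}|^2\langle\delta(H-(\nu-\mu)\cdot\mathbf e)G^*_{\mu\nu},G_{\mu\nu}\rangle$ rather than an indefinite quadratic form. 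The paper therefore inserts, \emph{before} any normal form, a Darboux theorem (Proposition~\ref{prop:darboux}) carrying $\Omega$ to the model $\Omega_0$ of \eqref{eq:defOm0}; this is proved by an ad hoc Moser-type deformation specifically engineered so that the change of variables has the structure $z'=z+\mathcal R^{1,1}$, $\eta'=\eta+\mathbf S^{1,1}$ (errors at least bilinear in $(z,\eta)$), which is what allows the asymptotics to be transported back to the original variables. Only after this preliminary step does your normal-form and FGR argument go through as written.
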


\begin{remark}\label{rem:wp}
In $H^{4} (\R ^3, \C ^4)$  the functional $u\to
g(u\overline{u}) \beta  u$ is locally Lipschitz and  \eqref{eq:NLS} is locally
well posed, see pp. 293--294 in volume III  \cite{taylor}.
\end{remark}
\begin{remark}\label{rem:sobh4} There is no attempt here to get a sharp result
with a minimum amount of regularity on the initial datum  $u_0 $.
The need of $H^{4}  $  comes up \eqref{eq:H^4}.
\end{remark}

 Theorem \ref{thm:small en} states that small solutions of the NLD  are    asymptotically equal to exactly one standing wave (possibly the vacuum)  up to radiation which scatters and  up to a phase factor.
 As we explain below, this happens  because of  the natural tendency of radiation
 to scatter because of linear scattering, and of energy to leak out of
 all discrete modes, except at most for one,  because of
    nonlinear interaction with the continuous modes which    produces  a form of    friction   on    most  discrete modes. Theorem \ref{thm:small en} does not say if
some of the standing waves are stable or unstable (this remains an open problem).

The asymptotic behavior of   solutions which start  close to equilibria represents
 a fundamental problem for nonlinear systems. Here we are interested on   hamiltonian systems  which are perturbations of linear systems  admitting
 a mixture of discrete and continuous components.  Decay phenomena imposed
 by the perturbation on the discrete modes and decay of \textit{metastable} states  are a classical topic of physical relevance in the study
 of radiation matter interaction, \cite{Cohen}. There is a substantial literature focused
 on the case of linear systems, see \cite{Jensen}  for a recent reference
 and therein for some of the literature.

 The NLS has been explored in papers such as  \cite{RW},    \cite{SW1}--\cite{SW4}, \cite{PiW},   \cite{TY1}--\cite{TY4}, \cite{GNT},
\cite{GP}. An analogue of Theorem \ref{thm:small en} for the NLS is proved in \cite{CM1}.
The case of the nonlinear Klein Gordon equation (NLKG), in the context of real valued
solutions, where all small solutions scatter to 0, has been initiated in \cite{SW4} in special case  and to a large degree solved in a general way in \cite{bambusicuccagna}. For complex valued solutions of the NLKG see \cite{CMT}.

In this paper we focus on the NLD.   There are
 many papers   on    global well posedness and dispersion   for solutions belonging to much larger spaces
 than what considered here, for example see  \cite{BejHerr, Candy,Cacciafesta, Dias,Escobedo,MacNakOz,Zhang} and references therein. However  they  do not address
 the asymptotic behavior in the presence of discrete modes. For a survey on the 1D case  we refer also to \cite{Pelinovsky}.
The essential indefiniteness of the Dirac operators \eqref{Eq:op} forbids
the use of  the stability theory of standing waves developed in  \cite{CL,W1,GSS1}, which involves conditionally positive Lyapunov functionals.
See \cite{StraussVazquez} for an attempt to apply this theory to the Dirac equation, in combination
 to numerical computations. A successful use of rather elaborate Lyapunov functionals
   in the special case of an integrable NLD is in \cite{pel1}.
 Since   energy methods cannot be used to prove stability
then another option is to use linear dispersion.
This is a problem arising also in other settings, see for example
\cite{pego}.
Thus  on a strictly technical standpoint    the NLD is a rather interesting class of hamiltonian systems, especially because it has been explored
much less than systems like the NLS.

We turn  to the specific problem considered in this paper.
In the case of the  linear Dirac equation $\im u_t=Hu$
we know that  there are invariant complex lines formed by standing waves
in correspondence to the eigenstates of $H$.  Proposition
 \ref{prop:bddst} describes the well known fact that   the NLD \eqref{eq:NLS},    at least at small energies, has    invariant  disks  formed by standing waves. The question arises then on what should be the behavior of other  initially small
 solutions of the NLD.  In the case of the NLS the answer given in \cite{CM1}  is that
 the union of these invariant disks is an attractor for all small solutions.
 This is by no means obvious in view of the fact that  in the  linear equation   the situation is different. In fact in the linear case   discrete modes  and the continuous part
 are all independent from each other  so that solutions
 of the linear equation contain a quasi periodic component. There are examples of
 nonlinear systems where these linear like  patterns persist.  Notably   the   NLS
 where $x\in \Z$,
  which has
 small quasi--periodic solutions, see for example in \cite{Maeda} and references therein.
 There are also equations in $\R ^n$ which admit families of  quasiperiodic solutions containing
 solitary waves as a special case, an example being exactly the \eqref{eq:NLS}  with potential
 $V=0$ in \eqref{Eq:op} where solitary waves are special cases of more general quasiperiodic
 solutions. This is related to the existence of special   eigenvalues
 of the linearization uncoupled to radiation, as the eigenvalues in claim 4 Lemma 6.1 in \cite{com}.
 Notice also that the stabilization  displayed in
  \cite{CM1} is a very slow phenomenon, non-detectable easily numerically:
  see \cite{Kevrekidis} for comments on  a somewhat different but related context.
  So results such as those in \cite{CM1}, from the earlier papers
 \cite{NPT, SW4,TY1,TY2,TY3,SW3} which treat special cases,
 to  the solutions in generic setting contained in
  \cite{bambusicuccagna,CM1}, are far from obvious.

Here we transpose partially to the NLD the  result proved for the NLS in \cite{CM1}.   No particular deep new insight is needed since the proof for the NLD is similar to that for the NLS, with some
 difference related to the dispersion theory of radiation. However,
 results such as Theorem \ref{thm:small en} are important  given that the literature
 gives a very fragmentary picture of the asymptotic analysis of
 solutions of    the  NLD.

  While prior to   \cite{CM1} there had been a large body of work for the NLS, very little has been written   for the NLD  about our problem, that is the analysis
  of the NLD for with a mixture of discrete and continuous components.
  It is worth comparing the known  results in the literature with Theorem \ref{thm:small en}.

     One study is  \cite{boussaid}.  \cite{boussaid} contains a number of useful results on the
     dispersion for the group $e^{\im tH}$ which are  used here.  In terms of analysis of small solutions of the  NLD with a linear potential, \cite{boussaid}   considers   the case of  $\sigma_p(H)=\{e_1<e_2  \} $  with $e_2-e_1<\min ( \mathscr{M} -e_1, e_1+\mathscr{M} )$ and then proves the existence of a hypersurface of initial data perpendicular  at the origin to
the eigenspace of $e_2$, and whose corresponding solutions  of \eqref{eq:NLS} converge
to the 1st family of standing waves $Q _{1z_1}$.

    Another study  is \cite{PelinovskyStefanov}.  \cite{PelinovskyStefanov}  proves
the asymptotic stability of the standing waves $Q _{1z_1}$ when $\sigma_p(H)=\{e_1   \} $,
that is an analogue of \cite{SW2,PiW}.    \cite{PelinovskyStefanov}, like \cite{SW2,PiW}, does not examine a whole neighborhood of 0
but  proves that if an initial datum starts very close to a $Q _{1z_1}$ then, in a sense similar to Theorem \ref{thm:small en},  it will converge to a nearby $Q _{1z_1}$. Notice that  while here we don't prove that the $Q _{1z_1}$
are stable, under the hypothesis $\sigma_p(H)=\{e_1   \} $ it can be
proved to be stable also in our 3 D set up. In fact this is much simpler to prove than
what we do here.

  In \cite{boussaidcuccagna} is  discussed
the asymptotic stability of standing waves of  the NLD in a different context, which is somewhat closer in spirit to Theorem \ref{thm:small en} since the emphasis is on the case when many
discrete modes are present. For another result, see also \cite{Comech}.

   Here we give   more comprehensive conclusions than in \cite{boussaid,PelinovskyStefanov}
   because we treat all small solutions of \eqref{eq:NLS}. Maybe we could have  analyzed
   the stability of specific standing waves, applying the theory in \cite{boussaidcuccagna}
   which provides some tools to characterize standing waves  of the \eqref{eq:NLS} through an
   analysis of the linearization of \eqref{eq:NLS} at the standing wave,
   but we don't do this here. So  unfortunately we do not   identify
   stable standing waves and we don't produce a criterion to define
   ground states for   \eqref{eq:NLS}. Recall that in \cite{CM1}  it is proved for the NLS that while     the $Q_{1z_1}$  are stable (which was well known since \cite{RW}),    the  $Q_{jz_j}$  for $j\ge 2$  are unstable. No similar analysis
   is done here.

After adding to the Dirac equation a nonlinearity such as  in \eqref{eq:NLS} as we have already mentioned    the lines   of
standing waves    persist,  only in  the form of  topological  disks.
  Here we briefly explain how the asymptotics claimed in Theorem \ref{thm:small en} comes about. The mechanism is the same of the NLS and to be seen requires the identification
  of an \textit{effective} hamiltonian.   In an appropriate system of coordinates, this turns out to be,  heuristically,  of the form
\begin{equation*}       \begin{aligned} &
  \mathcal{H} (z,\eta )= \sum _{j=1}^n
	e_j |z_j|^2+  \langle   H  \eta,  { \eta}^* \rangle  +   \sum  _{| (\mu -\nu )\cdot \mathbf{e}| > \mathscr{M} } ({z}^{\mu}   \overline{{z}}^{\nu}  \langle
  G_{\mu \nu } , \eta \rangle  +  \overline{{z}}^{\mu}  {{z}}^{\nu}  \langle
  G_{\mu \nu }^* , \eta ^*\rangle ), \text{ with $\mathbf{e}=(e_1,...., e_n),$ } \end{aligned}
\end{equation*}
where the 2nd summation is on an appropriate finite set of multi--indexes.
The coordinates $(z,\eta)$, with $z=(z_1,...., z_n)$, representing the discrete modes and $\eta
\in \mathcal{H}_c[0]$ representing the radiation, are canonical (or Darboux) coordinates (but not so the  initial  coordinates in Lemma \ref{lem:systcoo}).  This in particular means that  the equation for $\eta$ is
   \begin{equation}\label{intstr0}
  \im \dot \eta =H\eta + \sum  \overline{{z}}^{\mu}  {{z}}^{\nu}
  G_{\mu \nu }^*.
  \end{equation} Then, succinctly,
  \begin{equation}\label{intstr1}
   \| \eta \| _{\text{Strichartz}}\le \epsilon +\sum \|   {z} ^{\mu+\nu} \| _{L^2 },
  \end{equation}
  for appropriate Strichartz norms of $\eta$ which are proven in  Sect.
  \ref{subsec:prop}    using \cite{boussaid,Boussaid2,boussaidcuccagna}.
There are various other  sets of Strichartz like estimates for Dirac potentials in the literature, see for example \cite{Fanelli2,Cacciafesta,Fanelli1}, but they do not apply to our case since they involve Dirac  operators without eigenvalues.
The equations for the discrete modes are, heuristically,
 \begin{equation} \label{intstr11}  \begin{aligned} &
\im \dot z _j
=  e_j z_j   +   \sum _{| (\mu -\nu )\cdot \mathbf{e}| > \mathscr{M}  }     \nu _j  \frac{z  ^{\mu  }
 \overline{ {z}}^ { {\nu}   } }{\overline{z}_j}
\langle  \eta   ,
  {G}_{\mu \nu }  \rangle  +   \sum _{ | (\mu -\nu )\cdot \mathbf{e}| > \mathscr{M}}         \mu _j  \frac{z  ^{\nu   }
 \overline{ z}^ { {\mu}   } }{\overline{z}_j}
\langle  \eta ^*  ,
  {G}^*_{\mu  \nu  }  \rangle   .
\end{aligned}  \end{equation}
By an argument introduced in \cite{BP2,SW3} we write  \begin{equation} \label{intstr12}   \begin{aligned} &
 \eta   =  g-Y,  \text{  where } Y:=
  \sum _{| (\alpha  -\beta  )\cdot \mathbf{e}| > \mathscr{M}   }   \overline{{z}}^\alpha  {{z}}^\beta   R_H^{+}( {\textbf{e} }     \cdot (\beta  -
\alpha  ))
   {G}_{\alpha \beta  }  ^*.
\end{aligned}\end{equation}
We observe that \eqref{intstr12} is a decomposition of $\eta$ into $-Y$, which is the
part of $\eta$ that mostly affects the $z$'s, and $g$ which is small. Substituting in \eqref{intstr11} and ignoring   $g$ we get an autonomous system in $z$. Ignoring smaller terms, we have \begin{equation}\label{fgrintr}  \begin{aligned} &
     \frac{d}{dt}\sum _j  \  |  z _j| ^2
 =
     -2\pi \sum _{ \substack{  | (\mu -\nu )\cdot \mathbf{e}| > \mathscr{M} }} |  z ^{\mu }
 \overline{ {z }}^ { {\nu}}|^2 \langle \delta ({H-(\nu-\mu)\cdot \mathbf{e}})   G^*_{\mu \nu}  ,
    G_{\mu \nu}   \rangle       .
\end{aligned}\end{equation}
The r.h.s. is negative, see Lemma   \ref{plemeljDir} in the Appendix.
Notice that  in \cite{BP2,SW4}  the structure of the r.h.s. is as clear as \eqref{fgrintr}
only under very restrictive hypotheses on the discrete spectrum.

\noindent
We assume that for an appropriate set of pairs $(\mu ,\nu)$  in \eqref{fgrintr} we have  $\langle \delta ({H-(\nu-\mu)\cdot \mathbf{e}})   G^*_{\mu \nu}  ,
    G_{\mu \nu}   \rangle   >0$. This is   the content of hypothesis (H5)
    and is an exact analogue of  inequality (1.8), which is   the main
    hypotheses  in \cite{SW4}.
    Then   integrating \eqref{fgrintr}  we obtain
\begin{equation}  \label{intstr2}\begin{aligned} &
       \sum _j    |  z _j(t)| ^2 + \sum _{ \substack{  | (\mu -\nu )\cdot \mathbf{e}| > \mathscr{M} }}\int _0^t |  z ^{\mu }
 \overline{ {z }}^ { {\nu}}|^2 \le C \sum _j    |  z _j(0)| ^2\le C \epsilon ^2,
\end{aligned}\end{equation}
for a fixed $C$. This allows to close \eqref{intstr1} and to prove that $\eta$ scatters.
From  \eqref{intstr2} and the fact that the $\dot z (t)$ is uniformly bounded, we get that
the $\lim _{t\to +\infty }z ^{\mu+{\nu}} (t) =0$  for the $(\mu , \nu)$ in \eqref{intstr2}.
This allows to conclude that $ \lim_{t\to +\infty}  z_j(t)   =0$ for all except for at most one $j$, because the pairs of  multi--indexes $(\mu , \nu)$ involve cases such as $ z^{\mu _j}_j
\overline{z}^{\nu _k}_k$ for any pair $j\neq k$ (while cases of the form
$ z^{\mu _j}_j
\overline{z}^{\nu _j}_j$ are not allowed).

What has happened is that when we have substituted   \eqref{intstr12}  inside \eqref{intstr11} and ignored $g$, we got a system on the $z$'s which has some degree of friction,
ultimately due to the coupling of the discrete modes with radiation. This can be proved
thanks to the square structure of the r.h.s. of \eqref{fgrintr} where what is crucial
is the fact that each $G_{\mu \nu}$  is paired with its complex conjugate $G_{\mu \nu}^*$.
This comes about and can  be seen transparently because we have a hamiltonian system in Darboux coordinates,
which tells us that the $G_{\mu \nu}^*$'s in the r.h.s. of \eqref{intstr0}
are the same of the coefficients in the \eqref{intstr11}.

  To be able to implement the above intuition gets some work
  because, as we mentioned, the most natural coordinates are not  Darboux  coordinates,
  that is the symplectic form is complicated when expressed in these coordinates (this would
  make the search of the  effective hamiltonian very difficult in these initial coordinates) and the equations are not as simple as $\im \dot z_j= \partial _{\overline{z}_j}\mathcal{H}$ and $\im \dot \eta = \partial _{\eta ^*}\mathcal{H}$.

  There are various papers, such as \cite{Cu0,Cu2}, that discuss
  how to first produce Darboux  coordinates and how to find the effective hamiltonian
  in an appropriate way that guarantees that the system remains a semilinear Dirac equation.
  In fact this part of the proof is here the same of \cite{CM1}, since in terms of the hamiltonian
  formalism,
  the NLS and the NLD are   the same. So   Darboux  coordinates and search
  of the effective hamiltonian by means of Birkhoff normal forms are accomplished in \cite{CM1}.

The only  part of the proof   where there is difference between NLD and NLS is in the study
of dispersion.  The NLD requires its own set of technical machinery
about  linear dispersion theory, Strichartz and smoothing estimates,
which is not the same of the NLS and which is somewhat  tricker and less  well understood.
Nonetheless, all the linear theory we need here has been already developed
in the literature, in particular  in \cite{boussaid,Boussaid2,boussaidcuccagna}.

Since the main ideas on the hamiltonian structure
and coordinate changes come from \cite{CM1}, we will refer to \cite{CM1} for an extensive discussion of the main issues.   The rest of the proof consists in proving dispersion of the continuous component of the solution (and here we use
the technology of Dirac operators in  \cite{boussaid,Boussaid2}), i.e. \eqref{intstr1},
 and the so called Nonlinear Fermi Golden Rule,   i.e. \eqref{fgrintr}.  This latter part of the proof is similar
to \cite{CM1}, but requires   some   modification in the spirit of \cite{Cu3} because of
the possible presence of pairs of eigenvalues with different signs.

Here as in \cite{CM1}   we do not prove, as   done in \cite{bambusicuccagna}  in an easier setting, that hypothesis (H5) holds for generic pairs $(V, g(u\overline{u}))$. While in
\cite{CM1} what was missing to repeat the argument in  \cite{bambusicuccagna} was a meaningful mass term, here we have mass $\mathscr{M}$ but the dependence of the   linear operator $H$ on $\mathscr{M}$ requires new ideas.

\section{Further notation and   coordinates}\label{section:set up}

\subsection{Notation}
\label{subsec:notation}
\noindent
For~$k\in\R$ and~$1\leq p,q\leq\infty$, the Besov
space~$B^k_{p,q}(\R^3,\C^d)$\index{$B^k_{p,q}$} is the space of all   tempered
distributions $f\in {\mathcal S}'(\R^3,\C^d)$  such that
\[
   \|f\|_{B^k_{p,q}}=  (\sum_{j\in\N}2^{jkq} \|\varphi_j *
f\|_p^q )^{\frac{1}{q}}<+\infty,
\]
with~$\mathcal F (\varphi) \in {\mathcal C}^\infty_0(\R^n\setminus
\left\{0\right\})$ such that~$\sum_{j\in
\Z}\mathcal F (\varphi)(2^{-j}\xi)=1$ for all~$\xi
\in\R^3\setminus\left\{0\right\}$,
$\mathcal F (\varphi_j)(\xi)=\mathcal F (\varphi)(2^{-j} \xi)$ for all
$j\in\N^*$ and for all~$\xi \in\R^3$, and
$\mathcal F (\varphi_0)=1-\sum_{j\in \N^*}\mathcal F (\varphi_j)$. It is
endowed with the norm $   \|f\|_{B^k_{p,q}}$.

\begin{itemize}
\item We denote by $\N =\{ 1,2,...\}$  the set of natural numbers  and  set $\N_0= \N\cup \{ 0\}  $.

\item Given a Banach space $X$, $v\in X$ and $\delta>0$ we set
$$
B_X(v,\delta):=\{ x\in X\ |\ \|v-x\|_X<\delta\}.
$$

\item
We denote $z=( z_1,\dots ,z_n)$, $|z|:=\sqrt{\sum_{j=1}^n|z_j|^2}$.

\item We set $\partial _{ l}:=\partial _{z_l}$  and  $\partial _{\overline{l} }:=\partial _{\overline{z}_l}$.
Here as customary   $\partial _{z_l}  = \frac 12 (D _{lR}-\im  D _{lI} )$ and    $\partial _{\overline{z}_l}  = \frac 12 (D _{lR}+\im  D _{lI} )$.

\item Occasionally we use a single  index $\ell =j, \overline{j}$. To define $\overline{\ell}$  we use the convention $\overline{\overline{j}}=j$.
We will also write   $z_{\overline{j}}=\overline{z}_j$.

\item  We will consider vectors $z=(z_1, ..., z_n)\in \C^n$ and  for  vectors $\mu , \nu \in (\N \cup \{ 0 \} ) ^{n}$ we set $ z^\mu \overline{z}^\nu  := z_1^{\mu _1} ...z_n^{\mu _n}\overline{z}_1^{\nu _1} ...\overline{z}_n^{\nu _n}$.  We will set $|\mu |=\sum _j \mu _j$.

\item   We have  $dz_j  =dz_{jR}+\im dz_{jI}$,    $d\overline{z}_j  =dz_{jR}-\im dz_{jI}$.

\item We consider the vector $ \mathbf{e}=(e_1,...., e_n)$ whose entries are the eigenvalues of $H$.

\end{itemize}

\begin{remark}\label{rem:att}  We draw the attention of the reader to the fact that
the complex conjugate of $v\in \C^4$  is $v^*$ with $\overline{v}=\beta v^*$  while for $\zeta \in \C$ the complex conjugate is $\overline{\zeta}$ which is more convenient notation  in some
later formulas
than writing  $\zeta ^*$.

\end{remark}

\subsection{Coordinates}
\label{subsec:coord}

The first thing we need is the following well standard ansatz,  see for example in  Lemma 2.6
\cite{CM1}.

\begin{lemma}\label{lem:decomposition}
There exists $c _0 >0$ such that  there exists a $C>0$ such that for all $u \in H^1$   with $\|u\|_{H^1}<c  _0 $, there exists a unique  pair $(z,\Theta )\in   \C^{n }\times  ( H^1 \cap H_{c}[z])$
such that
\begin{equation}\label{eq:decomposition1}
\begin{aligned} &
u=\sum_{j=1}^nQ_{j z_j}+\Theta   \text{ with }  |z |+\|\Theta \|_{H^4}\le C \|u\|_{H^4} .
\end{aligned}
\end{equation}
 Finally,
 the map  $u \to (z,\Theta )$  is $C^\infty (B _{H^4}(0, c _0 ),
 \C ^n \times  H^4 )$   and  satisfies  the   gauge property  \begin{equation}\label{eq:decomposition3}
\begin{aligned} &
 \text{$z(e^{\im \vartheta} u)=e^{\im \vartheta} z( u)
$ and    $\Theta (e^{\im \vartheta} u)=e^{\im \vartheta} \Theta ( u).$ }
\end{aligned}
\end{equation}

\end{lemma}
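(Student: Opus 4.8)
The plan is to obtain the decomposition from the implicit function theorem (IFT) applied to the orthogonality constraints that define $\mathcal{H}_c[z]$, i.e.\ a Lyapunov--Schmidt reduction exactly as in Lemma 2.6 of \cite{CM1}. Writing $\eta(z,u):=u-\sum_{k=1}^nQ_{kz_k}$ and using the notation $z_k=z_{kR}+\im z_{kI}$, $D_{kB}=\partial/\partial z_{kB}$ of Definition~\ref{def:contsp}, I would introduce
\[
F\colon \C^n\times H^1\longrightarrow\R^{2n},\qquad F_{jA}(z,u):=\Re\big\langle\im\,\eta(z,u)^{\ast},D_{jA}Q_{jz_j}\big\rangle,\quad j=1,\dots,n,\ A=R,I,
\]
so that, by \eqref{eq:contsp}, the condition $\eta(z,u)\in\mathcal{H}_c[z]$ is precisely the equation $F(z,u)=0$. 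By Proposition~\ref{prop:bddst} the maps $z_j\mapsto Q_{jz_j}$ and $z_j\mapsto D_{jA}Q_{jz_j}$ are $C^\infty$ with values in every $\Sigma_r$, hence in $L^2$; $u$ enters affinely, and the bilinear pairing $\langle\cdot,\cdot\rangle$ is continuous on $L^2$. Therefore $F$ is $C^\infty$ as a map of real Banach spaces on $B_{\C^n}(0,a_0)\times H^1$, and $F(0,0)=0$ since $Q_{j0}=0$.

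The heart of the matter is the computation of $\partial_zF(0,0)$. Since $\eta(0,0)=0$, the contribution from differentiating the second slot $D_{jA}Q_{jz_j}$ drops out, so the $(jA,kB)$ entry equals $-\Re\big\langle\im\,(D_{kB}Q_{kz_k}|_0)^{\ast},D_{jA}Q_{jz_j}|_0\big\rangle$. From $Q_{jz_j}=z_j\phi_j+q_{jz_j}$ with $q_{jz_j}=O(|z_j|^3)$ in $\Sigma_r$ one gets $D_{jR}Q_{jz_j}|_0=\phi_j$ and $D_{jI}Q_{jz_j}|_0=\im\phi_j$; moreover, since $H$ is self-adjoint by $(\mathrm{H2})$ its eigenfunctions attached to the distinct simple eigenvalues of $(\mathrm{H3})$ are orthogonal, so together with the chosen normalization $\Re\langle\phi_j,\phi_k^{\ast}\rangle=\delta_{jk}$ one in fact has $\langle\phi_j,\phi_k^{\ast}\rangle=\delta_{jk}$. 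A short computation then yields, in the ordering $(R_1,\dots,R_n,I_1,\dots,I_n)$ of rows and columns, the matrix $\left(\begin{smallmatrix}0 & -I_n\\ I_n & 0\end{smallmatrix}\right)$, which is invertible. The IFT thus provides $c_0>0$ and a $C^\infty$ map $u\mapsto z(u)$ on $B_{H^1}(0,c_0)$ with $z(0)=0$ and $F(z(u),u)=0$, unique among small $z$; setting $\Theta(u):=u-\sum_jQ_{jz_j(u)}\in H^1\cap\mathcal{H}_c[z(u)]$ gives \eqref{eq:decomposition1}. The uniqueness clause follows once one observes that the constraint $\Theta\in\mathcal{H}_c[z]$ forces $|z|\le C\|u\|_{H^1}$ for any such decomposition (pair the identity $u=\sum_jQ_{jz_j}+\Theta$ against $\phi_k^{\ast}$ and use that $\mathcal{H}_c[z]$ differs from $\mathcal{H}_c[0]$ by $O(|z|^2)$), so that $z$ automatically lies in the range where the IFT solution is unique.

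For the bounds, since $z$ is $C^1$ with $z(0)=0$ we have, after shrinking $c_0$, $|z(u)|\le C\|u\|_{H^1}\le C\|u\|_{H^4}$ whenever $u\in H^4$; then $\|\Theta(u)\|_{H^4}\le\|u\|_{H^4}+\sum_j\|Q_{jz_j(u)}\|_{H^4}\le\|u\|_{H^4}+C|z(u)|\le C\|u\|_{H^4}$, using $\|Q_{jz_j}\|_{H^4}\le C|z_j|$ from Proposition~\ref{prop:bddst}(2) and $\phi_j\in\Sigma_r\subset H^4$. Smoothness of $u\mapsto(z,\Theta)$ as a map $B_{H^4}(0,c_0)\to\C^n\times H^4$ follows by precomposing the already-smooth $z$ with the continuous inclusion $H^4\hookrightarrow H^1$ and composing with $z_j\mapsto Q_{jz_j}\in\Sigma_r\subset H^4$. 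Finally, for the gauge property \eqref{eq:decomposition3}: \eqref{eq:sp} is covariant under $u\mapsto e^{\im\vartheta}u$ because $u\overline u$ is gauge invariant, so the uniqueness in Proposition~\ref{prop:bddst} gives $Q_{j,e^{\im\vartheta}z_j}=e^{\im\vartheta}Q_{jz_j}$; differentiating this identity in $z_j$ shows $V_z:=\mathrm{span}_\R\{D_{jA}Q_{jz_j}\}$ satisfies $V_{e^{\im\vartheta}z}=e^{\im\vartheta}V_z$, and since $(e^{\im\vartheta}\Theta)^{\ast}=e^{-\im\vartheta}\Theta^{\ast}$ and the bracket is $\C$-bilinear, $\Theta\in\mathcal{H}_c[z]$ implies $e^{\im\vartheta}\Theta\in\mathcal{H}_c[e^{\im\vartheta}z]$. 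Hence $e^{\im\vartheta}u=\sum_jQ_{j,e^{\im\vartheta}z_j(u)}+e^{\im\vartheta}\Theta(u)$ is a decomposition of the required type, and uniqueness forces $z(e^{\im\vartheta}u)=e^{\im\vartheta}z(u)$, $\Theta(e^{\im\vartheta}u)=e^{\im\vartheta}\Theta(u)$ (the IFT neighborhood being a ball, hence rotation invariant).

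The argument is standard; the only place calling for care is the linearization $\partial_zF(0,0)$ — specifically, noting that self-adjointness of $H$ upgrades the normalization to $\langle\phi_j,\phi_k^{\ast}\rangle=\delta_{jk}$, without which the relevant $2n\times2n$ matrix need not be invertible — together with the bookkeeping that lets one run the IFT in the weaker $H^1$ topology (where uniqueness for all small $H^1$ data is obtained) while still extracting the $H^4$ estimates and the $H^4$-smoothness of $u\mapsto(z,\Theta)$.
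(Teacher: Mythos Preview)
Your proof is correct and is precisely the standard implicit-function-theorem/Lyapunov--Schmidt argument that the paper has in mind: the paper gives no proof of this lemma at all, merely citing Lemma 2.6 of \cite{CM1} and ending with \qed. Your write-up supplies exactly the details one would find there, including the correct identification of $\partial_zF(0,0)$ as the symplectic matrix $\left(\begin{smallmatrix}0 & -I_n\\ I_n & 0\end{smallmatrix}\right)$ and the gauge covariance via uniqueness.
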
  \qed

We now recall from \cite{CM1} the following definitions.
\begin{definition}\label{def:comb0} Given $z\in \C^n$, we denote by $ \widehat{Z} $ the vector with entries $(  z_{i}  \overline{{z}}_j)$ with $i,j\in [1,n]$   ordered in lexicographic order (that is we write $z_{i}  \overline{{z}}_j$  before $z_{i'}  \overline{{z}}_{j'}$
if either $i<j$ or, when $i=j$, if $j<j'$). We denote by $\textbf{Z}$ the vector     with entries $(  z_{i}  \overline{{z}}_j)$ with $i,j\in [1,n]$   ordered in lexicographic order      but only with pairs of  indexes  with $i\neq j$.  Here  $\textbf{Z}\in   L$ with $L$ the subspace of $ \C ^{n_0} =\{  (a_{i,j} ) _{i, j =1, ..., n} : i\neq j   \}   $ where $n_0=n (n-1),$ with $(a_{i,j} ) \in  L $ iff $a_{i,j} = \overline{a}_{j,i}$ for all $i,j$.
 For  a multi index $\textbf{m}=\{  {m}_{ij}\in \N _0 :i\neq j  \}$ we set $\textbf{Z}^{\textbf{m}}=\prod  (z_{i}  \overline{{z}}_j) ^{ {m}_{ij}} $  and $ |\mathbf{m}| :=\sum _{i,j}m  _{ij} $.

\end{definition}

\begin{definition}\label{def:setM}   Consider the set of multiindexes  $\textbf{m}$
as in Definition  \ref{def:comb0}. Consider for any $k\in \{ 1, ..., n  \}$ the set
\begin{equation} \label{eq:setM0} \begin{aligned} &  \mathcal{M}_k (r)=
\{   \textbf{m} :   \left |\sum _{i=1} ^{n} \sum _{j=1} ^{n} m  _{ij} (e_i -e_j) - e_k\right |>\mathscr{M}   \text{ and  }  |\mathbf{m}|   \le r \},  \\& \mathcal{M}_0 (r)=
\{   \textbf{m} :     \sum _{i=1} ^{n} \sum _{j=1} ^{n} m  _{ij} (e_i -e_j) =0   \text{ and  }  |\mathbf{m}|   \le r \}  .
\end{aligned}
\end{equation}
Set now
\begin{equation} \label{eq:setM1} \begin{aligned} &   {M}_k(r)=
\{    (\mu , \nu )\in \N _0 ^{n}  \times  \N _0 ^{n} :   \exists \textbf{m}   \in \mathcal{M}_k (r) \text{ such that }   z^{\mu} \overline{z}^{\nu} = \overline{z}_k   \mathbf{Z} ^{\textbf{m} }  \}  , \\& M(r)= \cup  _{k=1} ^{n} {M}_k(r).
\end{aligned}
\end{equation}
We also set    $M= M (2N+4)$ and
\begin{equation} \label{eq:setMhat} \begin{aligned} &   M_{min}=
\{    (\mu , \nu )\in M :      (\alpha  , \beta  )\in M  \text{ with } \alpha _j\le \mu _j \text{ and } \beta _j\le \nu _j \ \forall \ j \Rightarrow  (\alpha  , \beta  ) = (\mu , \nu )  \}  .
\end{aligned}
\end{equation}

\end{definition}

The following simple lemma is used in Lemma \ref{lem:chcoo}.
\begin{lemma} \label{lem:comb-1}  The following facts hold.
\begin{itemize}
  \item[(1)]
If $(\mu , \nu )\in M_{min}$ then for any $j$ we have $\mu  _{j} \nu _{j}=0$.
\item[(2)] Suppose that   $(\mu , \nu )\in M_{min}$,   $(\alpha , \beta )\in M_{min}$  and
$(\mu - \nu ) \cdot \mathbf{e} =(\alpha - \beta ) \cdot \mathbf{e}  $. Then  $(\mu , \nu )=(\alpha , \beta ) $.
\end{itemize}

\end{lemma}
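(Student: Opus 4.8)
The plan is to argue both parts by contradiction, exploiting that $(\mu,\nu)\in M=M(2N+4)$ means $z^\mu\overline z^\nu=\overline z_k\mathbf Z^{\mathbf m}$ for some $k$ and some multigraph $\mathbf m$ with $|\mathbf m|=|\mu|=|\nu|-1\le 2N+4$, and with $|(\mu-\nu)\cdot\mathbf e|=\bigl|\sum m_{ij}(e_i-e_j)-e_k\bigr|>\mathscr M$; throughout I write $\mathbf 1_j\in\N_0^n$ for the multi-index with a single $1$ in position $j$, and view $\mathbf Z^{\mathbf m}$ as the product of the $|\mathbf m|$ edge-factors $z_i\overline z_j$ ($i\ne j$).

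For (1): suppose $(\mu,\nu)\in M_{min}$ but $\mu_{j_0}\nu_{j_0}\ge 1$, and fix $k$ and $\mathbf m\in\mathcal M_k(2N+4)$ realizing it. Since $\mu_{j_0}\ge 1$ there is an edge-factor $z_{j_0}\overline z_{j_1}$ with $j_1\ne j_0$, and since $\nu_{j_0}\ge 1$ either $k=j_0$ or there is an edge-factor $z_{j_2}\overline z_{j_0}$ with $j_2\ne j_0$. I would then cancel one factor $z_{j_0}\overline z_{j_0}$ from the monomial: if $k=j_0$, delete the factor $z_{j_0}\overline z_{j_1}$ and relabel $k$ to $k'=j_1$; if $k\ne j_0$ and $j_2\ne j_1$, replace $z_{j_0}\overline z_{j_1},\,z_{j_2}\overline z_{j_0}$ by the single factor $z_{j_2}\overline z_{j_1}$; and if $k\ne j_0$ and $j_2=j_1$, delete both $z_{j_0}\overline z_{j_1}$ and $z_{j_1}\overline z_{j_0}$ (removing also a $z_{j_1}\overline z_{j_1}$, which is legitimate since then $\mu_{j_1},\nu_{j_1}\ge 1$). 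In each case the result is $z^{\mu'}\overline z^{\nu'}=\overline z_{k'}\mathbf Z^{\mathbf m'}$ with $|\mathbf m'|<|\mathbf m|\le 2N+4$, where $(\mu',\nu')$ is $(\mu,\nu)$ with $\mathbf 1_{j_0}$ (resp.\ $\mathbf 1_{j_0}+\mathbf 1_{j_1}$) subtracted from both entries; the one genuine computation is that the phase is unchanged, $(\mu'-\nu')\cdot\mathbf e=(\mu-\nu)\cdot\mathbf e$. Hence $|(\mu'-\nu')\cdot\mathbf e|>\mathscr M$, so $\mathbf m'\in\mathcal M_{k'}(2N+4)$ (necessarily $\mathbf m'\ne 0$, since $\mathbf m'=0$ would give $|(\mu'-\nu')\cdot\mathbf e|=|e_{k'}|<\mathscr M$), whence $(\mu',\nu')\in M$; but $(\mu',\nu')\lneq(\mu,\nu)$ componentwise, contradicting the minimality defining $M_{min}$.

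For (2): by (1), $\mu_j\nu_j=\alpha_j\beta_j=0$ for every $j$. Put $\sigma:=\mu+\beta$, $\tau:=\nu+\alpha$. Then $(\sigma-\tau)\cdot\mathbf e=(\mu-\nu)\cdot\mathbf e-(\alpha-\beta)\cdot\mathbf e=0$, and since $|\nu|=|\mu|+1$, $|\beta|=|\alpha|+1$ we get $|\sigma|=|\mu|+|\alpha|+1=|\tau|$. The decisive ingredient is a uniform size bound: every $(\mu,\nu)\in M_{min}$ has $|\mu|\le N$. To see this one takes a realization $\mathbf m$ of minimal size; removing any directed cycle of $\mathbf m$ leaves the phase unchanged and produces a strictly smaller element of $M$ (unless $\mathbf m$ becomes empty, impossible as $|e_k|<\mathscr M$), so $\mathbf m$ is a DAG, and a similar deletion argument shows every vertex of $\mathbf m$ is a source or a sink; then minimality forbids deleting any single edge $u\to w$ of $\mathbf m$ (with or without relabelling $k$), which — say $(\mu-\nu)\cdot\mathbf e>\mathscr M$, the other sign being symmetric — confines every edge-weight $e_u-e_w$ to an interval of the form $[(\mu-\nu)\cdot\mathbf e-\mathscr M,\ \mathscr M+e_k]$; as each such weight is a positive difference of eigenvalues, hence $\ge\min_{i>j}(e_i-e_j)$, summing over the $|\mathbf m|$ edges and comparing with $\sum(e_u-e_w)=(\mu-\nu)\cdot\mathbf e+e_k$ forces $|\mathbf m|$ below $(\mathscr M+|e_1|)\,(\min_{i>j}(e_i-e_j))^{-1}<N$, by the choice of $N$ in (H4). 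Granting this, $|\sigma|=|\tau|\le 2N+1<2N+3$, so \eqref{h42} applies to $\sigma,\tau$ and yields $\sigma=\tau$, i.e.\ $\mu+\beta=\nu+\alpha$. Finally, for each $j$: if $\mu_j>0$ then $\nu_j=0$, so $\mu_j=\alpha_j-\beta_j\le\alpha_j$, hence $\alpha_j>0$, hence $\beta_j=0$, giving $\mu_j=\alpha_j$ and $\nu_j=\beta_j=0$; the case $\nu_j>0$ is symmetric; and if $\mu_j=\nu_j=0$ then $\alpha_j=\beta_j$, which with $\alpha_j\beta_j=0$ forces $\alpha_j=\beta_j=0$. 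Thus $(\mu,\nu)=(\alpha,\beta)$.

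The main obstacle is precisely the uniform bound $|\mu|\le N$ on elements of $M_{min}$: isolating the DAG/source–sink structure of a minimal realization and then squeezing its size down using the non‑removability of edges together with the quantitative choice of $N$ in (H4) is the delicate part; the factor cancellations in (1) and the phase computations are routine.
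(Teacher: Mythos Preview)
Your proof of (1) is correct and runs parallel to the paper's: the paper reduces $(\mu,\nu)$ to $(\mu-\mathbf 1_{j_0},\nu-\mathbf 1_{j_0})$ by invoking the characterization ``$(\mu,\nu)\in M(r)\iff |\nu|=|\mu|+1$, $|\mu|\le r$, $|(\mu-\nu)\cdot\mathbf e|>\mathscr M$,'' whereas you build the smaller realization $\overline z_{k'}\mathbf Z^{\mathbf m'}$ explicitly by edge surgery on $\mathbf m$. Both arrive at the same contradiction with minimality.

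For (2), however, there is a genuine gap in your ``decisive ingredient,'' the bound $|\mu|<N$ for $(\mu,\nu)\in M_{min}$. From deleting a single edge $u\to w$ you correctly deduce that minimality forces $|L-(e_u-e_w)|\le\mathscr M$, i.e.\ $e_u-e_w\in[L-\mathscr M,\,L+\mathscr M]$ with $L:=(\mu-\nu)\cdot\mathbf e>\mathscr M$. But the asserted sharper upper bound $e_u-e_w\le\mathscr M+e_k$ does not arise from ``deleting one edge with relabelling $k$'': any re-indexing of the free $\overline z$-factor in a realization leaves the underlying pair $(\mu',\nu')$ --- and hence its phase --- unchanged, so it produces no new inequality. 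Even granting both your bounds, the summing step fails: $|\mathbf m|\,\delta\le\sum_{\text{edges}}(e_u-e_w)=L+e_k$ gives only $|\mathbf m|\le(L+e_k)/\delta$, and nothing you have written forces $L+e_k\le\mathscr M+|e_1|$ (for instance take $L$ close to $2\mathscr M$ and $e_k>0$). Thus the conclusion $|\mathbf m|<(\mathscr M+|e_1|)/\delta<N$ is not justified. The paper's own proof of (2) is terse --- it applies \eqref{h42} to $\mu+\beta$ and $\nu+\alpha$ and then uses (1) --- and does not spell out why $|\mu+\beta|=|\mu|+|\alpha|+1\le 2N+3$; you are right that some size control on minimal elements is what is needed here, but the argument you sketch does not supply it.
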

\proof First of all it is easy to show  that  $(\mu , \nu )\in M(r)$    if and only if $|\nu |= |\mu |+1$, $|\mu | \le r$  and $|(\mu - \nu )\cdot \mathbf{e}|>\mathscr M$.

Suppose  that  $\mu  _{j} \ge 1$ and $\nu  _{j} \ge 1$.
Then consider $(\alpha , \beta )    $ \begin{equation*}
\begin{aligned}
&    \alpha _{  k}  =\left\{\begin{matrix}
          \mu _k    \text{ for $ k\neq j $} ,\\
  \mu _j-1    \text{ for $ k=j$}
\end{matrix}\right.     \qquad    \beta _k  =\left\{\begin{matrix}
          \nu _k    \text{ for $ k\neq j $}  ,\\
  \nu _j-1    \text{ for $ k=j$}
\end{matrix}\right.  .
\end{aligned}
\end{equation*}
Since $|\nu |= |\mu |+1$ we have $|\beta  |= | \alpha |+1$. Furthermore
$(\mu - \nu )\cdot \mathbf{e} = (\alpha - \beta )\cdot \mathbf{e}$. This implies
$(\alpha , \beta )  \in M $ and so   $(\mu , \nu )\not \in M_{min}$.
This proves claim (1).

\noindent  By  $(\mu - \nu ) \cdot \mathbf{e} =(\alpha - \beta ) \cdot \mathbf{e}  $   and (H4) we obtain $\mu - \nu = \alpha - \beta$, which by claim (1) yields  $(\mu , \nu )=(\alpha , \beta ) $.

\qed

\begin{lemma} \label{lem:M0} Assuming $(H4)$ then the following properties are fulfilled.

 \begin{itemize}
\item[(1)]
 For $\mathbf{Z}^{\mathbf{m}}=z^{\mu}\overline{z}^{\nu}$, then $\mathbf{m}\in \mathcal{M}_0 (2N+4)$ implies  $\mu =\nu$. In particular  $\mathbf{m}\in \mathcal{M}_0 (2N+4)$ implies $\mathbf{Z}^{\mathbf{m}}=|z_1| ^{2l_1}...|z_n| ^{2l_n}$ for some $(l_1,..., l_n)\in \N _0^n$.

 \item[(2)] For $|\textbf{m}  |  \le  2N+3$  and any $j$ we   have $\sum _{a,b}(e_a-e_b)  {m}_{ab} -e_j \neq 0$.

 \end{itemize}
\end{lemma}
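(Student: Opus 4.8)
The plan is to translate the multi-index $\mathbf{m}$ into the pair $(\mu,\nu)$ it produces and then feed everything into the non-resonance hypothesis $(\mathrm{H4})$. First I would record the dictionary: writing $\mathbf{Z}^{\mathbf{m}}=\prod_{i\neq j}(z_i\overline z_j)^{m_{ij}}=z^\mu\overline z^\nu$ and reading off the exponents of $z_k$ and of $\overline z_k$ gives $\mu_k=\sum_j m_{kj}$ and $\nu_k=\sum_i m_{ik}$, so that $|\mu|=|\nu|=|\mathbf{m}|$ and
\[
\sum_{a,b}(e_a-e_b)m_{ab}=\sum_a e_a\Big(\sum_b m_{ab}\Big)-\sum_b e_b\Big(\sum_a m_{ab}\Big)=(\mu-\nu)\cdot\mathbf{e}.
\]
Thus $\mathbf{m}\in\mathcal{M}_0(2N+4)$ means exactly $(\mu-\nu)\cdot\mathbf{e}=0$ with $|\mu|=|\nu|\le 2N+4$, item (1) is the assertion $\mu=\nu$, and the quantity in item (2) is $(\mu-\nu-\mathbf{1}_j)\cdot\mathbf{e}$, where $\mathbf{1}_j$ is the $j$-th coordinate vector.

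For item (1) the key device is a reduction stripping off the ``diagonal'' part of $\mathbf{Z}^{\mathbf{m}}$. Viewing $\mathbf{m}$ as a directed multigraph on $\{1,\dots,n\}$, a directed cycle through distinct vertices $i_1\to\cdots\to i_\ell\to i_1$ contributes the factor $(z_{i_1}\overline z_{i_2})\cdots(z_{i_\ell}\overline z_{i_1})=|z_{i_1}|^2\cdots|z_{i_\ell}|^2$: it adds the same vector $\mathbf{1}_{i_1}+\cdots+\mathbf{1}_{i_\ell}$ to $\mu$ and to $\nu$, hence does not affect whether $\mu=\nu$, it leaves $(\mu-\nu)\cdot\mathbf{e}$ unchanged (the telescoping sum cancels), and it lowers $|\mathbf{m}|$. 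Iterating, I would reduce to an acyclic $\mathbf{m}'$ with the same three properties (namely $(\mu'-\nu')\cdot\mathbf{e}=0$, $|\mathbf{m}'|\le 2N+4$, and $\mu=\nu\Leftrightarrow\mu'=\nu'$). A nonzero acyclic multigraph cannot have in-degree equal to out-degree at every vertex, so if $\mathbf{m}'\neq 0$ then $\mu'\neq\nu'$; decomposing $\mu'-\nu'$ into positive and negative parts $\gamma-\delta$ yields $\gamma,\delta\in\N_0^n$ with $\gamma\neq\delta$, disjoint supports, $\gamma\cdot\mathbf{e}=\delta\cdot\mathbf{e}$, and $|\gamma|=|\delta|\le|\mathbf{m}'|\le 2N+4$ — a short non-resonant relation among the eigenvalues, forbidden by $(\mathrm{H4})$. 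Hence $\mathbf{m}'=0$, so $\mu=\nu$, and then $\mathbf{Z}^{\mathbf{m}}=z^\mu\overline z^\mu=\prod_k|z_k|^{2\mu_k}$, which is the ``in particular'' with $l_k=\mu_k$.

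For item (2) I would argue by contradiction. If $\sum_{a,b}(e_a-e_b)m_{ab}=e_j$ with $|\mathbf{m}|\le 2N+3$, then $\mu\cdot\mathbf{e}=(\nu+\mathbf{1}_j)\cdot\mathbf{e}$ with $|\mu|=|\mathbf{m}|$ and $|\nu+\mathbf{1}_j|=|\mathbf{m}|+1$; these are two distinct non-negative multi-indices (distinct because their sizes differ) with the same $\mathbf{e}$-pairing. Performing the cyclic reduction on $\mathbf{m}$ first, and then passing to the positive and negative parts of $\mu'-\nu'-\mathbf{1}_j$, one lands on $\alpha,\beta\in\N_0^n$ with $\alpha\cdot\mathbf{e}=\beta\cdot\mathbf{e}$, $|\alpha|\neq|\beta|$, both of controlled size; $(\mathrm{H4})$ forces $\alpha=\beta$, contradicting $|\alpha|\neq|\beta|$. (In the degenerate case $\mathbf{m}=0$ the statement is simply $e_j\neq 0$.)

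The only step needing real care is the last one in each part: checking that after the cyclic reduction and the positive/negative splitting the multi-indices involved stay inside the window ($\le 2N+3$, resp.\ $\le 4N+6$) where $(\mathrm{H4})$ actually bites, the extremal sizes being precisely what the lower bound $N>(\mathscr{M}+|e_1|)(\min\{e_i-e_j:i>j\})^{-1}$ in $(\mathrm{H4})$ is there to control. Conceptually there is nothing deep here; the content is the bookkeeping translation above together with the observation that cyclic factors in $\mathbf{Z}^{\mathbf{m}}$ are of the shape $\prod_k|z_k|^2$.
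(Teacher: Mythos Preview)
Your translation dictionary matches the paper's exactly, and the overall strategy is right, but you are working much harder than needed. For item (1), once you have $|\mu|=|\nu|=|\mathbf m|$ and $(\mu-\nu)\cdot\mathbf e=0$, the paper simply invokes \eqref{h42} and concludes $\mu=\nu$ in one line; your cycle-stripping reduction and positive/negative splitting are correct but superfluous, since \eqref{h42} carries no disjoint-support or acyclicity hypothesis on $(\mu,\nu)$. For item (2) the paper is equally brief: it writes $\mathbf Z^{\mathbf m}\overline z_j=z^\mu\overline z^\nu$, records $|\nu|=|\mu|+1$, and argues that $(\mu-\nu)\cdot\mathbf e=0$ together with (H4) would force $\mu=\nu$, contradicting the size mismatch---the same endgame you reach, but without the graph-theoretic detour. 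Your extra machinery would only help if one had to shrink the multi-indices to fit the (H4) window, but the hypotheses $|\mathbf m|\le 2N+4$ (resp.\ $2N+3$) already place $(\mu,\nu)$ at that threshold.

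One genuine slip: your closing paragraph misidentifies the role of the lower bound $N>(\mathscr M+|e_1|)(\min\{e_i-e_j:i>j\})^{-1}$. That inequality has nothing to do with the size windows in \eqref{h41}--\eqref{h42}; it appears in Lemma~\ref{lem:comb1}, where it guarantees that a product of $N{+}1$ off-diagonal factors pushes the frequency combination outside the mass gap $(-\mathscr M,\mathscr M)$. It does not rescue any bookkeeping here.
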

\proof The following  proof is in   \cite{CM1}.
If  $\mu =\nu$ then $z^{\mu}\overline{z}^{\nu}=|z_1| ^{2\mu _1}...|z_n| ^{2\mu _n} $. So the first sentence in claim (1)  implies the second sentence in claim (1).
  We have
\begin{equation*}     \begin{aligned} &  \mathbf{Z}^{\mathbf{m}} = \prod _{i,l=1} ^{n}(z_i\overline{z}_l)^{m_{il}}= \prod _{i =1}^{n} z_i^{\sum _{l=1}^{n}m_{il} } \overline{z}_i^{\sum _{l=1}^{n}m_{li} } =  z^{\mu}\overline{z}^{\nu}.
\end{aligned}\end{equation*}
The pair $(\mu , \nu)$ satisfies $|\mu |=|\nu|\le 2N+4$ by
\begin{equation*}     \begin{aligned} &
 |\mu |=\sum _l \mu _l=\sum _{i,l} m_{il}  = |\nu| .
\end{aligned}\end{equation*}
We have $(\mu -\nu )\cdot \mathbf{e}=0$ by $\mathbf{m}\in \mathcal{M}_0 (2N+4)$ and
\begin{equation*}     \begin{aligned} &  \sum _i \mu _i e_i -\sum _l \nu _l e_l =  \sum _{i,l} m_{il}(e_i - e_l) = 0   .
\end{aligned}\end{equation*}
We conclude by (H4) that $\mu -\nu =0$.   This proves the 1st sentence of  claim (1).

\noindent The proof of claim (2)  is similar. Set
\begin{equation*}     \begin{aligned} &  \mathbf{Z}^{\mathbf{m}}\overline{z}_j = \prod _{i,l=1} ^{n}(z_i\overline{z}_l)^{m_{il}} \overline{z}_j = \prod _{i =1}^{n} z_i^{\sum _{l=1}^{n}m_{il} } \overline{z}_i^{\sum _{l=1}^{n}m_{li} } \overline{z}_j =  z^{\mu}\overline{z}^{\nu}.
\end{aligned}\end{equation*}
  We have
\begin{equation*}     \begin{aligned} & (\mu - \nu )\cdot \mathbf{e}=  \sum _i \mu _i e_i -\sum _l \nu _l e_l  =  \sum _{i,l} m_{il}(e_i - e_l)-e_j  ,
\end{aligned}\end{equation*}
in addition we have also
\begin{equation}  \label{eq:in}   \begin{aligned} &
 |\mu |=\sum _l \mu _l=\sum _{i,l} m_{il}  = |\nu|-1 .
\end{aligned}\end{equation}
If  $(\mu - \nu )\cdot \mathbf{e}=0$  then by $|\mu -\nu |\le 4N+5$
and by (H4) we would have $\mu =\nu$,   impossible by \eqref{eq:in}.

\qed

The   following elementary  lemma is very similar to Lemma 2.4 \cite{CM1}
and is used to bound the 2nd and 3rd term in the 1st line of
\eqref{eq:rest1}  in terms of the $z ^{\mu}\overline{z}^{\nu}$ with $(\mu ,\nu)\in M _{min}$
and $\eta$.

\begin{lemma} \label{lem:comb1} We have the  following facts.

\begin{itemize}
\item[(1)]
Consider a vector $\mathbf{m}=( m  _{ij}) \in \N _0 ^{n_0}  $ such that
$\sum _{i<j}m  _{ij} >N$ for   $N >\mathscr{M} (\min \{ e_j -e_i : j>i \}) ^{-1}$, see (H4). Then for any eigenvalue $e_k$  we have

\begin{equation}\label{eq:comb3}
\begin{aligned}
&  \sum _{i<j}  m  _{ij} (e_i -e_j) - e_k<-\mathscr{M} .
\end{aligned}
\end{equation}

\item[(2)] Consider $\mathbf{m}\in \N _0 ^{n_0} $   and the monomial $ z _j \mathbf{Z}^{\mathbf{m}}$. Suppose   $|\mathbf{m}|   \ge 2N+3$.
    Then there are  $\mathbf{a}, \mathbf{b}\in \N _0 ^{n_0} $ such  that we have
\begin{equation}\label{eq:comb4}
\begin{aligned}
& \sum _{i<j}  a  _{ij}    =N+1=\sum _{i<j}  b  _{ij} ,\\&
  a  _{ij}    =  b  _{ij}  =0  \text{ for all }   i>j ,
  \\&  a  _{ij}   +  b  _{ij} \le m  _{ij} + m  _{ji}   \text{ for all } (i,j)
\end{aligned}
\end{equation}
   and moreover there are   two indexes $ (k,l)$ such that
 \begin{equation}\label{eq:comb5}
\begin{aligned}
& \sum _{i<j}  a  _{ij} (e_i -e_j) - e_k< -\mathscr{M}, \ \  \ \sum _{i<j}  b  _{ij} (e_i -e_j) - e_l< -\mathscr{M}
\end{aligned}
\end{equation}
 and such that  for $|z|\le 1$
  \begin{equation}\label{eq:comb6}
\begin{aligned}
& |z _j \mathbf{Z}^{\mathbf{m}}|\le |z_j| \  |z _k \mathbf{Z}^{\mathbf{a}}|  \  |z _l \mathbf{Z}^{\mathbf{b}}| .
\end{aligned}
\end{equation}

\item[(3)] For $\mathbf{m}$ with $|\mathbf{m}|   \ge 2N+3$  there exist $(k,l)$ and $\mathbf{a} \in \mathcal{M}_k$ and $\mathbf{b} \in \mathcal{M}_l$ such that  \eqref{eq:comb6}  holds.

    \end{itemize}
\end{lemma}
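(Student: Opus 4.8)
The plan is to handle the three claims separately; all are elementary manipulations of the multiplicative structure of $\textbf{Z}^{\textbf{m}}$ together with the ordering and separation of the eigenvalues.

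For (1), I would use that $e_1<e_2<\dots<e_n$: with $\delta:=\min\{e_j-e_i:\ j>i\}>0$ one has $e_i-e_j\le-\delta$ for $i<j$, so $\sum_{i<j}m_{ij}(e_i-e_j)\le-\delta\sum_{i<j}m_{ij}\le-\delta N$ once $\sum_{i<j}m_{ij}> N$. Subtracting $e_k$, using $e_k\ge e_1$ and the quantitative lower bound on $N$ supplied by (H4) (which gives $\delta N>\mathscr{M}+|e_1|$, hence $\delta N+e_1>\mathscr{M}$), one obtains $\sum_{i<j}m_{ij}(e_i-e_j)-e_k\le-\delta N-e_1<-\mathscr{M}$, which is \eqref{eq:comb3}.

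For (2), the decisive observation is $|z_i\overline z_j|=|z_i|\,|z_j|=|z_j\overline z_i|$, so that setting $p_{ij}:=m_{ij}+m_{ji}$ for $i<j$ gives $|\textbf{Z}^{\textbf{m}}|=\prod_{i<j}(|z_i|\,|z_j|)^{p_{ij}}$ and $\sum_{i<j}p_{ij}=|\textbf{m}|\ge 2N+3$. Since $2N+3>2(N+1)$, a greedy packing yields non-negative integers $a_{ij},b_{ij}$ (for $i<j$, extended by $0$ for $i>j$) with $\sum_{i<j}a_{ij}=\sum_{i<j}b_{ij}=N+1$ and $a_{ij}+b_{ij}\le p_{ij}=m_{ij}+m_{ji}$, which is exactly \eqref{eq:comb4}. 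Applying (1) to $\textbf{a}$ and to $\textbf{b}$ (legitimate since each has weight $N+1>N$) shows $\sum_{i<j}a_{ij}(e_i-e_j)-e_k<-\mathscr{M}$ and $\sum_{i<j}b_{ij}(e_i-e_j)-e_l<-\mathscr{M}$ for \emph{any} indices $k,l$, so \eqref{eq:comb5} holds regardless of the eventual choice. To force \eqref{eq:comb6}, put $r_{ij}:=p_{ij}-a_{ij}-b_{ij}\ge0$; then $\sum_{i<j}r_{ij}=|\textbf{m}|-2N-2\ge1$, so we may pick $(k,l)$ with $k<l$ and $r_{kl}\ge1$. For $|z|\le1$ every $|z_i|\le1$, hence $\prod_{i<j}(|z_i|\,|z_j|)^{r_{ij}}\le(|z_k|\,|z_l|)^{r_{kl}}\le|z_k|\,|z_l|$; multiplying by $|z_j|\prod_{i<j}(|z_i|\,|z_j|)^{a_{ij}+b_{ij}}$ and using $|z_k\textbf{Z}^{\textbf{a}}|=|z_k|\prod_{i<j}(|z_i|\,|z_j|)^{a_{ij}}$ and $|z_l\textbf{Z}^{\textbf{b}}|=|z_l|\prod_{i<j}(|z_i|\,|z_j|)^{b_{ij}}$ gives $|z_j\textbf{Z}^{\textbf{m}}|\le|z_j|\,|z_k\textbf{Z}^{\textbf{a}}|\,|z_l\textbf{Z}^{\textbf{b}}|$.

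For (3), the $\textbf{a},\textbf{b}$ produced in (2) are supported on $\{i<j\}$ with $|\textbf{a}|=|\textbf{b}|=N+1$, so $\sum_{i,j}a_{ij}(e_i-e_j)=\sum_{i<j}a_{ij}(e_i-e_j)$ and the strict inequality from (2) gives $|\sum_{i,j}a_{ij}(e_i-e_j)-e_k|>\mathscr{M}$; together with $|\textbf{a}|=N+1$ this says $\textbf{a}\in\mathcal{M}_k$, symmetrically $\textbf{b}\in\mathcal{M}_l$, while \eqref{eq:comb6} is already established. I expect the only genuinely delicate point to be the packing in (2): one must give $\textbf{a}$ and $\textbf{b}$ weight exactly $N+1$ each (so that (1) applies to them separately) while retaining at least one unit of leftover mass $r_{kl}$ to absorb the extra factor $|z_k|\,|z_l|$ on the right of \eqref{eq:comb6}, and the bound $|\textbf{m}|\ge 2N+3=2(N+1)+1$ is precisely the threshold that makes both demands simultaneously satisfiable. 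Everything else is routine bookkeeping.
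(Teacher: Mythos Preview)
Your argument is correct and follows essentially the same route as the paper. The only cosmetic difference is the order of operations in (2): the paper first extracts sub-multi-indices $\mathbf{c},\mathbf{d}$ with $c_{ij}+d_{ij}\le m_{ij}$ (allowing entries with $i>j$), writes the leftover factor as $z^\mu\overline{z}^\nu$ with $|\mu|,|\nu|\ge 1$ to pick off $z_k$ and $\overline{z}_l$, and only then symmetrizes via $a_{ij}:=c_{ij}+c_{ji}$ for $i<j$; you instead symmetrize first by introducing $p_{ij}=m_{ij}+m_{ji}$ and then pack $\mathbf{a},\mathbf{b}$ and the residual $r$ directly inside $\{i<j\}$. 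Both arrive at the same $\mathbf{a},\mathbf{b}$ and the same bound, and your observation that the threshold $2N+3=2(N+1)+1$ is exactly what guarantees both the packing and a nonzero leftover is precisely the content of the paper's remark that $|\mu|>0$ and $|\nu|>0$.
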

\proof  The proof is very similar to Lemma 2.4 in \cite{CM1}. For example,
\eqref{eq:comb3} follows immediately from
\begin{equation*}
\begin{aligned}
& \sum _{i<j}  m  _{ij} (e_i -e_j) - e_k\le -\min \{ e_j -e_i : j>i \} N -e_1 <-\mathscr{M},
\end{aligned}
\end{equation*}
 with the latter inequality due to the definition of $N$. All the other
claims can be proved like the rest of Lemma 2.4 in \cite{CM1}.

Given  $\mathbf{a}, \mathbf{b}\in \N _0 ^{n_0} $ satisfying  \eqref{eq:comb4},
by claim (1) they satisfy   \eqref{eq:comb5}  for any pair of indexes  $ (k,l)$.  Consider now the monomial  $ z _j \mathbf{Z}^{\mathbf{m}}$.   Since  $|\mathbf{m}|\ge 2N+3$, there are vectors
$\mathbf{c}, \mathbf{d}\in \N _0 ^{n_0} $ such that  $|\mathbf{c}|=|\mathbf{d}| = N+1$ with $c  _{ij}   +  d  _{ij} \le m  _{ij}  $   for all  $(i,j)$. Furthermore
we have
 \begin{equation}\label{eq:comb7}
\begin{aligned}
& z _j \mathbf{Z}^{\mathbf{m}} = z _j   z^\mu \overline{z} ^\nu \mathbf{Z}^{\mathbf{c}}  \mathbf{Z}^{\mathbf{d}} \text{ with $|\mu | >0 $ and  $|\nu | >0 $.}
\end{aligned}
\end{equation}
So, for $z_k$ a factor of $z^\mu $ and  $\overline{z}_l$ a factor of $\overline{z}^\nu $,
and for
\begin{equation}\label{eq:comb8}
\begin{aligned}
&    a_{ij}  =\left\{\begin{matrix}
          c_{ij}+c_{ji}  \text{ for $i<j$} \,\\
  0     \text{ for $i>j$}
\end{matrix}\right.  ,  \qquad    b_{ij}  =\left\{\begin{matrix}
          d_{ij}+d_{ji}  \text{ for $i<j$} \,\\
  0     \text{ for $i>j$}
\end{matrix}\right.,
\end{aligned}
\end{equation}
for $|z|\le 1$ we have from \eqref{eq:comb7}
\begin{equation*}
\begin{aligned}
& |z _j \mathbf{Z}^{\mathbf{m}}| \le | z _j  | \ |  z_k   \mathbf{Z}^{\mathbf{c}} | \  | \ z_l \mathbf{Z}^{\mathbf{d}}  | =  | z _j  | \ |  z_k   \mathbf{Z}^{\mathbf{a}} | \  | \ z_l \mathbf{Z}^{\mathbf{b}}  | .
\end{aligned}
\end{equation*}
Furthermore,  \eqref{eq:comb4} is satisfied.

\noindent Since our $(\mathbf{a},\mathbf{b})$ satisfy
$\mathbf{a} \in \mathcal{M}_k$ and $\mathbf{b} \in \mathcal{M}_l$,
claim (3) is a consequence of claim (2).

\qed

Since $(z,\Theta  )$
in   \eqref{eq:decomposition1}  are not a system of independent coordinates we need
the  following,  see Lemma 2.5   \cite{CM1}.

\begin{lemma} \label{lem:contcoo}
There exists
$d _0>0$ such that for all $z\in\C$ with $|z|<d _0$  there exists $R[z]:\mathcal{H}_c[0]\to \mathcal{H}_c[z]$ such that  $ \left. P_c\right|_{\mathcal{H}_c[z]}=R[z]^{-1}$,
with $P_c$ the orthogonal projection of $L^2$ onto $\mathcal{H}_c[0]$, see   Definition \ref{def:contsp}.  Furthermore, for $|z|<d_0$ and $\eta \in  \mathcal{H}_c[0] $, we have the following properties.

\begin{itemize}
\item[(1)]
$R[z]\in C^\infty (B _{\C^n}  (0, \delta _0 ), B  (
H^r,H^ r ) ),$  for any $r\in \R$.

\item[(2)]  For any $r>0$, we have $\|(R[z]-1)\eta \|_{\Sigma _r}\le c_r |z	 |^2 \|\eta \|_{\Sigma _{-r}}$  for a fixed $c_r$.

\item[(3)]   We have the covariance property  $R[e^{\im \vartheta }z]= e^{\im \vartheta }R[z] e^{-\im \vartheta }.$

\item[(4)]    We have, summing on repeated indexes,
\begin{equation} \label{eq:contcoo21}
 R[z]\eta =\eta  +   (\alpha_j[z]\eta )\phi_j,  \text{  with }    \alpha_j[z]\eta =\langle B_j (z), \eta \rangle + \langle C_j(z),  {\eta }^*\rangle,
\end{equation}
where, for $ \widehat{Z} $ as in Definition \ref{def:comb0},  we have $B_j (z)=\widehat{B}_j (\widehat{Z} )$ and $C_j(z) = z_{i}  {z}_\ell  \widehat{C}_{i\ell j}(\widehat{Z})$, for $\widehat{B} _j$  and $\widehat{C}_{i\ell j}$  smooth in $\widehat{Z}$  with values in $\Sigma _r$.

\item[(5)] We have, for $r\in \R$, with $\textbf{Z}$   as in Definition \ref{def:comb0}\begin{equation} \label{eq:contcoo2}
\begin{aligned}  &   \| {B} _j  (z) + \partial _{\overline{z}_j}   {q}_{jz_j}^* \|_{\Sigma _r}  + \| C _j  (z)-\partial _{ \overline{z}_j}   {q}_{jz_j} \|_{\Sigma _r}   \le c_r |\textbf{Z}	 |^2.
\end{aligned}
\end{equation}

\end{itemize}
\end{lemma}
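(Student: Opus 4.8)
The plan is to construct $R[z]$ explicitly through the ansatz \eqref{eq:contcoo21}; the argument runs parallel to Lemma 2.5 of \cite{CM1}, so I only emphasize the points specific to the present setting. Given $\eta\in\mathcal H_c[0]$, I look for $R[z]\eta=\eta+\sum_{j}(\alpha_j[z]\eta)\phi_j$, with the $n$ complex numbers $\alpha_j[z]\eta$ fixed by imposing $R[z]\eta\in\mathcal H_c[z]$, i.e.\ the $2n$ real conditions $\Re\langle\im(R[z]\eta)^*,D_{jA}Q_{jz_j}\rangle=0$, $A\in\{R,I\}$. Substituting the ansatz, using $Q_{jz_j}=z_j\phi_j+q_{jz_j}$ with $D_{jR}(z_j\phi_j)=\phi_j$, $D_{jI}(z_j\phi_j)=\im\phi_j$, and using $\eta\in\mathcal H_c[0]$ to cancel the terms $\langle\im\eta^*,\phi_j\rangle$, this becomes a real-linear system $\mathcal A(z)\,\alpha=b_\eta(z)$ for the unknowns $(\alpha_{jR},\alpha_{jI})_{j=1}^n$, with right-hand side of entries $-\Re\langle\im\eta^*,D_{jA}q_{jz_j}\rangle$. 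By Proposition \ref{prop:bddst} the functions $q_{jz_j}$, hence the $D_{jA}q_{jz_j}$, are Schwartz, smooth in $z$, and $O(|z_j|^2)$ with an identical zero at $z_j=0$; moreover, since $H=D_{\mathscr M}+V$ is self-adjoint with simple eigenvalues the $\phi_j$ are mutually $L^2$-orthogonal, so the normalization $\Re\langle\phi_j,\phi_k^*\rangle=\delta_{jk}$ in fact gives $\langle\phi_j,\phi_k^*\rangle=\delta_{jk}$ and $\mathcal A(0)$ is block diagonal with invertible $2\times2$ blocks $\bigl(\begin{smallmatrix}0&1\\-1&0\end{smallmatrix}\bigr)$. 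Hence for $|z|<d_0$ small $\mathcal A(z)$ is invertible with $\mathcal A(z)^{-1}$ smooth in $z$, the system has a unique solution $\alpha_j[z]\eta$ that is $\R$-linear in $\eta$ and smooth in $z$, and rewriting the $\R$-linear functional $\eta\mapsto\alpha_j[z]\eta$ in the form $\langle B_j(z),\eta\rangle+\langle C_j(z),\eta^*\rangle$ with coefficients assembled out of $\phi_j$ and the $D_{jA}q_{jz_j}$ --- which lie in every $\Sigma_r$ --- yields \eqref{eq:contcoo21} and claim (1). Because $P_c$ annihilates the $\phi_j$ we get $P_cR[z]\eta=\eta$, and the transversality $\mathcal H_c[z]\cap\ker P_c=\{0\}$ (again the invertibility of $\mathcal A(z)$) forces $R[z]$ to be onto $\mathcal H_c[z]$, so $\left.P_c\right|_{\mathcal H_c[z]}=R[z]^{-1}$.

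For the quantitative statements (2) and (5) I would track orders in $z$. The decisive structural fact is that the diagonal $2\times2$ blocks of $\mathcal A(z)$ are \emph{exactly} those of $\mathcal A(0)$: differentiating the constraint $\langle q_{jz_j},\phi_j^*\rangle=0$ of \eqref{eq:sp} in $z_{jA}$ gives $\langle D_{jA}q_{jz_j},\phi_j^*\rangle=0$, which removes the only $z$-dependent contribution to the $j$-th diagonal block. The off-diagonal block in row $j$ is then $O(|z_j|^2)$ --- it is proportional to $\langle\im\phi_k^*,D_{jA}q_{jz_j}\rangle$, $k\neq j$, and vanishes at $z_j=0$ --- while the $j$-th block of $b_\eta(z)$ has size $O(|z_j|^2)\|\eta\|_{\Sigma_{-r}}$. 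A Neumann expansion of $\mathcal A(z)^{-1}$ then gives $\alpha_j[z]\eta=(\text{diagonal solution})_j+\langle\widetilde B_j(z),\eta\rangle+\langle\widetilde C_j(z),\eta^*\rangle$ with $\|\widetilde B_j(z)\|_{\Sigma_r}+\|\widetilde C_j(z)\|_{\Sigma_r}\le c_r|\mathbf Z|^2$, the remainder arising only from products of an off-diagonal block $O(|z_j|^2)$ with a block-$k$ datum $O(|z_k|^2)$, $k\neq j$, which is controlled by $\sum_{k\neq l}|z_k|^2|z_l|^2=|\mathbf Z|^2$. A short computation of the diagonal part, using $D_{jR}=\partial_{z_j}+\partial_{\overline z_j}$, $D_{jI}=\im(\partial_{z_j}-\partial_{\overline z_j})$, $q_{jz_j}=z_j\widehat q_j(|z_j|^2)$ and the symmetry of $\langle\cdot,\cdot\rangle$, identifies it with $\langle-\partial_{\overline z_j}q_{jz_j}^*,\eta\rangle+\langle\partial_{\overline z_j}q_{jz_j},\eta^*\rangle$. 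This gives $\|B_j(z)+\partial_{\overline z_j}q_{jz_j}^*\|_{\Sigma_r}+\|C_j(z)-\partial_{\overline z_j}q_{jz_j}\|_{\Sigma_r}\le c_r|\mathbf Z|^2$, i.e.\ \eqref{eq:contcoo2}; and claim (2) follows since $\|\partial_{\overline z_j}q_{jz_j}\|_{\Sigma_r}\le c_r|z_j|^2$ and $\|(R[z]-1)\eta\|_{\Sigma_r}\le\sum_j|\alpha_j[z]\eta|\,\|\phi_j\|_{\Sigma_r}$.

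For (3) and the $\widehat Z$-dependence in (4) I would exploit the gauge symmetry. From the uniqueness in Proposition \ref{prop:bddst} and the invariance of $g(u\overline u)\beta u$ under $u\mapsto e^{\im\vartheta}u$ one has $Q_{j,e^{\im\vartheta}z_j}=e^{\im\vartheta}Q_{jz_j}$, hence $\eta\mapsto e^{\im\vartheta}\eta$ carries $\mathcal H_c[z]$ onto $\mathcal H_c[e^{\im\vartheta}z]$; by uniqueness of the map with the stated properties this forces $R[e^{\im\vartheta}z]=e^{\im\vartheta}R[z]e^{-\im\vartheta}$, equivalently $\alpha_j[e^{\im\vartheta}z](\eta)=e^{\im\vartheta}\alpha_j[z](e^{-\im\vartheta}\eta)$. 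So $B_j$ is invariant under $z\mapsto e^{\im\vartheta}z$ and $C_j$ is multiplied by $e^{2\im\vartheta}$, and the usual description of smooth $S^1$-invariant, resp.\ weight-two, functions (as in \cite{CM1}) yields $B_j(z)=\widehat B_j(\widehat Z)$ and $C_j(z)=z_iz_\ell\widehat C_{i\ell j}(\widehat Z)$ with smooth $\Sigma_r$-valued $\widehat B_j,\widehat C_{i\ell j}$. I expect the one point that is not pure bookkeeping to be exactly the cancellation $\langle D_{jA}q_{jz_j},\phi_j^*\rangle=0$ used above: without it the $j$-th diagonal block of $\mathcal A(z)$ would carry an $O(|z_j|^2)$ correction, producing an $O(|z|^4)$ error in the identification of $B_j,C_j$ that would not be $O(|\mathbf Z|^2)$ --- hence too weak for \eqref{eq:contcoo2} --- as one sees by exciting a single mode $z_j$.
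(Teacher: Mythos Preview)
Your proposal is correct and follows exactly the approach the paper takes, which is simply to invoke Lemma~2.5 of \cite{CM1} without reproducing any details; you have filled in precisely that argument, including the key structural cancellation $\langle D_{jA}q_{jz_j},\phi_j^*\rangle=0$ that makes the diagonal blocks of $\mathcal A(z)$ \emph{exact} and forces the remainder in (5) to be $O(|\mathbf Z|^2)$ rather than merely $O(|z|^4)$.
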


\qed

Then Lemma 2.6 gives us a  system of coordinates near the origin in $H^4$.
The simple proof is the same of Lemma 2.6 \cite{CM1}.

\begin{lemma} \label{lem:systcoo}  For the $d  _0>0$ of Lemma \ref{lem:contcoo}
the map $(z,\eta )\to u$  defined
by
\begin{equation} \label{eq:systcoo1}
 u=\sum_{j=1}^nQ_{j z_j}+R[z] \eta , \text{ for $(z,\eta )\in B_{\C^n}(0 , d _0) \times ( H^4\cap \mathcal{H}_c[0]) ,$}
\end{equation}
is  with values in   $H^4$  and  is   $C^\infty$.
Furthermore, there is a $d_1>0$ such that    for $(z,\eta )\in B_{\C^n}(0 , d _1) \times  (B_{H^4}(0 , d _1)\cap \mathcal{H}_c[0]),$
the above map is a  diffeomorphism and
% there is a   constant $c_1=c_1(X)>1$ such that
\begin{equation} \label{eq:coo11}
  |z|+\| \eta \| _{H^4} \sim   \| u \| _{H^4}.
\end{equation}

\noindent
	Finally, we have the gauge properties  $u ( e^{\im \vartheta } z, e^{\im \vartheta } \eta )= e^{\im \vartheta } u (z,\eta ) $
	and    \begin{equation}\label{eq:detion31}
\begin{aligned} &
 \text{$z(e^{\im \vartheta} u)=e^{\im \vartheta} z( u)
$ and    $\eta (e^{\im \vartheta} u)=e^{\im \vartheta} \eta ( u)$ } .
\end{aligned}
\end{equation}

\end{lemma}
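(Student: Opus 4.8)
The plan is to exhibit the inverse of the map~\eqref{eq:systcoo1} explicitly, by composing the ansatz of Lemma~\ref{lem:decomposition} with the projection $P_c$, and then to verify that the forward and backward maps are mutually inverse on a small enough neighbourhood of the origin (so that no appeal to the inverse function theorem is needed). Concretely, I would start from $u\in H^4$ with $\|u\|_{H^4}$ small, apply Lemma~\ref{lem:decomposition} to get a unique $(z,\Theta)\in\C^n\times(H^4\cap\mathcal H_c[z])$ with $u=\sum_jQ_{jz_j}+\Theta$ and $|z|+\|\Theta\|_{H^4}\le C\|u\|_{H^4}$, and then set $\eta:=P_c\Theta$. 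Since $\Theta\in\mathcal H_c[z]$, Lemma~\ref{lem:contcoo} gives $P_c|_{\mathcal H_c[z]}=R[z]^{-1}$, hence $\Theta=R[z]\eta$ and $u=\sum_jQ_{jz_j}+R[z]\eta$; so $u$ lies in the range of~\eqref{eq:systcoo1} and $(z,\eta)$ are its coordinates. This identifies the inverse as $G:u\mapsto(z(u),P_c\Theta(u))$, which is $C^\infty$ as a composition of the smooth map $u\mapsto(z,\Theta)$ of Lemma~\ref{lem:decomposition} with the bounded linear map $(z,\Theta)\mapsto(z,P_c\Theta)$.

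Smoothness of the forward map $F$ in~\eqref{eq:systcoo1} is immediate from Proposition~\ref{prop:bddst} (for $z\mapsto Q_{jz_j}$) and from Lemma~\ref{lem:contcoo}(1) together with bilinearity (for $(z,\eta)\mapsto R[z]\eta$). For the diffeomorphism claim I would check $G\circ F=\mathrm{id}$ and $F\circ G=\mathrm{id}$. Given $(z,\eta)$ small, $R[z]\eta\in\mathcal H_c[z]$, so by the uniqueness part of Lemma~\ref{lem:decomposition} the decomposition of $u:=F(z,\eta)$ is exactly $(z,R[z]\eta)$, whence $G(u)=(z,P_cR[z]\eta)=(z,\eta)$ because $P_c$ restricted to $\mathcal H_c[z]$ is $R[z]^{-1}$. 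Conversely, for $u$ small, $\Theta(u)\in\mathcal H_c[z(u)]$ forces $\Theta(u)=R[z(u)]P_c\Theta(u)$, so $F(G(u))=\sum_jQ_{jz_j(u)}+\Theta(u)=u$. It then remains only to fix $d_1>0$ so that $F$ maps $B_{\C^n}(0,d_1)\times(B_{H^4}(0,d_1)\cap\mathcal H_c[0])$ into the ball $B_{H^4}(0,c_0)$ where Lemma~\ref{lem:decomposition} applies (using $\|Q_{jz_j}\|_{H^4}\lesssim|z_j|$ and $\|R[z]\|_{B(H^4)}\lesssim1$) and, after possibly shrinking $d_1$, so that $G$ maps a ball back inside the domain of $F$; this makes $F$ a $C^\infty$ diffeomorphism onto its image.

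The norm equivalence~\eqref{eq:coo11} is a two-sided estimate. The bound $|z|+\|\eta\|_{H^4}\lesssim\|u\|_{H^4}$ follows from $|z|+\|\Theta\|_{H^4}\le C\|u\|_{H^4}$ (Lemma~\ref{lem:decomposition}) together with $\|\eta\|_{H^4}=\|P_c\Theta\|_{H^4}\lesssim\|\Theta\|_{H^4}$, since $P_c$ differs from the identity by a finite-rank operator with Schwartz kernel and hence is bounded on $H^4$. For the reverse bound I would write $Q_{jz_j}=z_j\phi_j+q_{jz_j}$ with $\|q_{jz_j}\|_{\Sigma_4}\le C|z_j|^3$ by Proposition~\ref{prop:bddst}(2), and use $\|R[z]\|_{B(H^4)}\le1+c|z|^2$ (Lemma~\ref{lem:contcoo}), to get $\|u\|_{H^4}\le\sum_j|z_j|\|\phi_j\|_{H^4}+C|z|^3+(1+c|z|^2)\|\eta\|_{H^4}\lesssim|z|+\|\eta\|_{H^4}$ for $|z|$ small.

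Finally, the gauge properties. I would first note that $Q_{j,e^{\im\vartheta}z_j}=e^{\im\vartheta}Q_{jz_j}$: the function $e^{\im\vartheta}Q_{jz_j}$ solves~\eqref{eq:sp} with the same $E_{jz_j}$, since $H$ is linear, $(e^{\im\vartheta}Q)\overline{(e^{\im\vartheta}Q)}=Q\overline Q$ (the phases cancel, as $\overline{e^{\im\vartheta}Q}=\beta\,e^{-\im\vartheta}Q^{*}=e^{-\im\vartheta}\overline Q$ and the product $uv=u\cdot v$ is $\C$-bilinear), and the orthogonality to $\phi_j^{*}$ is preserved; uniqueness in Proposition~\ref{prop:bddst} gives the claim. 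Combining this with the covariance $R[e^{\im\vartheta}z]=e^{\im\vartheta}R[z]e^{-\im\vartheta}$ of Lemma~\ref{lem:contcoo}(3) yields $u(e^{\im\vartheta}z,e^{\im\vartheta}\eta)=e^{\im\vartheta}\sum_jQ_{jz_j}+e^{\im\vartheta}R[z]\eta=e^{\im\vartheta}u(z,\eta)$; applying $G$, together with~\eqref{eq:decomposition3} and the fact that $P_c$ commutes with scalar phases, gives $z(e^{\im\vartheta}u)=e^{\im\vartheta}z(u)$ and $\eta(e^{\im\vartheta}u)=e^{\im\vartheta}\eta(u)$, i.e.~\eqref{eq:detion31}. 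I expect the only genuine work to be the domain bookkeeping—shrinking $d_1$ repeatedly so that all compositions remain where Lemmas~\ref{lem:decomposition} and~\ref{lem:contcoo} are valid—but there is no essential obstacle, and the argument is exactly that of Lemma~2.6 in~\cite{CM1}.
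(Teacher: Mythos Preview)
Your proposal is correct and is precisely the argument the paper has in mind: the paper gives no details beyond citing Lemma~2.6 of \cite{CM1}, and your construction of the inverse as $G(u)=(z(u),P_c\Theta(u))$ together with the two-sided verification $G\circ F=\mathrm{id}$, $F\circ G=\mathrm{id}$ via Lemmas~\ref{lem:decomposition} and~\ref{lem:contcoo} is exactly that proof spelled out. The gauge identities are also handled correctly; note that $Q_{j,e^{\im\vartheta}z_j}=e^{\im\vartheta}Q_{jz_j}$ can be read off directly from the structural formula $q_{jz_j}=z_j\widehat q_j(|z_j|^2)$ in Proposition~\ref{prop:bddst}(1), which slightly shortens your uniqueness argument.
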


\qed

We end this section exploiting the notation introduced  in claim (5) of Lemma \ref{lem:contcoo}  to introduce two classes  of functions.   First of all notice that    the linear maps $\eta \to  \< {\eta},\phi _j^* \>$
extend into bounded linear maps $\Sigma _r\to \R$ for any $r\in \R $.   We set
\begin{equation}\label{eq:phsp1}
\begin{aligned}
\Sigma _r^c & :=\left\{\eta\in \Sigma _r :\  \< {\eta},\phi _j ^*\>=0,\ j=1,\cdots,n\right\} .
\end{aligned}
\end{equation}
The following two classes of functions  will be used in the rest of the paper.
Recall  that in   Definition \ref{def:comb0}  we introduced
$\textbf{Z}\in L$ with   $\dim L= n (n-1).$

\begin{definition}\label{def:scalSymb}
We will say that   $F(t,   z, Z ,\eta )\in C^{M}(I\times   \mathcal{A},\R)$, with
$I$ a neighborhood of 0 in $\R$ and
 $\mathcal{A}$   a neighborhood of 0 in  $  \C ^n \times L \times \Sigma _{-K}^c $
is   $F=\mathcal{R}^{i, j}_{ K,M} (t,  z,\textbf{Z},\eta)$,
 if    there exist    a $C>0$   and a smaller neighborhood  $\mathcal{A}'$ of 0   such that
 \begin{equation}\label{eq:scalSymb}
  |F(t,  z,\mathbf{Z},\eta)|\le C (\|  \eta \| _{\Sigma   _{-K}}+|\textbf{Z} |)^j (\|  \eta \| _{\Sigma   _{-K}}+ |\mathbf{Z}   |+|z |)^{i} \text{  in $I\times  \mathcal{A}  '$} .
\end{equation}
We will specify      $F=\mathcal{R}^{i, j}_{ K,M} (t, z,\textbf{Z})$    if
\begin{equation}\label{eq:scalSymb1}
  |F(t,  z,\mathbf{Z},\eta)|\le C |\textbf{Z} | ^j  |z | ^{i}
\end{equation}
and       $F=\mathcal{R}^{i, j}_{ K,M} (t,  z,\eta  )$    if
\begin{equation}\label{eq:scalSymb2}
  |F(t,  z,\mathbf{Z},\eta)|\le C  \|  \eta \| _{\Sigma   _{-K}} ^j (\|  \eta \| _{\Sigma   _{-K}}+|z |)^{i} .
\end{equation}
We will  omit $t$    if there is no dependence on such variable.
We write  $ F=\mathcal{R}^{i, j} _{K, \infty}$  if $F=\mathcal{R}^{i,j}_{K, m}$ for all $m\ge M$.
We write    $F=\mathcal{R}^{i, j}_{\infty, M} $       if   for all   $k\ge K$    the above   $F$ is the restriction  of an
$F(t,  z,\eta )\in C^{M}(I\times   \mathcal{A}_{k },\R)$ with  $\mathcal{A}_k$   a neighborhood of 0 in
$  \C  ^{n }\times L\times \Sigma _{-k}^c $ and
 which is
$F=\mathcal{R}^{i,j}_{k, M}$.
Finally we write
$F=\mathcal{R}^{i, j} _{\infty, \infty} $   if $F=\mathcal{R}^{i, j} _{k, \infty}$  for all $k$.

\end{definition}

\begin{definition}\label{def:opSymb}  We will say that an   $T(t, z,\eta )\in C^{M}(I\times   \mathcal{A},\Sigma   _{K}  (\R^3, \C ))$,  with the above notation,
 is   $T= \mathbf{{S}}^{i,j}_{K,M} (t,  z,\textbf{Z},\eta)$,
 if     there exists  a $C>0$   and a smaller neighborhood  $\mathcal{A}'
$ of 0   such that
 \begin{equation}\label{eq:opSymb}
  \|T(t,  z,\mathbf{Z},\eta)\| _{\Sigma   _{K}}\le    C (\|  \eta \| _{\Sigma   _{-K}}+|\textbf{Z} |)^j (\|  \eta \| _{\Sigma   _{-K}}+ |\mathbf{Z}   |+|z |)^{i}   \text{  in $I\times  \mathcal{A}'$}.
\end{equation}
We  use notations
$ \mathbf{{S}}^{i,j}_{K,M} (t,  z,\textbf{Z})$, $ \mathbf{{S}}^{i,j}_{K,M} (t, z,\eta  )$ etc.  as above.

\end{definition}

 \begin{remark}\label{rem:sym}
 For  given  functions   $F(t,  z, \eta)$ and $T(t,  z, \eta)$ we   write
  $F(t,  z, \eta)=\resto ^{i,j}_{K,M} (t,  z,\textbf{Z},\eta)$ and  $T(t, z, \eta)=\mathbf{{S}}^{i,j}_{K,M} (t,  z,\textbf{Z},\eta)$ when they are restrictions
  to the set of vectors  $\mathbf{Z}\in \{ (z_i \overline{z}_j) _{i,j =1,...,n}:i\neq j\}   $
 of functions satisfying the two above definitions.

\noindent  Furthermore  later, when we write $\resto ^{i,j}_{K,M} $ and $\mathbf{{S}}^{i,j}_{K,M}$,    we   mean $\resto ^{i,j}_{K,M} ( z,\textbf{Z},\eta) $ and $\mathbf{{S}}^{i,j}_{K,M}  ( z,\textbf{Z},\eta)$.

\noindent   Notice that $F= \resto ^{i,j}_{K,M} ( z,\textbf{Z} ) $ or  $S= \mathbf{S} ^{i,j}_{K,M} ( z,\textbf{Z} ) $ do not mean independence by the variable $\eta$.
\end{remark}

\section{Invariants}
\label{sec:invariants}

Equation \eqref{eq:NLS}  admits the  energy and mass  invariants,
   defined as follows for $G(0)=0$ and $G'(s)=g (s)
$:
\begin{equation}\label{eq:inv}
\begin{aligned}
&E(u ):= E_ K(u)   + E_P(u) \text{, where }  E_ K(u) := \langle  D_\mathscr{M} u, {u} ^* \rangle \text{ and } \\&
 E_P(u)= \frac 12 \int _{\R ^3}G( u \overline{u}) dx; \   \quad
Q(u ):= \langle    u, {u} ^* \rangle  .
\end{aligned}
\end{equation}
   We have   $ {E}\in C^\infty ( H^4( \R ^3, \C  ), \R  )$ and $Q\in C^\infty (  L^2(
\R ^3, \C ) , \R  )$.    We denote by $dE$ the Frech\'et derivative of $E$.
 We define $ \nabla {E}\in C^\infty ( H^4( \R ^3, \C  ), H^4( \R ^3, \C  )  )$ by  $dE(X)= \Re \langle   \nabla {E},  {X} ^{*}\rangle$,
  for any $X\in   H^4$. We define also $\nabla _{u}{E}$ and  $\nabla _{ {u}^{*}}{E}$
 by
 \begin{equation*}
\begin{aligned}
& dEX =   \langle   \nabla _{u} {E},  {X} \rangle  + \langle   \nabla _{{u}^{*}} {E}, {X} ^{*} \rangle,  \text{ that is }
 \nabla _{u} {E} =2^{-1}  (\nabla {E}) ^*\text{  and } \nabla _{{u}^{*}} {E} =2^{-1}  {\nabla {E}}.
\end{aligned}
\end{equation*}
Notice that $ \nabla   {E} =2Hu+2 g(u\overline{u} )\beta u$.
Then equation \eqref{eq:NLS} can be
interpreted as
\begin{equation}\label{eq:NLSham}
\begin{aligned}
& \im \dot u = \nabla _{  {u}^{*}} E(u ) .
\end{aligned}
\end{equation}

We recall that normal forms arguments consist in making Taylor expansions
of the hamiltonian and in the cancellation of the \textit{non--resonant} terms
of the expansion.
The following proposition identifies the kind of expansion we have in mind.
We should think of \eqref{eq:enexp1} as an expansion in the variables $z$, $\eta$ and the
auxiliary variable $\mathbf{Z}$. Eventually the effective hamiltonian will
contain terms  in the r.h.s. such as the 1st and 2nd  in the 1st line and the terms of the 2nd line. The cancellations will occur later in the 2nd line.
\begin{proposition}
  \label{prop:EnExp}
We have the following expansion of the energy for   any preassigned $r_0\in \N$:
\begin{align}
   &E (u)=     \sum _{j=1}^{n}E (Q_{j z_j}) + \langle   H   \eta,   { \eta} ^*\rangle
    +\resto ^{1,  2  }_{r_{0} , \infty} ( z,\eta  ) +
\resto ^{0,  2N+5 }_{r_{0} , \infty} (z,\textbf{Z}  )
       \nonumber   \\& \nonumber  +
	 \sum _{j =1}^n\sum _{l =0} ^{2N+3}	 \sum _{
	  |\textbf{m}|=l+1 }  \textbf{Z}^{\textbf{m}}   a_{j \textbf{m} }( |z_j |^2 )+
   \sum _{j,k=1}^n \sum _{l =0} ^{2N+3}	 \sum _{
	  |\textbf{m}|=l  }
     ( \overline{z}_j \textbf{Z}^{\textbf{m}}  \langle
  G_{jk\textbf{m}}(|z_k|^2  ), \eta \rangle  +c.c. )   \\&   \nonumber +   \Re \langle
  \textbf{S} ^{0,   2N+4 }_{r_{0} , \infty} (z,\textbf{Z}  ) ,  {\eta}^* \rangle
		+ \sum _{ i+j  = 2}   \sum _{
		  |\textbf{m}|\le 1 }  \textbf{Z}^{\textbf{m}}    \langle G_{2\textbf{m} ij } ( z ),   \eta ^{  \otimes i}\otimes ({\eta}^*) ^{\otimes j}\rangle\\&
		+  	   \sum _{  d  = 2  }^{3}   \sum _{ i  = 1}^d \mathcal R ^{0,  3-d }_{r_{0} , \infty} (z, \eta ) \int _{\R ^3} G_{d i } ( x,z, \eta , \eta (x) )    \eta ^{  \otimes i}(x)\otimes ({\eta}^*(x)) ^{\otimes (d-i)} dx          + E _P( \eta),  \label{eq:enexp1}
  \end{align}
where $c.c.$ is the complex conjugate of the term right before in   the $( \ )$ and where:
  \begin{itemize}

\item[(1)] $  ( a_{j\textbf{m} } ,  G_{jk\textbf{m}}  , G_{2\textbf{m} ij })  \in C^\infty (B_{\R}(0,d  _0), \C\times  \Sigma _{r_{0}}(\mathbb{R}^3, \mathbb{C}   )\times
\Sigma _{r_{0}} (\mathbb{R}^3, B^{i+j}(\C ^4,\mathbb{C})   )) $;

 \item[(2)]
 $G_{d i }( \cdot ,z, \eta , \zeta ) \in C^{\infty } ( B_{\C ^n}(0,d _0) \times \Sigma _{-r_{0}} (\mathbb{R}^3, \C)\times \C^4,
\Sigma _{r_{0}} (\mathbb{R}^3, B^{d}(\C ^4,\mathbb{C})   ))) $;

\item[(3)]
for $|\textbf{m}|=0$ we have $G_{2\textbf{0} ij } (0)=0$ and
\begin{equation}\label{eq:b02}
\begin{aligned}
  &    \sum _{ i +j =  2}           \langle G_{2\textbf{0} i j} ( z ),   \eta ^{  \otimes i} ({\eta}^*) ^{\otimes  j}\rangle = 2^{-1} \sum _{j=1}^{n}
  \langle g ( Q_{j z_{j}} \overline{Q}_{j z_{j}}) \eta  , {\eta }^* \rangle \\& +  \sum _{j=1}^{n}  \Re \langle  g'(Q_{j z_{j}} \overline{Q}_{j z_{j}} ) \Re (Q_{j z_{j}} \overline{\eta}) \beta    Q_{j z_{j}}  ,    {\eta } ^*\rangle .\end{aligned}
\end{equation}

\end{itemize}
\end{proposition}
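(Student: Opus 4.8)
The plan is to obtain \eqref{eq:enexp1} by inserting the coordinate change $u=\sum_j Q_{jz_j}+R[z]\eta$ from Lemma \ref{lem:systcoo} into the energy $E(u)=\langle D_{\mathscr M}u,u^*\rangle + \frac12\int G(u\overline u)\,dx$ and organizing the result as a Taylor expansion in the variables $z$, $\eta$ and the auxiliary variable $\mathbf{Z}$. First I would treat the kinetic term $E_K(u)=\langle D_{\mathscr M}u,u^*\rangle$. Writing $u=P+R[z]\eta$ with $P=\sum_j Q_{jz_j}$, bilinearity gives $E_K(u)=E_K(P)+2\Re\langle D_{\mathscr M}P,(R[z]\eta)^*\rangle+\langle D_{\mathscr M}R[z]\eta,(R[z]\eta)^*\rangle$. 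In the last term I use $R[z]\eta=\eta+(\alpha_j[z]\eta)\phi_j$ from \eqref{eq:contcoo21}, expand, and replace $D_{\mathscr M}=H-V$; the purely quadratic-in-$\eta$ piece $\langle H\eta,\eta^*\rangle$ is extracted as the main term, while the cross terms with $(\alpha_j[z]\eta)\phi_j$ and the $\langle V\cdot,\cdot\rangle$ correction produce, via \eqref{eq:contcoo2} and the bound $|B_j+\partial_{\overline z_j}q_{jz_j}^*|_{\Sigma_r}+|C_j-\partial_{\overline z_j}q_{jz_j}|_{\Sigma_r}\le c_r|\mathbf Z|^2$, contributions that are either of the stated polynomial-in-$(z,\mathbf Z)$ shape (the $\mathbf Z^{\mathbf m}a_{j\mathbf m}$ and $\overline z_j\mathbf Z^{\mathbf m}\langle G_{jk\mathbf m},\eta\rangle$ families) or remainders $\resto^{1,2}_{r_0,\infty}(z,\eta)$, $\resto^{0,2N+5}_{r_0,\infty}(z,\mathbf Z)$, $\mathbf S^{0,2N+4}_{r_0,\infty}(z,\mathbf Z)$. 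The bound state equation \eqref{eq:sp}, which says $HQ_{jz_j}=E_{jz_j}Q_{jz_j}-g(Q_{jz_j}\overline Q_{jz_j})\beta Q_{jz_j}$, is what lets me rewrite $\langle D_{\mathscr M}P,(R[z]\eta)^*\rangle$ in terms of the stated coefficient functions.

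Next I would expand the potential term $E_P(u)=\frac12\int G(u\overline u)\,dx$. Here I Taylor-expand $G(u\overline u)=G((P+R[z]\eta)\overline{(P+R[z]\eta)})$ around $u\overline u=P\overline P$: the zeroth order gives $\sum_j E_P(Q_{jz_j})$ up to cross terms $Q_{jz_j}\overline Q_{kz_k}$ with $j\ne k$ which are high order in $z$ (absorbed into $\resto^{0,2N+5}_{r_0,\infty}(z,\mathbf Z)$ after noting $|z_jz_k|\le|\mathbf Z|$ for $j\ne k$), the first order gives the $\langle G,\eta\rangle$-linear terms which combine with the kinetic cross terms, the second order gives the quadratic form in $\eta$ whose explicit leading part is recorded in \eqref{eq:b02} as the claimed identity for $G_{2\mathbf 0 ij}$, and the cubic-and-higher remainder of the Taylor expansion in $\eta$, together with the residual $E_P(\eta)$, gives the $\sum_{d=2}^3\sum_i \mathcal R^{0,3-d}_{r_0,\infty}(z,\eta)\int G_{di}(\cdots)\eta^{\otimes i}\otimes(\eta^*)^{\otimes(d-i)}\,dx + E_P(\eta)$ structure; since $g\in C^\infty$ by (H1) and $Q_{jz_j}\in\mathcal S$ with $q_{jz_j}=O(|z_j|^3)$ in every $\Sigma_r$ by Proposition \ref{prop:bddst}, all the coefficient functions $a_{j\mathbf m}$, $G_{jk\mathbf m}$, $G_{2\mathbf m ij}$, $G_{di}$ are smooth with values in the asserted Schwartz-type spaces, giving claims (1) and (2). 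The smoothness in $|z_j|^2$ rather than $z_j$ of $a_{j\mathbf m}(|z_j|^2)$ and $G_{jk\mathbf m}(|z_k|^2)$ follows from the gauge covariance $Q_{j e^{\im\vartheta}z_j}=e^{\im\vartheta}Q_{jz_j}$, equivalently from $q_{jz_j}=z_j\widehat q_j(|z_j|^2)$, $E_{jz_j}=E_j(|z_j|^2)$ in Proposition \ref{prop:bddst}(1).

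The remaining bookkeeping step is to sort the monomials by the value of the frequency $\sum_{a,b}m_{ab}(e_a-e_b)$ (possibly shifted by an $-e_j$), truncating the Taylor expansion at order $2N+4$ in $\mathbf Z$ and order $2$ (for the $\langle\cdot,\eta\rangle$ and $\langle\cdot,\eta^{\otimes i}\otimes(\eta^*)^{\otimes j}\rangle$ pieces, respectively order $3$ for the $E_P$ remainder) in $\eta$: monomials $\mathbf Z^{\mathbf m}$ with $|\mathbf m|\ge 2N+4$ are pushed into the remainders $\resto^{0,2N+5}_{r_0,\infty}$ and $\mathbf S^{0,2N+4}_{r_0,\infty}$, using Lemma \ref{lem:comb1} (particularly \eqref{eq:comb6}) to factor such high-order monomials and confirm they satisfy the required $\resto$/$\mathbf S$ bounds. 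The $\resto^{1,2}_{r_0,\infty}(z,\eta)$ term collects everything that is at least linear in $z$ and at least quadratic in $\eta$ beyond the explicitly displayed quadratic form; the bounds \eqref{eq:scalSymb}, \eqref{eq:opSymb} are verified directly from the $C^\infty$ dependence and the $\Sigma_r$-estimates above. I expect the main obstacle to be not any single estimate but the combinatorial sorting: making sure that every term produced by the two expansions lands in exactly one of the displayed families or remainder classes with the stated indices $i,j$ and truncation orders, and in particular verifying that the quadratic-in-$\eta$ part isolated from $E_P$ matches \eqref{eq:b02} on the nose (the factor $2^{-1}$ and the $\Re(Q\overline\eta)$ term come from differentiating $s\mapsto G(s)$ twice at $s=Q\overline Q$ and from $\overline u=\beta u^*$ being $\C$-antilinear in $u$). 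Since, as the authors note, this part of the argument is formally identical to the NLS case, I would follow the proof of the corresponding expansion in \cite{CM1} essentially verbatim, only checking that $\beta$ and the matrix structure of $D_{\mathscr M}$ do not interfere with the bilinear pairing $\langle\cdot,\cdot\rangle$ in \eqref{eq:Inner Product}.
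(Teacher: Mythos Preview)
Your approach is essentially the same as the paper's: Taylor expand $K(z,\eta)=E(\sum_j Q_{jz_j}+R[z]\eta)$ in $\eta$ and then sort the resulting terms using gauge invariance. The paper organizes this slightly more systematically by writing $K=K(z,0)+\Re\langle\partial_\eta K(z,0),\eta^*\rangle+\tfrac12\Re\langle\partial_\eta^2K(z,0)\eta,\eta^*\rangle+E_P(\eta)+R_P(z,\eta)$ and treating each piece by a dedicated lemma (Lemmas \ref{lem:expKP}, \ref{lem:1exp}, \ref{lem:expand3}, and the explicit $R_P$ computation), but your separation into $E_K$ and $E_P$ leads to the same computations.

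One correction: the remainders $\resto^{0,2N+5}_{r_0,\infty}(z,\mathbf Z)$ and $\mathbf S^{0,2N+4}_{r_0,\infty}(z,\mathbf Z)$ do not come from Lemma \ref{lem:comb1}. That lemma is about factoring high-order monomials and is used later, in Section \ref{sec:disp}, to control the $\resto$ remainder in \eqref{eq:rest1}. Here the structured expansions with $\mathbf Z^{\mathbf m}$ coefficients depending on a single $|z_j|^2$ and the associated Taylor remainders in $\mathbf Z$ come from Lemma \ref{lem:gaugesmooth} (claims (2) and (3)), which is the precise tool encoding the gauge-invariance argument you allude to. You should invoke that lemma explicitly rather than Lemma \ref{lem:comb1}.
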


In  order to prove Proposition \ref{prop:EnExp}
 we set
\begin{equation*} \begin{aligned} &
K(z,\eta):=E(\sum_{j=1}^nQ_{j z_j}+R[z] \eta)=K_K(z,\eta)+K_P(z,\eta), \text{ with} \\&
K_K(z,\eta):=E_K(\sum_{j=1}^nQ_{j z_j}+R[z] \eta),\quad K_P(z,\eta):=E_P(\sum_{j=1}^nQ_{j z_j}+R[z] \eta).  \end{aligned}
\end{equation*}

\noindent
By Taylor expansion, we   write
 \begin{align}
&K(z,\eta)=K(z,0)+\Re \< \partial _{\eta } K (z,0), \eta ^* \> + \frac  1 2 \Re  \< \partial _{\eta }  ^2 K(z,0)  \eta,\eta ^*\> + K_3(z,\eta),\label{eq:KP0}\\& K_3(z,\eta):=\frac 1 2 \int_0^1(1-t)^2 \Re  \< \partial _{\eta } ^3K(z,t\eta)  \eta ^2,\eta ^* \>\,dt \label{eq:KP1}.\end{align}

\noindent
We  expand
\begin{equation}\label{eq:KP2}
K_3(z,\eta)=K_3(0,\eta)+R_P(z,\eta), \text{ where $K_3(0,\eta)=K_P(0,\eta)=E_P(\eta )$}
\end{equation}
 and
\begin{equation}
\begin{aligned}
R_P(z,\eta)&=\int_0^1 \partial_zK_3(tz,\eta) z\,dt:=\int_0^1\sum_{j=1}^n\sum_{A=R,I}D_{jA}K_3(tz,\eta)z_{jA}\,dt\\&=\sum_{j=1}^n\int_0^1\(\partial_jK_3(tz,\eta)z_j+\partial_{\bar j}K_3(tz,\eta)\bar z_j\)\,dt.
\end{aligned}
\end{equation}

To prove {Proposition}
  \ref{prop:EnExp} we compute the terms of
\begin{equation}\label{eq:5termcompute}
K(z,\eta)=K(z,0)+\Re \<\partial _{\eta }  K (z,0), \eta  ^*\> + \frac  1 2 \Re \< \partial _{\eta }^2 K(z,0)  \eta,\eta ^*\> +E_P(\eta )+R_P(z,\eta),
\end{equation}
starting with $\partial _{\eta } K$, $\partial _{\eta } ^2K$ and $\partial _{\eta } ^3K$.

\begin{lemma}\label{lem:expKP} Set   $u=u(z,\eta )= \sum_{j=1}^n Q_{jz_j}+R[z] \eta $. We have the following equalities:
\begin{equation}
\begin{aligned}\label{eq:lemexpK}
&\partial _{\eta } K_K(z,\eta)=2R[z]^*H u ,\\&
\partial _{\eta } ^2 K_K(z,\eta)=2R[z]^*H R[z],\quad
\partial _{\eta } ^3 K_K(z,\eta)=0;
\end{aligned}
\end{equation}
\begin{equation}\label{eq:lemexpP}
\begin{aligned}
&\partial _{\eta } K_P(z,\eta)= R[z]^*\(g (u \overline{u })\beta u \)  ,\\
&\partial _{\eta } ^2 K_P (z,\eta)\nu= R[z]^*g (u \overline{u })\beta R[z]\nu +2R[z]^*g'(u \overline{u })\mathrm{Re}\(u \ \overline{R[z]\nu}\) \beta u ,\\&
\(\partial _{\eta } ^3 K_P (z,\eta)\nu\)\nu= 6R[z]^* g'(u \overline{u })\mathrm{Re}\(u \overline{R[z]\nu}\)\beta R[z]\nu  +
4R[z]^*g''(u \overline{u })\(\mathrm{Re}\(u \overline{R[z]\nu}\)\)^2\beta u .
\end{aligned}
\end{equation}
In particular  we have
\begin{equation*}
\begin{aligned}
 \partial _{\eta } K_K(z,0)&=2R[z]^*H \sum_{j=1}^nQ_{j z_j},\ \quad
\partial _{\eta }^2 K_K(z,0)=2R[z]^*H R[z] ,\\  \partial _{\eta}  K_P(z,0)&=  g( \sum_{j,k }  Q_{jz_j}   \overline{Q}_{kz_k} )R[z]^* \beta  \sum_{j=1}^n Q_{jz_j}   ,\\
\partial _{\eta } ^2 K_P(z,0)&=   g( \sum_{j,k }  Q_{jz_j}   \overline{Q}_{kz_k} )R[z]^* \beta R[z]\\& +2g'(  \sum_{j,k }  Q_{jz_j}   \overline{Q}_{kz_k} ) \mathrm{Re}\( \sum_{j=1}^n Q_{jz_j} \overline{R[z]\ \cdot\ }\)     R[z]^*\beta \sum_{j=1}^n Q_{jz_j}  .
\end{aligned}
\end{equation*}
\end{lemma}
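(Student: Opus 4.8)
\textbf{Proof proposal (for Lemma \ref{lem:expKP}).}
The plan is a direct computation by the chain rule and the Leibniz rule, exploiting the fact that, for fixed $z$, the map $\eta\mapsto u(z,\eta)=\sum_{j=1}^{n}Q_{jz_j}+R[z]\eta$ is affine. Its differential in $\eta$ is the constant bounded operator $\nu\mapsto R[z]\nu$ (bounded on every $\Sigma_r$, with bounded adjoint $R[z]^{*}$, by Lemma \ref{lem:contcoo}), and all higher $\eta$-differentials of $u$ vanish. Hence, for a smooth functional $F$, the chain rule collapses to $\partial_\eta^{k}\!\big(F(u(z,\eta))\big)[\nu_1,\dots,\nu_k]=D_u^{k}F(u)[R[z]\nu_1,\dots,R[z]\nu_k]$, and after identifying a Fréchet derivative with its gradient through the Hermitian pairing $\langle\,\cdot\,,\,\cdot^{*}\,\rangle$ of \eqref{eq:Inner Product}, the gradient in the last slot is obtained by one more application of $R[z]^{*}$. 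So it remains only to compute the $u$-derivatives of $E_K$ and $E_P$ at the frozen point $u=u(z,\eta)$.

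First I would dispatch the kinetic part. Since $E_K(u)=\langle Hu,u^{*}\rangle$ is the quadratic form of the self-adjoint operator $H$, a one-line expansion gives $D_uE_K(u)=2Hu$ (as a gradient), $D_u^{2}E_K=2H$ and $D_u^{3}E_K=0$; pre- and post-composing with $R[z]$ yields the three identities of \eqref{eq:lemexpK}.

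Next, for the potential part, set $\rho(z,\eta):=u\overline{u}=u\cdot\beta u^{*}$, so that $K_P(z,\eta)=\tfrac12\int_{\R^{3}}G(\rho)\,dx$ with $G'=g$. Because $\beta$ is real and symmetric, the derivative of $\rho$ in a direction $\nu$ is $\rho'=2\,\mathrm{Re}\big(u\,\overline{R[z]\nu}\big)$, its second derivative is $\rho''=2\,(R[z]\nu)\overline{R[z]\nu}$ (a real scalar), and $\rho'''=0$. The Leibniz rule then gives $\tfrac{d}{ds}G(\rho)=g(\rho)\rho'$, $\tfrac{d^{2}}{ds^{2}}G(\rho)=g'(\rho)(\rho')^{2}+g(\rho)\rho''$ and $\tfrac{d^{3}}{ds^{3}}G(\rho)=g''(\rho)(\rho')^{3}+3g'(\rho)\rho'\rho''$ (the $\rho'''$-term dropping). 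Substituting the above $\rho',\rho''$, carrying the $\tfrac12$ through the integral, and rewriting each integral $\int g^{(m)}(\rho)(\cdots)\cdot\overline{R[z]\nu}\,dx$ as a pairing $\big\langle R[z]^{*}\!\big(g^{(m)}(\rho)(\cdots)\,\beta\,(\cdots)\big),\nu^{*}\big\rangle$, one reads off exactly the three formulas of \eqref{eq:lemexpP}; note that the $\rho''$-contribution surfaces in the gradient as the term $g(u\overline u)\,\beta R[z]\nu$, because its pairing with $\nu^{*}$ via $R[z]^{*}$ reproduces $(R[z]\nu)\overline{R[z]\nu}$. The only thing to verify is the numerology of the coefficients in the last line of \eqref{eq:lemexpP}: the $\tfrac12$ combined with $3\cdot2\cdot2$ gives the $6$, and with $2^{3}$ gives the $4$. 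Finally, specializing to $\eta=0$ uses only $u(z,0)=\sum_j Q_{jz_j}$, hence $u(z,0)\overline{u(z,0)}=\sum_{j,k}Q_{jz_j}\overline{Q}_{kz_k}$ and $Hu(z,0)=\sum_j HQ_{jz_j}$, and the stated closed forms of $\partial_\eta K_{K}$, $\partial_\eta^{2}K_{K}$, $\partial_\eta K_{P}$, $\partial_\eta^{2}K_{P}$ at $z$ follow by substitution.

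I do not expect a genuine obstacle: the statement is bookkeeping. The one place to be careful is not to confuse the two pairings — the complex-bilinear $\langle f,g\rangle=\int f\cdot g$ and the Hermitian $\langle f,g^{*}\rangle$ — and to keep the adjoint $R[z]^{*}$ in the correct slot; combined with the reality of $\rho=u\overline{u}$ and of $(R[z]\nu)\overline{R[z]\nu}$, this is exactly what makes the various $\mathrm{Re}$'s collapse and delivers the formulas in the stated form.
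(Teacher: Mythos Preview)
Your proposal is correct and is essentially the paper's own proof: the paper also computes by expanding $K_K(z,\eta+\varepsilon\nu)$ and $G\big((u+\varepsilon R[z]\nu)\overline{(u+\varepsilon R[z]\nu)}\big)$ in powers of $\varepsilon$ and reading off the coefficients, which is exactly your chain-rule computation with $\rho'=2\,\mathrm{Re}(u\,\overline{R[z]\nu})$, $\rho''=2\,(R[z]\nu)\overline{R[z]\nu}$, $\rho'''=0$ written out explicitly. The only difference is packaging (you phrase it via the affine map $\eta\mapsto u(z,\eta)$ and the abstract chain rule, the paper writes the $\varepsilon$-expansion line by line), and your remark about keeping straight the bilinear pairing $\langle f,g\rangle$ versus the Hermitian one $\langle f,g^{*}\rangle$ is exactly the bookkeeping point at stake.
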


\proof
  We get \eqref{eq:lemexpK}    by
\begin{equation*}
\begin{aligned}
K_K(z,\eta+\varepsilon\nu)&=\Re \<H (u(z,\eta ) +\varepsilon R[z]  \nu ), (u(z,\eta ) +\varepsilon R[z]  \nu )^*\>\\&
=K_K(z,\eta)+2 \varepsilon \Re  \<H u(z,\eta ) , (R[z] \nu ) ^*\>+\varepsilon^2\Re  \<H R[z] \nu, (R[z] \nu ) ^*\> .
\end{aligned}
\end{equation*}
   Moreover  we arrive at  \eqref{eq:lemexpP}  by
\begin{equation*}
\begin{aligned}
&K_P(z,\eta+\varepsilon\nu)=2^{-1}\int G((u(z,\eta )+\varepsilon R[z] \nu )(\overline{u(z,\eta )+\varepsilon R[z] \nu}) )\,dx,
\end{aligned}
\end{equation*}
and   by
\begin{equation*}
\begin{aligned}
&G((u(z,\eta )+\varepsilon R[z] \nu )(\overline{u(z,\eta )+\varepsilon R[z] \nu}) )\\&=
G\( u(z,\eta )\overline{u(z,\eta )}+2\varepsilon \mathrm{Re}\(u(z,\eta )\overline{R[z]\nu}\)+\varepsilon^2 R[z]\nu \overline{R[z]\nu}\) \\&=  o(\varepsilon^3) +
 G(u(z,\eta )\overline{u(z,\eta )} ) +
2\varepsilon    g (u(z,\eta )\overline{u(z,\eta )})\mathrm{Re}\( \beta u(z,\eta ) (R[z]\nu )^* \)\\&+
\varepsilon^2 \left [ g(u(z,\eta )\overline{u(z,\eta )} )R[z]\nu \overline{R[z]\nu} +
2  g'(u(z,\eta )\overline{u(z,\eta )} )\( \mathrm{Re}\(u(z,\eta )\overline{R[z]\nu}\) \)^2\right ] + \\&   \varepsilon ^3  \big  [
2g'(u(z,\eta )\overline{u(z,\eta )} )\mathrm{Re}\(u(z,\eta ) \overline{R[z]\nu}\)R[z]\nu \overline{R[z]\nu}
+\frac 4 3    g''(u(z,\eta )\overline{u(z,\eta )})\Re \(u(z,\eta )\overline{R[z]\nu}\)^3
 \big ] .
\end{aligned}
\end{equation*}

\qed

 We now examine the    r.h.s. of
 \eqref{eq:5termcompute}.

 \begin{lemma} \label{lem:1exp}  Consider the first two terms in the r.h.s. of
 \eqref{eq:5termcompute}. We then have
\begin{align} &
K(z,0)= \sum _{j=1}^{n}E (Q_{j z_j})   +
	 \sum _{j =1}^n\sum _{l =0} ^{2N+3}	 \sum _{
	  |\textbf{m}|=l+1 }  \textbf{Z}^{\textbf{m}}   a_{j \textbf{m} }( |z_j |^2 )+
\resto ^{0,  2N+5 }_{\infty , \infty} (z,\textbf{Z}  ),\label{eq:1exp}\\&
\Re \<\partial _{\eta }  K (z,0), \eta  ^*\> =
   \sum _{j,k=1}^n \sum _{l =0} ^{2N+3}	 \sum _{
	  |\textbf{m}|=l  }
     ( \overline{z}_j \textbf{Z}^{\textbf{m}}  \langle
  G_{jk\textbf{m}}(|z_k|^2  ), \eta \rangle  +c.c. )+
        \Re \langle
  \textbf{S} ^{0,   2N+4 }_{\infty , \infty} (z,\textbf{Z}  ) ,  {\eta}^* \rangle   \label{eq:2exp},
\end{align}
 where the coefficients in the r.h.s.'s have the properties listed in claim (1) in  Prop. \ref{prop:EnExp}.

\end{lemma}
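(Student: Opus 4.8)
\textbf{Proof proposal for Lemma \ref{lem:1exp}.}

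The plan is to compute the two terms $K(z,0)$ and $\Re\langle\partial_\eta K(z,0),\eta^*\rangle$ explicitly from their definitions and then reorganize each as a Taylor-like expansion in the auxiliary variables $\mathbf{Z}=(z_i\overline z_j)_{i\neq j}$, isolating the bookkeeping terms up to degree $2N+4$ and absorbing the higher-order tail into the $\resto$ and $\mathbf{S}$ symbol classes of Definitions \ref{def:scalSymb} and \ref{def:opSymb}. For the first identity, note $u(z,0)=\sum_{j}Q_{jz_j}$, so that $K(z,0)=E(\sum_j Q_{jz_j})$. Writing $E=E_K+E_P$ and using bilinearity of $E_K$, the diagonal terms $\langle D_{\mathscr M}Q_{jz_j},Q_{jz_j}^*\rangle$ together with the potential contribution $E_P(Q_{jz_j})$ reassemble into $\sum_j E(Q_{jz_j})$ once one also accounts for the cross terms; here the crucial input is that $Q_{jz_j}$ solves \eqref{eq:sp}, so $HQ_{jz_j}=E_{jz_j}Q_{jz_j}-g(\cdots)\beta Q_{jz_j}$, and the cross terms $\langle HQ_{jz_j},Q_{kz_k}^*\rangle$ for $j\neq k$ can be evaluated using this eigen-like relation and the orthogonality $\langle q_{jz_j},\phi_k^*\rangle=0$ together with $\Re\langle\phi_j,\phi_k^*\rangle=\delta_{jk}$. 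By Proposition \ref{prop:bddst}(1), $Q_{jz_j}=z_j\phi_j+q_{jz_j}$ with $q_{jz_j}=z_j|z_j|^2\widetilde q_j(|z_j|^2)$, so every cross term is a monomial $z_i\overline z_j$ (times a smooth function of the $|z_\ell|^2$), i.e.\ an entry of $\mathbf{Z}$; collecting these and their products up to total degree $2N+4$ gives the finite sum $\sum_{j}\sum_{l=0}^{2N+3}\sum_{|\mathbf m|=l+1}\mathbf Z^{\mathbf m}a_{j\mathbf m}(|z_j|^2)$, while the remainder — a convergent power series in $\mathbf Z$ all of whose terms have $\mathbf Z$-degree $\ge 2N+5$ — is by construction $\resto^{0,2N+5}_{\infty,\infty}(z,\mathbf Z)$. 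The smoothness and $\Sigma_{r_0}$-valuedness of the coefficients claimed in Proposition \ref{prop:EnExp}(1) follow from the smoothness statements in Proposition \ref{prop:bddst}(1) and the Schwartz decay of $V$ in (H2).

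For the second identity I would use Lemma \ref{lem:expKP}, which gives
$\partial_\eta K(z,0)=2R[z]^*H\sum_j Q_{jz_j}+g(\sum_{j,k}Q_{jz_j}\overline Q_{kz_k})R[z]^*\beta\sum_j Q_{jz_j}$.
Using again $HQ_{jz_j}=E_{jz_j}Q_{jz_j}-g(Q_{jz_j}\overline Q_{jz_j})\beta Q_{jz_j}$ and the representation $Q_{jz_j}=z_j\phi_j+q_{jz_j}$, one expands both summands. Since $\eta\in\mathcal H_c[0]$, the pairing $\langle\cdot,\eta^*\rangle$ kills the components along the $\phi_j$; but $R[z]^*$ and the nonlinear factor $g(\sum Q\overline Q)$ mix things, so the leading surviving contributions are of the form $\overline z_j\langle(\text{smooth }\Sigma_{r_0}\text{ function}),\eta\rangle$ plus conjugate, where the smooth function depends on $z$ only through the $|z_k|^2$ at leading order and otherwise through $\mathbf Z$. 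Expanding $R[z]^*=1+O(|z|^2)$ via Lemma \ref{lem:contcoo}(2)–(5), $g(\sum_{j,k}Q\overline Q)$ as a power series (using (H1)), and $q_{jz_j}=O(|z_j|^3)$, I collect all monomials $\overline z_j\mathbf Z^{\mathbf m}$ with $|\mathbf m|\le 2N+3$ into the stated double sum $\sum_{j,k}\sum_{l=0}^{2N+3}\sum_{|\mathbf m|=l}\overline z_j\mathbf Z^{\mathbf m}\langle G_{jk\mathbf m}(|z_k|^2),\eta\rangle+c.c.$, and the tail — a $\Sigma_{r_0}$-valued smooth function of $(z,\mathbf Z)$ all of whose $\mathbf Z$-degree is $\ge 2N+4$, paired against $\eta^*$ — is exactly $\Re\langle\mathbf S^{0,2N+4}_{\infty,\infty}(z,\mathbf Z),\eta^*\rangle$. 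The index structure (a bare $\overline z_j$ times a monomial in $\mathbf Z$, with a separate smooth dependence on $|z_k|^2$ through $G_{jk\mathbf m}$) is precisely what one reads off from the two constituent terms above.

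The main obstacle I anticipate is purely organizational rather than conceptual: one must carefully verify that the decomposition into a finite explicit part plus a symbol-class remainder is compatible with the symbol calculus of Definitions \ref{def:scalSymb}–\ref{def:opSymb}, in particular that multiplying power series in $\mathbf Z$ (from $g(\sum Q\overline Q)$, from $R[z]^*-1$, and from the $\widetilde q_j(|z_j|^2)$) never lowers the $\mathbf Z$-degree below the threshold claimed, and that truncating a power series at a given degree leaves a remainder in the right class with smooth ($C^\infty$, hence $C^M$ for every $M$) coefficients valued in $\Sigma_{r_0}$ for the preassigned $r_0$. This is where one uses that $a_0$ in Proposition \ref{prop:bddst} can be chosen independent of $r$, so that the same neighborhood works for all the $\Sigma_r$ estimates, and that $\mathbf Z$-monomials of degree $l$ dominate $|z|^{2l}$ on $|z|\le 1$. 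Since this is the same algebra as in Lemma 2.7 of \cite{CM1} and the hamiltonian formalism for NLD and NLS is identical, I expect the verification to be routine, and I would simply indicate the computation and refer to \cite{CM1} for the detailed bookkeeping.
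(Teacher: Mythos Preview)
Your approach is correct in spirit but takes a substantially different route from the paper's proof. You compute everything explicitly: for $K(z,0)$ you use the eigen-like relation \eqref{eq:sp} and the explicit form $Q_{jz_j}=z_j(\phi_j+\widehat q_j(|z_j|^2))$ to see that cross terms are built out of $z_i\overline z_j$ monomials, and for $\Re\langle\partial_\eta K(z,0),\eta^*\rangle$ you expand $R[z]^*$, $g(\sum Q\overline Q)$ and the $q_{jz_j}$ as power series and collect terms by $\mathbf Z$-degree. This works because Proposition~\ref{prop:bddst}(1) makes the homogeneity in $z_j$ explicit, and your acknowledged ``organizational'' concern is indeed the only nontrivial point.

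The paper's proof is much shorter and avoids all of this bookkeeping. It observes that both $K(z,0)$ and $\partial_\eta K(z,0)$ are gauge invariant (respectively covariant), and then simply invokes the abstract Lemma~\ref{lem:gaugesmooth}: any smooth function $a(z)$ with $a(e^{\im\vartheta}z)=a(z)$ automatically has an expansion $\sum_{|\mathbf m|\le M-1}\mathbf Z^{\mathbf m}b_{j\mathbf m}(|z_j|^2)+\resto^{0,M}_{\infty,\infty}$, and similarly for $a(e^{\im\vartheta}z)=e^{\im\vartheta}a(z)$. For \eqref{eq:2exp} this gives the result in one line; for \eqref{eq:1exp} the paper first splits $E(\sum_j Q_{jz_j})=\sum_k E(Q_{kz_k})+\sum_k\alpha_k(z)$ via the iterated mean-value identity $\alpha_k(z)=\int_{[0,1]^2}\partial_s\partial_t E(sQ_{kz_k}+t\sum_{j>k}Q_{jz_j})\,dt\,ds$, then applies Lemma~\ref{lem:gaugesmooth}(2) to each $\alpha_k$, noting that $\alpha_k(z)=O(|\mathbf Z|)$ forces the $|\mathbf m|=0$ coefficients to vanish. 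What you gain with your direct approach is a more explicit identification of the coefficients; what the paper's approach buys is that the structural form of the expansion is forced by symmetry alone, so there is nothing to check term by term.
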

  \proof  First of all, both l.h.s.'s of \eqref{eq:1exp}--\eqref{eq:2exp}
  are gauge invariant. Then
  \eqref{eq:2exp} is an immediate consequence of claim (3) of Lemma
  \ref{lem:gaugesmooth} below.

  We have
  \begin{equation*}  \begin{aligned}  & K(z,0)= E (\sum _{j=1}^{n} Q_{j z_j}) = E (  Q_{1 z_1})+E (\sum _{j>1} Q_{j z_j})+\int   _{[0,1]^2}  \frac{\partial ^2}{\partial s\partial t} E ( sQ_{1 z_1} +t\sum _{j>1}  Q_{j z_j})dtds \\& =\sum _{k=1}^{n} E ( Q_{k z_k}) + \alpha _k(z)  \text{ with }
  \alpha _k(z):=\sum _{k=1}^{n}\int   _{[0,1]^2}  \frac{\partial ^2}{\partial s\partial t} E (s Q_{k z_k} +t\sum _{j>k}  Q_{j z_j})dtds.
  \end{aligned}
 \end{equation*}
  The $\alpha _k(z)$ are gauge invariant, so that we can apply to them
  claim (2) of Lemma
  \ref{lem:gaugesmooth} below. Furthermore, since $\alpha _k(z)=O(|\mathbf{Z}|)$ we conclude that  in the expansion \eqref{eq:zj1}
  for  $\alpha _k(z) $ we have equalities
  $b_{j\mathbf 0}(|z_j|^2)=0$.

  \qed

\begin{lemma}\label{lem:gaugesmooth}   The following facts hold:

\begin{itemize}

 \item[(1)]
For $a( \zeta  )$   smooth from $B_{\C}(0,\delta)$ to $\R$  such that  $a(e^{\im\theta}\zeta )=a(\zeta )$ for any $\theta\in\R$  there exists $\alpha\in \C^\infty([0,\delta^2);\R)$ such that $\alpha(|\zeta |^2)=a(\zeta )$.

\item[(2)]
Let $a\in C^\infty (B_{\C^{n}}(0,\delta),\R ) $ satisfy $a(e^{\im\theta}z_1,\cdots,e^{\im\theta}z_n)=a(z_1, \cdots, z_n)$ for all $\theta \in \R $ and $a(0)=0$.
Then  for any $M>0$  there exist smooth   $b_{j\mathbf{0}}$ such that $b_{j\mathbf 0}(|z_j|^2)=a(0,\cdots,0,z_j,0,\cdots,0)$ and
\begin{equation}\label{eq:zj1}
a(z_1,\cdots,z_n)= \sum_{|\mathbf{m}|\leq M-1}\mathbf Z^{\mathbf m}b_{j\mathbf m}(|z_j|^2)+\mathcal R^{0,M}_{\infty ,\infty}(z,\mathbf Z).
\end{equation}

\item[(3)] Let    $a\in C^\infty (B_{\C^{n}}(0,\delta),\Sigma_r)$  $\forall$ $r\in \R$   such that $a(e^{\im\theta}z_1,\cdots,e^{\im\theta}z_n)=e^{\im\theta}a(z_1,\cdots,z_n)$.
Then  for any $M>0$ $\exists$  $G_{j\mathbf{m}}$ such that  $G_{j\mathbf{m}}\in C^\infty (B_{\C^{n}}(0,\delta),\Sigma_r)$  $\forall$  $r$,  $z_jG_{j\mathbf 0}  (|z_j|^2)=a(0,\cdots,0,z_j,0,\cdots,0)$    and
\begin{equation}\label{eq:zj2}
a(z_1,\cdots,z_n)= \sum _{j=1}^{n}\sum_{|\mathbf{m} |\leq M-1}z_j\mathbf Z^{\mathbf m}G_{j\mathbf m}(|z_j|^2)+\mathcal S^{1,M}_{\infty , \infty}(z,\mathbf Z).
\end{equation}

\end{itemize}
\end{lemma}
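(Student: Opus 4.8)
The three claims are about extracting the phase-invariance structure of a smooth function, and all follow by standard arguments around the observation that a monomial $z^\mu\overline z^\nu$ is gauge invariant if and only if $|\mu|=|\nu|$, and more precisely, after peeling off a factor $|z_j|^2$ at each vertex and then one power of $z_j$ (for the vector-valued covariant case), what remains is a monomial in the mixed products $z_i\overline z_j$. I would first dispatch claim (1): restricting $a(\zeta)$ to the real axis gives a smooth even function $s\mapsto a(s)$ of one real variable, hence by Whitney's theorem (smooth even functions of one variable are smooth functions of the square) there is $\alpha\in C^\infty([0,\delta^2);\R)$ with $\alpha(s^2)=a(s)$; since $a(e^{\im\theta}\zeta)=a(\zeta)$, evaluating at $\zeta=|\zeta|$ gives $a(\zeta)=a(|\zeta|)=\alpha(|\zeta|^2)$ for all $\zeta$.

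For claim (2) I would argue by induction on $n$ and on the Taylor order $M$. Expand $a$ in its (finite) Taylor polynomial at $0$ to order $M-1$ with a $C^\infty$ remainder that is $O(|z|^M)$; averaging the Taylor polynomial over the gauge action $z\mapsto e^{\im\theta}z$ kills every monomial $z^\mu\overline z^\nu$ with $|\mu|\ne|\nu|$ and leaves the remainder still a smooth $O(|z|^M)$ term, i.e. an element of $\mathcal R^{0,M}_{\infty,\infty}(z,\mathbf Z)$ once one checks (using $|z^\mu\overline z^\nu|\le|z|^M$ and that the Taylor remainder of a gauge-invariant function is gauge invariant) that the estimate \eqref{eq:scalSymb1} holds. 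Each surviving monomial has $|\mu|=|\nu|$; grouping indices, any such monomial can be written as $|z_1|^{2k_1}\cdots|z_n|^{2k_n}\,\mathbf Z^{\mathbf m}$ for a unique $\mathbf m$ (with $m_{ij}=0$ for some orientation), which after collecting the coefficients gives a finite sum $\sum_{|\mathbf m|\le M-1}\mathbf Z^{\mathbf m}\,p_{\mathbf m}(|z_1|^2,\dots,|z_n|^2)$ for polynomials $p_{\mathbf m}$. To obtain the stated form $\sum \mathbf Z^{\mathbf m}b_{j\mathbf m}(|z_j|^2)$ one notes that the cross-terms $|z_k|^{2k_k}$ with $k$ not the ``base'' index of $\mathbf m$ can be absorbed into a larger $\mathbf Z^{\mathbf m'}$ or, when they cannot, contribute only to strictly higher order and hence go into the remainder; the normalization $b_{j\mathbf 0}(|z_j|^2)=a(0,\dots,z_j,\dots,0)$ is forced by setting all other $z_k=0$, and claim (1) applied to this one-variable restriction provides the smooth function $b_{j\mathbf 0}$.

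Claim (3) is the same argument run with a target space $\Sigma_r$ and with the covariance $a(e^{\im\theta}z)=e^{\im\theta}a(z)$ instead of invariance: now the averaged Taylor polynomial retains exactly the monomials with $|\nu|=|\mu|+1$, each of which factors as $z_j\cdot|z_1|^{2k_1}\cdots|z_n|^{2k_n}\,\mathbf Z^{\mathbf m}$ for some $j$, the coefficients are $\Sigma_r$-valued for every $r$ (being derivatives of $a$ at $0$), and the remainder is a vector-valued $O(|z|^M)$ term, i.e. an $\mathcal S^{1,M}_{\infty,\infty}(z,\mathbf Z)$, using that $|z_j\mathbf Z^{\mathbf m}|\le|z|\,|\mathbf Z|^{|\mathbf m|}$ bounds the scalar prefactor while the $\Sigma_r$-norm of the vector part is controlled by the Taylor estimate. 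The one-variable restriction plus claim (1) (applied componentwise, or rather to $a(0,\dots,z_j,\dots,0)/z_j$ after checking this is smooth and gauge invariant, which it is since the covariance forces the $z_j=0$ restriction to vanish) yields the $G_{j\mathbf 0}$. The only mildly delicate point is bookkeeping: making sure that the regrouping of $|z_k|^{2}$ factors into $\mathbf Z^{\mathbf m}$'s is done consistently so that what is left over genuinely lands in the remainder classes of Definitions \ref{def:scalSymb}--\ref{def:opSymb} with the right indices $(0,M)$ resp. $(1,M)$; this is routine but is where one must be careful, and it is essentially the combinatorics already recorded in Lemma \ref{lem:M0}. I expect no real obstacle here — the lemma is a packaging statement — and would refer to Lemma 2.4 of \cite{CM1} for the analogous details.
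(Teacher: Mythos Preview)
The paper does not actually prove this lemma: it simply writes ``This elementary lemma is proved in \cite{CM1}'' and moves on. Your sketch (Whitney's theorem for claim (1); Taylor expansion plus averaging over the $e^{\im\theta}$-action to isolate the monomials with $|\mu|=|\nu|$, resp.\ $|\nu|=|\mu|+1$, for claims (2)--(3)) is the standard argument and is presumably what \cite{CM1} does, so your approach aligns with the paper's.

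One caution on the point you yourself flag as ``mildly delicate'': the remainder classes $\mathcal R^{0,M}_{\infty,\infty}(z,\mathbf Z)$ and $\mathcal S^{1,M}_{\infty,\infty}(z,\mathbf Z)$ require bounds by $|\mathbf Z|^M$, not merely $|z|^M$ (see \eqref{eq:scalSymb1}, \eqref{eq:opSymb}), and $|\mathbf Z|$ can vanish while $|z|$ does not (e.g.\ on a coordinate axis). So the raw Taylor remainder $O(|z|^M)$ is not automatically in the right class; one must first subtract the axis contributions $a(0,\dots,z_j,\dots,0)$ and then use the absorption identity $|z_k|^2\, z_i\overline z_j=(z_i\overline z_k)(z_k\overline z_j)$ to trade any residual diagonal factors for off-diagonal ones, so that what remains is genuinely a smooth function of $(z,\mathbf Z)$ vanishing to order $M$ in $\mathbf Z$. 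Your reference to Lemma \ref{lem:M0} here is slightly off (that lemma uses hypothesis (H4), irrelevant for the present statement); the relevant bookkeeping is purely this regrouping.
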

\proof This elementary lemma is proved in   \cite{CM1}. \qed

The 3rd term in the r.h.s. of \eqref{eq:5termcompute} is dealt by the
following lemma.

\begin{lemma}\label{lem:expand3}
There exist     $ G_{2\textbf{m} i (2-i)} ( z )  $  as in the statement of Prop. \ref{prop:EnExp}     such that\eqref{eq:b02}  holds and
\begin{equation}\label{eq:lemexpand3}
\Re \<    \partial _{\eta}^2K(z,0)\eta,\eta ^*\>=\<H\eta,\eta ^*\>+ \mathcal R ^{1,  2  }_{\infty ,\infty} (z, \eta  )
   +    \sum _{ i  = 0}^{2}   \sum _{
		  |\textbf{m}|\le 1 }  \textbf{Z}^{\textbf{m}}    \langle G_{2\textbf{m} i (2-i)} ( z ),   \eta ^{  \otimes i} ({\eta}^*) ^{\otimes  (2-i)}\rangle.
\end{equation}
\end{lemma}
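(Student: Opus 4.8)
The plan is to start from the closed formulae of Lemma~\ref{lem:expKP} for $\partial_\eta^2K_K(z,0)$ and $\partial_\eta^2K_P(z,0)$, to replace $R[z]$ by $\mathbf 1$ plus the small finite‑rank remainder provided by Lemma~\ref{lem:contcoo}, and then to peel off the finitely many ``resonant'' monomials $\mathbf Z^{\mathbf m}$ with $|\mathbf m|\le 1$ by a gauge‑covariant Taylor expansion in $(z,\bar z)$ in the style of Lemma~\ref{lem:gaugesmooth}. Everything not so extracted must be shown to satisfy the bound defining $\mathcal R^{1,2}_{\infty,\infty}(z,\eta)$.

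For the kinetic part, Lemma~\ref{lem:expKP} gives $\partial_\eta^2K_K(z,0)=2R[z]^*HR[z]$, so $\Re\langle\partial_\eta^2K_K(z,0)\eta,\eta^*\rangle=2\Re\langle HR[z]\eta,(R[z]\eta)^*\rangle$. Writing $R[z]=\mathbf 1+(R[z]-1)$ and recalling from Lemma~\ref{lem:contcoo}(4) that $(R[z]-1)\eta=\sum_j(\alpha_j[z]\eta)\phi_j$ lies in $\mathrm{span}\{\phi_j\}$, and from (2) that $\|(R[z]-1)\eta\|_{\Sigma_r}\le c_r|z|^2\|\eta\|_{\Sigma_{-r}}$, one expands the bilinear form into four pieces. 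The two pieces linear in $R[z]-1$ vanish: since $H\phi_j=e_j\phi_j$, $H$ is self‑adjoint and $\langle\phi_j,\eta^*\rangle=\langle\eta^*,\phi_j\rangle=0$ for $\eta\in\mathcal H_c[0]$ by \eqref{eq:contsp1}, one gets $\langle H(R[z]-1)\eta,\eta^*\rangle=0=\langle H\eta,((R[z]-1)\eta)^*\rangle$. The piece quadratic in $R[z]-1$ is $O(|z|^4\|\eta\|_{\Sigma_{-r}}^2)$ and smooth in $(z,\eta)$, hence an $\mathcal R^{1,2}_{\infty,\infty}(z,\eta)$; this leaves the term $\langle H\eta,\eta^*\rangle$.

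For the potential part, by Lemma~\ref{lem:expKP}, $\partial_\eta^2K_P(z,0)$ is the sum of $g(\Phi)R[z]^*\beta R[z]$ and $2g'(\Phi)\,\mathrm{Re}\bigl(\sum_jQ_{jz_j}\overline{R[z]\,\cdot\,}\bigr)R[z]^*\beta\sum_jQ_{jz_j}$, where $\Phi:=\sum_{j,k}Q_{jz_j}\overline Q_{kz_k}$. Since $Q_{jz_j}=z_j\phi_j+q_{jz_j}$ with $q_{jz_j}=O(|z_j|^3)$ in every $\Sigma_r$ and $q_{jz_j},\phi_j$ are Schwartz (Prop.~\ref{prop:bddst}), since $g(0)=0$ by (H1), $V\in\mathcal S$ by (H2), and since $R[z]$ is $C^\infty$ into $B(\Sigma_r,\Sigma_r)$ and equals $\mathbf 1$ plus an $O(|z|^2)$ correction into $\mathrm{span}\{\phi_j\}$ (Lemma~\ref{lem:contcoo}), the quantity $\Re\langle\partial_\eta^2K_P(z,0)\eta,\eta^*\rangle$ is a $C^\infty$ function of $(z,\bar z)$, bilinear in $(\eta,\eta^*)$, with $\Sigma_{r_0}$‑valued (Schwartz) coefficients, vanishing to second order at $z=0$; expanding the reality brackets $\Re$ and $\mathrm{Re}$ splits it into the tensor types $\eta^{\otimes i}\otimes(\eta^*)^{\otimes(2-i)}$, $i=0,1,2$. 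As $\Re\langle\partial_\eta^2K(z,0)\eta,\eta^*\rangle$ is invariant under $(z,\eta)\mapsto(e^{\im\vartheta}z,e^{\im\vartheta}\eta)$ — being the second $\eta$‑variation at $\eta=0$ of the gauge‑invariant $K$ — one applies the gauge‑covariant Taylor expansion (Lemma~\ref{lem:gaugesmooth}, adapted to coefficients multilinear in $\eta$) to each tensor component, retaining only $\mathbf Z^{\mathbf m}$ with $|\mathbf m|\le 1$. Monomials with $|\mathbf m|\ge 2$ carry four $z$‑factors and a bilinear‑in‑$\eta$ factor, hence are $\le C|z|^2\|\eta\|_{\Sigma_{-r}}^2$, and together with all remainders of the Taylor expansions are absorbed into $\mathcal R^{1,2}_{\infty,\infty}(z,\eta)$; this produces the $G_{2\mathbf m i(2-i)}(z)$ with the smoothness and $\Sigma_{r_0}$‑mapping properties of Prop.~\ref{prop:EnExp}(1)--(2). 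Finally \eqref{eq:b02} is obtained by isolating the $|\mathbf m|=0$ component at leading order in $z$: there the off‑diagonal part of $\Phi$ is $O(|\mathbf Z|)$, so modulo $\mathbf Z$‑monomials and modulo $\mathcal R^{1,2}$ only the diagonal sum $\sum_jQ_{jz_j}\overline Q_{jz_j}$ survives inside $g$ and $g'$, $R[z]$ may be replaced by $\mathbf 1$, and one is left with the stated expression; in particular $G_{2\mathbf 0 ij}(0)=0$ because every term there has a factor $Q_{jz_j}$.

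The main point requiring care — as with the analogous computations taken over from~\cite{CM1} — is the bookkeeping in the extraction step: one must track precisely which power of $z,\bar z$ accompanies each pairing against some $\phi_j$ (arising both from $R[z]-1$ and from $\Phi$), so as to be certain that whatever is not of the listed form $\mathbf Z^{\mathbf m}\langle G_{2\mathbf m i(2-i)}(z),\cdot\rangle$ with $|\mathbf m|\le 1$ genuinely obeys the $\mathcal R^{1,2}_{\infty,\infty}(z,\eta)$ estimate in every weighted norm, and that the coefficients inherit the claimed Schwartz decay and smoothness. A useful structural fact here is that the ``diagonal'' terms linear in $R[z]-1$ drop out of the kinetic part because $\eta\in\mathcal H_c[0]$, so the kinetic remainder is purely quadratic in $R[z]-1$, i.e.\ $O(|z|^4)$, and manifestly harmless.
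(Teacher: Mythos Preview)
Your proof is correct and follows essentially the same route as the paper's: invoke Lemma~\ref{lem:expKP} for the explicit form of $\partial_\eta^2 K(z,0)$, use Lemma~\ref{lem:contcoo} to replace $R[z]$ by the identity up to a remainder of class $\mathcal R^{1,2}_{\infty,\infty}(z,\eta)$, and then read off the $G_{2\mathbf m i(2-i)}$ from the resulting potential term. Your treatment is simply more detailed than the paper's, which compresses the potential-part extraction into ``it is straightforward''; in particular, your observation that in the kinetic part the two cross terms $\langle H(R[z]-1)\eta,\eta^*\rangle$ and $\langle H\eta,((R[z]-1)\eta)^*\rangle$ vanish \emph{exactly} by the orthogonality $\langle\eta^*,\phi_j\rangle=0$ is a clean refinement (the paper only uses the $O(|z|^2)$ bound from Lemma~\ref{lem:contcoo}(2), which also suffices since $(R[z]-1)\eta$ ranges in $\mathrm{span}\{\phi_j\}$).
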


\proof
By Lemma \ref{lem:expKP} and
by claim (2) in Lemma \ref{lem:contcoo}  we have
\begin{equation*}
\begin{aligned}
\frac 1 2 \Re  \< \partial _\eta^2K(z,0)\eta,\eta\>=&\<HR[z]\eta,(R[z]\eta )^* \>+\langle g  ( \sum_{j,k} Q_{jz_j} \overline{Q}_{jz_j}  )R[z]\eta,(R[z]\eta )^* \rangle \\ +&
2\Re \langle g'(\sum_{j,k} Q_{jz_j} \overline{Q}_{jz_j})\mathrm{Re}\(\sum_{j=1}^nQ_{jz_j}\overline{R[z]\eta}\)
\sum_{j=1}^nQ_{jz_j},(R[z]\eta )^*\rangle \\=& \<H \eta, \eta  ^* \> +\mathcal R^{1,2}_{\infty , \infty}(z,\eta) \\  +\langle g  ( \sum_{j,k} Q_{jz_j} \overline{Q}_{kz_k}  ) \eta, \eta  ^* \rangle &  +
2\Re \langle g'(\sum_{j,k} Q_{jz_j} \overline{Q}_{kz_k})\mathrm{Re}\(\sum_{j=1}^nQ_{jz_j}\overline{ \eta}\)
\sum_{j=1}^nQ_{jz_j}, \eta  ^*\rangle.
\end{aligned}
\end{equation*}
The last line yields   the last term of  \eqref{eq:lemexpand3}  and it is straightforward to
see that it has the required properties.

\qed

To finish the discussion of the r.h.s. of \eqref{eq:5termcompute} we need
to   compute $R_P$.    We consider first  a preparatory lemma.

\begin{lemma}\label{lem:DR}
Let $l=1,\cdots,n$ and $A=R,I$.
Then we have
$(D_{lA}R[z])\eta=\sum_{j=1}^n\mathcal R^{1,1}_{\infty , \infty}(z,\eta)\phi_j$.
\end{lemma}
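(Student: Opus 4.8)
The plan is to differentiate the explicit representation of $R[z]$ provided by claim (4) of Lemma \ref{lem:contcoo}. Since the eigenfunctions $\phi_j$ do not depend on $z$, from \eqref{eq:contcoo21} we have
\[
(D_{lA}R[z])\eta=\sum_{j=1}^n\bigl(D_{lA}\alpha_j[z]\eta\bigr)\phi_j
=\sum_{j=1}^n\bigl(\langle D_{lA}B_j(z),\eta\rangle+\langle D_{lA}C_j(z),{\eta}^*\rangle\bigr)\phi_j ,
\]
so it suffices to show that for every $r\in\R$ one has $\|D_{lA}B_j(z)\|_{\Sigma_r}+\|D_{lA}C_j(z)\|_{\Sigma_r}\le c_r|z|$ for $|z|$ small, and that $z\mapsto D_{lA}B_j(z),D_{lA}C_j(z)$ are $C^\infty$ with values in $\Sigma_r$. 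The smoothness (for all $r$, and to all orders) is immediate from claim (1) of Lemma \ref{lem:contcoo}, which also guarantees that the index $\infty,\infty$ in the conclusion is legitimate.

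For the size estimate, recall from claim (4) of Lemma \ref{lem:contcoo} that $B_j(z)=\widehat{B}_j(\widehat{Z})$ and $C_j(z)=z_i z_\ell\,\widehat{C}_{i\ell j}(\widehat{Z})$ (summing on repeated indices), where $\widehat{Z}$ has entries $(z_i\overline z_k)_{i,k}$ and $\widehat{B}_j,\widehat{C}_{i\ell j}$ are smooth near $0$ with values in $\Sigma_r$ for every $r$. The map $z\mapsto\widehat{Z}$ is a homogeneous quadratic polynomial in the real coordinates $z_{mA}$, so each $D_{lA}(z_i\overline z_k)$ and each $D_{lA}(z_iz_\ell)$ is homogeneous of degree $1$, hence $O(|z|)$. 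By the chain rule $D_{lA}B_j(z)=\sum_{i,k}(\partial_{w_{ik}}\widehat{B}_j)(\widehat{Z})\,D_{lA}(z_i\overline z_k)$, and since $\partial_{w_{ik}}\widehat{B}_j$ is bounded in $\Sigma_r$ near $0$ while $D_{lA}(z_i\overline z_k)=O(|z|)$, we get $\|D_{lA}B_j(z)\|_{\Sigma_r}\le c_r|z|$. Likewise $D_{lA}C_j(z)=\bigl(D_{lA}(z_iz_\ell)\bigr)\widehat{C}_{i\ell j}(\widehat{Z})+z_iz_\ell\,(D_{lA}\widehat{C}_{i\ell j})(\widehat{Z})$, whose first summand is $O(|z|)$ in $\Sigma_r$ and whose second is $O(|z|^3)$ (a factor $z_iz_\ell=O(|z|^2)$ times a chain-rule derivative which is $O(|z|)$), so again $\|D_{lA}C_j(z)\|_{\Sigma_r}\le c_r|z|$.

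Finally, estimating the pairings by duality, $|\langle D_{lA}B_j(z),\eta\rangle|\le\|D_{lA}B_j(z)\|_{\Sigma_r}\|\eta\|_{\Sigma_{-r}}\le c_r|z|\,\|\eta\|_{\Sigma_{-r}}$, and similarly $|\langle D_{lA}C_j(z),{\eta}^*\rangle|\le c_r|z|\,\|\eta\|_{\Sigma_{-r}}$ (using $|{\eta}^*(x)|=|\eta(x)|$), so the scalar coefficient of $\phi_j$ satisfies $|D_{lA}\alpha_j[z]\eta|\le c_r|z|\,\|\eta\|_{\Sigma_{-r}}$ for every $r$. Together with the smoothness recorded above, this is precisely the definition of $\mathcal R^{1,1}_{\infty,\infty}(z,\eta)$ (see Definition \ref{def:scalSymb}), which yields $(D_{lA}R[z])\eta=\sum_{j=1}^n\mathcal R^{1,1}_{\infty,\infty}(z,\eta)\phi_j$. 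There is no genuine obstacle here; the only point requiring a little care is to verify that every estimate holds in $\Sigma_r$ for all $r$ simultaneously, which is ensured by the $\Sigma_r$-valuedness (for all $r$) of $\widehat{B}_j$ and $\widehat{C}_{i\ell j}$ in claim (4) of Lemma \ref{lem:contcoo}.
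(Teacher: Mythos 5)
Your proof is correct and follows exactly the paper's approach: differentiate $R[z]\eta=\eta+\sum_j(\alpha_j[z]\eta)\phi_j$, reduce to showing $D_{lA}B_j(z)$ and $D_{lA}C_j(z)$ are $\mathbf{S}^{1,0}_{\infty,\infty}$, and pair with $\eta$. The paper simply asserts these $\mathbf{S}^{1,0}_{\infty,\infty}$ bounds; you supply the chain-rule computation via the explicit $\widehat Z$-structure from claim (4) of Lemma \ref{lem:contcoo}, which is the natural way to justify them.
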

\begin{proof}
One has $R[z]\eta=\eta+\sum_{j=1}^n(\alpha_j[z]\eta)\phi_j$.
So $D_{lA}R[z]\eta=\sum_{j=1}^n\((D_{lA}\alpha_j[z])\eta\)\phi_j$.
Now, $D_{lA}\alpha[z]\eta=\int D_{lA}B_j(z)\eta\,dx+\int D_{lA}C_j (z) {\eta}^*\,dx$, where $B$ and $C$ are given in Lemma \ref{lem:contcoo} and we have $D_{lA}B_j(z)=\mathcal S^{1,0}_{\infty , \infty}$ and $D_{lA}C_j(z)=\mathcal S^{1,0}_{\infty , \infty}$.
This yields the lemma.
\end{proof}

We set $u:=\sum_{j=1}^n Q_{j  z_j}+  R[ z]\eta$.
Then, we have $D_{lA}u= D_{lA}Q_{l z_l}+  (D_{lA}R [ z])\eta$.
For $l=1,\cdots,n$ and $A=R,I$ we have
\begin{equation*}
\begin{aligned}
&D_{lA} \Re \<\(\partial ^3_{\eta} K_P ( z, \eta)\eta\)\eta, \eta ^*\>= 6 \mathscr{A}+4\mathscr{B}
\\& \mathscr{A} :=
D_{lA}\Re \langle  g'( u\overline{u}  )\mathrm{Re}\( \overline{R[ z]\eta}\) \beta R[ z]\eta, (R[ z]\eta )^*\rangle ,\\& \mathscr{B} :=
 D_{lA}\Re \langle   g''( u \overline{u})\(\mathrm{Re}\( u \overline{R[ z]\eta}\)\)^2\beta u ,(R[ z]\eta  )^*\rangle .
\end{aligned}
\end{equation*}
We have
\begin{equation*}\label{eq:expand41}
\begin{aligned}
  &\mathscr{A}  =
2\langle  g''( u \overline{u} ) \mathrm{Re}\( \overline{D_{lA}u}\)\mathrm{Re}\(u\overline{R[z]\eta}\) \beta R[z]\eta, (R[z]\eta  )^* \rangle\\&+
\langle  g '( u \overline{u}  )\mathrm{Re}\(D_{lA}u\overline{R[z]\eta}\) \beta R[z]\eta, (R[z]\eta  )^* \rangle +
\langle  g '( u \overline{u}  ) \mathrm{Re}\( u\overline{(D_{lA}R[z])\eta}\)\beta R[z]\eta, (R[z]\eta  )^* \rangle \\& +2
\langle  g '( u \overline{u}  ) \mathrm{Re}\(u\overline{R[z]\eta}\)\beta (D_{lA}R[z])\eta, (R[z]\eta  )^*\rangle ,
\end{aligned}
\end{equation*}
\begin{equation*}\label{eq:expand42}
\begin{aligned}
  &\mathscr{B} =
2\langle g'''(u \overline{u} )\mathrm{Re}(u\overline{D_{lA}u})\(\mathrm{Re}\( u \overline{R[z]\eta}\)\)^2 \beta u ,(R[z]\eta  )^* \rangle \\&+2\langle  g''(u \overline{u} )\mathrm{Re}\( u \overline{R[z]\eta}\)\mathrm{Re}\(D_{lA}u \overline{R[z]\eta}\)\beta  u ,(R[z]\eta  )^* \rangle \\&+2\langle g''(u \overline{u} )\mathrm{Re}\( u  \overline{R[z]\eta}\)\mathrm{Re}\( u \overline{(D_{lA}R[z])\eta}\) u ,(R[z]\eta  )^* \rangle \\&+
\langle g''(u \overline{u} )\(\mathrm{Re}\( u \overline{R[z]\eta}\)\)^2\beta  D_{lA} u ,(R[z]\eta  )^* \rangle  +
\langle g''(u \overline{u} )\(\mathrm{Re}\( u\overline{R[z]\eta}\)\)^2 \beta u ,((D_{lA}R[z] )\eta )^*\rangle.
\end{aligned}.
\end{equation*}
All the terms in  the formulas for $\mathscr{A}$ and $\mathscr{B}$ can be expressed as
\begin{equation}\label{eq:R_P}
\sum _{  d  = 2, 3 }   \sum _{ i+j  = d}   \langle G_{d ij } ( z, \eta ),   \eta ^{ \otimes  i} \otimes ({\eta ^*})  ^{\otimes  j}\rangle  \mathcal R ^{2,  3-d }_{\infty ,\infty} (z, \eta )  +  \mathcal R ^{2,  2  }_{\infty ,\infty}(z, \eta  ).
\end{equation}
Therefore $R_P$ admits an expansion of the form \eqref{eq:R_P}, which   is absorbed in terms of the r.h.s. of \eqref{eq:enexp1}.

\proof[Proof of Proposition \ref{prop:EnExp}]
We have just seen that $R_P$ is absorbed   in  the r.h.s. of \eqref{eq:enexp1}.
The other terms of the r.h.s. of \eqref{eq:5termcompute} are treated by
Lemmas \ref{lem:1exp} and \ref{lem:expand3}.
\qed

\section{Effective Hamiltonian}
\label{sec:darboux}

In this section we apply the theory of Sect. 4 and 5 in \cite{CM1}
which yields an effective Hamiltonian  in an appropriate coordinate system,
which in turn will be used to prove Theorem \ref{thm:mainbounds} which yields
Theorem \ref{thm:small en}. Finding an effective Hamiltonian entails  canceling as many terms
as possible from the 2nd line of \eqref{eq:enexp1} through appropriate changes of
variables. This process is called Birkhoff normal form  argument and    is done by means of a recursive procedure where each time we need to cancel
a term from the hamiltonian we find an appropriate coordinate change by first solving
an  equation, the \textit{homological} equation.   It is easier
to implement this procedure using coordinates which are Darboux. In  a finite dimensional setting this would mean that  the symplectic form
is equal to a simple model, like   $\omega _0:=\sum _j\im dz_{j  }\wedge d\overline{z}_{j  }$. This corresponds to
diagonalizing the  {homological} equations. Furthermore, it is important that the new coordinates
remain Darboux. This means that the change of coordinates should leave $\omega _0$ invariant.
 One way to do this is to make changes of coordinates using    flows  of hamiltonian
 vectorfields.
See for example Sect. 1.8 \cite{Hofer} for a general introduction to the subject.

\noindent The   system  \eqref{eq:NLSham}    is   Hamiltonian   with respect to the symplectic form
\begin{equation}
\label{eq:Omega} \Omega (X,Y
):=   -2\Im \langle  {X} , Y^*\rangle .
\end{equation}
The first thing to notice  is  that the coordinates in Lemma \ref{lem:systcoo},
  initially the most natural coordinates in our problem,
  do not form a system
of Darboux coordinates for \eqref{eq:Omega} in any reasonable sense. Indeed $\Omega$  is
rather complicated in this coordinate system.

 \noindent We consider as a local model  the  symplectic form
\begin{equation}\label{eq:defOm0}
\begin{aligned} &
\Omega_0  :=  \im \sum_{j=1}^n (1+ \gamma _j (|z_j|^{2}))  dz_{j  }\wedge d\overline{z}_{j  }  + \im \<d \eta  , d {\eta}^*  \>   -\im \<d   {\eta}^*  , d {\eta}  \>,    \\&
  \text{where }     \gamma _j(|z_j|^2)
:=-
 \<  \widehat{{q}}_{j  } (|z_j|^2), \widehat{{q}}_{j  } ^*  (|z_j|^2) \> +  2|z_j|^2   \Re \<  \widehat{{q}}_{j  }^* (|z_j|^2), \widehat{{q}}_{j  } ' (|z_j|^2) \>,
\end{aligned}
\end{equation}
 with $\widehat{{q}}_{j  } '  (t)= \frac{d}{dt}\widehat{{q}}_{j  }(t) $.
 By Proposition \ref{prop:bddst}  and Definition \ref{def:scalSymb} we have $\gamma _j(|z_j|^2) =\resto ^{2,0}_{\infty,\infty}(|z_j|^2)$.

 \begin{remark}\label{rem:remOm0}    $\Omega_0$  is the same  {local model} symplectic form  of \cite{CM1}. We do not know if
 Proposition \ref{prop:darboux}  below   holds when choosing $\gamma _j\equiv 0$ because  we do not know if in \eqref{eq:syst11} below  we would still have   $S _{j  }  =\mathcal{R}^{1,1}_{r, \infty} $   and
  $ S _{\eta}   =\textbf{S}^{1,1}_{r, \infty} $, which is crucial. Notice, incidentally, that the Darboux Theorem is an abstract result. But   in \cite{CM1}
   it is proved with an \textit{ad hoc} argument exactly because we want this change of coordinates,
    as well as all the other coordinates changes in   the paper,
   to have this crucial property. The fact that all coordinate changes have this property guarantees that the limits \eqref{eq:small en1} in one coordinate system  imply the same limit in any other coordinate system.

   While $\Omega_0$  would be simpler if $\gamma _j\equiv 0$, nonetheless  it is simple enough
   for  a normal form argument.
  \end{remark}
 In Sect. 4 \cite{CM1}   the following proposition is proved.
\begin{proposition}[Darboux Theorem]\label{prop:darboux} Fix any $r\in \N$.  There exists a    $\delta _0 \in (0, d_0) $
such that the following facts hold.
\begin{itemize} \item[(1)]
 There exists a gauge invariant  1--form
  $\Gamma = \Gamma _{jA}  dz_{jA} + \langle \Gamma _{\eta}, d\eta \rangle  + \langle \Gamma _{ {\eta}^*}, d {\eta}^* \rangle,
   $  with
 \begin{align} \label{eq:alphaest1}
  &          \Gamma _{jA}=   \mathcal{R}^{1,1}_{\infty, \infty}(z,\textbf{Z},\eta )  \text{ and } \Gamma _{\xi }= \mathbf{S} ^{1,1}_{\infty, \infty}(z,\textbf{Z},\eta )  \text{ for $\xi = \eta , {\eta} ^* $,}
\end{align}
 such that $d\Gamma =\Omega - \Omega _0 $.

\item[(2)]   For any     $(t, z, \eta )\in (-4, 4)\times B_{\C ^n} (0,\delta _0 )
    \times   B_{\Sigma _{-r}^c} (0,\delta _0 )$    there exists
  exactly one solution
    $\mathcal{X}^t  (z, \eta )\in L^2$ of the  equation     $
 i_{\mathcal{X}^t} \Omega _t=-  \Gamma
   $. Furthermore,  $\mathcal{X}^t  (z, \eta )$ is gauge invariant, $\mathcal{X}^t  (z, \eta )\in \Sigma _{ r}$ and if we set $\mathcal{X}^t _{jA} (z, \eta )=dz_{jA}\mathcal{X}^t  (z, \eta )$ and  $\mathcal{X}^t _{\eta} (z, \eta )=d\eta \mathcal{X}^t  (z, \eta )$,   we have
  $\mathcal{X}^t _{jA} (z, \eta )=\mathcal{R}^{1,1}_{r,\infty }(t,z,\textbf{Z}, \eta )$ and
  $ \mathcal{X}^t _{\eta} (z, \eta )=\textbf{S}^{1,1}_{r,\infty}(t,z,\textbf{Z},\eta )$.

  \item[(3)] Consider
         the   following system  in
     $(t, z, \eta )\in (-4, 4)\times B_{\C ^n} (0,\delta _0 )
    \times   B_{\Sigma _{ k}^c} (0,\delta _0 )$  for all  $k\in \Z\cap [-r,r]$:
\begin{equation}\label{eq:syst1}
\begin{aligned}
  &    \dot z_{j } = \mathcal{X}^t _{j }  (z, \eta ) \text{  and }     \dot \eta = \mathcal{X}^t _{\eta}  (z, \eta ) .
     \end{aligned}
\end{equation}
Then the following facts hold  for the corresponding flow $\mathfrak{F}^t$.\begin{itemize}
\item[(3.1)]  For   $\delta _1\in (0,\delta _0 ) $  sufficiently small we have
\begin{align} \nonumber
  &   \mathfrak{F}^t \in C ^\infty ((-2, 2)\times B_{\C ^n} (0,\delta _1 )
    \times   B_{\Sigma _{ k}^c} (0,\delta _1 ) ,B_{\C ^n} (0,\delta _0 )
    \times   B_{\Sigma _{ k}^c} (0,\delta _0 )) \text{ for all $k\in \Z\cap [-r,r]$} \\&  \mathfrak{F}^t \in C ^\infty ((-2, 2)\times B_{\C ^n} (0,\delta _1 )
    \times   B_{H^1 \cap \mathcal{H}_{c}[0]} (0,\delta _1 ) ,B_{\C ^n} (0,\delta _0 )
    \times   B_{H^1 \cap \mathcal{H}_{c}[0]} (0,\delta _0 ) ) .\label{eq:syst122}
     \end{align}
 In particular we have
\begin{equation}\label{eq:syst11}
\begin{aligned}
  &      z_{j  }  ^{t}= z_{j }+ S _{j  }  (t,z, \eta ) \text{  and }       \eta  ^t= \eta+ S _{\eta}  (t,z, \eta ),\end{aligned}
\end{equation}
with
  $S _{j  } (t,z, \eta )=\mathcal{R}^{1,1}_{r, \infty}(t,z,\textbf{Z}, \eta )$   and
  $ S _{\eta}  (t,z, \eta )=\textbf{S}^{1,1}_{r, \infty}(t,z,\textbf{Z}, \eta )$.

 \item[(3.2)]      The map $\mathfrak{F}=\mathfrak{F}^1$ is a local diffeomorphism
 of $H^1$ into itself near  the origin, and we have $\mathfrak{F}^*\Omega =\Omega _0$.

  \item[(3.3)]      We have  $S _{j }  (t,e^{\im \vartheta}z,e^{\im \vartheta} \eta )=e^{\im \vartheta}S _{j }  (t,z, \eta )$,
 $S _{\eta }  (t,e^{\im \vartheta}z,e^{\im \vartheta} \eta )=e^{\im \vartheta}S _{\eta }  (t,z, \eta )$.
 \end{itemize}

 \end{itemize}

\end{proposition}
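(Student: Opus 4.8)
\emph{Proof plan.} I would argue by Moser's deformation (homotopy) method, run in the scale of weighted Sobolev spaces exactly as in Sect.~4 of \cite{CM1}. Put $\Omega_t:=\Omega_0+t(\Omega-\Omega_0)$ for $t\in(-4,4)$; since $\Omega$ and $\Omega_0$ are closed, so is every $\Omega_t$. The goal is a time-dependent vector field $\mathcal{X}^t$ whose flow $\mathfrak{F}^t$ satisfies $(\mathfrak{F}^t)^*\Omega_t\equiv\Omega_0$; differentiating in $t$ and using Cartan's formula with $d\Omega_t=0$ gives
\begin{equation*}
\frac{d}{dt}\big((\mathfrak{F}^t)^*\Omega_t\big)=(\mathfrak{F}^t)^*\big(d\,i_{\mathcal{X}^t}\Omega_t+(\Omega-\Omega_0)\big),
\end{equation*}
so once $\Omega-\Omega_0=d\Gamma$ it suffices to solve the pointwise linear equation $i_{\mathcal{X}^t}\Omega_t=-\Gamma$. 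This is exactly the splitting into claims (1)--(3) of the proposition.

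For claim (1) I would use that $\Omega$ is exact: in the ambient linear space it equals $d\lambda_0$ with the Liouville form $\lambda_0(X)=-\Im\langle u,X^*\rangle$, and likewise $\Omega_0=d\lambda_1$ for an explicit primitive $\lambda_1$ assembled from the $\gamma_j$'s of \eqref{eq:defOm0} and the standard $\eta$-Liouville form. Setting $\Gamma:=\lambda-\lambda_1$, where $\lambda$ is the pullback of $\lambda_0$ under the map $u=u(z,\eta)$ of \eqref{eq:systcoo1}, automatically yields $d\Gamma=\Omega-\Omega_0$ and gauge invariance, because $u(e^{\im\vartheta}z,e^{\im\vartheta}\eta)=e^{\im\vartheta}u(z,\eta)$ and $\lambda_0$ is gauge invariant. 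The work is the bound \eqref{eq:alphaest1}: inserting $du=\sum_{j,A}\big(D_{jA}Q_{jz_j}+(D_{jA}R[z])\eta\big)\,dz_{jA}+R[z]\,d\eta$ into $\lambda_0$ and using Proposition \ref{prop:bddst} ($Q_{jz_j}=z_j\phi_j+O(|z_j|^3)$) together with Lemma \ref{lem:contcoo} ($R[z]-1$ and $B_j,C_j$ are $O(|\mathbf{Z}|)$), one checks that the quadratic part of $\lambda$ coincides with $\lambda_1$ --- which is precisely why the correction $\gamma_j$ was inserted in $\Omega_0$ --- so the remainder $\Gamma$ has $dz_{jA}$-components in $\mathcal{R}^{1,1}_{\infty,\infty}$ and $d\eta,d\eta^*$-components in $\mathbf{S}^{1,1}_{\infty,\infty}$.

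For claim (2) I would observe that $\Omega_0$ is weakly non-degenerate with an explicit bounded inverse: on the $\eta$-block it is, up to sign, multiplication by $\im$, hence preserves $\Sigma_r$ for every $r$, and on the $z$-block it is $\im(1+\gamma_j(|z_j|^2))$, invertible near $0$. Since $\Omega_t-\Omega_0=t\,d\Gamma$ has all coefficients of order $\ge1$ by claim (1), on a small enough ball $B_{\C^n}(0,\delta_0)\times B_{\Sigma^c_{-r}}(0,\delta_0)$ and uniformly for $t\in(-4,4)$ the form $\Omega_t$ stays invertible via a Neumann series around $\Omega_0$; this produces the unique $\mathcal{X}^t=\Omega_t^{-1}(-\Gamma)$, gauge equivariant (since $\Gamma$ and $\Omega_t$ are gauge invariant), valued in $\Sigma_r$ (since $\Omega_0^{-1}$ and the correction lose neither regularity nor weight), and with components inheriting $\mathcal{X}^t_{jA}=\mathcal{R}^{1,1}_{r,\infty}$, $\mathcal{X}^t_\eta=\mathbf{S}^{1,1}_{r,\infty}$ from $\Gamma$, because the symbol classes are stable under the algebraic manipulations and smooth substitutions of the Neumann series.

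For claim (3) I would integrate \eqref{eq:syst1}: $\mathcal{X}^t$ is smooth and, being $\mathcal{R}^{1,1}$ (resp. $\mathbf{S}^{1,1}$), vanishes to second order at the origin, so a Gronwall estimate produces the flow $\mathfrak{F}^t$ on $(-2,2)\times B_{\delta_1}$ with values in $B_{\delta_0}$, in both the $\Sigma^c_k$ and the $H^1\cap\mathcal{H}_c[0]$ settings, with smooth dependence on the data, giving \eqref{eq:syst122}; writing $z^t_j-z_j=\int_0^t\mathcal{X}^s_j(z^s,\eta^s)\,ds$ (and similarly for $\eta$) and using that $(z^s,\eta^s)$ differs from $(z,\eta)$ by a higher-order term, closure of the symbol classes under composition yields $S_j=\mathcal{R}^{1,1}_{r,\infty}$, $S_\eta=\mathbf{S}^{1,1}_{r,\infty}$, i.e. \eqref{eq:syst11}. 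Claim (3.2) is the Moser identity above: $\frac{d}{dt}\big((\mathfrak{F}^t)^*\Omega_t\big)=(\mathfrak{F}^t)^*\big(-d\Gamma+(\Omega-\Omega_0)\big)=0$, so $\mathfrak{F}^*\Omega=(\mathfrak{F}^0)^*\Omega_0=\Omega_0$, and $\mathfrak{F}=\mathfrak{F}^1$ is a local diffeomorphism because $D\mathfrak{F}(0)=\mathrm{Id}$; claim (3.3) follows from the gauge equivariance of $\mathcal{X}^t$ and uniqueness of the flow. The genuinely laborious point --- and the one demanding care --- is the symbol-class bookkeeping: proving \eqref{eq:alphaest1} (hinging on the exact tuning of $\gamma_j$ so that $\lambda$ and $\lambda_1$ agree to quadratic order), checking that $\Omega_0^{-1}$ preserves both regularity and weight, and propagating $\mathcal{R}^{1,1}$ and $\mathbf{S}^{1,1}$ through the Neumann series and the flow; the geometric Moser mechanism is standard, and since in this Hamiltonian formalism the NLD and NLS coincide, the whole argument is the one carried out in Sect.~4 of \cite{CM1}.
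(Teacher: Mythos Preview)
Your proposal is correct and aligns with the paper's approach: the paper gives no proof here beyond citing Sect.~4 of \cite{CM1}, and your outline of the Moser deformation argument (construct $\Gamma$ as a difference of Liouville primitives, invert $\Omega_t$ by a Neumann series around $\Omega_0$, integrate the resulting vector field and track symbol classes) is precisely the method carried out there. Your remark that the $\gamma_j$ in $\Omega_0$ are tuned so that the Liouville forms agree to quadratic order is exactly the point flagged in Remark~\ref{rem:remOm0}.
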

  \qed

We now consider the pullback $K:=E\circ   \mathfrak{F} $.
\begin{lemma}
  \label{lem:KExp1}    Consider the $\delta _1>0$ and    $\delta _0>0$ of Prop. \ref{prop:darboux}
   and set      $r_0=r $ with  $r  $  the index   in Prop. \ref{prop:EnExp}.   Then we have
   \begin{equation}\label{eq:KExp11}
\begin{aligned} &
   \mathfrak{{F}}(B_{\C^n}(0 , \delta  _1) \times ( B_{H^1}(0 , \delta  _1)\cap \mathcal{H}_c[0]) )\subset  B_{\C^n}(0 , \delta   _0) \times ( B_{H^1}(0 , \delta  _0)\cap \mathcal{H}_c[0])
\end{aligned}
\end{equation}
   and $  \mathfrak{{F}}|_{B_{\C^n}(0 , \delta  _1) \times ( B_{H^1}(0 , \delta  _1)\cap \mathcal{H}_c[0])} $ is a diffeomorphism between domain and   an open neighborhood of the origin in
   $\C^n \times (H^1 \cap \mathcal{H}_c[0])$ and furthermore
the functional  $K  $ admits an expansion for $r_1=r_0-2$
\begin{align} \nonumber
  & K (z,\eta )=H_2(z,\eta )  + \sum _{j=1,...,n} \lambda _j(|z_j| ^{2}  )
 \\&   \nonumber  +
	 	 \sum _{l=1}^{2N+4 } \sum _{
		  |\textbf{m}|=   l+1 }  \textbf{Z}^{\textbf{m}}   a_{  \textbf{m} }^{(1)}(|z _{1}|^2,..., |z _{n}|^2 )+ \sum _{j =1}^n \sum _{l=1}^{ 2N+3 }	\sum _{   |\textbf{m}  |  =l} ( \overline{z}_j \textbf{Z}^{\textbf{m}}  \langle
  G_{j \textbf{m}}^{(1)}(|z_j|^2  ), \eta \rangle  +   c.c. )
		\\&  \nonumber
	 	  +     \resto ^{1,  2  }_{{r_1}, \infty} (z , \eta  )  +
	 	  \resto ^{0,  2N+5 }_{{r_1}, \infty} (z,\textbf{Z}  ,\eta )+   \Re \langle
  \textbf{S} ^{0,   2N+4 }_{{r_1}, \infty} (z,\textbf{Z},\eta ) , {\eta} ^{*} \rangle \nonumber
		\\&
		+       \sum _{  d  = 2  }^{3}   \sum _{ i  = 1}^d \mathcal R ^{0,  3-d }_{r_{0} , \infty} (z, \eta ) \int _{\R ^3} G_{d i }^{(1)} ( x,z, \eta , \eta (x) )    \eta ^{  \otimes i}(x)\otimes ({\eta}^*(x)) ^{\otimes (d-i)} dx     \nonumber \\&  +
 \sum _{ i+j  = 2}   \sum _{
		  |\textbf{m}|\le 1 }  \textbf{Z}^{\textbf{m}}    \langle G_{2\textbf{m} ij }^{(1)} ( z ),   \eta ^{  \otimes i}\otimes ({\eta}^*) ^{\otimes j}\rangle  + E _P( \eta), \   \label{eq:enexp10}
  \end{align}
  where
  \begin{equation*}
   H_2(z,\eta ) = \sum _{j=1}^n
	e_j |z_j|^2+  \langle   H  \eta,  { \eta}^* \rangle
  \end{equation*}
    and  where:    $G_{j \textbf{m}}^{(1)}$,     $G^{(1)}_{2\textbf{m} ij }    $     are   $\textbf{S} ^{0,  0} _{{r_1}, \infty }  $;   $a_{  \textbf{m}}^{(1)}(|z _{1}|^2,..., |z _{n}|^2 )  =\resto ^{0,  0} _{\infty, \infty  }  (z) $;     c.c. means complex conjugate;  $\lambda _j(|z_j| ^{2}  )
=\resto ^{2,  0  }_{\infty, \infty} (|z_j| ^{2}  )$;    $$G_{d i }^{(1)}( \cdot ,z, \eta , \zeta ) \in C^{\infty } ( B_{\C ^n}(0,\delta _1) \times \Sigma _{-r_{1}} (\mathbb{R}^3, \C)\times \C^4,
\Sigma _{r_{1}} (\mathbb{R}^3, B^{d}(\C ^4,\mathbb{C})   ))) .$$ For $|\textbf{m}|=0$,  $G^{(1)} _{2\textbf{m} ij }( z,\eta  ) = G_{2\textbf{m} ij }( z   ) $ is the same of \eqref{eq:b02}.
Finally, we have the invariance  $\resto ^{1,  2  }_{r_{1} , \infty} (e^{\im \vartheta} z, e^{\im \vartheta} \eta )\equiv \resto ^{1,  2  }_{r_{1} , \infty} (  z,   \eta )  $.

\end{lemma}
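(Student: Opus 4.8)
The plan is to obtain \eqref{eq:enexp10} by inserting the Darboux change of variables of Proposition~\ref{prop:darboux} into the energy expansion of Proposition~\ref{prop:EnExp}; as recalled in the introduction the Hamiltonian formalism for the NLD coincides with that of the NLS, so the argument is the one of Sect.~5 in \cite{CM1} and I only indicate its structure.

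First I would dispose of \eqref{eq:KExp11} and the diffeomorphism statement. By Proposition~\ref{prop:darboux}~(3.1)--(3.2) the map $\mathfrak{F}=\mathfrak{F}^1$ is $C^\infty$ from $B_{\C^n}(0,\delta_1)\times(B_{H^1}(0,\delta_1)\cap\mathcal{H}_c[0])$ into $B_{\C^n}(0,\delta_0)\times(B_{H^1}(0,\delta_0)\cap\mathcal{H}_c[0])$, is a local diffeomorphism near $0$ with $\mathfrak{F}^*\Omega=\Omega_0$, and has the near-identity form $\mathfrak{F}(z,\eta)=(z+S(z,\eta),\eta+S_\eta(z,\eta))$ with $S_j=\mathcal{R}^{1,1}_{r,\infty}$ and $S_\eta=\mathbf{S}^{1,1}_{r,\infty}$. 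Shrinking $\delta_1$ if needed so that $|S|+\|S_\eta\|_{H^1}<\delta_0/2$ on that domain gives \eqref{eq:KExp11}; since $\delta_0<d_0$, the functional $E(\sum_jQ_{jz_j}+R[z]\eta)$ of Lemma~\ref{lem:systcoo} and its expansion \eqref{eq:enexp1} are available on the image, so $K$ is exactly that expansion precomposed with $\mathfrak{F}$, and the restriction of $\mathfrak{F}$ to $B_{\C^n}(0,\delta_1)\times(B_{H^1}(0,\delta_1)\cap\mathcal{H}_c[0])$ is a diffeomorphism onto an open neighborhood of the origin in $\C^n\times(H^1\cap\mathcal{H}_c[0])$.

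For the expansion I would substitute $z\mapsto z+S$ and $\eta\mapsto\eta+S_\eta$ in every term of \eqref{eq:enexp1} (taken with $r_0=r$) and re-collect, using that the classes $\mathcal{R}^{i,j}$ and $\mathbf{S}^{i,j}$ are stable under finite sums, products and post-composition with a near-identity map whose increment lies in $\mathcal{R}^{1,1}\times\mathbf{S}^{1,1}$, i.e. the algebra of symbol classes established in \cite{CM1}. Since $S$ and $S_\eta$ are of order $\ge 2$, each monomial of \eqref{eq:enexp1} reproduces itself with at most modified coefficients plus corrections of strictly higher order in $(z,\mathbf{Z},\eta)$ that are swallowed by the remainders $\mathcal{R}^{1,2}_{r_1,\infty}$, $\mathcal{R}^{0,2N+5}_{r_1,\infty}$, $\mathbf{S}^{0,2N+4}_{r_1,\infty}$ appearing in \eqref{eq:enexp10}; the gauge invariance of all coefficients is inherited from that of $E$ and the gauge equivariance of $\mathfrak{F}$ in Proposition~\ref{prop:darboux}~(3.3), and the $|\mathbf{m}|=0$ coefficient is unchanged, $G^{(1)}_{2\mathbf{0}ij}=G_{2\mathbf{0}ij}$, because its only corrections come from feeding the order-$\ge 2$ increments into lower-order terms. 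The term $\sum_jE(Q_{jz_j})$ becomes $\sum_jE(Q_{j(z_j+S_j)})$, and since $E(Q_{jz_j})$ is gauge invariant and equals $e_j|z_j|^2+\mathcal{R}^{2,0}_{\infty,\infty}(|z_j|^2)$ by Proposition~\ref{prop:bddst}, Taylor expanding in the increment $S_j$ yields $\sum_je_j|z_j|^2+\sum_j\lambda_j(|z_j|^2)$ with $\lambda_j=\mathcal{R}^{2,0}_{\infty,\infty}$ up to absorbed remainders; together with $\langle H\eta,\eta^*\rangle$ this produces $H_2$.

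The only delicate point, and the origin of the loss $r_1=r_0-2$, is the quadratic term $\langle H\eta,\eta^*\rangle$: under the substitution it turns into $\langle H\eta,\eta^*\rangle+2\Re\langle H\eta,S_\eta^*\rangle+\langle HS_\eta,S_\eta^*\rangle$, and because $H=D_{\mathscr{M}}+V$ is a self-adjoint first-order operator the cross terms can be recognised as $\mathcal{R}^{1,2}$- and higher-order symbols only after $H$ is transferred onto $S_\eta\in\mathbf{S}^{1,1}_{r,\infty}$, which costs regularity; with the conventions of \cite{CM1} this is precisely the drop from $r_0$ to $r_1=r_0-2$ on the $\eta$-dependent remainders, while the coefficients not paired with $\eta$ keep index $\infty$. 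All remaining terms of \eqref{eq:enexp1}, namely the polynomials in $(z,\mathbf{Z},\eta)$, the integral nonlinear terms and $E_P(\eta)$, involve $H$ only through an undifferentiated $\eta$, hence are rearranged purely algebraically with no further loss. I expect the only genuine chore to be tracking which correction falls in which remainder class; this is identical to the NLS computation and can be imported verbatim from \cite{CM1}.
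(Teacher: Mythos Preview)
Your outline captures the ``elementary'' half of the argument---substituting the near-identity Darboux map $\mathfrak{F}$ into the expansion \eqref{eq:enexp1} and re-collecting using the symbol-class algebra of \cite{CM1}. The paper itself points to exactly this for the bulk of the computation (Lemma~4.10 in \cite{CM1} and Lemma~4.3 in \cite{Cu0}), and your treatment of \eqref{eq:KExp11}, the gauge invariance, the preservation of the $|\mathbf{m}|=0$ quadratic coefficient, and the $r_0\to r_1$ loss from $\langle H\eta,\eta^*\rangle$ is in line with that.

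However you have missed what the paper singles out as \emph{the key fact}: in the second line of \eqref{eq:enexp10} both summations start at $l=1$, not $l=0$. In \eqref{eq:enexp1} the corresponding sums have nontrivial $l=0$ contributions---terms of the form $z_i\overline{z}_j\,a_{j\mathbf{m}}(|z_j|^2)$ with $|\mathbf{m}|=1$ in the first sum, and $\overline{z}_j\langle G_{jk\mathbf{0}}(|z_k|^2),\eta\rangle+\text{c.c.}$ with $|\mathbf{m}|=0$ in the second. Your substitution argument (``each monomial reproduces itself plus higher-order corrections'') would \emph{preserve} these lowest-order terms, not kill them. Their vanishing after the Darboux step is a genuine cancellation, not a bookkeeping rearrangement: it exploits the specific choice of the $1$-form $\Gamma$ in Proposition~\ref{prop:darboux} (and ultimately the special structure of $\Omega_0$ with the $\gamma_j$'s, cf.\ Remark~\ref{rem:remOm0}). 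The paper defers this to the cancellation Lemma~4.11 of \cite{CM1}, and without invoking or reproducing that mechanism your argument does not yield \eqref{eq:enexp10} as stated---it only yields the weaker expansion with possibly nonzero $l=0$ terms, which would block the Birkhoff normal-form iteration of Proposition~\ref{th:main} that needs to start at $\iota=2$.
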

\proof The proof of the above statement with possibly nonzero  terms also corresponding to $l=0$ in both summations in the 2nd line of \eqref{eq:enexp10} is
elementary, see Lemma 4.10 in \cite{CM1} and Lemma 4.3 \cite{Cu0}.

\noindent The key fact that in the 2nd line of \eqref{eq:enexp10}   both summations start from $l=1$  and there are no $l=0$ is proved in the cancellation Lemma 4.11 in \cite{CM1}

\qed

Consider now the    symplectic form $\Omega _0$   in \eqref{eq:defOm0}.
 We introduce an index $\ell =j, \overline{j}$, for $\overline{\overline{j}}=j$
  with $j=1,...,n$. We write $\partial _j=\partial _{z_j}$ and  $\partial _{ \overline{j}}=\partial _{\overline{z}_j}$, $z_{\overline{j}}=\overline{z}_j$.
Given   $F\in C^1(U,\C )$
with $U$ an open subset of $\C^n\times \Sigma _{r}^c$,  its Hamiltonian vector field $X_F$ is defined by
$i_{X_F}\Omega _0=dF$.     We have summing on $j$
\begin{equation}  \label{eq:hamf-1}\begin{aligned}        &      i_{X_F}\Omega _0=    \im     (1 +   \gamma _j(|z_j|^2) ) ((X_F) _{j }   d\overline{z}_{j } -(X_F) _{\overline{j} }   d {z}_{j })  + \im \<(X_F) _{\eta  }  , d\overline{\eta}  \>   -\im \<(X_F) _{\overline{\eta} }    , d {\eta}  \>   \\&  =  \partial _j F    d {z}_j   +     \partial _{\overline{j} } F  d\overline{z}_{j }    +\<\nabla _{\eta  } F , d {\eta}  \> +  \<\nabla _{ {\eta}^*  } F   , d {\eta}^*  \>    ,
\end{aligned}\end{equation}
where \eqref{eq:hamf-1} is used also to define $\nabla _{\xi  } F$ for $\xi = \eta , \eta ^*$.

Comparing the components of the two sides of \eqref{eq:hamf-1}  we get   for $1 +   \varpi _j(|z_j|^2)=  (1 +   \gamma _j(|z_j|^2) )^{-1}$  where    $\varpi  _j(|z_j|^2) =\resto ^{2,0}_{\infty, \infty }(|z_j|^2)$
:
\begin{equation} \label{eq:hamf1}\begin{aligned}        &    (X_F)_j    =-\im   (1 +   \varpi  _j(|z_j|^2))
 \partial _{\overline{j}} F   \ ,  \quad    (X_F)_{\overline{j}}   = \im   (1 +  \varpi  _j(|z_j|^2)) \partial _j F ,         \\&    (X_F)_\eta   =-\im    \nabla  _{{\eta}^* } F  \,  , \quad     (X_F)_{{\eta}^* }  =  \im    \nabla  _\eta F .
\end{aligned}\end{equation}

Given    $G\in C^1(U,\C )$  and   $F\in C^1(U, \textbf{E} )$,  with $\textbf{E}$ a Banach space, we set
$  \{   F, G  \}  :=dF X_G$.

\begin{definition}[Normal Forms]
\label{def:normal form}  Recall Definition \ref{def:setM}  and    \eqref{eq:setM0}.
Fix
$r\in \N _0$.
A real valued function $Z( z,\eta  )$ is in normal form if  $
  Z=Z_0+Z_1
$
with $Z_0$ and $Z_1$   finite sums of the following type,  for $\mathbf l \ge 1$:  for $ G_{j \textbf{m}}( |z_j|^2 ) =S^{0,0}_{r,\infty}(|z_j|^2  )$,   c.c. the complex conjugate and $ a_{  \textbf{m} }( |z _1|^2,...,|z _n|^2  )  =\resto ^{0,0}_{r,\infty}(|z _1|^2,...,|z _n|^2  )$,
\begin{equation}
\label{e.12a}Z_1 (z,\mathbf{Z},\eta )=      \sum _{j =1}^n	\sum _{   \substack{|\textbf{m}  |   = \mathbf l \\ \mathbf{m}  \in \mathcal{M}_{j}(\mathbf l)} } \(\overline{z}_j \textbf{Z}^{\textbf{m}}  \langle
  G_{j \textbf{m}}(|z_j|^2  ), \eta \rangle  +   \text{c.c.}\), \
\end{equation}
 \begin{equation}
\label{e.12c}Z_0 (z,\mathbf{Z},\eta ) =  	\sum _{   \substack{|\textbf{m}  |   = \mathbf l +1\\ \mathbf{m}  \in \mathcal{M}_{0}(\mathbf l+1)} } \textbf{Z}^{\textbf{m}}   a_{  \textbf{m} }( |z _1|^2,...,|z _n|^2 ) . \end{equation}

 \end{definition}

\begin{remark}\label{rem:H3}
  By Hypothesis (H4), in particular by \eqref{h42},  for any $\textbf{m} \in  \mathcal{M}_{0}(2N+4)$  we have $\textbf{Z}^{\textbf{m}} =|z_1| ^{2m_1}...|z_n| ^{2m_n} $ for an $m \in  \N  _0^n$  with $2|m|=|\mathbf{m}| $.
Similarly  by (H4), in particular by \eqref{h41},    for $|\textbf{m}  |  \le  2N+4$ we   have $|\sum _{a,b}(e_a-e_b)  {m}_{ab} -e_j |\neq M$.
\end{remark}

For $ \mathbf{l} \le 2N+4$ we will consider   flows   associated to   Hamiltonian   vector-fields $X_\chi$
with real valued
   functions  $\chi $ of the following form,  with    $ b_{  \textbf{m} }  =\resto ^{0,0}_{\mathbf r ,\infty }(|z _1|^2,...,|z _n|^2  )$  and    $ B_{j \textbf{m}} =S^{0,0}_{\mathbf{r},\infty}(|z_j|^2  )$  for some $\mathbf{r}\in \N $ defined in $B _{\C^n} (0, \mathbf{d})$ for some $\mathbf{d}>0$:
\begin{equation} \label{eq:chi1}\begin{aligned}
\chi   &=     	\sum _{   \substack{|\textbf{m}  |   = \mathbf{l} +1\\   \mathbf{{m}}
 \not \in \mathcal{M} _0 (\mathbf{l}+1) } } \textbf{Z}^{\textbf{m}}   b_{  \textbf{m} }( |z _1|^2,...,|z _n|^2 )    +
 \sum _{j =1}^n	\sum _{   \substack{|\textbf{m}  |   = \mathbf{l} \\ \mathbf{{m}}
 \not \in \mathcal{M} _j (\mathbf{l}) } } (\overline{z}_j \textbf{Z}^{\textbf{m}}  \langle
  B_{j \textbf{m}}(|z_j|^2  ), \eta \rangle  +   \text{c.c.})   \ .
\end{aligned}\end{equation}

The following result is proved in \cite{CM1}

\begin{proposition}[Birkhoff normal forms]
\label{th:main} For any $\iota \in \N \cap [2,  2 {N}+4]$
there are a $\delta _\iota >0$,   a  polynomial
$\chi _{\iota}$  as in \eqref{eq:chi1}    with   $\mathbf{l}=\iota$,  $\mathbf{d}=\delta _\iota $ and   $\mathbf{r}= {r_{\iota}} =r_0- 2(\iota +1)$ such that
for all $k\in \Z\cap [-r(\iota),r(\iota)]$ we have for each $\chi _{\iota}$
a flow (for $\delta _1>0$ the constant in Prop. \ref{prop:darboux})
 \begin{align}
  &   \phi ^t_{\iota} \in C ^{\infty} ((-2, 2)\times B_{\C ^n} (0,  \delta _\iota  )
    \times   B_{\Sigma _{ k}^c} (0,\delta _\iota  ) ,B_{\C ^n} (0,\delta _{\iota -1 } )
    \times   B_{\Sigma _{ k}^c} (0,\delta _{\iota -1 }    )) , \label{eq:bsyst122}\\&   \phi ^t_{\iota} \in C ^{\infty} ((-2, 2)\times B_{\C ^n} (0,\delta _\iota  )
    \times   B_{H^1 \cap \mathcal{H}_{c}[0]} (0,\delta _\iota  ) ,B_{\C ^n} (0,\delta _{\iota -1 }   )
    \times   B_{H^1 \cap \mathcal{H}_{c}[0]} (0,\delta _{\iota -1 }   ) )   \nonumber
     \end{align}
 and  such that, if we set  $\mathfrak{F} ^{( \iota )} := \mathfrak{F}  \circ \phi _2\circ ...\circ \phi _ \iota   $, with   $\mathfrak{F} $  the transformation in
Prop.  \ref{prop:darboux} and
the $\phi _j=  \phi ^1_{\iota}   $, then
  for $(z,\eta )\in  B_{\C^n}(0 , \delta _\iota) \times ( B_{H^1}(0 ,\delta _\iota  )\cap \mathcal{H}_c[0])$ we have  the following expansion   \begin{equation}  \label{eq:enexp40}    \begin{aligned} &
   H^{(\iota )} (z,\eta ):= E\circ \mathfrak{F} ^{( \iota )} (z,\eta )=H_2(z,\eta )  +\sum _{j=1}^{n} \lambda _j (|z_j| ^{2}  )+ Z  ^{(\iota )} (z,\mathbf{Z},\eta ) \\& +
	 	 \sum _{l=\iota  }^{2N+3 } \sum _{
		  |\textbf{m}|=   l+1 }  \textbf{Z}^{\textbf{m}}   a_{  \textbf{m} }^{(\iota )}( |z_1 |^2,...,|z_n |^2 )     + \sum _{j =1}^n \sum _{l=\iota  }^{ 2N+3 }	 \sum _{   |\textbf{m}  |  =l} ( \overline{z}_j \textbf{Z}^{\textbf{m}}  \langle
  G_{j \textbf{m}}^{(\iota )}(|z_j|^2  ), \eta \rangle  +   c.c. )
		\\&          +
	 	  \resto ^{1,  2  }_{{r_{\iota}},\infty} (z, \eta )+
	 	  \resto ^{0,  2N+5 }_{{r_{\iota}},\infty} (z,\textbf{Z}  ,\eta )+   \Re \langle
  \textbf{S} ^{0,   2N+4 }_{{r_{\iota}},\infty} (z,\textbf{Z},\eta ) , {\eta} ^* \rangle +
		\\&
		+       \sum _{  d  = 2  }^{3}   \sum _{ i  = 1}^d \mathcal R ^{0,  3-d }_{r_{0} , \infty} (z, \eta ) \int _{\R ^3} G_{d i }^{(\iota )} ( x,z, \eta , \eta (x) )    \eta ^{  \otimes i}(x)\otimes ({\eta}^*(x)) ^{\otimes (d-i)} dx     \\&  +
 \sum _{ i+j  = 2}   \sum _{
		  |\textbf{m}|\le 1 }  \textbf{Z}^{\textbf{m}}    \langle G_{2\textbf{m} ij }^{(\iota )} ( z ),   \eta ^{  \otimes i}\otimes ({\eta}^*) ^{\otimes j}\rangle  + E _P( \eta) ,\end{aligned}
\end{equation}
where, for coefficients like in  Definition \ref{def:normal form}  for $(r ,m )=({r_{\iota}},\infty)$,
\begin{equation}  \label{eq:enexp41}
\begin{aligned}
&   Z ^{(\iota )} = \sum _{   \mathbf{m}  \in \mathcal{M}_{0}(\iota )  } \textbf{Z}^{\textbf{m}}   a_{  \textbf{m} }( |z _1|^2,...,|z _n|^2 )    + \sum _{j =1}^n	 ( \sum _{   \mathbf{m}  \in \mathcal{M}_{j}(\iota- 1)  } \overline{z}_j \textbf{Z}^{\textbf{m}}  \langle
  G_{j \textbf{m}}(|z_j|^2  ), \eta \rangle  +   \text{c.c.} ).
     \end{aligned}
\end{equation}
We have
$\resto ^{1,  2 }_{ r _{\iota}, \infty }  = \resto ^{1,  2 }_{ r _{2}, \infty }
 $ and
 $\resto ^{1,  2 }_{ r _{2}, \infty }
 (e^{\im \vartheta} z , e^{\im \vartheta} \eta )\equiv \resto ^{1,  2 }_{ r _{2}, \infty } (  z ,   \eta )  $.

 In particular we have for $\delta  _f:=  \delta  _{2 {N}+4} $ and for the $\delta _0$
 in Prop. \ref{prop:darboux},
   \begin{equation}\label{eq:diff1}
\begin{aligned} &
  \mathcal{F}^{(2 {N}+4)}(B_{\C^n}(0 , \delta  _f) \times ( B_{H^1}(0 , \delta  _f)\cap \mathcal{H}_c[0]) )\subset  B_{\C^n}(0 , \delta  _0) \times ( B_{H^1}(0 , \delta  _0)\cap \mathcal{H}_c[0])
\end{aligned}
\end{equation}
   with $ \mathcal{F}|_{B_{\C^n}(0 , \delta  _f) \times ( B_{H^1}(0 , \delta  _f)\cap \mathcal{H}_c[0])} $   a diffeomorphism between its domain and   an open neighborhood of the origin in
   $\C^n \times (H^1 \cap \mathcal{H}_c[0])$.

   Furthermore, for $r=r_0- 4N-10$
   there is a pair $\mathcal{R}^{1, 1} _{r, \infty}$ and  $\mathbf{S}^{1, 1} _{r, \infty}$ such that
for $(z',\eta ')=\mathcal{F}^{(2 {N}+4)}(z ,\eta  )$
we have
\begin{equation}\label{eq:diff2}
\begin{aligned} &  z'=z+ \mathcal{R}^{1, 1} _{r, \infty}(z,\mathbf{Z},\eta ), \, \quad  \eta'= \eta+ \mathbf{S} ^{1, 1} _{r, \infty}(z,\mathbf{Z},\eta ) .
\end{aligned}
\end{equation}
Furthermore, by taking  all the  $\delta  _\iota >0$ sufficiently small, we can assume that all the symbols in the proof, i.e. the symbols in \eqref{eq:diff2}
and the symbols in the expansions \eqref{eq:enexp40}, satisfy the estimates
of Definitions \ref{def:scalSymb} and \ref{def:opSymb} for $|z|<\delta  _\iota $ and $\|  \eta \| _{\Sigma_{-r(\iota)}}<\delta  _\iota$ for their respective
$\iota$'s.

\end{proposition}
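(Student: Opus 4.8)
The plan is to prove Proposition \ref{th:main} by a finite induction on $\iota$ running from $2$ to $2N+4$, carrying out the Birkhoff normal form construction exactly as in Sect.~5 of \cite{CM1} (see also \cite{Cu0}). The base case $\iota=1$ is Lemma \ref{lem:KExp1}, which already puts $K=E\circ\mathfrak{F}$ in the shape \eqref{eq:enexp40} with both discrete summations starting at $l=1$ (take $\mathfrak{F}^{(1)}:=\mathfrak{F}$, $Z^{(1)}$ void, $r_1=r_0-2$). At the $\iota$-th step one is handed $H^{(\iota-1)}=E\circ\mathfrak{F}^{(\iota-1)}$ in the form \eqref{eq:enexp40} with index $\iota-1$ and must exhibit a real $\chi_\iota$ of the form \eqref{eq:chi1} such that the time-one flow $\phi_\iota:=\phi^1_\iota$ of the Hamiltonian vector field $X_{\chi_\iota}$, computed with respect to the model form $\Omega_0$ via the formulas \eqref{eq:hamf1}, satisfies $H^{(\iota)}:=H^{(\iota-1)}\circ\phi_\iota$ and has the form \eqref{eq:enexp40} with index $\iota$. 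One then composes: $\mathfrak{F}^{(\iota)}=\mathfrak{F}\circ\phi_2\circ\cdots\circ\phi_\iota$, and \eqref{eq:diff1}--\eqref{eq:diff2} follow from composing the $2N+3$ near-identity diffeomorphisms with $\mathfrak{F}$.

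\textbf{Homological equation.} Writing $H^{(\iota-1)}=H_2+(\text{rest})$, isolate inside the second line of \eqref{eq:enexp40} the lowest-order non-normal-form monomials: the $\mathbf{Z}^{\mathbf m}a^{(\iota-1)}_{\mathbf m}$ with $\mathbf m\notin\mathcal{M}_0$ and the $\overline{z}_j\mathbf{Z}^{\mathbf m}\langle G^{(\iota-1)}_{j\mathbf m},\eta\rangle$ with $\mathbf m\notin\mathcal{M}_j$, of minimal weight. One seeks $\chi_\iota$ as in \eqref{eq:chi1} solving $\{H_2,\chi_\iota\}=-(\text{those monomials})+(\text{a normal-form remainder})$. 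Since $\Omega_0$ is block-diagonal in $(z,\eta)$, the equation decouples monomial by monomial. For each $\mathbf{Z}^{\mathbf m}$-monomial one divides by $\sum_{a,b}m_{ab}(e_a-e_b)$, which is nonzero precisely for $\mathbf m\notin\mathcal{M}_0$ (and, for weights $\le 2N+4$, $\mathcal{M}_0$ consists only of $|z_1|^{2m_1}\cdots|z_n|^{2m_n}$ by \eqref{h42} and Lemma \ref{lem:M0}); for each $\overline{z}_j\mathbf{Z}^{\mathbf m}\langle\,\cdot\,,\eta\rangle$-monomial one must invert $H-\big(\sum_{a,b}m_{ab}(e_a-e_b)-e_j\big)$ on the coefficient, which lies in $\mathcal{H}_c[0]$. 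On $\mathcal{H}_c[0]$ the self-adjoint $H$ has no eigenvalues and spectrum contained in $\{|\lambda|\ge\mathscr{M}\}$ by $(\mathrm{H2})$--$(\mathrm{H3})$, so this inversion is possible exactly when $\big|\sum_{a,b}m_{ab}(e_a-e_b)-e_j\big|<\mathscr{M}$, i.e. $\mathbf m\notin\mathcal{M}_j$; the borderline value $\mathscr{M}$ never occurs for weights $\le 2N+4$ by \eqref{h41} (Remark \ref{rem:H3}). The $b_{\mathbf m}$, $B_{j\mathbf m}$ so produced inherit the smoothness and symbol bounds required for \eqref{eq:chi1}, at the cost of a fixed loss of weights.

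\textbf{Lie-series re-expansion and bookkeeping.} One computes $H^{(\iota-1)}\circ\phi^1_\iota=\sum_{k\ge0}\frac1{k!}\{\cdots\{H^{(\iota-1)},\chi_\iota\},\cdots,\chi_\iota\}$: the $k=0,1$ contributions yield $H_2$, the new $Z^{(\iota)}$, and the untouched higher-order terms, while all remaining terms are iterated Poisson brackets of $\chi_\iota$ with objects of order $\ge\iota$ and hence of strictly higher order. These must be re-sorted into the six families of \eqref{eq:enexp40} (discrete polynomials, the two $\langle\,\cdot\,,\eta\rangle$-families, the two $\resto$/$\mathbf{S}$ remainders, the cubic-in-$\eta$ integral terms, and $E_P(\eta)$), using that the symbol classes of Definitions \ref{def:scalSymb}--\ref{def:opSymb} are closed under products and under Poisson brackets relative to $\Omega_0$. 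Shrinking $\delta_\iota\in(0,\delta_{\iota-1})$ so that $X_{\chi_\iota}$ generates a flow on $(-2,2)$ with values in the previous ball gives \eqref{eq:bsyst122} and, by the Lie series again, $\phi_\iota=\mathrm{id}+(\mathcal{R}^{1,1}_{r_\iota,\infty},\mathbf{S}^{1,1}_{r_\iota,\infty})$ in the sense of \eqref{eq:syst11}. The main obstacle — and essentially the only one, the argument being otherwise identical to \cite{CM1} — is this last bookkeeping: checking that at each step the $\resto^{1,2}_{r_2,\infty}$ remainder and the gauge-covariance are preserved and that none of the six term types degrades. Controlling this is precisely what forces the weight/derivative budget $r_\iota=r_0-2(\iota+1)$ and is the reason the enlarged symbol classes of Definitions \ref{def:scalSymb}--\ref{def:opSymb} are introduced.
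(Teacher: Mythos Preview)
Your proposal is correct and follows exactly the route the paper takes: the paper does not give an independent proof of this proposition but states that it is proved in \cite{CM1} (Sect.~5), and your sketch---induction on $\iota$ starting from Lemma~\ref{lem:KExp1}, solving the homological equation monomial-by-monomial using the nonresonance conditions (H4) and the spectral gap of $H$ on $\mathcal{H}_c[0]$, then re-expanding via the Lie series and tracking the symbol classes---is a faithful outline of that argument. The only point worth flagging is the off-by-one bookkeeping between the index $\mathbf{l}=\iota$ in $\chi_\iota$ and the level $l$ being eliminated in \eqref{eq:enexp40}, which you wisely leave implicit; once that convention is fixed the rest is exactly as you describe.
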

\qed

\section{Dispersion} \label{sec:disp}

We apply Proposition \ref{th:main},  set $  \mathcal{H} = H^{(2N+4 )}$ so that for some $r\in \N$ which we can take arbitrarily large,
\begin{equation}  \label{eq:enexp401}    \begin{aligned} &
  \mathcal{H} (z,\eta )=H_2(z,\eta )  +\sum _{j=1}^{n} \lambda _j (|z_j| ^{2}  ) + \mathcal{Z}  (z,\mathbf{Z},\eta )   +\resto , \end{aligned}
\end{equation}
with $ \mathcal{Z}  (z,\mathbf{Z},\eta )=  \mathcal{Z}^{(2N+4 )}  (z,\mathbf{Z},\eta )     $
and
\begin{equation}  \label{eq:rest1}  \begin{aligned} & \resto =
	 	  \resto ^{1,  2  }_{{r_{\iota}},\infty} (z, \eta )+
	 	  \resto ^{0,  2N+5 }_{{r_{\iota}},\infty} (z,\textbf{Z}  ,\eta )+   \Re \langle
  \textbf{S} ^{0,   2N+4 }_{{r_{\iota}},\infty} (z,\textbf{Z},\eta ) , {\eta} ^* \rangle +
		\\&
		+       \sum _{  d  = 2  }^{3}   \sum _{ i  = 1}^d \mathcal R ^{0,  3-d }_{r_{0} , \infty} (z, \eta ) \int _{\R ^3} G_{d i }^{(\iota )} ( x,z, \eta , \eta (x) )    \eta ^{  \otimes i}(x)\otimes ({\eta}^*(x)) ^{\otimes (d-i)} dx     \\&  +
 \sum _{ i+j  = 2}   \sum _{
		  |\textbf{m}|\le 1 }  \textbf{Z}^{\textbf{m}}    \langle G_{2\textbf{m} ij }^{(\iota )} ( z ),   \eta ^{  \otimes i}\otimes ({\eta}^*) ^{\otimes j}\rangle  + E _P( \eta) . \end{aligned}
\end{equation}
Our ambient space is $H^{4} (\R ^3, \C ^4)$.  So under (H1) the functional $u\to
g(u\overline{u}) \beta  u$ is locally Lipschitz and    \eqref{eq:NLS},
 \eqref{eq:NLSham} and the equivalent  system  with Hamiltonian  $\mathcal{H} (z,\eta )$ and symplectic form $\Omega _0$,
are locally
well posed, see pp. 293--294 volume III  \cite{taylor}.

\noindent By standard arguments, see  \cite{CM1},  Theorem \ref{thm:mainbounds} below implies Theorem \ref{thm:small en}.
      \begin{theorem}[Main Estimates]\label{thm:mainbounds} Consider the    $ \epsilon   $  of Theorem \ref{thm:small en}.
      Then   there exists  $\epsilon _0>0$     and a
$C _{0}>0$ such that  if $ \epsilon <\epsilon _0$ then   for $I= [0,\infty )$     we have
the following inequalities:
\begin{align}
&   \|  \eta \| _{L^p_t( I,B^{ 4 -\frac{2}{p}} _{q,2} (\R ^3, \C ^4))}\le
  C   \epsilon ,\text{ for all the pairs $(p,q)$ as of \eqref{eq:numbers1}},
  \label{Strichartzradiation} \\& \|  \eta \| _{L^2_t( I,H^{ 4 ,-10}(\R ^3, \C ^4))}\le
  C \epsilon,
  \label{Smootingradiation}\\& \|  \eta \| _{L^2_t( I,L^{ \infty}(\R ^3, \C ^4))}\le
  C \epsilon,
  \label{endStrich}
\\& \| z_j \mathbf{Z}  ^{\mathbf{m}} \| _{L^2_t(I)}\le
  C   \epsilon, \text{ for all   $(j,\mathbf{m})$
  with  $\mathbf{m} \in \mathcal{M}_j (2N+4)  $,} \label{L^2discrete}\\& \| z _j  \|
  _{W ^{1,\infty} _t  (I )}\le
  C   \epsilon, \text{ for all   $j\in \{ 1, \dots ,  {n}\}$ } \label{L^inftydiscrete}
   .
\end{align}
Furthermore,  there exists  $\rho  _+\in [0,\infty )^n$ such that  there exist  a   $j_0$  with $\rho_{+j}=0$ for $j\neq j_0$
and   there exists $\eta _+\in L^\infty $   such that
  $| \rho  _+   | \le C   \epsilon $ and   $\|  \eta _+\| _{L^\infty }\le C    \epsilon $  such that
\begin{align}&     \lim_{t\to +\infty}\| \eta (t,x)-
e^{-\im tD_{\mathscr{M}} }\eta  _+ (x)   \|_{L^\infty _x(\R ^3, \C ^4)}=0 ,\label{eq:small en31} \\&
  \lim_{t\to +\infty} |z_j(t)|  =\rho_{+j}  .\label{eq:small en32}
\end{align}

\end{theorem}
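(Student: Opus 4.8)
The plan is to establish Theorem \ref{thm:mainbounds} by a continuity (bootstrap) argument performed in the coordinates furnished by Proposition \ref{th:main}, in which the Hamiltonian has the normal form \eqref{eq:enexp401}. First I would record the Hamilton equations for $(z,\eta)$ relative to $\Omega_0$: by \eqref{eq:hamf1} they read $\im\dot z_j=(1+\varpi_j(|z_j|^2))\,\partial_{\overline{z}_j}\mathcal{H}$ and $\im\dot\eta=\nabla_{\eta^*}\mathcal{H}=H\eta+\mathcal{G}$, where $\mathcal{G}$ collects the resonant source terms $\sum_{(\mu,\nu)\in M_{min}}\overline{z}^\mu z^\nu G^*_{\mu\nu}$ coming from $\mathcal{Z}$ together with the pieces produced by differentiating the remainder $\resto$ of \eqref{eq:rest1}; since all normal-form monomials start at $|\mathbf{m}|\ge 1$, the $z$-equations also give $\dot z_j+\im e_j z_j=O(\epsilon^2)$. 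Given the local well-posedness in $H^4$ recalled after \eqref{eq:rest1} and the norm equivalence \eqref{eq:coo11}, it suffices to work on a maximal interval $[0,T)$ of smallness of $\|u(t)\|_{H^4}$, assume \eqref{Strichartzradiation}--\eqref{L^inftydiscrete} with $C_0$ replaced by $2C_0$, and recover them with $C_0$; standard continuity then yields $T=+\infty$.

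For the dispersive bounds \eqref{Strichartzradiation}--\eqref{endStrich} I would perform, following \cite{BP2,SW3}, the substitution $\eta=g-Y$ of \eqref{intstr12}, with $Y$ built from the limiting-absorption resolvents $R_H^+(\mathbf{e}\cdot(\beta-\alpha))$ applied to the resonant sources $G^*_{\alpha\beta}$; by (H3) these resolvents are well defined at the relevant non-threshold energies, so $\|Y(t)\|_{\Sigma_4}\lesssim|z(t)|^2$ while $\dot Y$ is controlled through the $z$-equations by the discrete monomials of \eqref{L^2discrete}. Using the resolvent identity and $\frac{d}{dt}(\overline{z}^\alpha z^\beta)=\im(\alpha-\beta)\cdot\mathbf{e}\,\overline{z}^\alpha z^\beta+O(\epsilon^{|\alpha|+|\beta|+1})$, the slowly-decaying part of the source cancels and $g$ solves $\im\dot g=Hg+\mathcal{G}_{\mathrm{fast}}$ with forcing of remainder type --- cubic-or-higher, or proportional to the FGR monomials --- whose dual-Strichartz / dual-smoothing norm is $\lesssim\epsilon+\sum\|z^{\mu+\nu}\|_{L^2_t}^2$, as in \eqref{intstr1}. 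Applying to $e^{-\im tH}P_c$ the Strichartz estimates for the admissible pairs of \eqref{eq:numbers1}, the Kato-type smoothing estimate for \eqref{Smootingradiation}, the endpoint $L^2_tL^\infty_x$ estimate for \eqref{endStrich}, and the weighted-$L^2\to L^\infty$ decay bound --- all available from \cite{boussaid,Boussaid2,boussaidcuccagna} --- and inserting these into Duhamel's formula, the quadratic-or-higher character of $\mathcal{G}_{\mathrm{fast}}$ closes \eqref{Strichartzradiation}--\eqref{endStrich} for $\epsilon_0$ small.

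The $L^2_t$-bound \eqref{L^2discrete} is the Nonlinear Fermi Golden Rule. I would compute $\frac{d}{dt}\sum_j|z_j|^2=2\sum_j(1+\varpi_j)\,\Im(\overline{z}_j\partial_{\overline{z}_j}\mathcal{H})$ from the equations of motion; gauge invariance makes the purely-$z$ normal-form terms contribute nothing, and substituting $\eta=g-Y$ in the remaining terms and discarding the $g$-contribution (higher order, by the Strichartz bounds just obtained) produces exactly the leading expression of \eqref{fgrintr}, namely $-2\pi\sum_{|(\mu-\nu)\cdot\mathbf{e}|>\mathscr{M}}|z^\mu\overline{z}^\nu|^2\langle\delta(H-(\nu-\mu)\cdot\mathbf{e})G^*_{\mu\nu},G_{\mu\nu}\rangle$ modulo time-integrable or higher-order errors. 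By Lemma \ref{plemeljDir} and hypothesis (H5) every coefficient here is strictly positive; this is the one place where the argument genuinely departs from the NLS case, since the indefiniteness of $D_{\mathscr{M}}$ and the possible presence of eigenvalues of opposite sign must be handled in the spirit of \cite{Cu3} so as to guarantee that the pairs $(\mu,\nu)\in M_{min}$ enter with a definite sign. Integrating over $[0,t]$ then gives \eqref{intstr2}, i.e. \eqref{L^2discrete}, once the errors are absorbed into the bootstrap, and \eqref{L^inftydiscrete} follows at once from $\dot z_j+\im e_j z_j=O(\epsilon^2)$.

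Finally I would read off the asymptotics \eqref{eq:small en31}--\eqref{eq:small en32}. From \eqref{L^2discrete} together with the uniform bound on $\dot z_j$ from \eqref{L^inftydiscrete}, every monomial $z_j\mathbf{Z}^{\mathbf{m}}$ with $\mathbf{m}\in\mathcal{M}_j(2N+4)$ tends to $0$; since for each pair $j\neq k$ there is such a monomial involving a positive power of both $z_j$ and $z_k$, this forces $|z_j(t)|\to 0$ for all $j$ except at most one $j_0$, while $|z_{j_0}(t)|^2$ converges since its derivative is integrable by \eqref{fgrintr}, giving $\rho_+$ with $\rho_{+j}=0$ for $j\neq j_0$ and $|\rho_+|\lesssim\epsilon$, hence \eqref{eq:small en32}. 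For $\eta$: $\mathcal{G}_{\mathrm{fast}}$ is integrable in time against the dispersive decay, so $e^{\im tH}g(t)$ converges and $\|g(t)-e^{-\im tH}g_+\|_{L^\infty}\to 0$; combined with the existence and asymptotic completeness of the wave operators for $(H,D_{\mathscr{M}})$ on $\mathcal{H}_c[0]$ (again from \cite{boussaid,Boussaid2}) this produces $\eta_+$ with $\|\eta_+\|_{L^\infty}\lesssim\epsilon$ and \eqref{eq:small en31}. Setting $\widetilde\eta:=g$ and $A:=-Y$, one has $\eta=\widetilde\eta+A$ with $\widetilde\eta$ satisfying \eqref{Strichartzradiation} and $A(t,\cdot)\in\Sigma_4$, $\|A(t)\|_{\Sigma_4}\to 0$, because every monomial entering $Y$ couples at least two distinct eigenvalues --- the single-mode resonances having been cancelled in Proposition \ref{th:main} --- so $Y$ dies out once all but one discrete mode has. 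The principal obstacle is the dispersive half of the argument: one needs the full package of Strichartz, Kato-smoothing and endpoint $L^2_tL^\infty_x$ estimates for $e^{-\im tH}P_c$ with a potential carrying eigenvalues --- the point where NLD departs from NLS --- together with the sign-definiteness of the Fermi Golden Rule coefficients under the indefinite $D_{\mathscr{M}}$, which is precisely where the \cite{Cu3}-type modification is required.
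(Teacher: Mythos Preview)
Your overall architecture matches the paper's: a bootstrap (Proposition~\ref{prop:mainbounds}), Strichartz/smoothing estimates on $\eta$ feeding into \eqref{4.5}, a weighted-$L^2$ bound on the auxiliary variable $g$, the FGR computation for the discrete modes, and finally the scattering asymptotics. A few points of execution are off, however.

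First, you have the roles of $\eta$ and $g$ reversed in the dispersive step. The paper proves \eqref{Strichartzradiation}--\eqref{endStrich} directly on $\eta$ from the equation \eqref{eq:eq f}, obtaining \eqref{4.5} with the \emph{linear} dependence $\|z^\mu\overline z^\nu\|_{L^2_t}$ (not squared). The decomposition $g=\eta+Y$ is used only to get the weighted bound of Lemma~\ref{lem:bound g}, which is then fed into the discrete-mode equations. Your route --- Strichartz on $g$, then transfer to $\eta=g-Y$ --- does not work as stated, because $Y$ involves $R_H^+(\lambda)G^*_{\alpha\beta}$, which lies only in $L^{2,-\tau}$ (outgoing waves, $|x|^{-1}$ decay), not in the Besov spaces $B^{4-2/p}_{q,2}$.

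Second, in the FGR step you skip a genuine piece of work: after substituting $\eta=g-Y$ into \eqref{eq:FGR01} one obtains $X_j(M_{min})$ of \eqref{eq:XN}, which still contains oscillating cross terms with $(\alpha,\beta)\neq(\mu,\nu)$. The paper removes these by the further change of variables $z\mapsto\zeta$ of \eqref{eq:FGR21} (a second, finite-dimensional normal form), after which Lemma~\ref{lem:comb-1} forces $(\alpha,\beta)=(\mu,\nu)$ and the sign-definite structure of \eqref{eq:FGR25} emerges. Your phrase ``modulo time-integrable or higher-order errors'' hides this step.

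Third, the asymptotics \eqref{eq:small en31}: the paper does not invoke wave operators, but writes $\im\dot\eta=D_{\mathscr{M}}\eta+V\eta+\mathbb{B}$ and shows directly that $e^{\im D_{\mathscr{M}}t}\eta(t)$ is Cauchy in $H^4$, using $V\eta+\mathbb{B}\in L^1_tH^4+L^2_tH^{4,10}$. Your wave-operator route is equivalent in spirit, but the $H^4$ statement is what is actually needed for Theorem~\ref{thm:small en}.
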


By an elementary continuation argument (see \cite{boussaidcuccagna, CM1} or \cite{So}, end of the proof of Theorem 2.1, Sect. II), the estimates
\eqref{Strichartzradiation}--\eqref{L^inftydiscrete} for  $I= [0,\infty )$ are a consequence of the following proposition.

\begin{proposition}\label{prop:mainbounds} There exist  a  constant $c_0>0$  such that
for any  $C_0>c_0$ there is a value    $\epsilon _0= \epsilon _0(C_0)   $ such that   if   the inequalities  \eqref{Strichartzradiation}--\eqref{L^inftydiscrete}
hold  for $I=[0,T]$ for some $T>0$, for $C=C_0$  and for $0< \epsilon < \epsilon _0$,
then in fact for $I=[0,T]$  the inequalities  \eqref{Strichartzradiation}--\eqref{L^inftydiscrete} hold  for   $C=C_0/2$.
\end{proposition}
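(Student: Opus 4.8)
The plan is to run a continuity (bootstrap) argument of the kind carried out in \cite{CM1,bambusicuccagna,boussaidcuccagna}: assuming \eqref{Strichartzradiation}--\eqref{L^inftydiscrete} on $I=[0,T]$ with constant $C_0$, one upgrades them to $C_0/2$; global existence and the eventual passage to $I=[0,\infty)$ then follow from the local well posedness noted after \eqref{eq:rest1} and the conservation of $Q(u)=\|u\|_{L^2}^2$. Throughout one works in the Birkhoff coordinates of Proposition \ref{th:main}, where the Hamiltonian is \eqref{eq:enexp401} and the symplectic form is $\Omega_0$, so that by \eqref{eq:hamf1} the equations for $z$ and $\eta$ have the schematic form \eqref{intstr0} and \eqref{intstr11} together with a remainder built from \eqref{eq:rest1}: for the resonant pairs $(\mu,\nu)$ (those with $|(\nu-\mu)\cdot\mathbf e|>\mathscr M$) the $\eta$-equation carries the coherent source $\sum\overline z^\mu z^\nu G_{\mu\nu}^*$, with $G_{\mu\nu}\in\mathcal H_c[0]\cap\Sigma_r$ for all $r$, and all remaining terms in both equations are $\mathcal R$- or $\mathbf S$-symbols that are at least quadratic in $(|\mathbf Z|,\|\eta\|_{\Sigma_{-K}})$ or of order $|z|^2$. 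The simplest bound, \eqref{L^inftydiscrete}, is essentially unconditional: from $Q(u(t))=Q(u(0))$ one gets $|z(t)|\le c_1\epsilon$ and $\|\eta(t)\|_{L^2}\le c_1\epsilon$ on $I$ with $c_1$ absolute, and feeding this into the $z$-equation (and using $\|\eta\|_{\Sigma_{-K}}\lesssim\|\eta\|_{L^2}$) yields $\|\dot z_j+\im e_j z_j\|_{L^\infty_t(I)}\le c_2\epsilon^2$, hence \eqref{L^inftydiscrete} and the second line of \eqref{eq:small en2}.

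For the dispersive estimates \eqref{Strichartzradiation}--\eqref{endStrich} I would use the Fermi Golden Rule substitution of \cite{BP2,SW3}, writing $\eta=g-Y$ with $Y$ as in \eqref{intstr12}; since $G_{\mu\nu}\in\mathcal H_c[0]$ and $R_H^{+}$ preserves $\mathcal H_c[0]$, also $g\in\mathcal H_c[0]$. By (H4) and Lemma \ref{lem:comb1}(1) every resonant pair satisfies $|\mu|+|\nu|\ge 2N+3$, so $Y$ is a sum of very-high-degree monomials in $z$ times fixed profiles bounded in the weighted Dirac spaces where $R_H^{+}$ acts, by the limiting absorption principle of \cite{boussaid,Boussaid2}. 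Differentiating $Y$, using the $z$-equation and the identity $HR_H^{+}(\lambda)=I+\lambda R_H^{+}(\lambda)$, the coherent source cancels, and $g$ satisfies $\im\dot g=Hg+\text{(source)}$ with a source made of the remainder $\rho$ of \eqref{eq:rest1} and of terms proportional to $\dot z_j+\im e_j z_j$ times high-degree monomials, hence at least quadratic in the bootstrapped quantities. One then applies Duhamel for $g$ and invokes the linear Strichartz, smoothing and $L^2_tL^\infty_x$ estimates for $e^{-\im tH}P_c$ established in \cite{boussaid,Boussaid2,boussaidcuccagna} (Sect.\ \ref{subsec:prop}): the homogeneous contribution is $\lesssim\epsilon$, the genuinely nonlinear source terms ($E_P(\eta)$ and the $\int G_{di}\,\eta^{\otimes i}\otimes(\eta^*)^{\otimes(d-i)}$ pieces) are controlled by the already assumed bounds on $\eta$, and the polynomial-in-$z$ source is reduced, via Lemmas \ref{lem:comb-1}, \ref{lem:M0} and \ref{lem:comb1}, to products of two monomials controlled by \eqref{L^2discrete} times factors $\lesssim\epsilon$, while $\|Y\|$ in the relevant norms is $\lesssim\|z\|_{L^\infty_t}^{2N+3}$; altogether the inhomogeneous part is $\le P(C_0)\epsilon^2$ for a fixed polynomial $P$, which gives \eqref{Strichartzradiation}--\eqref{endStrich} with constant $C_0/2$.

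To bootstrap the Nonlinear Fermi Golden Rule estimate \eqref{L^2discrete} I would compute $\frac{d}{dt}\sum_{j=1}^n|z_j|^2$ from the $z$-equation, substitute $\eta=g-Y$ in the normal-form terms, and isolate the diagonal contribution obtained by pairing $\overline z^\mu z^\nu G_{\mu\nu}^*$ inside $Y$ against $G_{\mu\nu}$; since we are in Darboux coordinates this equals $-2\pi\sum_{(\mu,\nu)}|z^\mu\overline z^\nu|^2\,\langle\delta(H-(\nu-\mu)\cdot\mathbf e)\,G_{\mu\nu}^*,G_{\mu\nu}\rangle$ up to errors, and by the Sokhotski--Plemelj formula for $H$ (Lemma \ref{plemeljDir}) the bracket is a positive multiple of $\|\widehat G_{\mu\nu}|_{S_{\mu\nu}}\|_{L^2(S_{\mu\nu})}^2$, which by (H5) is $\ge c_*>0$; it is here that a modification in the spirit of \cite{Cu3} is needed to accommodate eigenvalues $e_j$ of different signs. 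The error terms involve either $g$, estimated through \eqref{Smootingradiation}, \eqref{endStrich} and Cauchy--Schwarz in $t$, or off-diagonal products of monomials, and the degree bound $|\mu|+|\nu|\ge 2N+3$ together with Lemma \ref{lem:comb1} forces each of them to integrate to $\le P(C_0)\epsilon^{N+3}$ on $I$. Integrating in time then yields
\begin{equation*}
\sum_{j=1}^n|z_j(t)|^2+c_*\sum_{(\mu,\nu)}\int_0^t|z^\mu\overline z^\nu|^2\,ds\le\sum_{j=1}^n|z_j(0)|^2+P(C_0)\epsilon^{N+3}\le\epsilon^2+P(C_0)\epsilon^{N+3},
\end{equation*}
and, since every monomial occurring in \eqref{L^2discrete} is one of the $z^\mu\overline z^\nu$ times a factor $|z_j|^{2k}\lesssim\epsilon^{2k}$, this gives \eqref{L^2discrete} with constant $C_0/2$.

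In all of the above every controlled quantity has been bounded by $c_0\epsilon+P(C_0)\epsilon^2$ with $c_0$ absolute and $P$ fixed, so taking $c_0$ as the constant of the statement, then any $C_0>2c_0$, and finally $\epsilon_0=\epsilon_0(C_0)$ small enough that $P(C_0)\epsilon_0<C_0/2-c_0$ closes the continuity argument and proves Proposition \ref{prop:mainbounds}. I expect the main obstacle to be the Nonlinear Fermi Golden Rule step: extracting the strictly negative dissipation with the correct sign when the eigenvalues have mixed signs — this is exactly where \cite{Cu3}, hypothesis (H5) and Lemma \ref{plemeljDir} are indispensable — together with the careful bookkeeping of the limiting absorption resolvent $R_H^{+}$, which is bounded only between weighted Dirac spaces and whose mapping properties are taken from \cite{boussaid,Boussaid2,boussaidcuccagna}; the Darboux/Birkhoff machinery, by contrast, is inherited verbatim from \cite{CM1}.
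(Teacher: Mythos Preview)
Your overall architecture is right, but there is one concrete error that propagates through the whole argument: you invoke Lemma~\ref{lem:comb1}(1) to assert that \emph{every} resonant pair $(\mu,\nu)\in M$ satisfies $|\mu|+|\nu|\ge 2N+3$. Lemma~\ref{lem:comb1}(1) says the \emph{opposite} implication: if $\sum_{i<j}m_{ij}>N$ then the combination is resonant. Resonant pairs can and do occur at much lower degree (the minimal degree depends on the spacing of the $e_j$'s; $N$ is only an upper bound for the threshold). Consequently $Y$ in \eqref{eq:def g} is \emph{not} a sum of very-high-degree monomials, and the bound $\|Y\|\lesssim\|z\|_{L^\infty_t}^{2N+3}$ is false.

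This breaks your route to \eqref{Strichartzradiation}--\eqref{endStrich}. Even setting the degree issue aside, $R_H^{+}(\lambda)G_{\alpha\beta}^*$ decays only like $|x|^{-1}$ (see \eqref{eq:flat R1}), so $Y(0)\notin L^2$ and hence $g(0)=\eta(0)+Y(0)\notin L^2$: you cannot run Duhamel/Strichartz on the $g$-equation. In the paper the $g$-variable is used only for the weighted smoothing bound $\|g\|_{L^2_tL^{2,-S}_x}\lesssim\epsilon$ (Lemma~\ref{lem:bound g}, via Lemma~\ref{lem:lemg9}), while the Strichartz/smoothing/endpoint bounds on $\eta$ are obtained \emph{directly} from the $\eta$-equation, treating the coherent source $\sum\overline z^{\mu}z^{\nu}G_{\mu\nu}^*$ as an $L^2_tH^{4,\tau_0}_x$ forcing; this produces the conditional inequality \eqref{4.5}, which is then closed by \eqref{eq:crunch0}.

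The same degree misconception undermines your FGR step. The off-diagonal cross-terms coming from $X_j(M_{min})$ are \emph{not} small by a power-counting argument; they are oscillatory. The paper removes them by a second (algebraic) normal form $\zeta_j=z_j+T_j(z)$ in \eqref{eq:FGR21}, and the residual terms with $(\mu-\nu)\cdot\mathbf e=(\alpha-\beta)\cdot\mathbf e$ are shown to be diagonal by Lemma~\ref{lem:comb-1}(2). The passage from $M$ to $M_{min}$ (the term $\mathcal B_j$ in \eqref{eq:FGRX}) is what actually uses a degree argument, via the definition of $M_{min}$, not Lemma~\ref{lem:comb1}. Your use of $Q$-conservation for the $L^\infty_t$-bound on $z$ is harmless but unnecessary: in the paper this bound is a byproduct of integrating \eqref{eq:FGR25}.
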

\proof  By Lemma \ref{lem:conditional4.2}, there exists a fixed $c_1>0$ such that given any
$C_0$ if $\epsilon _0>0$  is small enough we have for all admissible pairs $(p,q)$, for the ${M}$ of Definition \ref{def:setM} and for a preassigned $\tau _0>1$,
\begin{equation}\label{4.5}
    \|   \eta  \| _{L^p_t( [0,T ],B^{  4-\frac{2}{p}} _{q,2})\cap L^2_t([0,T],H^{  4 ,-\tau _0}_x)\cap  L^2_t( [0,T],L^{
\infty}_x)}\le   c_1  \epsilon   + c_1    \sum _{(\mu , \nu )\in {M}  }|   z ^{\mu}\overline{z}  ^{\nu}  | _{L^2_t( 0,T  )}.
\end{equation}
The aim of Sect.  \ref{subsec:prop} is to prove that   there exists a fixed $c_2>0$ such that if $\epsilon _0>0$ for any  given any
$C_0$ if $\epsilon _0>0$  is small enough we have \begin{equation} \label{eq:crunch0}\begin{aligned}&  \sum _j   \| z
_j \|^2_{L^\infty_t( 0,T  )}  +\sum _{ ( \mu , \nu )
   \in M}  \| z^{\mu +\nu } \| _{L^2(0,T)}^2\le c_2
\epsilon ^2+ c_2 C_0\epsilon ^2.\end{aligned}
\end{equation}
This implies that we can replace $C_0$ with $C_1$ with     $C_1  = \sqrt{c_2(1+C_0)} \le   2\sqrt{c_2 C_0 }$. We have $ C_1 \le C_0/2$ if $C_0\ge c_0:=16c_2 $. The proof is now completed.\qed

 \subsection{Completion of the proof of Proposition \ref{prop:mainbounds}} \label{subsec:prop}
\subsubsection{Bounds on the continuous modes} \label{subsec:cont}
We start this section by listing some results known in the literature, then we prove some auxiliary tools required for the proof of Proposition \ref{prop:mainbounds}. The following theorem is Theorem 1.1 \cite{boussaid}
\begin{theorem}
\label{Thm:bouss1}  Under hypotheses (H2)--(H3)  for $s>5/2$ and any $k\in \R$ we have
\begin{equation}\label{eq:bouss11}
   \| e^{\im H t}P_c u_0\| _{H^{k,-s}(\R ^3, \C ^4)}\le C _{s,k}\langle t \rangle ^{-\frac{3}{2}} \|  P_c u_0\| _{H^{k, s}(\R ^3, \C ^4)}.
\end{equation}

\end{theorem}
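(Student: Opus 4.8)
The estimate \eqref{eq:bouss11} is Theorem~1.1 of \cite{boussaid}, so in the present paper one merely quotes it; what follows is a sketch of how such a dispersive bound is obtained. The plan is to pass to the spectral representation of $H$ and thereby reduce \eqref{eq:bouss11} to a one-dimensional oscillatory integral estimate. Under (H2)--(H3) one has $\sigma_{ac}(H)=(-\infty,-\mathscr{M}]\cup[\mathscr{M},\infty)$ with no eigenvalue embedded in this set and no singular continuous spectrum, so $e^{\rmi tH}P_c$ is the a.c. part of the evolution and Stone's formula gives
\[
e^{\rmi tH}P_c u_0=\frac{1}{2\pi\rmi}\int_{|\lambda|\ge\mathscr{M}}e^{\rmi t\lambda}\,\bigl(R_H(\lambda+\rmi 0)-R_H(\lambda-\rmi 0)\bigr)u_0\,d\lambda,\qquad R_H(\zeta):=(H-\zeta)^{-1}.
\]
Thus \eqref{eq:bouss11} becomes a statement about the Fourier transform in $\lambda$ of the operator-valued amplitude $\lambda\mapsto R_H(\lambda\pm\rmi 0)$, viewed as an element of $B\bigl(H^{k,s},H^{k,-s}\bigr)$.

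The first ingredient is control of this amplitude. For the free operator one has the factorization $R_{D_\mathscr{M}}(\zeta)=(D_\mathscr{M}+\zeta)\,(-\Delta+\mathscr{M}^2-\zeta^2)^{-1}$, so its kernel is explicit in terms of the Yukawa/Helmholtz kernel $e^{-\sqrt{\mathscr{M}^2-\zeta^2}\,|x-y|}$ divided by $|x-y|$. From this and the Schwartz decay of $V$ in (H2) one gets, via the resolvent identity $R_H=(1+R_{D_\mathscr{M}}V)^{-1}R_{D_\mathscr{M}}$, the compactness of $VR_{D_\mathscr{M}}(\lambda\pm\rmi 0)$ on weighted $L^2$, and the Fredholm alternative, the limiting absorption principle: for $s>1/2$ the map $\lambda\mapsto R_H(\lambda\pm\rmi 0)\in B(H^{k,s},H^{k,-s})$ is well defined, continuous on $\{|\lambda|>\mathscr{M}\}$ and $C^\infty$ away from the thresholds, precisely because (H3) rules out eigenvalues and resonances in $[\mathscr{M},\infty)\cup(-\infty,-\mathscr{M}]$ and at $\pm\mathscr{M}$. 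Near a threshold $\tau=\pm\mathscr{M}$ one needs the finer expansion of $R_H(\zeta)$ in powers of $\kappa:=\sqrt{\mathscr{M}^2-\zeta^2}$; the absence of a threshold eigenvalue and of a threshold resonance forces the potentially singular leading terms of this expansion to vanish, so that the spectral density $\frac{1}{2\pi\rmi}(R_H(\lambda+\rmi 0)-R_H(\lambda-\rmi 0))$, in weighted operator norm, vanishes like $(|\lambda|-\mathscr{M})^{1/2}$ as $|\lambda|\downarrow\mathscr{M}$ with the remaining factor of class $C^{2}$ in $\lambda$. This last requirement is what forces $s>5/2$: one needs two $\lambda$-derivatives of the resolvent in the weighted norm beyond the basic limiting-absorption threshold $s>1/2$, each derivative costing one power of the weight.

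With this in hand the time decay follows from standard one-dimensional oscillatory integral lemmas. For $|t|\le1$ the bound is immediate from the uniform resolvent bounds. For $|t|\ge1$, introduce a smooth partition of unity in $\lambda$ separating neighborhoods of $\pm\mathscr{M}$ from the rest. On the pieces bounded away from the thresholds one may integrate by parts twice in $\lambda$, using the $C^{2}$ control of the amplitude and the decay of its $\lambda$-derivatives at infinity, or equivalently pass to the distorted Fourier side of Section \ref{app:Pformula}, where the spectral density is carried by the shell $|\xi|^2+\mathscr{M}^2=\lambda^2$ and the phase $t\sqrt{|\xi|^2+\mathscr{M}^2}$ has nondegenerate Hessian away from $\xi=0$; either way one gains $\langle t\rangle^{-3/2}$, indeed faster. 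On the threshold pieces the amplitude is $(|\lambda|-\mathscr{M})^{1/2}$ times a $C^{2}$ function supported near $\tau$, and the Fourier transform of such an object decays exactly like $\langle t\rangle^{-3/2}$; summing the finitely many pieces yields \eqref{eq:bouss11}.

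The main obstacle is the threshold analysis at $\tau=\pm\mathscr{M}$. This is simultaneously the point where the free Dirac resolvent has a branch point, where the oscillatory integral receives an endpoint rather than an interior stationary-phase contribution, and where the qualitative hypothesis (H3) — no threshold eigenvalue, no threshold resonance — must be converted into the quantitative statement that the spectral density vanishes to the precise fractional order $1/2$ and carries the requisite number of bounded $\lambda$-derivatives. Everything else — the short-time bound, the away-from-threshold stationary phase, and the passage from $R_{D_\mathscr{M}}$ to $R_H$ by Fredholm theory — is by now routine.
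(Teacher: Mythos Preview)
Your proposal is correct: the paper itself gives no proof of this theorem and simply cites it as Theorem~1.1 of \cite{boussaid}, exactly as you note in your first sentence. The sketch you provide of the spectral/oscillatory-integral argument behind \eqref{eq:bouss11} is additional content not present in the paper, but it accurately reflects the strategy of \cite{boussaid}.
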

\qed

\noindent The subsequent  is  Theorem 1.1 in  \cite{Boussaid2}.
\begin{theorem}[Smoothness estimates]
\label{Thm:Smoothness} For any $\tau > 1 $ and $k\in \R$
$\exists $ $C$ such that
  \begin{align} &%
 \|  e^{-\rmi t H}P_c\psi \|_{L_t^ 2(\R,H^{k, -\tau}(\R ^3, \C ^4))} \leq
C \| P_c\psi \|_{H^{k }(\R ^3, \C ^4)},\label{eq:Smooth1}
 \\&
  \| \int_{\R}e^{\rmi t H}  P_cF(t)\;dt \|_{H^k} \leq
C \| P_c F \|_{L_t^2(\R,H^{k,  \tau}(\R ^3, \C ^4))},\label{eq:Smooth2}
 \\&
 \|\int_{t'<t}  e^{-\rmi(t-t')
H}  P_c F(t')\;dt'  \|_{L_t^2(\R,H^{k, -\tau}(\R ^3, \C ^4))} \leq C \| P_cF
\|_{L_t^2(\R,H^{k,  \tau}(\R ^3, \C ^4))}.\label{eq:Smooth3}
  \end{align}
\end{theorem}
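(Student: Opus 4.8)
The plan is to deduce all three estimates from a single uniform resolvent bound — the limiting absorption principle (LAP) for $H$ on $\mathrm{Ran}\,P_c$ — and then use Kato's theory of $H$-smooth operators; this is exactly what is carried out in \cite{Boussaid2}, so in the paper one simply quotes it, but the structure of the argument is as follows. First I would establish that for every $\tau>1$ and $k\in\R$ the boundary values $R_H^\pm(\lambda):=\lim_{\varepsilon\downarrow0}(H-\lambda\mp\im\varepsilon)^{-1}$ exist in $\mathcal B\big(H^{k,\tau}(\R^3,\C^4),H^{k,-\tau}(\R^3,\C^4)\big)$ for all $\lambda$ in the continuous spectrum $(-\infty,-\mathscr M]\cup[\mathscr M,\infty)$, with
\[
\sup_{\lambda}\ \big\|\,\langle x\rangle^{-\tau}\langle D\rangle^{k}\,R_H^\pm(\lambda)P_c\,\langle D\rangle^{k}\langle x\rangle^{-\tau}\big\|_{L^2_x\to L^2_x}<\infty .
\]
Away from the thresholds $\pm\mathscr M$ this reduces, via $D_{\mathscr M}^2=-\Delta+\mathscr M^2$, to the Helmholtz resolvent estimates for the free operator together with a Fredholm/Born-series perturbation argument, using $V\in\mathcal S$ and the absence of embedded eigenvalues (by (H3), $\sigma_p(H)\subset(-\mathscr M,\mathscr M)$, and $P_c$ removes the discrete part). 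The threshold points $\pm\mathscr M$ need exactly the part of (H3) asserting that $\pm\mathscr M$ are neither eigenvalues nor resonances, which makes $1+R_{D_{\mathscr M}}^\pm(\pm\mathscr M)V$ invertible in the relevant weighted space; the high-energy regime $|\lambda|\to\infty$ needs a Mourre/semiclassical commutator bound to make the resolvent estimate uniform. I expect the threshold analysis to be the genuine obstacle: for the first-order, matrix operator $D_{\mathscr M}$ the low-energy/threshold expansion of the resolvent is more delicate than in the Schr\"odinger case.

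Given the LAP, estimate \eqref{eq:Smooth1} follows from the Stone representation $e^{-\im tH}P_c\psi=\frac{1}{2\pi\im}\int e^{-\im t\lambda}\big(R_H^+(\lambda)-R_H^-(\lambda)\big)P_c\psi\,d\lambda$: applying $\langle x\rangle^{-\tau}\langle D\rangle^k$, using Plancherel in $t$, and inserting the uniform bound above (after an $\varepsilon$-regularization and the standard limiting argument) gives $\|\langle x\rangle^{-\tau}\langle D\rangle^k e^{-\im tH}P_c\psi\|_{L^2_{t,x}}\le C\|\langle D\rangle^k\psi\|_{L^2_x}$; the right-hand side is the weight-free $H^k$ norm because $\langle D\rangle^k$ commutes with $D_{\mathscr M}$ and $[\langle D\rangle^k,V]$ is harmless since $V$ is Schwartz. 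Note that the dispersive decay of Theorem~\ref{Thm:bouss1} alone does not give this: it only yields \eqref{eq:Smooth1} with a spatial weight also on $\psi$, which is strictly weaker. Estimate \eqref{eq:Smooth2} is then the adjoint of \eqref{eq:Smooth1} after the time reversal $t\mapsto-t$, hence immediate by duality.

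For the retarded estimate \eqref{eq:Smooth3} one cannot invoke the Christ--Kiselev lemma, since both sides carry the endpoint $L^2$ norm in time. Instead I would argue directly: after Fourier transform in $t$, the operator $F\mapsto\langle x\rangle^{-\tau}\langle D\rangle^k\int_{t'<t}e^{-\im(t-t')H}P_c\langle D\rangle^k\langle x\rangle^{-\tau}F(t')\,dt'$ becomes multiplication (in the time-frequency variable $\lambda$) by $\langle x\rangle^{-\tau}\langle D\rangle^kR_H^+(\lambda)P_c\langle D\rangle^k\langle x\rangle^{-\tau}$, so its $L^2_{t,x}\to L^2_{t,x}$ boundedness is equivalent to the uniform LAP bound already obtained — this is the classical Kato--Yajima computation, carried out for the Dirac operator in \cite{Boussaid2}. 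Thus all three estimates collapse to the LAP, and the whole difficulty is concentrated there.
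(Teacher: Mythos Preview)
Your proposal is correct, and it matches the paper's approach: the paper does not prove this theorem at all but simply records it as Theorem~1.1 of \cite{Boussaid2}, exactly as you anticipated. Your outline of the underlying argument --- uniform LAP on $\mathrm{Ran}\,P_c$ (with threshold regularity from (H3) and high-energy control), then Kato smoothing for \eqref{eq:Smooth1}, duality for \eqref{eq:Smooth2}, and the direct Fourier-in-$t$ argument for the retarded estimate \eqref{eq:Smooth3} --- is the standard route and is what \cite{Boussaid2} carries out.
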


\noindent The following  is  Theorem 3.1 in  \cite{boussaid}.

\begin{theorem}
\label{Thm:dispersion} For   $  p \in [1,2]$,
$\theta\in[0,1]$, with
$ k-k'\ge (2 +\theta )    ( \frac{2}{p}-1) $ and  $q\in [1,\infty ]$,
there is a constant $C$  such that for $p'= \frac{p}{p-1}$,
\begin{equation*}
  \| e ^{\im t D _\mathscr{M}} \| _{B^{k}_{p,q}\to B^{k'}_{p',q }} \le C (K(t))^{\frac{2}{p}-1},  \text{  where }K(t):=
\left\{\begin{matrix}
|t|  ^{-1+\theta /2}  \text{  if $|t|\le 1$}\\
|t|  ^{-1-\theta /2}\text{  if $|t|\ge 1$.}\end{matrix}\right.
\end{equation*}

\end{theorem}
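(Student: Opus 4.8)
The plan is to reduce the estimate, in three steps, to a single degenerate stationary--phase computation: first reduce $e^{\im tD_\mathscr{M}}$ to the two scalar half--Klein--Gordon flows, then reduce those to a fixed--time dispersive bound on one Littlewood--Paley block, and finally recover the Besov statement by interpolation and dyadic summation. For the first step I would use the algebraic identity $D_\mathscr{M}^2=(-\Delta+\mathscr{M}^2)\,I_{\C^4}$. Setting $\omega:=\sqrt{-\Delta+\mathscr{M}^2}$ and $\Pi_\pm:=\tfrac12(I\pm D_\mathscr{M}\,\omega^{-1})$, one checks $\Pi_\pm^2=\Pi_\pm$, $\Pi_++\Pi_-=I$, $D_\mathscr{M}\Pi_\pm=\pm\omega\Pi_\pm$, hence $e^{\im tD_\mathscr{M}}=e^{\im t\omega}\Pi_++e^{-\im t\omega}\Pi_-$. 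The symbol of $\Pi_\pm$ is $\tfrac12\bigl(I\pm(\sum_{j=1}^3\alpha_j\xi_j+\mathscr{M}\beta)/\sqrt{|\xi|^2+\mathscr{M}^2}\bigr)$, a matrix--valued H\"ormander symbol of order $0$ which is smooth on all of $\R^3$ (no singularity at $\xi=0$, since $\mathscr{M}>0$); its localization to any frequency annulus $|\xi|\sim\lambda$ has a convolution kernel of $L^1$--norm bounded uniformly in $\lambda$, so $\Pi_\pm$ is bounded on every $B^s_{p,q}(\R^3,\C^4)$. It therefore suffices to prove the claimed bound for the scalar operators $e^{\pm\im t\omega}$.

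Next I would localize in frequency. Writing $f=\sum_j\varphi_j*f$ as in Section \ref{section:set up}, and using that $e^{\pm\im t\omega}$ commutes with the Littlewood--Paley pieces, the theorem reduces to the fixed--time bound, for the block at frequency $\lambda=2^j\ge1$,
\[
   \bigl\| e^{\pm\im t\omega}(\varphi_j* f)\bigr\|_{L^\infty(\R^3)}\le C\,\lambda^{\,2+\theta}\,K(t)\,\|\varphi_j* f\|_{L^1(\R^3)},
\]
together with $\bigl\| e^{\pm\im t\omega}(\varphi_0* f)\bigr\|_{L^\infty}\le C\min(1,|t|^{-3/2})\|\varphi_0*f\|_{L^1}$ for the low frequencies. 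Indeed, Riesz--Thorin interpolation of the displayed bound with $\|e^{\pm\im t\omega}(\varphi_j*f)\|_{L^2}=\|\varphi_j*f\|_{L^2}$ gives $\|e^{\pm\im t\omega}(\varphi_j* f)\|_{L^{p'}}\le C(\lambda^{2+\theta}K(t))^{2/p-1}\|\varphi_j* f\|_{L^p}$ for $1/p+1/p'=1$; the low--frequency term similarly yields $C\min(1,|t|^{-3/2})^{2/p-1}\le CK(t)^{2/p-1}\|\varphi_0*f\|_{L^p}$ (here $3/2\ge1+\theta/2$ is used, valid since $\theta\le1$); and summing the $q$--th powers against $2^{jk'q}$, absorbing the factor $\lambda^{(2+\theta)(2/p-1)}$ into the regularity gap $k-k'\ge(2+\theta)(2/p-1)$, gives $\|e^{\pm\im t\omega}f\|_{B^{k'}_{p',q}}\le CK(t)^{2/p-1}\|f\|_{B^{k}_{p,q}}$, which is the theorem (with $C$ also depending on $\mathscr{M},p,q,\theta,k,k'$).

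Then I would prove the block kernel bound by stationary phase. The kernel is $K_{\lambda,t}(x)=\int_{\R^3}e^{\im(x\cdot\xi\pm t\sqrt{|\xi|^2+\mathscr{M}^2})}\varphi(\lambda^{-1}\xi)\,d\xi$. For $\lambda\lesssim1$ the Hessian $\nabla_\xi^2\sqrt{|\xi|^2+\mathscr{M}^2}$ is uniformly non--degenerate, so ordinary stationary phase gives $\|K_{\lambda,t}\|_{L^\infty_x}\lesssim\min(1,|t|^{-3/2})$, the low--frequency bound above. For $\lambda\gtrsim1$, where $\sqrt{|\xi|^2+\mathscr{M}^2}\approx|\xi|$, that Hessian has two eigenvalues $\sim\lambda^{-1}$ tangent to the sphere $|\xi|=\lambda$ and one eigenvalue $\sim\mathscr{M}^2\lambda^{-3}$ in the radial direction, so the full phase $x\cdot\xi\pm t\sqrt{|\xi|^2+\mathscr{M}^2}$ is stationary in $\xi$ only on the group--velocity cone $x=\mp t\,\xi/\sqrt{|\xi|^2+\mathscr{M}^2}$, i.e. $|x|\sim|t|$; away from it one integrates by parts in $\xi$ with negligible contribution. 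Near the cone I would pass to polar coordinates $\xi=r\sigma$, $r>0$, $\sigma\in S^2$: the integral over $\sigma$ yields a factor $\lesssim(1+\lambda|x|)^{-1}$ by the classical sphere stationary phase, and the remaining radial integral is handled by van der Corput. Estimating the radial integral trivially gives the ``wave--type'' bound $\|K_{\lambda,t}\|_\infty\lesssim\lambda^{2}|t|^{-1}$ (the endpoint $\theta=0$), while exploiting the radial curvature $\sim|t|\mathscr{M}^2\lambda^{-3}$ gives the ``Klein--Gordon--type'' bound $\|K_{\lambda,t}\|_\infty\lesssim\lambda^{3}|t|^{-1/2}$ for $|t|\le1$ and $\lesssim\lambda^{3}|t|^{-3/2}$ for $|t|\ge1$ (the endpoint $\theta=1$). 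Since these are two upper bounds for one and the same kernel, taking their geometric mean with weights $1-\theta$ and $\theta$ produces $\|K_{\lambda,t}\|_\infty\lesssim\lambda^{2+\theta}K(t)$ for every $\theta\in[0,1]$, which closes Step 3 and hence the proof.

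The hard part will be Step 3, the degenerate, anisotropic stationary--phase analysis near the group--velocity cone: correctly splitting the radial and angular integrations, choosing the right order of van der Corput in the nearly degenerate radial variable, and tracking the exact power of $\lambda$ so as to land precisely at $\lambda^{2+\theta}$ and at the small--time/large--time dichotomy encoded by $K(t)$. This is the classical dispersive estimate for the Klein--Gordon equation; a complete proof in the Dirac formulation, including the reduction of Step 1, is given in \cite{boussaid}.
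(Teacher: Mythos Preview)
The paper does not prove this theorem at all; it simply quotes it as Theorem~3.1 of \cite{boussaid} and moves on. Your sketch is precisely the argument carried out in that reference (diagonalization of $e^{\im tD_\mathscr{M}}$ into the two scalar half--Klein--Gordon flows via the order--zero projections $\Pi_\pm$, Brenner--type block--wise stationary phase interpolating the wave and Schr\"odinger regimes, then dyadic summation into Besov norms), so your approach coincides with the cited one.
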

\qed

\noindent The following  is  Theorem 1.2 in  \cite{Boussaid2}.

\begin{theorem}[Strichartz estimates]
\label{Thm:Strichartz} For any $2\leq p,q \leq \infty$,
$\theta\in[0,1]$, with
$(1-\frac{2}{q})(1\pm\frac{\theta}{2})=\frac{2}{p}$ and
$(p,\theta)\neq(2,0)$, and for any reals $k$, $k'$ with $k'-k\geq
\alpha(q)$, where $\alpha(q)=(1+\frac{\theta}{2})(1-\frac{2}{q})$,
there exists a positive constant $C$ such that
%   \label{Thm:Strichartz flat}
  \begin{align} &
\left\|e^{-\rmi t
H }P_c\psi\right\|_{L_t^{p}(\R,B^k_{q,2}(\R^3,\C^4))} \leq
 C\left\| P_c \psi\right\|_{H^{k'}(\R^3,\C^4)}, \label{eq:Stri1}
\\&
  \left\|\int _{\R}e^{\rmi t H} P_c F(t)\,dt\right\|_{H^k(\R ^3, \C ^4)} \leq
 C\left\|   P_c F\right\|_{L_t^{p'}(\R,B^{k'}_{q',2}(\R^3,\C^4))}, \label{eq:Stri2}
\\&
  \left\| \int_{t'<t} e^{-\rmi(t-t')H}P_c F(t')\,dt'
\right\|_{L_t^{p}(\R,B^{ k}_{q,2}(\R^3,\C^4))} \leq  C
 \left\|   P_cF\right\|_{L_t^{a'}(\R,B^{h}
 _{b',2}(\R^3,\C^4))}, \label{eq:Stri3}
\end{align}
for any  $(a, b)$ chosen like $(p,q)$ and
$h-k\geq\alpha(q)+\alpha(b)$.
\end{theorem}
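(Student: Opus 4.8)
The statement of Theorem \ref{Thm:Strichartz} is quoted verbatim from \cite{Boussaid2}, so strictly speaking there is nothing to prove here and one simply invokes that reference; for completeness I indicate the architecture of the argument, which is the standard one for perturbed dispersive propagators. \emph{Step 1 (free Strichartz).} One first proves the analogues of \eqref{eq:Stri1}--\eqref{eq:Stri3} with $H$ replaced by $D_{\mathscr{M}}$. The input is the fixed--time dispersive bound of Theorem \ref{Thm:dispersion}, $\| e^{\im tD_{\mathscr{M}}}\|_{B^{k}_{p,q}\to B^{k'}_{p',q}}\lesssim K(t)^{\frac 2p-1}$, interpolated against the $L^2$ isometry. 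Since $K(t)$ is not a pure power of $|t|$ --- it is wave--like, $|t|^{-1+\theta/2}$, for $|t|\le 1$ and Klein--Gordon--like, $|t|^{-1-\theta/2}$, for $|t|\ge 1$ --- the Keel--Tao lemma does not apply off the shelf; instead one runs the $TT^*$ argument with the time integral split at $|t-s|=1$ and uses Hardy--Littlewood--Sobolev on each piece, obtaining the homogeneous bound \eqref{eq:Stri1} and, by duality, \eqref{eq:Stri2}. The derivative loss $\alpha(q)=(1+\frac\theta 2)(1-\frac 2q)$ is exactly the price of the short--time wave--type decay, the admissibility relation $(1-\frac 2q)(1\pm\frac\theta 2)=\frac 2p$ is the one making the double time integral converge, and the forbidden case $(p,\theta)=(2,0)$ is the usual sharp endpoint. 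The retarded estimate \eqref{eq:Stri3} then follows from \eqref{eq:Stri2} by the Christ--Kiselev lemma, legitimate because $p>a'$ away from the excluded endpoint.

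\emph{Step 2 (transfer to $H$).} Under (H2)--(H3), and in particular the absence of eigenvalues and resonances at the thresholds $\tau=\pm\mathscr{M}$, the wave operators $W_{\pm}$, defined as the strong limits of $e^{\im tH}e^{-\im tD_{\mathscr{M}}}$ as $t\to\pm\infty$, exist and are asymptotically complete with range $\mathcal{H}_c[0]=P_cL^2$, and one has the intertwining identity $e^{-\im tH}P_c=W_{\pm}\,e^{-\im tD_{\mathscr{M}}}\,W_{\pm}^{*}$. Hence \eqref{eq:Stri1}--\eqref{eq:Stri3} for $H$ follow from Step 1 once one knows that $W_{\pm}$ and $W_{\pm}^{*}$ are bounded on the Besov spaces $B^{k}_{q,2}(\R^3,\C^4)$ and on $H^{k}(\R^3,\C^4)$; this $L^p$--type boundedness of the Dirac wave operators is the substantive analytic fact, and it is precisely what is established in \cite{Boussaid2}, the Schwartz decay of $V$ and the threshold hypothesis in (H3) being used there. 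A more self--contained route avoids wave operators altogether: combine the free Strichartz estimates of Step 1 with the smoothing estimates of Theorem \ref{Thm:Smoothness} for both $D_{\mathscr{M}}$ and $H$ and iterate the Duhamel identity $e^{-\im tH}P_c=e^{-\im tD_{\mathscr{M}}}P_c-\im\int_0^t e^{-\im(t-s)D_{\mathscr{M}}}V\,e^{-\im sH}P_c\,ds$, using that $V\in\mathcal{S}(\R^3,S_4(\C))$ maps $H^{k,-\tau}$ boundedly into $H^{k,\tau}$ for any $\tau$; the resulting Volterra--type equation is closed by a contraction argument for $\tau>1$, after removing the finitely many discrete modes via $P_c$.

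\emph{Main obstacle.} The only non--routine ingredient is the second step: the $L^p$/Besov boundedness of the wave operators for $D_{\mathscr{M}}+V$, equivalently the control of the resolvent $(H-\tau)^{-1}$ uniformly as $\tau\to\pm\mathscr{M}$. This is exactly where the hypothesis in (H3) that $\pm\mathscr{M}$ be neither eigenvalues nor resonances is indispensable: without it the boundary values of the resolvent are too singular at the thresholds and the wave operators fail to be bounded on anything beyond $L^2$. Everything else --- the $TT^*$ plus Christ--Kiselev part of Step 1 and the bookkeeping of the admissibility exponents and the derivative loss --- is standard, so in the present paper we simply record Theorem \ref{Thm:Strichartz} as it stands in \cite{Boussaid2}.
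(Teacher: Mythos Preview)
Your proposal is correct and matches the paper's approach exactly: the paper gives no proof at all, simply recording the statement with a \qed and the attribution ``The following is Theorem 1.2 in \cite{Boussaid2}.'' Your additional sketch of the architecture (free Strichartz via Theorem~\ref{Thm:dispersion} and $TT^*$, then transfer to $H$ by wave operators or Duhamel plus smoothing) is accurate and helpful context, but goes beyond what the paper itself supplies.
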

\qed

We have the following facts concerning the resolvent of the operator $D_\mathscr{M}$, see \cite{boussaid,Boussaid2,boussaidcuccagna}.
\begin{lemma}
  \label{lem:absflat} The following facts are true.
 \begin{itemize}
 \item[(1)] For $z\not \in \sigma (D_\mathscr{M})$ for the integral kernel  we have  $R  _{D_\mathscr{M}
}(x,y,z)=  R  _{D_\mathscr{M}
}(x-y,z )  $      with
\begin{equation} \label{eq:flat R1} \begin{aligned}&
 R  _{D_\mathscr{M} }(x ,z  )= \begin{pmatrix}
 (z +\mathscr{M}) I_2 &   \rmi \sqrt{\mathscr{M}^2-z ^2}\sigma \cdot \widehat{x} \\
 \rmi \sqrt{\mathscr{M}^2-z ^2}\sigma \cdot \widehat{x} &   (z
-\mathscr{M}) I_2
\end{pmatrix} \frac{e^{-\sqrt{\mathscr{M}^2-z ^2} |x|}}{4\pi |x| } +
\rmi   \frac{\alpha  \cdot \widehat{x}} {4\pi |x|
^2}e^{-\sqrt{\mathscr{M}^2-z ^2} |x|},
\end{aligned}
\end{equation}
where  $\widehat{x}=x/|x|$ and  where for $\zeta =e^{\rmi
\vartheta} r$ with $r\ge 0$ and $\vartheta \in (-\pi , \pi )$ we
set $\sqrt{\zeta} =e^{\rmi \vartheta/2} \sqrt{r}$.

\item[(2)] For any $\tau > 1 $   there exists $C$
  such that $
    \|  R _{D_\mathscr{M}}(z)  \psi  \| _{L^{2,-\tau }(\R ^3, \C ^4)}
     \leq C \|\psi \| _{L^{2, \tau }(\R ^3, \C ^4)}
    $,  for all $z\not \in \R$.
      \item[(3)]
  For any   $\tau >1$   the following limits  exist  in $B(H^{1, \tau   }(\R ^3, \C ^4) , L^{2, -\tau   } (\R ^3, \C ^4))$,
   \begin{equation} \label{eq:absfree5}\index{$R_{D_m}^+$}
   R_{D_\mathscr{M}}^+
    (\lambda )=\lim _{\varepsilon \searrow 0} R_{D_\mathscr{M}}
    (\lambda \pm \rmi \varepsilon  )
    \text{ for  }   \lambda \in \R \backslash (-\mathscr{M},\mathscr{M})
   \end{equation}
   and the
   convergence is uniform for $\lambda $ in compact subsets of $\R \backslash (-\mathscr{M},\mathscr{M})$.

   \item[(4)]  $ R  _{D_\mathscr{M} }^{+}(x ,z  )$ for $z
>\mathscr{M}  $   (resp. $z <-\mathscr{M}  $) is obtained substituting
$\sqrt{\mathscr{M}^2-z
^2}$  in  \eqref{eq:flat R1}  with  $\displaystyle -\rmi \sqrt{
z ^2 -\mathscr{M}^2}= \lim _{\varepsilon \searrow 0}\sqrt{\mathscr{M}^2-(z
+\rmi \varepsilon )^2} $ (resp. $\displaystyle  \rmi \sqrt{ z
^2 -\mathscr{M}^2}= \lim _{\varepsilon \searrow 0}\sqrt{\mathscr{M}^2-(z +\rmi
\varepsilon )^2} $).

   \item[(5)]  We have
\begin{equation} \label{eq:smooth23}  \begin{aligned} &  R_{D_\mathscr{M} }  ^{\pm }(\lambda     )  =R_{-\Delta +\mathscr{M} ^2 }  ^{\pm }(  \lambda    ^2  ) \mathcal{A}(\lambda , \nabla ) \text{ with }  \mathcal{A} (\lambda , \nabla ) :=\begin{pmatrix}
{\lambda   +\mathscr{M}}
  &
   -\rmi{\sigma \cdot \nabla }   \\
    -\rmi {\sigma \cdot \nabla }    &    {\lambda   -\mathscr{M}}
\end{pmatrix}      . \end{aligned}
   \end{equation}

   \end{itemize}
\end{lemma}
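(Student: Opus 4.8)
The plan is to reduce every assertion to the corresponding well known fact for the scalar Schr\"odinger operator $-\Delta+\mathscr{M}^2$, and the natural order is to prove item (5) first. From the Clifford relations $\alpha_j\alpha_k+\alpha_k\alpha_j=2\delta_{jk}I_4$, $\alpha_j\beta+\beta\alpha_j=0$ and $\beta^2=I_4$ one gets $D_\mathscr{M}^2=-\Delta+\mathscr{M}^2$, hence the operator identity $(D_\mathscr{M}+z)(D_\mathscr{M}-z)=(D_\mathscr{M}-z)(D_\mathscr{M}+z)=-\Delta+\mathscr{M}^2-z^2$. Since the resolvent $R_{-\Delta+\mathscr{M}^2}(z^2)=(-\Delta+\mathscr{M}^2-z^2)^{-1}$ commutes with $D_\mathscr{M}$, this gives $R_{D_\mathscr{M}}(z)=(D_\mathscr{M}+z)\,R_{-\Delta+\mathscr{M}^2}(z^2)=R_{-\Delta+\mathscr{M}^2}(z^2)\,(D_\mathscr{M}+z)$, and inspecting the block form of $-\rmi\sum_j\alpha_j\partial_{x_j}+\mathscr{M}\beta+z$ shows $D_\mathscr{M}+z=\mathcal{A}(z,\nabla)$, which is exactly (5). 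Translation invariance of the kernel in (1) is immediate because $D_\mathscr{M}$ has constant coefficients.

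For (1) I would insert into (5) the classical Yukawa kernel of $R_{-\Delta+\mathscr{M}^2}(z^2)$, namely $x\mapsto(4\pi|x|)^{-1}e^{-\kappa|x|}$ with $\kappa=\sqrt{\mathscr{M}^2-z^2}$ and the branch of $\sqrt{\cdot}$ fixed so that $\Re\kappa>0$, and then apply $\mathcal{A}(z,\nabla_x)$ to it. Using $\nabla_x\big((4\pi|x|)^{-1}e^{-\kappa|x|}\big)=-\big(\kappa+|x|^{-1}\big)(4\pi|x|)^{-1}e^{-\kappa|x|}\,\widehat x$, the diagonal entries of $\mathcal{A}$ reproduce the $(z\pm\mathscr{M})I_2$ blocks times $(4\pi|x|)^{-1}e^{-\kappa|x|}$; in the off-diagonal blocks $-\rmi\sigma\cdot\nabla_x$ produces $\rmi\kappa\,\sigma\cdot\widehat x\,(4\pi|x|)^{-1}e^{-\kappa|x|}+\rmi\,\sigma\cdot\widehat x\,(4\pi|x|^2)^{-1}e^{-\kappa|x|}$; the first summand is $\rmi\sqrt{\mathscr{M}^2-z^2}\,\sigma\cdot\widehat x\,(4\pi|x|)^{-1}e^{-\kappa|x|}$ and the second recombines, using $\alpha\cdot\widehat x=\big(\begin{smallmatrix}0&\sigma\cdot\widehat x\\ \sigma\cdot\widehat x&0\end{smallmatrix}\big)$, into $\rmi(4\pi|x|^2)^{-1}e^{-\kappa|x|}\,\alpha\cdot\widehat x$; this is precisely the stated formula. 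Item (4) is then obtained by tracking this same square root: for $z=\lambda+\rmi\varepsilon$ with $\lambda>\mathscr{M}$ one has $\mathscr{M}^2-(\lambda+\rmi\varepsilon)^2\to\mathscr{M}^2-\lambda^2<0$ from the lower half-plane, so with the convention $\sqrt{e^{\rmi\vartheta}r}=e^{\rmi\vartheta/2}\sqrt r$, $\vartheta\in(-\pi,\pi)$, the limit is $-\rmi\sqrt{\lambda^2-\mathscr{M}^2}$; for $\lambda<-\mathscr{M}$ the approach is from the upper half-plane and the limit is $+\rmi\sqrt{\lambda^2-\mathscr{M}^2}$, as claimed.

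For (2) and (3) I would invoke the classical weighted resolvent estimates and the limiting absorption principle for the Schr\"odinger operator $-\Delta+\mathscr{M}^2$ (Agmon, Kato--Kuroda, Jensen--Kato). For $z\notin\R$ one has $z^2\notin[0,\infty)\supset\sigma(-\Delta+\mathscr{M}^2)$, so $R_{-\Delta+\mathscr{M}^2}(z^2)$ is well defined, and the bound $\|R_{-\Delta+\mathscr{M}^2}(z^2)\|_{L^{2,\tau}\to L^{2,-\tau}}\le C$ for $\tau>1$, uniform in $z\notin\R$, together with (5) written as $R_{D_\mathscr{M}}(z)=R_{-\Delta+\mathscr{M}^2}(z^2)\mathcal{A}(z,\nabla)$ and with the mapping $\mathcal{A}(z,\nabla)\colon H^{1,\tau}\to L^{2,\tau}$, yields (2), the $z$-dependence of $\mathcal{A}$ being absorbed by the high-energy decay of the free resolvent exactly as in \cite{boussaid,Boussaid2}. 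For (3), the Schr\"odinger limiting absorption principle provides the boundary values $R_{-\Delta+\mathscr{M}^2}^{\pm}(\lambda^2)$ in $B(L^{2,\tau},L^{2,-\tau})$, locally uniformly for $\lambda\in\R\setminus(-\mathscr{M},\mathscr{M})$; composing on the right with the bounded, continuously $\lambda$-dependent operator $\mathcal{A}(\lambda,\nabla)\colon H^{1,\tau}\to L^{2,\tau}$ gives the existence of $R_{D_\mathscr{M}}^{\pm}(\lambda)$ in $B(H^{1,\tau},L^{2,-\tau})$ with the asserted local uniformity; the derivative in $\mathcal{A}$ is exactly why the domain in (3) is $H^{1,\tau}$ and not $L^{2,\tau}$.

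The only genuinely delicate points are the uniformity in $z$ in item (2) and, in (4), the correct identification of the branch of $\sqrt{\mathscr{M}^2-z^2}$ on the two sides of the cut; everything else is the algebraic reduction (5) together with standard Schr\"odinger resolvent theory, and these statements are anyway collected from \cite{boussaid,Boussaid2,boussaidcuccagna}.
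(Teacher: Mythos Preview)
The paper does not give its own proof of this lemma: it simply records the statement and cites \cite{boussaid,Boussaid2,boussaidcuccagna}. Your argument is the standard one and is essentially what those references do, namely reduce everything to the scalar operator $-\Delta+\mathscr{M}^2$ via the algebraic identity $D_\mathscr{M}^2=-\Delta+\mathscr{M}^2$, which you correctly prove first as item (5). Your computations for (1) and (4) are accurate, including the branch analysis of $\sqrt{\mathscr{M}^2-z^2}$.

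One small point in your treatment of (2): you write the factorization as $R_{D_\mathscr{M}}(z)=R_{-\Delta+\mathscr{M}^2}(z^2)\,\mathcal{A}(z,\nabla)$ and then use $\mathcal{A}(z,\nabla)\colon H^{1,\tau}\to L^{2,\tau}$. As written this only yields a bound $H^{1,\tau}\to L^{2,-\tau}$, whereas (2) asserts $L^{2,\tau}\to L^{2,-\tau}$. The fix is immediate: use the other ordering $R_{D_\mathscr{M}}(z)=\mathcal{A}(z,\nabla)\,R_{-\Delta+\mathscr{M}^2}(z^2)$ together with the Agmon bounds $\|\,z\,R_{-\Delta+\mathscr{M}^2}(z^2)\|_{L^{2,\tau}\to L^{2,-\tau}}+\|\nabla R_{-\Delta+\mathscr{M}^2}(z^2)\|_{L^{2,\tau}\to L^{2,-\tau}}\le C(\tau)$, which is exactly how the paper proceeds a few lines later in the proof of Lemma~\ref{lem:smooth1}. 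With that adjustment your proof is complete.
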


By Lemma \ref{lem:absflat} above we are able to deal with the resolvent of the perturbed Dirac operator $H$.

\begin{lemma}  \label{lem:smooth1} For any preassigned  $\tau >1$
the following facts hold.

\begin{itemize}

\item[(1)]  The  limits
   $\displaystyle
 R_{H}^\pm (\lambda)=R_{H}
    (\lambda \pm \rmi 0  ) :=\lim _{\varepsilon \searrow 0}
R_{H}
    (\lambda \pm \rmi \varepsilon  )
  $, for $\lambda \in (-\infty ,-\mathscr{M} ) \cup (\mathscr{M} , \infty )$,
   exist  in $B(L^{2, \tau   }  , L^{2, -\tau   } )$ and the
   convergence is  uniform   in   compact subsets of $(-\infty ,-\mathscr{M} ) \cup (\mathscr{M} , \infty )$.

\item[(2)]
  There exists a
constant $ C_1=C _1(\tau  )$
such that  for any $u _0  \in L^2(\R ^3,\mathbb{C}^4)$ and any
$\varepsilon
\ge 0$ we have
 \begin{align}  &
   \| \langle x \rangle ^{-\tau}  R_{ H}(\lambda \pm \rmi
\varepsilon )
    P_c  u _0\| _{L ^2 _{\lambda} (\R  , L^2 _{ x}  (\R ^3) ) } \le C _{1} \|
P_c  u _0\| _{  L^2(\R ^3) }  .  \label{eq:smooth10}\end{align}

 \item[(3)]  Let $ \Lambda$ be a
compact subset of $(-\infty ,-\mathscr{M} )\cup ( \mathscr{M} ,\infty ) $.
  There exists a
constant $ C_1=C _1(\tau , \Lambda  )$
such that
 \begin{align}  &  \| \langle x \rangle ^{-\tau}  R_{ H}^{\pm }(\lambda   )
    P_c  u _0\| _{L ^\infty _{\lambda} ( \Lambda  , L^2 _{ x}  (\R ^3) ) } \le C _{1} \|
P_c  u _0\| _{  L^2(\R ^3) } . \label{eq:smooth11}
   \end{align}

   \end{itemize}

 \end{lemma}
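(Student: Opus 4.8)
The plan is to derive all three statements from the corresponding facts for the free Dirac operator $D_\mathscr{M}$ collected in Lemma \ref{lem:absflat}, by a perturbation argument based on the second resolvent identity written in factorized form
\begin{equation*}
R_{H}(z)=\bigl(I+R_{D_\mathscr{M}}(z)V\bigr)^{-1}R_{D_\mathscr{M}}(z),\qquad z\notin\R .
\end{equation*}
First I would record that, for $\tau>1$, Lemma \ref{lem:absflat}(2)--(3) together with the density of $H^{1,\tau}$ in $L^{2,\tau}$ and the uniform bound in Lemma \ref{lem:absflat}(2) imply $R_{D_\mathscr{M}}^{\pm}(\lambda)\in B(L^{2,\tau},L^{2,-\tau})$, with $\lambda\mapsto R_{D_\mathscr{M}}^{\pm}(\lambda)$ norm--continuous on $(-\infty,-\mathscr{M})\cup(\mathscr{M},\infty)$.

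For part (1) I would first observe that, since $V\in\mathcal{S}$, the operator $R_{D_\mathscr{M}}^{\pm}(\lambda)V$ is compact on $L^{2,-\tau}$: writing it as $\bigl(R_{D_\mathscr{M}}^{\pm}(\lambda)\langle x\rangle^{-\tau-1}\bigr)\bigl(\langle x\rangle^{\tau+1}V\bigr)$, the first factor is compact from $L^{2}$ to $L^{2,-\tau}$ by the local elliptic regularity gain of the first--order resolvent and a Rellich argument, and the second is bounded; moreover it depends norm--continuously on $\lambda$. The analytic Fredholm alternative then gives that $\bigl(I+R_{D_\mathscr{M}}^{\pm}(\lambda)V\bigr)^{-1}$ exists in $B(L^{2,-\tau})$ for all $\lambda$ in the continuum away from a closed discrete set where the kernel is nontrivial. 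To exclude that set, if $f\neq0$ lies in $\ker\bigl(I+R_{D_\mathscr{M}}^{\pm}(\lambda)V\bigr)$ then $f=-R_{D_\mathscr{M}}^{\pm}(\lambda)Vf$ with $Vf\in\mathcal{S}$, and pairing together with $V=V^{*}$ yields
\begin{equation*}
0=\Im\langle Vf,f^{*}\rangle=\Im\langle R_{D_\mathscr{M}}^{\pm}(\lambda)Vf,(Vf)^{*}\rangle=\pm\pi\,\langle\delta(D_\mathscr{M}-\lambda)Vf,(Vf)^{*}\rangle ,
\end{equation*}
so that the distorted Fourier transform of $Vf$ vanishes on the characteristic sphere. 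Using the explicit kernel \eqref{eq:flat R1} (equivalently \eqref{eq:smooth23}), this vanishing cancels the singular part of $R_{D_\mathscr{M}}^{\pm}(\lambda)Vf$, hence $f=-R_{D_\mathscr{M}}^{\pm}(\lambda)Vf\in L^{2}$ and $(H-\lambda)f=0$; since $(\mathrm{H3})$ forces $\sigma_{p}(H)\subset(-\mathscr{M},\mathscr{M})$, we get $f=0$, a contradiction. Therefore the inverse exists on all of $(-\infty,-\mathscr{M})\cup(\mathscr{M},\infty)$, is norm--continuous, hence locally uniformly bounded, and composing with $R_{D_\mathscr{M}}^{\pm}(\lambda)$ proves (1).

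For part (2) I would note that the claim says exactly that $\langle x\rangle^{-\tau}$ is $H$--smooth on the range of $P_{c}$; by Kato's theorem this is equivalent to the uniform--in--$\varepsilon$ resolvent bound \eqref{eq:smooth10}, and, by Plancherel in time, to the case $k=0$ of the smoothing estimate \eqref{eq:Smooth1} of Theorem \ref{Thm:Smoothness} (alternatively one reads it off from part (1) via Kato's criterion). For part (3), by part (1) the map $\lambda\mapsto\langle x\rangle^{-\tau}R_{H}^{\pm}(\lambda)\langle x\rangle^{-\tau}\in B(L^{2})$ is norm--continuous, hence bounded on the compact set $\Lambda$, which gives the estimate for inputs in $L^{2,\tau}$; using the factorization $R_{H}^{\pm}(\lambda)P_{c}=\bigl(I+R_{D_\mathscr{M}}^{\pm}(\lambda)V\bigr)^{-1}R_{D_\mathscr{M}}^{\pm}(\lambda)P_{c}$ with the inverse uniformly bounded on $\Lambda$ (by part (1)) reduces the general case to the corresponding estimate for $D_\mathscr{M}$ on $\Lambda$, classical via \eqref{eq:smooth23} and limiting absorption for $-\Delta+\mathscr{M}^{2}$.

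The hard part is part (1), specifically establishing invertibility of $I+R_{D_\mathscr{M}}^{\pm}(\lambda)V$ for \emph{every} $\lambda$ in the continuum: this is where one must exclude eigenvalues embedded in $(-\infty,-\mathscr{M})\cup(\mathscr{M},\infty)$ and non-$L^{2}$ resonances on the continuum. The first is built into $(\mathrm{H3})$; the second is handled by the radiation--condition/Rellich uniqueness argument indicated above, and making this clean --- while checking that all constants remain locally uniform up to the thresholds $\pm\mathscr{M}$, where resonances are excluded by $(\mathrm{H3})$ --- is the delicate point, and is what is carried out in \cite{boussaid,Boussaid2,boussaidcuccagna}.
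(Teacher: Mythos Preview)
Your argument is correct and follows the same strategy as the paper: factorization via the second resolvent identity, the analytic Fredholm alternative, free-resolvent bounds through \eqref{eq:smooth23} and Agmon's limiting absorption for $-\Delta$, and the equivalence of (2) with the case $k=0$ of \eqref{eq:Smooth1}. The only differences are cosmetic: for (1) the paper simply cites Theorem~\ref{Thm:bouss1} (case $\tau>5/2$) and Proposition~3.10 of \cite{boussaid} (extension to $\tau>1$) rather than writing out the Fredholm/kernel-exclusion argument you give, and for (3) the paper uses the equivalent ``symmetrized'' factorization $AR_H=(1+AR_{D_\mathscr{M}}B^*)^{-1}AR_{D_\mathscr{M}}$ with $A=\langle x\rangle^{-\tau}$ and $B^*A=V$, related to yours by the push-through identity. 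One small caveat: your last sentence in (3), attempting to upgrade the bound from inputs in $L^{2,\tau}$ to inputs in $L^{2}$ by reducing to the free case, does not work---limiting absorption for $D_{\mathscr{M}}$ (equivalently for $-\Delta+\mathscr{M}^{2}$) already requires the weight on the input side---but the paper's own proof also establishes only the $B(L^{2,\tau},L^{2,-\tau})$ bound \eqref{eq:smooth113}, and that is all that is used downstream (e.g.\ in Lemma~\ref{lem:lemg9}).
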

\proof Claim (1) is an immediate consequence of Theorem \ref{Thm:bouss1} in the case $\tau >5/2$, as observed on p. 783 \cite{boussaid}. The extension  to the case $\tau >1$
 follows by the proof of     Proposition 3.10 \cite{boussaid}.   \eqref{eq:smooth10}
  is equivalent to \eqref{eq:Smooth1} for $k=0$.

\noindent We prove now \eqref{eq:smooth11}.
Let
  $u_0= P_c  u _0$,
    $A (x) =\langle x \rangle ^{-\tau}$
  and    $
B(x)\in \mathcal{S}( \mathbb{R}^3,S_4 (\C ))  $
  such that $B^*A=V  $. Then
 \begin{equation*} \label{eq:smooth101}
 A R_{H} (z  ) u _0=
 ( 1+ AR_{D_\mathscr{M} } (z  )B^*   )^{-1}
  AR_{D_\mathscr{M}} (z  )u _0    \text{  for $z\in \C \backslash \R$.}
 \end{equation*}
This equality continues to hold  on $\R \pm \im 0$ by     Lemmas \ref{lem:absflat} and \ref{lem:smooth1}.   We  then have
\begin{equation} \label{eq:smooth113} \begin{aligned} &\|  R_{H }^+ (\lambda
)P_c   \| _{ B( L^{2,\tau}_x,
L^{2,-\tau}_x) }\le  \| ( 1+ AR_{D_\mathscr{M}} ^{+}(\lambda  )B^*   )^{-1}\| _{ B( L^{2 ,\tau }_x,
L^{2 ,\tau }_x) } \|
  R_{D_\mathscr{M}}^+ (\lambda
)\| _{ B( L^{2 ,\tau }_x,
L^{2,-\tau}_x) }.
\end{aligned}
\end{equation}
By  \cite{Agmon}  there is a $C'(\tau )>0$ such that for all $\lambda \in \R$
$$\| \lambda
  R_{-\Delta}^+ (\lambda ^2
)\| _{ B( L^{2,\tau}_x,
L^{2,-\tau}_x) }+ \|  \nabla
  R_{-\Delta}^+ (\lambda ^2
)\| _{ B( L^{2,\tau}_x,
L^{2,-\tau}_x) }\le C'(\tau ). $$
 Then by \eqref{eq:smooth23}  we have $\|
  R_{D_\mathscr{M}}^+ (\lambda
)\| _{ B( L^{2 ,\tau }_x,
L^{2,-\tau}_x) }\le C (\tau )$  for all $\lambda \in \R$.

\noindent We obtain \eqref{eq:smooth11} from
\begin{equation} \label{eq:smooth114} \begin{aligned} & \sup _{\lambda \in \Lambda} \| ( 1+ AR_{D_\mathscr{M}} ^{+}(\lambda  )B^*   )^{-1}\| _{ B( L^{2 ,\tau }_x,
L^{2 ,\tau }_x) }  < \infty,
\end{aligned}
\end{equation}
which follows from the analytic Fredholm alternative.

\qed

\begin{remark}
\label{rem:egs}     Notice that \eqref{eq:smooth114} is in fact true for $\Lambda =\R$
by (H3) and, for large $\lambda$, by \cite{EGS}, see Appendix A \cite{boussaidcuccagna}.
\end{remark}

\noindent The next lemma is proved by  an argument of \cite{M1} reviewed in Lemma 5.7 \cite{boussaidcuccagna}.
\begin{lemma}
\label{lem:surrogate}  Consider pairs $(p,q)$ as in Theorem
\ref{Thm:Strichartz} with $p>2$, $k\in \R$ arbitrary and
$k'-k\ge \alpha (q)$. Then for any $\tau >1$ there is a constant
$C_0=C_0(\tau , k,p,q)$ such that
\begin{equation}
\label{eq:surrogate} \left\|  \int _{0} ^t e^{\im H(t'-t)} P_cF(t')
dt'\right \| _{L^p_tB^{k} _{q,2}} \le C_0    \|
 P_cF\|_{L_t^2H ^{   k', \tau } }\quad .
\end{equation}
\end{lemma}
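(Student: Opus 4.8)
The plan is to derive \eqref{eq:surrogate} by combining three facts already in hand: the homogeneous Strichartz estimate \eqref{eq:Stri1}, the dual smoothing estimate \eqref{eq:Smooth2}, and the Christ--Kiselev lemma. The key structural observation is that the \emph{non-retarded} version of the integral operator in \eqref{eq:surrogate} factors: writing $\int_\R e^{\im H(t'-t)}P_cF(t')\,dt' = e^{-\im Ht}\Psi$ with $\Psi:=\int_\R e^{\im Ht'}P_cF(t')\,dt'$, the outer factor $e^{-\im Ht}$ is controlled by the homogeneous Strichartz estimate, while the constant-in-$t$ vector $\Psi$ is controlled by the dual smoothing estimate, both used at the single regularity level $k'$. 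This is exactly the argument of \cite{M1} reviewed in Lemma 5.7 of \cite{boussaidcuccagna}.

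First I would reduce to the non-retarded operator. Since $p>2$ --- this is the only place the hypothesis $p>2$ is used, and the reason the endpoint must be excluded --- and the right-hand side of \eqref{eq:surrogate} carries an $L^2_t$ norm, the Christ--Kiselev lemma (in its Banach-space-valued form) applies to the operator-valued kernel $K(t,t')=e^{\im H(t'-t)}P_c$, so it suffices to prove
\[
\left\|\int_\R e^{\im H(t'-t)}P_cF(t')\,dt'\right\|_{L^p_tB^k_{q,2}}\le C_0\,\|P_cF\|_{L^2_tH^{k',\tau}} .
\]
On the half-line the lower endpoint of the truncated integral causes no separate issue.

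For the non-retarded operator I would use the factorization above. The homogeneous Strichartz estimate \eqref{eq:Stri1} of Theorem \ref{Thm:Strichartz} applies because $(p,q)$ is an admissible pair in that theorem and $k'-k\ge\alpha(q)$ by hypothesis; it gives $\|e^{-\im Ht}P_c\Psi\|_{L^p_tB^k_{q,2}}\le C\|P_c\Psi\|_{H^{k'}}$, and $P_c\Psi=\Psi$. Then the dual smoothing estimate \eqref{eq:Smooth2} of Theorem \ref{Thm:Smoothness}, applied with regularity index $k'$ and the given weight $\tau>1$, gives $\|\Psi\|_{H^{k'}}\le C\|P_cF\|_{L^2_tH^{k',\tau}}$ (note that the sign in $e^{\im Ht'}$ matches that in \eqref{eq:Smooth2}). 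Chaining these two bounds and passing back through Christ--Kiselev yields \eqref{eq:surrogate}.

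I do not expect a genuine obstacle here: all the analytic content is already packaged in the cited dispersive results of \cite{boussaid,Boussaid2}. The only points to keep straight are the strict inequality $p>2$ demanded by Christ--Kiselev, and the bookkeeping of exponents --- both \eqref{eq:Stri1} and \eqref{eq:Smooth2} are invoked at level $k'$, while the derivative loss $k'-k\ge\alpha(q)$ is spent entirely in the Strichartz step, leaving the smoothing step at the same level on both sides.
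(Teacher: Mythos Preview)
Your proposal is correct and follows essentially the same approach as the paper's proof: factor the non-retarded operator as $e^{-\im Ht}$ applied to a fixed vector, bound the two pieces by \eqref{eq:Stri1} and \eqref{eq:Smooth2} respectively, and invoke Christ--Kiselev (using $p>2$) to pass to the retarded version. The only cosmetic difference is that the paper integrates over $[0,+\infty)$ rather than $\R$ and states Christ--Kiselev at the end rather than as an initial reduction.
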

\proof  For $F(t,x
)\in C^\infty _0(  \mathbb{R}\times  \mathbb{R}^3)$ set
\begin{eqnarray}T F(t):=\int _0^{+\infty}
e^{\im (t'-t)H} P_c F(t') dt' \, , \quad f:= \int _0^{+\infty} e^{\im
t' H} P_c F(t') dt' .\nonumber
\end{eqnarray} Theorem \ref{Thm:Strichartz} implies
$\left\|  T F\right \| _{L^p_tB^{k} _{q,2}} \le \| f \| _{H^{k'}}$
for $k'-k=\alpha (q)$. By Theorem \ref{Thm:Smoothness}  we have  $\| f \|
_{H^{k'}} \le C  \|
 F\|_{L_t^2H ^{   k', \tau } }.$ By $p>2$
  a lemma by Christ and Kiselev \cite{ChristKiselev}, see
 Lemma 3.1 \cite{SmithSogge},
  yields Lemma \ref{lem:surrogate}.

   \qed

\begin{lemma}\label{lem:conditional4.2} Assume
the hypotheses of Prop. \ref{prop:mainbounds} and recall the definition of ${M}$ in Definition \ref{def:setM}.  Let $\tau _0>1$.   Then  there is a fixed $c_1$
such that for all admissible pairs $(p,q)$ inequality \eqref{4.5} holds.

\end{lemma}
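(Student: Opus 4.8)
The plan is to work in the coordinates of Proposition~\ref{th:main}, in which $\eta$ solves $\im\dot\eta=\nabla_{{\eta}^*}\mathcal{H}(z,\eta)$ with $\mathcal H$ as in \eqref{eq:enexp401}--\eqref{eq:rest1}. Computing this gradient and isolating the part that is neither at least quadratic in $\eta$ nor of high order in $\textbf{Z}$, the equation takes the form
\begin{equation*}
\im\dot\eta=H\eta+\nabla_{{\eta}^*}\mathcal Z(z,\textbf{Z},\eta)+\mathcal N(t,z,\eta),
\end{equation*}
where, by the normal--form structure \eqref{eq:enexp41}, $\nabla_{{\eta}^*}\mathcal Z$ is a finite sum of monomials in $z,\overline z$ of modulus $|z^\mu\overline z^\nu|$, $(\mu,\nu)\in M$, times Schwartz $\Sigma_r$--valued coefficients on $B_{\C^n}(0,\delta_f)$, while $\mathcal N=\nabla_{{\eta}^*}\resto$ splits as $\mathcal N_{\mathrm{lin}}+\mathcal N_{\ge2}+\mathcal N_{\textbf{Z}}$: (i) $\mathcal N_{\mathrm{lin}}$, linear in $\eta$ with rapidly decaying coefficients of size $O(|z|^2)$ (from $\nabla_{{\eta}^*}$ of the $|\textbf{m}|=0$ terms \eqref{eq:b02}); (ii) $\mathcal N_{\ge2}$, the terms at least quadratic in $\eta$ (the cubic $g(\eta\overline\eta)\beta\eta$, the integrals carrying $\mathcal R^{0,3-d}$, the $|\textbf{m}|=1$ pieces, $\nabla_{{\eta}^*}\resto^{1,2}$, and the $\eta$--linear remainder $\langle\textbf{S}^{0,2N+4},{\eta}^*\rangle$); (iii) $\mathcal N_{\textbf{Z}}$, the contributions of $\resto^{0,2N+5}$ and $\textbf{S}^{0,2N+4}$ carrying $\textbf{Z}^{\textbf{m}}$ with $|\textbf{m}|$ large.

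First I would pass to Duhamel form, $\eta(t)=e^{-\im tH}P_c\eta(0)-\im\int_0^t e^{-\im(t-s)H}P_c(\nabla_{{\eta}^*}\mathcal Z+\mathcal N)(s)\,ds$, and estimate each of the three norms in \eqref{4.5} term by term using the linear machinery of Section~\ref{subsec:cont}. For the free term $\|\eta(0)\|_{H^4}\lesssim\|u(0)\|_{H^4}=\epsilon$ by Lemma~\ref{lem:systcoo} and \eqref{eq:diff2}; Theorems~\ref{Thm:Smoothness} and \ref{Thm:Strichartz} bound its contribution to the $L^2_tH^{4,-\tau_0}_x$ and $L^p_tB^{4-2/p}_{q,2}$ norms by $\lesssim\epsilon$, and its contribution to the $L^2_tL^\infty_x$ norm by $\lesssim\|e^{-\im tH}P_c\eta(0)\|_{L^2_tB^3_{6,2}}\lesssim\epsilon$, using that $(2,6)$ is an admissible Strichartz pair for the Dirac flow ($\theta=1$, so $(p,\theta)\ne(2,0)$, with $\alpha(6)=1\le4$) and $B^3_{6,2}(\R^3)\hookrightarrow L^\infty(\R^3)$. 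For the resonant source $\|\nabla_{{\eta}^*}\mathcal Z\|_{L^2_tH^{4,\tau_0}_x}\lesssim\sum_{(\mu,\nu)\in M}\|z^\mu\overline z^\nu\|_{L^2_t}$, since the coefficients are uniformly bounded in $\Sigma_r$ for $|z|<\delta_f$; its Duhamel contribution is then controlled in $L^2_tH^{4,-\tau_0}_x$ by \eqref{eq:Smooth3}, in $L^p_tB^{4-2/p}_{q,2}$ (for $p>2$) by the surrogate estimate \eqref{eq:surrogate} of Lemma~\ref{lem:surrogate}, and in $L^2_tL^\infty_x$ by Theorem~\ref{Thm:Smoothness} together with the local--decay and dispersive bounds \eqref{eq:bouss11} and Theorem~\ref{Thm:dispersion} for the spatially localised sources, convolved in time against $\|z^\mu\overline z^\nu\|_{L^2_t}$ — legitimate since, for $\theta=1$, the Dirac dispersion kernel is integrable in time — exactly as in \cite{boussaidcuccagna, CM1}.

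To handle $\mathcal N$, for $\mathcal N_{\mathrm{lin}}$ one has $\|\mathcal N_{\mathrm{lin}}(s)\|_{H^{4,\tau_0}_x}\lesssim|z(s)|^2\,\|\eta(s)\|_{H^{4,-10}_x}$ (the Schwartz decay of the coefficients absorbing the weights), so by \eqref{eq:Smooth3}, \eqref{eq:surrogate} and the a priori bounds \eqref{Smootingradiation}, \eqref{L^inftydiscrete} its Duhamel contribution is $\lesssim\|z\|_{L^\infty_t}^2\,\|\eta\|_{L^2_tH^{4,-10}_x}\le C_0^3\epsilon^3\le c_1\epsilon$ once $\epsilon<\epsilon_0(C_0)$. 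For $\mathcal N_{\ge2}$ each term is estimated in $L^2_tH^{4,\tau_0}_x$ or in a dual Strichartz norm, after Hölder in time and product/Moser estimates in the Besov and weighted Sobolev spaces, by a product of at least two norms of $\eta$ drawn from \eqref{Strichartzradiation}--\eqref{endStrich} — interpolating, where intermediate $L^{p}_t\Sigma_{-K}$ factors arise, with $\|\eta\|_{L^\infty_t\Sigma_{-K}}\lesssim\|\eta\|_{L^\infty_tL^2_x}\lesssim C_0\epsilon$, the last bound following from conservation of the mass $Q(u)$ and \eqref{L^inftydiscrete} — times a bounded power of $|z|\le C_0\epsilon$; hence $\lesssim(C_0\epsilon)^2\le c_1\epsilon$ for $C_0^2\epsilon<c_1$. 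Finally, for $\mathcal N_{\textbf{Z}}$ I would invoke Lemma~\ref{lem:comb1}, by which any monomial $z_j\textbf{Z}^{\textbf{m}}$ with $|\textbf{m}|$ exceeding $2N+3$ factors, for $|z|\le1$, as a bounded quantity times at least one $z^\mu\overline z^\nu$ with $(\mu,\nu)\in M$; therefore $\|\mathcal N_{\textbf{Z}}\|_{L^2_tH^{4,\tau_0}_x}\lesssim(C_0\epsilon)^2\sum_{(\mu,\nu)\in M}\|z^\mu\overline z^\nu\|_{L^2_t}$, which has the form of the second term of \eqref{4.5}. Summing all contributions and shrinking $\epsilon_0(C_0)$ yields \eqref{4.5} with a universal $c_1$.

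I expect the main obstacle to be the $L^2_tL^\infty_x$ component of \eqref{4.5}: unlike for the Schr\"odinger flow the endpoint $p=2$ is delicate, and for the inhomogeneous term one must exploit the wave--type high--frequency dispersion of the Dirac group (Theorems~\ref{Thm:dispersion} and \ref{Thm:bouss1}) rather than a Strichartz estimate; checking that these linear inputs — all supplied by \cite{boussaid, Boussaid2, boussaidcuccagna} — suffice to close the estimate for the localised sources is the only genuinely Dirac--specific point, the remainder being the bookkeeping of \cite{CM1}. A secondary point requiring attention is to verify, among the many terms generated by $\nabla_{{\eta}^*}\resto$, that the only linear--in--$\eta$ contribution with $O(1)$ coefficient is $H\eta$ itself (absorbed into the propagator), so that the scheme closes by a bona fide smallness argument rather than circularly.
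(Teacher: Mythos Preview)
Your proposal is correct and follows essentially the same route as the paper. The paper writes the $\eta$--equation exactly as you do (see \eqref{eq:eq f}), isolates the resonant source $\sum_{(\mu,\nu)\in M}\overline z^\mu z^\nu G_{\mu\nu}^*$, and collects everything else into a remainder $\mathbb{A}$; it then reduces Lemma~\ref{lem:conditional4.2} to three auxiliary lemmas---Lemma~\ref{lem:rem1} bounding $\mathbb{A}$ in $L^2_tH^{4,S}_x+L^1_tH^4_x$, Lemma~\ref{lem:conditional4.21} packaging the Strichartz/smoothing estimates for the non--endpoint norms, and Lemma~\ref{lem:conditional4.22} for the $L^2_tL^\infty_x$ endpoint via the $D_{\mathscr{M}}$--Duhamel formula and Theorem~\ref{Thm:dispersion}---which is precisely your decomposition and your identification of the endpoint as the Dirac--specific step. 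Two minor remarks: the paper places the purely cubic $g(\eta\overline\eta)\beta\eta$ in $L^1_tH^4_x$ rather than in a weighted $L^2_t$ space (since it is not localized), which is covered by your ``or in a dual Strichartz norm''; and for the endpoint the paper actually needs $\tau_0>3/2$ (Lemma~\ref{lem:conditional4.22}), harmless here since all sources are Schwartz.
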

 \proof
 By picking $\epsilon _0>0$ sufficiently small and $\epsilon =\| u (0)\| _{H^4}<\epsilon _0$, for a fixed $c_1>0$
    for the final coordinates $(z  (0), \eta   (0))$   of $u (0)$ we have
     \begin{equation}
  |z  (0)|+\| \eta   (0) \| _{H^4} \le  c_1    \epsilon ,
  \label{4.5id}
\end{equation}
We have  for $ {G}_{j \textbf{m}}^*   =  {G}_{j \textbf{m}}^* (0  )$
\begin{equation} \label{eq:eq f} \begin{aligned} &
 \im \dot \eta =\im \{ \eta , \mathcal{H}  \} =    H \eta  + \sum _{j =1}^n \sum _{l=1 }^{ 2N+3 }	\sum _{   |\textbf{m}  |  =l}  {z}_j \overline{\textbf{Z}}^{\textbf{m}}
  {G}_{j \textbf{m}} ^* + \mathbb{A}, \text{  where}\\&
    \mathbb{A}:=
    \sum _{j =1}^n \sum _{l=1 }^{ 2N+3 }	\sum _{   |\textbf{m}  |  =l}  {z}_j \overline{\textbf{Z}}^{\textbf{m}}
  [ {G}_{j \textbf{m}}^* (|z_j|^2  )  -  {G}_{j \textbf{m}}^*  ] + \nabla _{  \eta ^*} \resto   .
\end{aligned}\end{equation}
We rewrite
\begin{equation} \label{4.9} \begin{aligned} &
 \sum _{j =1}^n \sum _{l=1 }^{ 2N+3 }	\sum _{   |\textbf{m}  |  =l}  {z}_j \overline{\textbf{Z}}^{\textbf{m}}
   {G}_{j \textbf{m}}^*= \sum _{(\mu , \nu )\in {M}  }    \overline{z} ^{\mu} {z}  ^{\nu}  {G} _{\mu \nu}^*.
\end{aligned}\end{equation}
Notice that \eqref{L^2discrete} is the same as
 \begin{align}
&    \|   {z} ^{\mu} \overline{{z}}  ^{\nu} \| _{L^2_t(I)}\le
  C   \epsilon \text{ for all     $(\mu , \nu )\in {M} . $ } \label{L^2disbis}
\end{align}
The proof of Lemma \ref{lem:conditional4.2} is a consequence of Lemmas \ref{lem:rem1} , \ref{lem:conditional4.21} and \ref{lem:conditional4.22} below. \qed

 \begin{lemma} \label{lem:rem1}
For $I_T:= [0,T]$ and for   $S\in \R   $ and  $\epsilon _0>0$  small enough
 then for a constant $C (S,C_0) $   independent from $T$ and $\epsilon$ we have
\begin{equation} \label{eq:eq A} \begin{aligned} &
 \| \mathbb{A} \|  _{ L^2 (I_T, H ^{4, S}) + L^1 (I_T , H^4) } \le C (S,C_0) \epsilon ^2.
\end{aligned}\end{equation}
\end{lemma}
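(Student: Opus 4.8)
The plan is to write $\mathbb{A}=\mathbb{A}_1+\mathbb{A}_2$, where $\mathbb{A}_1$ is the first (finite) sum in the definition of $\mathbb{A}$ in \eqref{eq:eq f} and $\mathbb{A}_2:=\nabla_{\eta^*}\resto$ with $\resto$ as in \eqref{eq:rest1}, and to place $\mathbb{A}_1$ (together with the most regular part of $\mathbb{A}_2$) in $L^2(I_T,H^{4,S})$ and the remainder of $\mathbb{A}_2$ in $L^1(I_T,H^4)$. All the estimates will use the a priori bounds \eqref{Strichartzradiation}--\eqref{L^inftydiscrete} on $I=I_T$ with $C=C_0$, in particular $\|z\|_{L^\infty_t}\le C_0\epsilon$, $\|\eta\|_{L^\infty_tH^4}\le C_0\epsilon$ (the endpoint $(p,q)=(\infty,2)$ of \eqref{Strichartzradiation}, since $B^4_{2,2}=H^4$), $\|\eta\|_{L^2_tH^{4,-10}}\le C_0\epsilon$, $\|\eta\|_{L^2_tL^\infty_x}\le C_0\epsilon$, and $\|z^\mu\overline z^\nu\|_{L^2_t}\le C_0\epsilon$ for every $(\mu,\nu)\in M$ (this is \eqref{L^2disbis}); together with the elementary facts $\Sigma_r\hookrightarrow H^{4,S}$ and $\|\eta\|_{\Sigma_{-K}}\lesssim\|\eta\|_{H^{4,-10}}$, valid once $r,K\ge r_0(S)$, which is legitimate because $r_0$, and hence all the $r_\iota$, may be taken as large as we wish (see \eqref{eq:enexp401}).

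\emph{The term $\mathbb{A}_1$.} Write $G_{j\mathbf m}^*(t)-G_{j\mathbf m}^*(0)=t\,\widetilde{G}_{j\mathbf m}(t)$ with $\widetilde{G}_{j\mathbf m}$ smooth with values in $\Sigma_r$ for every $r$; then each summand of $\mathbb{A}_1$ has $H^{4,S}$--norm $\le C\,|z_j|^2\,|z_j\overline{\mathbf Z}^{\mathbf m}|$. For $\mathbf m\in\mathcal M_j(l)$ there is $(\mu,\nu)\in M_j(l)\subset M$ with $z^\mu\overline z^\nu=\overline z_j\mathbf Z^{\mathbf m}$, so $|z_j\overline{\mathbf Z}^{\mathbf m}|=|z^\mu\overline z^\nu|$, and over $I_T$ we get $\|z_j\overline{\mathbf Z}^{\mathbf m}(G_{j\mathbf m}^*(|z_j|^2)-G_{j\mathbf m}^*(0))\|_{L^2(H^{4,S})}\le C\,\|z_j\|^2_{L^\infty_t}\,\|z^\mu\overline z^\nu\|_{L^2_t}\le C(C_0)\epsilon^3$. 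Summing the finitely many terms, $\|\mathbb{A}_1\|_{L^2(I_T,H^{4,S})}\le C(S,C_0)\epsilon^2$.

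\emph{The term $\mathbb{A}_2$.} One runs through \eqref{eq:rest1} term by term. Differentiating a scalar symbol in $\eta^*$ lowers by one the second superscript and produces an operator symbol valued in the positively weighted space, so $\nabla_{\eta^*}\resto^{1,2}_{r_\iota,\infty}(z,\eta)$ is an $\mathbf S^{1,1}_{r_\iota,\infty}(z,\eta)$, while $\nabla_{\eta^*}\resto^{0,2N+5}_{r_\iota,\infty}(z,\mathbf Z,\eta)$ and $\nabla_{\eta^*}\Re\langle\mathbf S^{0,2N+4}_{r_\iota,\infty}(z,\mathbf Z,\eta),\eta^*\rangle$ are $\mathbf S^{0,2N+4}_{r_\iota,\infty}(z,\mathbf Z,\eta)$; all take values in $\Sigma_{r_\iota}\hookrightarrow H^{4,S}$. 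The first obeys $\|\,\cdot\,\|_{\Sigma_{r_\iota}}\lesssim\|\eta\|_{\Sigma_{-r_\iota}}(\|\eta\|_{\Sigma_{-r_\iota}}+|z|)$, so its $L^2(I_T,H^{4,S})$--norm is $\lesssim(\|\eta\|_{L^\infty_tH^4}+\|z\|_{L^\infty_t})\|\eta\|_{L^2_tH^{4,-10}}\le C(C_0)\epsilon^2$. For the $\mathbf S^{0,2N+4}$ terms, $\|\,\cdot\,\|_{\Sigma_{r_\iota}}\lesssim(\|\eta\|_{\Sigma_{-r_\iota}}+|\mathbf Z|)^{2N+4}$; in the binomial expansion the monomials with at least two factors $\|\eta\|$ go into $L^1(I_T,H^4)$ with norm $\lesssim\|\eta\|^2_{L^2_tH^{4,-10}}(\|\eta\|_{L^\infty_t}+\|\mathbf Z\|_{L^\infty_t})^{2N+2}\le C(C_0)\epsilon^{2N+4}$, and the monomials with at least $2N+3$ factors $|\mathbf Z|$ are handled, after a Cauchy--Schwarz in $t$, by the bound $\||\mathbf Z|^{M}\|_{L^1(I_T)}\le C(C_0)\epsilon^4$ for even $M\ge 2N+4$ proved below. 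Finally the $\eta$--nonlinear pieces: $\nabla_{\eta^*}$ of the $d=2,3$ integral terms gives weighted, $\Sigma_{r_0}$--valued expressions of $\eta$--order $2$, resp. $3$, bounded in $L^1(I_T,H^4)$ by products of $\|\eta\|_{L^2_tH^{4,-10}}$, $\|\eta\|_{L^2_tL^\infty}$ and $\|\eta\|_{L^\infty_tH^4}$, hence $\le C(C_0)\epsilon^2$; and $\nabla_{\eta^*}E_P(\eta)=g(\eta\overline\eta)\beta\eta$, which by a Moser estimate in $H^4$ --- using $H^4(\R^3)\hookrightarrow W^{2,\infty}(\R^3)$, the one place where $H^4$--regularity is needed (cf. Remark \ref{rem:sobh4}) --- is $\lesssim\|\eta\|^2_{L^\infty}\|\eta\|_{H^4}$, so $\le\|\eta\|^2_{L^2_tL^\infty}\|\eta\|_{L^\infty_tH^4}\le C(C_0)\epsilon^3$ in $L^1(I_T,H^4)$. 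Adding the contributions yields \eqref{eq:eq A}.

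The step I expect to be the crux is the appearance of high powers of $|\mathbf Z|$ in the $\mathbf S^{0,2N+4}$ contributions, since a single off--diagonal monomial $z_i\overline z_j$ ($i\neq j$) need not belong to any $\mathcal M_k$ and therefore is \emph{not} controlled in $L^2(I_T)$ by \eqref{L^2discrete}. The remedy is Lemma \ref{lem:comb1}(3): for a monomial $\mathbf Z^{\mathbf p}$ with $|\mathbf p|\ge 2N+4$ one pulls out a factor, $\mathbf Z^{\mathbf p}=z_{i_0}\overline{z}_{j_0}\,\mathbf Z^{\mathbf p-\mathbf{e}_{i_0 j_0}}$ with $|\mathbf p-\mathbf{e}_{i_0 j_0}|\ge 2N+3$, and Lemma \ref{lem:comb1}(3) gives $|z_{i_0}\mathbf Z^{\mathbf p-\mathbf{e}_{i_0 j_0}}|\le|z_{i_0}|\,|z_k\mathbf Z^{\mathbf a}|\,|z_l\mathbf Z^{\mathbf b}|$ with $\mathbf a\in\mathcal M_k(N+1)$ and $\mathbf b\in\mathcal M_l(N+1)$, so that $z_k\mathbf Z^{\mathbf a}$ and $z_l\mathbf Z^{\mathbf b}$ are among the monomials bounded by \eqref{L^2discrete}; hence $\|\mathbf Z^{\mathbf p}\|_{L^1(I_T)}\le\|z_{i_0}\|_{L^\infty_t}\|z_{j_0}\|_{L^\infty_t}\|z_k\mathbf Z^{\mathbf a}\|_{L^2_t}\|z_l\mathbf Z^{\mathbf b}\|_{L^2_t}\le C(C_0)\epsilon^4$. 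Since for even $M$ the power $|\mathbf Z|^{M}=(|\mathbf Z|^2)^{M/2}$ is a finite sum of such monomials $\mathbf Z^{\mathbf p}$ with symmetric $\mathbf p$ of length $M$, one gets $\||\mathbf Z|^{M}\|_{L^1(I_T)}\le C(C_0)\epsilon^4$ for every even $M\ge 2N+4$, which is exactly what the previous paragraph needs. The remaining work --- the bookkeeping identification of $\nabla_{\eta^*}\resto$ inside the symbol calculus of Definitions \ref{def:scalSymb}--\ref{def:opSymb}, the embeddings between the $\Sigma_r$ and $H^{k,s}$ scales, and the Leibniz/Moser estimates for the $\eta$--nonlinear terms --- is routine.
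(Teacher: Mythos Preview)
Your proof is essentially correct and follows the same strategy as the paper's. The paper handles the first sum in $\mathbb{A}$ and $\nabla_{\eta^*}E_P(\eta)$ exactly as you do, then places all of $R_1:=\nabla_{\eta^*}(\resto-E_P(\eta))$ into $L^2(I_T,H^{4,S})$, deferring the details to \cite{CM1}. You instead sketch the argument explicitly and route some pieces through $L^1(I_T,H^4)$; both decompositions work, and your use of Lemma~\ref{lem:comb1}(3) to control high powers of $|\mathbf Z|$ is precisely the mechanism behind the \cite{CM1} bound the paper cites.

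Two small points. First, you omit the quadratic form $\sum_{i+j=2}\sum_{|\mathbf m|\le 1}\mathbf Z^{\mathbf m}\langle G^{(\iota)}_{2\mathbf m ij}(z),\eta^{\otimes i}\otimes(\eta^*)^{\otimes j}\rangle$ from the last line of \eqref{eq:rest1}; after $\nabla_{\eta^*}$ it gives a term linear in $\eta$ with $\Sigma_r$--coefficient of size $O(|z|)$ (since $G^{(\iota)}_{2\mathbf 0 ij}(0)=0$) or $O(|\mathbf Z|)$, hence it lands in $L^2(I_T,H^{4,S})$ with bound $\lesssim\|z\|_{L^\infty_t}\|\eta\|_{L^2_tH^{4,-10}}\le C(C_0)\epsilon^2$ by your own method. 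Second, your rule ``differentiating a scalar symbol in $\eta^*$ lowers by one the second superscript and produces an operator symbol'' is not a formal consequence of Definitions~\ref{def:scalSymb}--\ref{def:opSymb}; it holds here because the specific remainders in \eqref{eq:rest1} are built from Schwartz coefficients and polynomial dependence on $(\eta,\eta^*)$, a structural fact one should invoke (as \cite{CM1} does) rather than treat as an abstract symbol calculus identity.
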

\proof We have   $r-1\ge S $,
\begin{equation} \label{4.88} \begin{aligned} &
\| {z}_j \overline{\textbf{Z}}^{\textbf{m}}
  [ {G}_{j \textbf{m}}^* (|z_j|^2  )  -  {G}_{j \textbf{m}}^*  ]   \|  _{  L^2 (I_T, H ^{4, S})} \le \| {z}_j \overline{\textbf{Z}}^{\textbf{m}}
    \|  _{  L^2 (I_T, \C)}    \|   {G}_{j \textbf{m}} (|z_j|^2  )  -  {G}_{j \textbf{m}}    \|  _{  L^\infty (I_T, H ^{4, S})} \\& \le C_0  \epsilon
       \sup \{ \|  {G}_{j \textbf{m}} '(|z_j|^2  )\| _{\Sigma _{r}}:  |z_j|\le   \delta _{0} \}
        \|   z_j ^2     \|  _{  L^\infty (I_T, \C)} \le   C C_0^3 \epsilon ^{3}   .
\end{aligned}\end{equation}
Furthermore we get for a fixed $c_1>0$
\begin{equation} \label{4.8} \begin{aligned} &
 \| \nabla  _{\eta ^*} E_P (\eta )   \|  _{L^1 (I_T , H^4) } = 2 \|  g(\eta \overline{\eta} )   \eta    \|  _{L^1 (I_T , H^4) }   \le   c_1  \|    \eta    \|  _{L^\infty  (I_T , H^4) } \|    \eta    \| ^2  _{L^2 (I_T , L^\infty) }\le c_1 C_0^3   \epsilon ^3  .
\end{aligned}\end{equation}
The rest of Lemma  \ref{lem:rem1}  follows by the fact that for arbitrarily preassigned $S>2$,
\begin{align} &
  \| R_1  \|  _{ L^2 (I_T, H ^{4, S})} \le C (S,C_0) \epsilon ^2  \text{ for $R_1 =\nabla_{ \eta ^*} ( \resto  -E_P (\eta ))$.} \label{4.6}
\end{align}
This inequality is proved in \cite{CM1}    (for $H ^{4, S}$  replaced by $H ^{1, S}$,
but the proof is  the same).  Then  \eqref{4.88}--\eqref{4.6}
  imply  \eqref{eq:eq A}.

%$R_1$ is a sum of various term obtained from the expansion \eqref{eq:rest1}.
%The following estimates are proved in \cite{CM1}:
%\begin{equation} \label{4.9A} \begin{aligned} &
 % \|  \nabla _{{\eta} ^*  }\resto ^{1,  2  }_{r,\infty} (z, \eta )   \|  _{ L^2 (I_T, H ^{4, S})}  + \| \nabla _{ {\eta} ^* }\resto ^{0,  2N+5 }_{r,\infty} (z,\textbf{Z}  ,\eta )   \|  _{ L^2 (I_T, H ^{4, S})}   \le C (S,C_0) \epsilon ^2 .
%\end{aligned}\end{equation}
%Other terms in $R_1$ can be bounded with similarly elementary  arguments,
%yielding  \eqref{4.6}.

 \qed

\begin{lemma}\label{lem:conditional4.21} Consider $\im \dot \psi -
 {H}\psi =F$  where $P_c $  and $\psi= P_c\psi$. Let $k\in
\Z$  with $k\ge 0$ and $\tau _0>1$.  Then for $(p,q) $ as in  \eqref{eq:numbers1}  and $\tau _0>1$  for a constant $C=C(p,q,k,\tau _0)$  we have
\begin{equation} \label{eq:421}  \begin{aligned}
 &     \|  \psi \| _{L^p_t( [0,T ],B^{  k -\frac{2}{p}} _{q,2})\cap
L^2_t([0,T],H^{  k  ,-\tau _0}_x) }\le C\| \psi(0) \| _{H^{  k }}+ C \|  F \|
_{L^1_t([0,T],H^{ k } ) + L^2_t([0,T],H^{ k  , \tau _0} )},  \end{aligned}
\end{equation}
\end{lemma}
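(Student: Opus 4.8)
The plan is to combine Duhamel's formula with the linear estimates recalled in this subsection, treating the forcing $F$ according to the two components of the sum space $L^1_t([0,T],H^k)+L^2_t([0,T],H^{k,\tau_0})$. First, since $\psi=P_c\psi$ and $P_c$ commutes with both $H$ and $\partial_t$, we have $\rmi\dot\psi-H\psi=P_cF$, so $F$ may be replaced by $P_cF$; because $P_c=1-\sum_j\langle\,\cdot\,,\phi_j^*\rangle\phi_j$ differs from the identity by a finite rank operator with Schwartz kernel, $P_c$ is bounded on $H^k$ and on $H^{k,\pm\tau_0}$, so this replacement costs only a fixed constant. Writing
\begin{equation*}
 \psi(t)=e^{-\rmi tH}P_c\psi(0)-\rmi\int_0^t e^{-\rmi(t-t')H}P_cF(t')\,dt'
\end{equation*}
and splitting $F=F_1+F_2$ with $F_1\in L^1_t([0,T],H^k)$ and $F_2\in L^2_t([0,T],H^{k,\tau_0})$ (realizing the infimum in the sum-space norm up to an arbitrarily small error), it suffices to bound the homogeneous term and the two Duhamel contributions, each in both norms appearing on the left-hand side of \eqref{eq:421}.

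The key point for the index bookkeeping is that for the pairs $(p,q)$ of \eqref{eq:numbers1} one has $\tfrac2p=\tfrac32\big(1-\tfrac2q\big)$, which is exactly the case $\theta=1$ of Theorem \ref{Thm:Strichartz}, so that $\alpha(q)=\big(1+\tfrac\theta2\big)\big(1-\tfrac2q\big)=\tfrac2p$. Consequently: for the homogeneous term, \eqref{eq:Stri1} with Besov regularity $k-\tfrac2p$ and Sobolev index $k=\big(k-\tfrac2p\big)+\alpha(q)$ gives the $L^p_tB^{k-2/p}_{q,2}$ bound by $\|\psi(0)\|_{H^k}$, and \eqref{eq:Smooth1} with $\tau=\tau_0$ gives the $L^2_tH^{k,-\tau_0}$ bound. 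For the $F_1$-contribution, Minkowski's integral inequality reduces matters to bounding $e^{-\rmi(t-t')H}P_cF_1(t')$ for each fixed $t'$, over $t\in\R$, in $L^p_tB^{k-2/p}_{q,2}$ (resp. $L^2_tH^{k,-\tau_0}$), which is in turn $\le C\|P_cF_1(t')\|_{H^k}$ by \eqref{eq:Stri1} (resp. \eqref{eq:Smooth1}); integrating in $t'$ yields the bound by $\|F_1\|_{L^1_tH^k}$, with a constant independent of $T$ since all the cited estimates are global in time and restricting the time interval only decreases norms. For the $F_2$-contribution, the $L^2_tH^{k,-\tau_0}$ bound is precisely the inhomogeneous smoothing estimate \eqref{eq:Smooth3} with $\tau=\tau_0$, while the $L^p_tB^{k-2/p}_{q,2}$ bound follows from Lemma \ref{lem:surrogate} applied with its internal Besov index taken to be $k-\tfrac2p$ and its internal Sobolev index taken to be $k$ (so that the gap is $\tfrac2p=\alpha(q)$) and $\tau=\tau_0$. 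Summing the four contributions gives \eqref{eq:421} with $C=C(p,q,k,\tau_0)$.

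There is no essential difficulty here: the lemma is a bookkeeping assembly of the linear estimates recalled above. The points that require care are (i) verifying that the admissible pairs of \eqref{eq:numbers1} are exactly the $\theta=1$ pairs, so that $\alpha(q)=\tfrac2p$ and no loss of derivatives beyond the one already present in the statement occurs; (ii) the passage from estimates on $\R$ to estimates on $[0,T]$ with $T$-independent constants, which is immediate by zero-extension of $F_1,F_2$ and restriction of the time variable; and (iii) the exclusion of the endpoint $p=2$, which holds because \eqref{eq:numbers1} forces $p\ge p_0>2$ and which is precisely what makes Lemma \ref{lem:surrogate} available, since that lemma rests on the Christ--Kiselev lemma, valid only off the endpoint.
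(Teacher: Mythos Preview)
Your proof is correct and follows essentially the same approach as the paper: Duhamel's formula, the splitting $F=F_1+F_2$, and the same linear estimates (\eqref{eq:Stri1}, \eqref{eq:Smooth1}, \eqref{eq:Smooth3}, Lemma~\ref{lem:surrogate}) applied to each piece. The only cosmetic difference is that for the $F_1$ Strichartz bound you invoke Minkowski plus \eqref{eq:Stri1}, whereas the paper cites the inhomogeneous estimate \eqref{eq:Stri3} directly (with the dual pair $(a',b')=(1,2)$); these are equivalent, and your added bookkeeping---identifying the pairs of \eqref{eq:numbers1} as the $\theta=1$ case so that $\alpha(q)=\tfrac2p$, and noting $p>2$ for Christ--Kiselev---makes the argument more explicit than the paper's sketch.
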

\begin{proof} We split $F=F_1+F_2$ with $F_1\in L^1 ([0,T],H^{ k } )$ and
$F_2\in  L^2 ([0,T],H^{ k  , \tau _0} ) $ and we write
\begin{equation}\label{eq:duh1}
 \psi (t) = e^{-\im tH}\psi (0) -\im \sum _{j=1}^{2}\int _{0}^te^{-\im (t-s)H}F_j(s) ds.
\end{equation}
Estimate \eqref{eq:421} in the special case $F=0$ is a consequence of \eqref{eq:Stri1} and \eqref{eq:Smooth1}. The case $\psi_0=0$, $F_2=0$ follows by \eqref{eq:Stri3} and \eqref{eq:Smooth1}. Finally, the  case $\psi_0=0$, $F_1=0$ follows by  \eqref{eq:surrogate} and \eqref{eq:Smooth3}.

 \end{proof}

\begin{lemma}\label{lem:conditional4.22} Using the notation of Lemma \ref{lem:conditional4.21},  but this time picking $\tau _0>3/2$,     we have
\begin{equation} \label{eq:endpoint1}  \begin{aligned}
 &     \|  \psi \| _{L^2_t( [0,T ],L^{ \infty} ) }\le C\| \psi(0) \| _{H^{  4 }}+ C \|  F \|
_{L^1_t([0,T],H^{ 4 }_x) + L^2_t([0,T],H^{ 4  , \tau _0}_x)}  \end{aligned}
\end{equation}
\end{lemma}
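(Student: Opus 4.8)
The plan is to work from the Duhamel representation \eqref{eq:duh1}, $\psi(t)=e^{-\im tH}P_c\psi(0)-\im\int_0^t e^{-\im(t-s)H}P_cF(s)\,ds$ (we may assume $F=P_cF$, since $P_c$ differs from the identity by a finite--rank operator with Schwartz kernel and is therefore bounded on $H^4$ and on $H^{4,\tau_0}$), and from a splitting $F=F_1+F_2$ with $F_1\in L^1_t([0,T],H^4)$ and $F_2\in L^2_t([0,T],H^{4,\tau_0})$. For the homogeneous term I would use the endpoint Strichartz estimate \eqref{eq:Stri1}: taking $\theta=1$ and the admissible pair $(p,q)=(2,6)$ (for which $\alpha(6)=1$ and $(p,\theta)\neq(2,0)$) gives $\|e^{-\im tH}P_c v\|_{L^2_tB^3_{6,2}}\le C\|v\|_{H^4}$, and since $B^3_{6,2}(\R^3,\C^4)\hookrightarrow L^\infty$ (because $3>3/6$) this yields $\|e^{-\im tH}P_c\psi(0)\|_{L^2_tL^\infty_x}\le C\|\psi(0)\|_{H^4}$. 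The $F_1$--contribution follows from Minkowski's integral inequality together with the same bound applied with translated initial time: $\big\|\int_0^t e^{-\im(t-s)H}P_cF_1(s)\,ds\big\|_{L^2_tL^\infty_x}\le\int_0^T\|e^{-\im(\cdot)H}P_cF_1(s)\|_{L^2_t([0,\infty),L^\infty_x)}\,ds\le C\int_0^T\|F_1(s)\|_{H^4}\,ds$. Neither of these two steps uses $\tau_0>3/2$.

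The remaining piece is $\varphi(t):=-\im\int_0^t e^{-\im(t-s)H}P_cF_2(s)\,ds$, and here the retarded Strichartz tool of Lemma \ref{lem:surrogate} is of no use because it requires $p>2$ on the output side. Instead I would first invoke the smoothing Duhamel bound \eqref{eq:Smooth3} with $k=4$, $\tau=\tau_0$ (legitimate since $\tau_0>3/2>1$) to get, after extending $F_2$ by zero outside $[0,T]$, $\|\varphi\|_{L^2_t([0,T],H^{4,-\tau_0})}\le C\|F_2\|_{L^2_t([0,T],H^{4,\tau_0})}$. Since $\varphi$ solves $\im\dot\varphi=D_\mathscr{M}\varphi+G$ with $\varphi(0)=0$ and $G:=V\varphi+P_cF_2$, and since $\langle x\rangle^{2\tau_0}V\in\mathcal{S}(\R^3,S_4(\C))$ by (H2) (so multiplication by it is bounded on $H^4$), one has $\|V\varphi\|_{H^{4,\tau_0}}\le C\|\varphi\|_{H^{4,-\tau_0}}$ and hence $\|G\|_{L^2_t([0,T],H^{4,\tau_0})}\le C\|F_2\|_{L^2_t([0,T],H^{4,\tau_0})}$. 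It therefore suffices to prove the free--Dirac inhomogeneous endpoint estimate $\big\|\int_0^t e^{-\im(t-s)D_\mathscr{M}}G(s)\,ds\big\|_{L^2_tL^\infty_x}\le C\|G\|_{L^2_tH^{4,\tau_0}}$.

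For that last estimate I would avoid $TT^\ast$/Christ--Kiselev entirely and use the pointwise--in--time dispersive decay of Theorem \ref{Thm:dispersion}: with $p=1$, $q=1$, $\theta=1$, $k=3$, $k'=0$ (so $k-k'=3=(2+\theta)(\tfrac2p-1)$) it gives $\|e^{\im\sigma D_\mathscr{M}}\|_{B^3_{1,1}\to B^0_{\infty,1}}\le C\,K(\sigma)$, where $K(\sigma)=|\sigma|^{-1/2}$ for $|\sigma|\le1$ and $K(\sigma)=|\sigma|^{-3/2}$ for $|\sigma|\ge1$, so $K\in L^1((0,\infty))$. Combining this with $B^0_{\infty,1}\hookrightarrow L^\infty$ and with the weighted embedding $\|h\|_{B^3_{1,1}}\le C\|h\|_{W^{4,1}}\le C\|h\|_{H^{4,\tau_0}}$ — valid via the inclusions $W^{4,1}\hookrightarrow B^4_{1,\infty}\hookrightarrow B^3_{1,1}$ and via Cauchy--Schwarz in $x$, which needs exactly $\langle x\rangle^{-\tau_0}\in L^2(\R^3)$, i.e. $\tau_0>3/2$ — one gets $\big\|\int_0^t e^{-\im(t-s)D_\mathscr{M}}G(s)\,ds\big\|_{L^\infty_x}\le C\int_0^t K(t-s)\|G(s)\|_{H^{4,\tau_0}}\,ds$, and Young's inequality $L^1\ast L^2\subset L^2$ in the time variable closes the estimate. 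Summing the three contributions and taking the infimum over the decompositions $F=F_1+F_2$ produces \eqref{eq:endpoint1}.

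\textbf{Main obstacle.} The crux, and the reason $\tau_0>3/2$ is imposed here (rather than $\tau_0>1$ as in Lemma \ref{lem:conditional4.21}), is the $L^2_tL^\infty_x$ inhomogeneous bound for the source $F_2\in L^2_tH^{4,\tau_0}$: the Christ--Kiselev argument is unavailable at $p=2$, so one is forced to descend to the free group $e^{-\im tD_\mathscr{M}}$, absorb the potential term through the smoothing estimate, and exploit the integrable time decay $K\in L^1((0,\infty))$ from Theorem \ref{Thm:dispersion}; the cost of that reduction is precisely the weighted Sobolev embedding $H^{4,\tau_0}\hookrightarrow B^3_{1,1}$, which holds exactly because $2\tau_0>3$.
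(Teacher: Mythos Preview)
Your proof is correct and follows essentially the same route as the paper's: reduce the $L^2_tL^\infty_x$ bound to the free group $e^{-\im tD_\mathscr{M}}$, use the dispersive decay of Theorem \ref{Thm:dispersion} together with Young's inequality in time, and pay the price of the weighted embedding $H^{4,\tau_0}\hookrightarrow B^{3}_{1,1}$ (or $B^4_{1,2}$ in the paper), which is exactly where $\tau_0>3/2$ enters. The only organizational difference is that the paper writes the free-Dirac Duhamel formula for all of $\psi$ at once (so the homogeneous and $F_1$ pieces are handled by the \emph{free} Strichartz estimate and the extra source $V\psi$ is controlled via the already-proved Lemma \ref{lem:conditional4.21}), whereas you treat the homogeneous and $F_1$ pieces with the \emph{perturbed} endpoint Strichartz $(p,q,\theta)=(2,6,1)$ and descend to $D_\mathscr{M}$ only for the $F_2$ contribution $\varphi$, bounding $V\varphi$ through \eqref{eq:Smooth3}; this is a cosmetic rearrangement and both versions are equally valid.
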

 \begin{proof} The argument is the same of Lemma 10.5 \cite{boussaidcuccagna}.  We
 consider
 \begin{equation}\label{eq:duh2}
 \psi (t) = e^{-\im tD_{\mathscr{M}}}\psi (0)  +\im  \int _{0}^te^{-\im (t-s)D_\mathscr{M}}V \psi (s) ds-\im \sum _{j=1}^{2}\int _{0}^te^{-\im (t-s)D_\mathscr{M}}F_j(s) ds.
\end{equation}
     We have
    for $k\in ( 1/2  , 3]$
\begin{equation}    \begin{aligned}
 &  \| e^{-\im D_\mathscr{M} t}\psi  (0)\| _{L^2_t L^{ \infty}_x  }\le C \| e^{-\im D_\mathscr{M} t}\psi  (0)\| _{L^2_t  B _{6,2} ^{k} }\le C '\| \psi (0)\| _{H ^{k+1} } \le C '\| \psi (0)\| _{H ^{4} } \end{aligned} \nonumber
 \end{equation}
by  the flat version of \eqref{eq:Stri1}, which holds by \cite{boussaid}. Similarly we have
\begin{equation}    \begin{aligned}
 &  \| \int _0^t e^{ \im D_\mathscr{M}  (t'-t)}  F_1 (t')dt'\| _{L^2_t L^{ \infty}_x  }\le C \| \int _0^t e^{ \im D_\mathscr{M} (t'-t)}   F_1 (t')dt'\| _{L^2_t  B _{6,2} ^{k} }\\& \le C '\| F_1\| _{L^1_t H ^{k+1} } \le C '  \| F_1\| _{L^1_t H ^{4} } .\end{aligned} \nonumber
 \end{equation}
Using $B ^{k}_{\infty ,2} \subset L^{ \infty} $  for $k>1/2$ and picking $k<1$,   by   Theorem \ref{Thm:dispersion} we have
\begin{equation} \label{eq:H^4}   \begin{aligned}
 &  \| \int _0^t e^{ \im D_\mathscr{M} (t'-t)}   F_2 (t')dt'\| _{L^2_t L^{ \infty}_x  }\le C \left \| \int_0^t  \min \{ |t-t'|   ^{-\frac{1}{2}},
 |t-t'|   ^{-\frac{3}{2}} \}  \|    F_2(t') \|_{B^{k+3}_{1, 2  }} dt' \right \|
_{L_t^{2} } \\& \le C '\| F_2\| _{L^2_t B^{ 4}_{1, 2  } } \le C ''  \| \langle x \rangle ^{ \tau _0}F_2\| _{L^2_t B^{4}_{2, 2  } } =C ''  \|  F_2\| _{L^2_t H^{4, \tau _0}  }  ,\end{aligned}
 \end{equation}
 where we have used $\|   \varphi _j* F_2 \| _{L^1_x}\le \| \langle x \rangle ^{- \tau _0}   \| _{L^2_x}  \| \langle x \rangle ^{ \tau _0} \varphi _j* F_2 \| _{L^2_x}
 \le C''' \| \varphi _j* ( \langle \cdot  \rangle ^{ \tau _0}   F_2 )\| _{L^2_x}$
 for fixed $C'''>0$  and fixed $\tau _0>3/2$.  With $F_2$ replaced by
 $V\psi$ we get a similar estimate. This yields inequality \eqref{eq:endpoint1}.

\end{proof}

 Setting $M= M (2N+4)$, see  Definition \ref{def:setM},
  we now introduce a new variable $g$ setting
\begin{equation} \label{eq:def g} \begin{aligned} &
 g   =  \eta +Y \text{   with } Y:=
  \sum _{(\alpha , \beta )\in M   }   \overline{{z}}^\alpha  {{z}}^\beta   R_H^{+}( {\textbf{e} }     \cdot (\beta  -
\alpha  ))
   {G}_{\alpha \beta  }  ^*   .
\end{aligned}\end{equation}
This can be traced in \cite{BP2,SW4} and has the following   meaning. When we write $\eta  =-Y+g$  the term $-Y$ is the part of $\eta$ which has the most significant
effect on the variables $z_j$. Substituting $\eta$ by $-Y+g$ in the equations for the $z_j$,
these equations reduce to equations dependent only on $z$, up to a perturbation.

The following lemma is an easier version of Lemma 10.7 \cite{boussaidcuccagna} and so we give the proof in few lines.
\begin{lemma}\label{lem:bound g}  Assume the hypotheses of Prop. \ref{prop:mainbounds} and fix $S>9/2$. Then  there is a $c_1(S)>0$
 such that
 for any $C_0$ there is a   $\epsilon _0=\epsilon _0(C_0,S) >0$   such that for $\epsilon \in (0, \epsilon _0)$ in Theorem \ref{thm:small en} we have
\begin{equation} \label{bound:auxiliary}\| g
\| _{L^2 ([0,T], L^{2,-S}  )}\le c_1(S) \epsilon  .\end{equation}
\end{lemma}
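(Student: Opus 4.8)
Since $g=\eta +Y$ with $Y$ as in \eqref{eq:def g}, the plan is to show that $g$ solves a Schr\"odinger‑type equation whose inhomogeneity is genuinely of higher order, and then to run a Duhamel argument in weighted $L^2_t$ spaces. First I would derive the equation for $g$. By \eqref{eq:eq f}--\eqref{4.9} the radiation obeys $\im\dot\eta=H\eta+\sum_{(\mu,\nu)\in M}\overline z^{\mu}z^{\nu}G^{*}_{\mu\nu}+\mathbb A$. Differentiating $Y$, writing the equations of motion as $\dot z_j=-\im e_jz_j+w_j$ (from \eqref{eq:hamf1} and \eqref{eq:enexp401}, with $w_j$ at least quadratic, i.e.\ of type $\resto ^{1,1}$), and using the resolvent identity $HR_H^{+}(\lambda)=\mathrm{Id}+\lambda R_H^{+}(\lambda)$ at $\lambda_{\mu\nu}:=(\nu-\mu)\cdot\mathbf e$ — which is legitimate since $|\lambda_{\mu\nu}|>\mathscr M$ lies strictly inside $\sigma_c(H)$ by $(\mathrm{H4})$, so the limits of Lemma \ref{lem:smooth1}(1) apply — the resonant source $\sum_{(\mu,\nu)\in M}\overline z^{\mu}z^{\nu}G^{*}_{\mu\nu}$ cancels, leaving
\[
\im\dot g=Hg+\mathbb A+\mathbb B,\qquad g=P_cg,\qquad \mathbb B:=\im\!\!\sum_{(\mu,\nu)\in M}\!\!\widetilde E_{\mu\nu}\,R_H^{+}(\lambda_{\mu\nu})G^{*}_{\mu\nu},
\]
where $\widetilde E_{\mu\nu}:=\tfrac{d}{dt}(\overline z^{\mu}z^{\nu})+\im\lambda_{\mu\nu}\overline z^{\mu}z^{\nu}$ is built from the $w_k$'s and discrete monomials of degree $\ge 3$, hence is a symbol $\resto ^{i,j}$ with $i\ge 2$.

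Because $R_H^{+}(\lambda_{\mu\nu})G^{*}_{\mu\nu}$ is a scattering profile (it decays like $|x|^{-1}$ and is not in $L^2$), $\mathbb B$ cannot be fed into the inhomogeneous smoothing estimate \eqref{eq:Smooth3} directly. Instead I would run the Duhamel formula for the $\eta$‑equation and integrate by parts in the term $\int_0^t e^{-\im(t-s)H}P_c\big(\sum_{(\mu,\nu)\in M}\overline z^{\mu}z^{\nu}G^{*}_{\mu\nu}\big)\,ds$, using $G^{*}_{\mu\nu}=(H-\lambda_{\mu\nu})R_H^{+}(\lambda_{\mu\nu})G^{*}_{\mu\nu}$ and $\tfrac{d}{dt}(\overline z^{\mu}z^{\nu})=-\im\lambda_{\mu\nu}\overline z^{\mu}z^{\nu}+\widetilde E_{\mu\nu}$; the boundary term is exactly $-Y(t)$, which cancels the $+Y(t)$ in $g=\eta+Y$, so that
\[
g(t)=e^{-\im tH}P_c\eta(0)+\!\!\sum_{(\mu,\nu)\in M}\!\!\overline z(0)^{\mu}z(0)^{\nu}e^{-\im tH}P_cR_H^{+}(\lambda_{\mu\nu})G^{*}_{\mu\nu}+\!\!\sum_{(\mu,\nu)\in M}\int_0^t\!\widetilde E_{\mu\nu}(s)e^{-\im(t-s)H}P_cR_H^{+}(\lambda_{\mu\nu})G^{*}_{\mu\nu}ds-\im\!\int_0^t\! e^{-\im(t-s)H}P_c\mathbb A(s)\,ds .
\]

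I would then estimate the four terms in $L^2_t([0,T],L^{2,-S})$. The first is $\le C(S)\|\eta(0)\|_{L^2}\le C(S)c_1\epsilon$ by the smoothing estimate \eqref{eq:Smooth1} and \eqref{4.5id}, where $c_1$ is the absolute constant of the coordinate change, independent of $C_0$. In the second, each $\overline z(0)^{\mu}z(0)^{\nu}$ has degree $\ge 3$, hence is $\lesssim\epsilon^{3}$, while $\|R_H^{+}(\lambda_{\mu\nu})G^{*}_{\mu\nu}\|_{L^{2,-S}}\le C(S)$ (Lemma \ref{lem:smooth1}(1), since the $G_{\mu\nu}$ are Schwartz) and $e^{-\im tH}P_cR_H^{+}(\lambda_{\mu\nu})G^{*}_{\mu\nu}$ enjoys local decay in $L^{2,-S}$ by Theorem \ref{Thm:bouss1}, so this term is $\lesssim C(S)\epsilon^{3}$. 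For the third, I would first bound $\|\widetilde E_{\mu\nu}\|_{L^1_t}+\|\widetilde E_{\mu\nu}\|_{L^2_t}\le C(S,C_0)\epsilon^{2}$ by inserting the equations of motion for the $w_k$'s and using the bootstrap bounds \eqref{L^2discrete}, \eqref{endStrich}, \eqref{L^inftydiscrete} — high‑degree monomials in $\widetilde E_{\mu\nu}$ being dominated, via Lemma \ref{lem:comb1}, by products of two monomials from the $L^2_t$‑bounded families $\mathcal M_k,\mathcal M_l$ — and then combine with the uniform‑in‑time local decay of $e^{-\im\sigma H}P_cR_H^{+}(\lambda_{\mu\nu})G^{*}_{\mu\nu}$ in $L^{2,-S}$, getting a contribution $\le C(S,C_0)\epsilon^{2}$. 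The last term is $\le C(S,C_0)\epsilon^{2}$ by Lemma \ref{lem:rem1} together with \eqref{eq:Smooth1}--\eqref{eq:Smooth3}. Summing and choosing $\epsilon_0=\epsilon_0(C_0,S)$ with $C(S,C_0)\epsilon_0\le c_1$ yields $\|g\|_{L^2([0,T],L^{2,-S})}\le 2C(S)c_1\epsilon$; relabelling $c_1(S):=2C(S)c_1$, which does not depend on $C_0$, concludes.

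The main obstacle — and the only point that is not bookkeeping with the symbol classes of Definitions \ref{def:scalSymb}--\ref{def:opSymb} — is precisely the occurrence in $\mathbb B$ (and in the explicit second term of the Duhamel formula) of the scattering profiles $R_H^{+}(\lambda_{\mu\nu})G^{*}_{\mu\nu}\notin L^2$: one cannot apply a standard inhomogeneous Strichartz or smoothing estimate, and must instead trade the smallness and time‑integrability of the scalar coefficients $\overline z(0)^{\mu}z(0)^{\nu}$ and $\widetilde E_{\mu\nu}$ (from \eqref{L^2discrete}--\eqref{L^inftydiscrete}) against the local‑decay of $e^{-\im tH}P_c$ on such profiles (Theorem \ref{Thm:bouss1}, \cite{boussaid}). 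This is exactly the mechanism already carried out, in a more elaborate form, in Lemma 10.7 of \cite{boussaidcuccagna}, of which the present statement is a simpler instance.
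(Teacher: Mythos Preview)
Your approach is essentially the paper's: derive $\im\dot g=Hg+\mathbb A+\mathbf T$ (the paper's $\mathbf T$ is your $\mathbb B$), write the Duhamel formula with boundary term $e^{-\im tH}Y(0)$, and estimate each piece in $L^2_tL^{2,-S}_x$. The paper is terse here and defers to Lemma~6.4 of \cite{CM1}; you spell out the four terms, which is fine.

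One citation is wrong and it is exactly the point you yourself flag as the obstacle. When you say that the second and third terms are controlled because ``$e^{-\im tH}P_cR_H^{+}(\lambda_{\mu\nu})G^{*}_{\mu\nu}$ enjoys local decay in $L^{2,-S}$ by Theorem~\ref{Thm:bouss1}'', that theorem does \emph{not} apply: it requires the initial datum in $H^{k,s}$ with $s>5/2$, whereas $R_H^{+}(\lambda)G^{*}_{\mu\nu}\in L^{2,-\tau}$ only for $\tau>1$ (it decays like $|x|^{-1}$, as you note). The correct tool is Lemma~\ref{lem:lemg9}, stated right after this lemma in the paper, which gives
\[
\|e^{-\im tH}R_H^{+}(\lambda)P_cv_0\|_{L^{2,-S}}\le c(S,\Lambda)\langle t\rangle^{-3/2}\|P_cv_0\|_{L^{2,S}}
\]
directly, by expanding $R_H^{+}=R_{D_{\mathscr M}}^{+}-R_{D_{\mathscr M}}^{+}VR_H^{+}$ and \emph{then} invoking Theorem~\ref{Thm:bouss1} on each piece (this is the analogue of Lemma~5.8 in \cite{boussaidcuccagna} and replaces Lemma~6.5 of \cite{CM1}). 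Once you cite Lemma~\ref{lem:lemg9} instead, your argument goes through: $\langle t\rangle^{-3/2}\in L^1_t\cap L^2_t$, so Young's inequality handles the convolution with $\widetilde E_{\mu\nu}$ in the third term exactly as you indicate, and the second term is $O(\epsilon^{3})$ as you claim.
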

\proof  Substituting \eqref{eq:def g}  in \eqref{eq:eq f} and using \eqref{4.9} we obtain
\begin{equation} \label{eq:eq g1}  \begin{aligned} &
 \im \dot g =     H g  +   \im \dot Y -HY + \sum _{(\alpha , \beta )\in M   }   \overline{{z}}^\alpha  {{z}}^\beta
   {G}_{\alpha \beta  }  ^*+   \mathbb{A}.
\end{aligned}\end{equation}
We then compute
\begin{equation} \label{eq:eq g2}  \begin{aligned} &
   \im \dot Y =\sum _{(\alpha , \beta )\in M   }  {\textbf{e} }     \cdot (\beta  -
\alpha  ) \overline{{z}}^\alpha  {{z}}^\beta   R_H^{+}( {\textbf{e} }     \cdot (\beta  -
\alpha  ))
   {G}_{\alpha \beta  }  ^* + \mathbf{T} \text{ where} \\&     \textbf{T} :=\sum _j \left [\partial _{z_j}Y (\im \dot z_j- {e}_jz_j)+\partial _{\overline{z}_j}Y (\im \dot {\overline{z}}_j+ {e}_j\overline{z}_j)
 \right ] .
\end{aligned}\end{equation}
Then in \eqref{eq:eq g1} we have the cancellation
\begin{equation*}    \begin{aligned} &
   \sum _{(\alpha , \beta )\in M   }  {\textbf{e} }     \cdot (\beta  -
\alpha  ) \overline{{z}}^\alpha  {{z}}^\beta   R_H^{+}( {\textbf{e} }     \cdot (\beta  -
\alpha  ))
   {G}_{\alpha \beta  }  ^* -HY +    \sum _{(\alpha , \beta )\in M   }   \overline{{z}}^\alpha  {{z}}^\beta
   {G}_{\alpha \beta  }  ^*=0.
\end{aligned}\end{equation*}
So \eqref{eq:eq g1} becomes  \begin{equation} \label{eq:eq g} \begin{aligned} &
 \im \dot g =     H g  +   \mathbb{A} + \mathbf{T}   .
\end{aligned}\end{equation}
We then have
\begin{equation} \label{eq:exp g} \begin{aligned} &
 g(t)= e^{-\im Ht} \eta (0) + e^{-\im Ht} Y (0)-\im \int _0^t  e^{-\im H(t-s)}  (\mathbb{A} (s) +\textbf{T} (s) ) ds .
\end{aligned}\end{equation}
We have $ \|  e^{-\im Ht} \eta (0)
\| _{L^2 (\R , L^{2,-S}  )}\le    c   \| \eta (0) \| _{L^2} \le  c   \epsilon$ by
\eqref{eq:Smooth1}.  The rest of the proof of Lemma \ref{lem:bound g}
is exactly the same of Lemma 6.4  \cite{CM1}, where the auxiliary Lemma 6.5  \cite{CM1} needs to be replaced by
 Lemma \ref{lem:lemg9} below. \qed

Following the proof of Lemma 5.8 \cite{boussaidcuccagna} we obtain Lemma \ref{lem:lemg9} below, a lemma which is standard ingredient
in this type of proofs. For example in \cite{SW4} the analogous ingredient is
Proposition 2.2, but versions appear in \cite{BP2,TY1} (see also references therein) just to name a few.
\begin{lemma}\label{lem:lemg9}
Let $ \Lambda$ be a
compact subset of $(-\infty ,-\mathscr{M} )\cup ( \mathscr{M} ,\infty ) $
and let $S >7/2$. Then  there exists a fixed $ c(S,\Lambda)$ such that
  for every $t \ge 0$ and  $\lambda \in  \Lambda$,
\begin{equation} \label{eq:lemg91}
\|  e^{-\im H  t}R_{ H }^{+}( \lambda )
P_cv_0 \|_{L^{ 2, - S}(\R^3)} \le c(S,\Lambda)\langle  t\rangle ^{-\frac 32} \| P_cv_0  \|_{L^{ 2,   S}(\R^3)}  \text{  for all $v_0\in L^{ 2,   S}(\R^3)$} .
\end{equation}
\end{lemma}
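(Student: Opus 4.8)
The plan is to exploit the factorization $R_H^+(\lambda)P_c = \big(1 + A R_{D_\mathscr{M}}^+(\lambda) B^*\big)^{-1} A R_{D_\mathscr{M}}^+(\lambda)$ (with $A(x) = \langle x\rangle^{-\tau}$ and $B \in \mathcal{S}(\R^3, S_4(\C))$ chosen so that $B^*A = V$, exactly as in the proof of Lemma \ref{lem:smooth1}), so as to reduce everything to a statement about the free propagator $e^{-\im D_\mathscr{M} t}$ applied to the free resolvent on the boundary. The key point is the Stone-type formula: using that the spectral measure of $H$ on the continuous part is given by the jump of the resolvent across $(-\infty,-\mathscr{M})\cup(\mathscr{M},\infty)$, one writes, for $v_0 = P_c v_0$,
\begin{equation*}
e^{-\im H t} R_H^+(\lambda) P_c v_0 = \frac{1}{2\pi\im}\int_{|\mu|\ge \mathscr{M}} e^{-\im \mu t}\,\big(R_H^+(\mu) - R_H^-(\mu)\big)\,R_H^+(\lambda) P_c v_0 \; d\mu,
\end{equation*}
and then uses the resolvent identity $\big(R_H^+(\mu)-R_H^-(\mu)\big)R_H^+(\lambda) = \frac{1}{\lambda-\mu}\big(R_H^+(\mu)-R_H^-(\mu)\big) - \frac{1}{\lambda-\mu}\big(R_H^+(\lambda)-R_H^-(\lambda)\big)$ to move the singular factor $(\lambda-\mu)^{-1}$ outside. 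This is the mechanism by which the single resolvent $R_H^+(\lambda)$ gets absorbed and one is left with integrals of the bare spectral density against a smooth, compactly-away-from-threshold cutoff plus a $\langle t\rangle^{-3/2}$-decaying contribution from $e^{-\im H t}P_c$ itself (Theorem \ref{Thm:bouss1}).

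Concretely, first I would establish the needed weighted $L^2$ bounds that are uniform in the spectral parameter: by Lemma \ref{lem:smooth1}(3) (together with Remark \ref{rem:egs}) one has $\sup_{\lambda}\|R_H^\pm(\lambda)P_c\|_{B(L^{2,\tau},L^{2,-\tau})}<\infty$ for $\tau>1$, and moreover the map $\lambda \mapsto \langle x\rangle^{-\tau}\big(R_H^+(\lambda)-R_H^-(\lambda)\big)P_c\langle x\rangle^{-\tau}$ is $C^1$ (indeed smooth) on $\Lambda$ with derivatives bounded in $B(L^2,L^2)$, since away from thresholds the free resolvent kernel in \eqref{eq:flat R1} depends smoothly on $\lambda$ and the Fredholm inverse $(1+AR_{D_\mathscr{M}}^+B^*)^{-1}$ inherits this regularity. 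Second, I would insert these facts into the Stone formula above. After the resolvent-identity manipulation the $t$-dependent part is $\int e^{-\im\mu t}\,\Phi(\mu)\,d\mu$ where $\Phi$ is a $C^1$ (in fact $C^\infty$) $L^{2,-S}$-valued function, supported in a fixed compact piece of $\{|\mu|\ge\mathscr{M}\}$ coming from a smooth partition of unity separating a neighborhood of $\Lambda$ (where the $(\lambda-\mu)^{-1}$ is genuinely singular and handled by the cancellation) from the rest (where one is far from $\lambda$ and the integrand is harmless); integration by parts in $\mu$ then gives $\langle t\rangle^{-1}$, and the standard stationary-phase/Van der Corput refinement for the Dirac (equivalently Klein–Gordon via \eqref{eq:smooth23}) dispersion relation $\mu = \pm\sqrt{|\xi|^2+\mathscr{M}^2}$, whose Hessian is nondegenerate away from $\xi=0$, upgrades this to $\langle t\rangle^{-3/2}$. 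The weights $S>7/2$ enter precisely to absorb the powers of $\langle x\rangle$ lost in passing between the $\langle t\rangle^{-1}$ estimate valid for $t\le 1$ and the dispersive $\langle t\rangle^{-3/2}$ estimate for $t\ge 1$, and to control the $\langle x\rangle$-growth produced by differentiating the free resolvent kernel twice in $\mu$ (each $\partial_\mu$ of $e^{-\sqrt{\mathscr{M}^2-\mu^2}|x|}$ near threshold costs a power of $|x|$).

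The main obstacle — and the only genuinely delicate point — is the behavior near the thresholds $\tau = \pm\mathscr{M}$: the free resolvent kernel \eqref{eq:flat R1} is only Hölder-$1/2$ in $\mu$ at $|\mu|=\mathscr{M}$, so a naive double integration by parts fails there. This is exactly why the statement restricts $\lambda$ to a compact $\Lambda$ strictly inside the continuous spectrum, so that in the singular region $\mu\approx\lambda$ we are bounded away from the thresholds and may integrate by parts freely, while in the region $\mu$ near a threshold the factor $(\lambda-\mu)^{-1}$ is smooth and one uses instead the decay of the free Dirac/Klein–Gordon propagator near threshold (the $|t|^{-3/2}$ low-frequency decay, valid because (H3) guarantees no threshold resonance, cf. the argument behind Theorem \ref{Thm:bouss1} and Lemma \ref{lem:absflat}). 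Assembling these two regimes via a smooth cutoff, and invoking the already-cited dispersive estimate of \cite{boussaid} for the free group to handle the threshold piece, closes the estimate; since this is the standard $\langle t\rangle^{-3/2}$-with-resolvent lemma, I would present it following Lemma 5.8 of \cite{boussaidcuccagna} and Proposition 2.2 of \cite{SW4}, merely checking that the weight $S>7/2$ suffices in the present setting.
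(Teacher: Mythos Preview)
Your approach via the Stone formula, the spectral density, and stationary phase is considerably more elaborate than what the paper actually does. The paper's proof is essentially two lines: it applies the \emph{second} resolvent identity
\[
R_H^+(\lambda) = R_{D_\mathscr{M}}^+(\lambda) - R_{D_\mathscr{M}}^+(\lambda)\,V\,R_H^+(\lambda),
\]
invokes the already-available combined estimate
\[
\|e^{-\im D_\mathscr{M} t}R_{D_\mathscr{M}}^+(\lambda)\psi_0\|_{L^{2,-\tau_1}} \le C\langle t\rangle^{-3/2}\|\psi_0\|_{L^{2,\tau_1+1}}\qquad(\tau_1>5/2)
\]
for the \emph{free} Dirac operator (Theorem~\ref{Thm:bouss1} combined with \cite[Theorem~2]{BerthierGeorgescu}), and then observes that the Schwartz decay of $V$ together with the uniform bound on $R_H^+(\lambda)P_c$ from Lemma~\ref{lem:smooth1} lets the second term be estimated identically. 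No Stone formula, no integration by parts in the spectral parameter, no partition into threshold and non-threshold pieces: all of that is already hidden inside the cited black boxes. This is what ``following Lemma~5.8 of \cite{boussaidcuccagna}'' means here.

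One correction: the operator identity you wrote,
\[
(R_H^+(\mu)-R_H^-(\mu))R_H^+(\lambda) = \tfrac{1}{\lambda-\mu}(R_H^+(\mu)-R_H^-(\mu)) - \tfrac{1}{\lambda-\mu}(R_H^+(\lambda)-R_H^-(\lambda)),
\]
is false. The first resolvent identity gives simply
\[
(R_H^+(\mu)-R_H^-(\mu))R_H^+(\lambda) = \tfrac{1}{\mu-\lambda}(R_H^+(\mu)-R_H^-(\mu)),
\]
so your formula has both a sign error and a spurious additional term. The add-and-subtract regularization you presumably have in mind---replacing $E'(\mu)/(\mu-\lambda)$ by $[E'(\mu)-E'(\lambda)]/(\mu-\lambda) + E'(\lambda)/(\mu-\lambda)$ with $E'(\mu)=\tfrac{1}{2\pi\im}(R_H^+(\mu)-R_H^-(\mu))$---is a legitimate device at the level of the \emph{integrand} in the Stone integral, not an operator identity, and the residual piece $E'(\lambda)\int_{|\mu|\ge\mathscr{M}} e^{-\im\mu t}(\mu-\lambda-\im 0)^{-1}d\mu$ then still needs its own treatment (note the integration domain is not all of $\R$). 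Your spectral-representation route can be pushed through in this corrected form, but it is substantially more work than the paper's reduction-to-free argument.
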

  \proof
  We expand $ R_{H }^{+}(\lambda )=
R_{D_\mathscr{M}}^{+}(\lambda )   -  R_{D_\mathscr{M}}^{+}(\lambda ) V R_{H}^{+}(\lambda )$. For $\tau _1 >5/2$,  by Theorem \ref{Thm:bouss1}  and by
 \cite[Theorem 2]{BerthierGeorgescu} we   have
 \begin{equation*}\label{eq:w-estflat}
\|  e^{- \rmi  t D_\mathscr{M}
}R_{D_\mathscr{M}}^{+}(\lambda )   \psi  _{0}\|_{L^{2,
-\tau _1}(\R^3)} \le C\langle  t \rangle ^{-\frac 32} \|
R_{D_\mathscr{M}}^{+}(\lambda )   \psi _{0}
\|_{L^{2,  \tau _1}(\R^3)} \leq  C_1\langle  t \rangle ^{-\frac 32} \|
\psi _{0}
\|_{L^{2,  \tau _1+1}(\R^3)},
\end{equation*}
with $C_1$ locally bounded in $\lambda$ and
$\tau_1$.  Hence, by the rapid
decay of
$V$ and by Lemma \ref{lem:smooth1}
\begin{equation*} \begin{aligned} & \|  e^{-
\rmi t D_\mathscr{M}} R_{D_\mathscr{M}}^{+}(\lambda )
V
R_{H}^{+}(\lambda )  P_c    \psi  _{0}\|_{L^{2,
\tau _1} }\\
& \le  C_1\langle  t \rangle ^{-\frac 32}
\left\|V \right\|_{B(L^{2,-\tau_1},L^{2,\tau_1+1})}
\left\|R_{H}^{+}(\lambda )  P_c
\right\|_{B(L^{2,\tau_1},L^{2,-\tau_1})}\|  \psi
_{0}
\|_{L^{2,  \tau_1} } \le C ' \langle  t \rangle ^{-\frac 32}.
\end{aligned} \nonumber
\end{equation*}
\qed

\subsubsection{The analysis of the discrete modes}
\label{subsec:fgr}
Let us turn now to the analysis of the Fermi Golden Rule (FGR).
We have
\begin{equation}\label{eq:FGR01} \begin{aligned} &
\im \dot z _j
=(1 +   \varpi  _j(|z_j|^2)) (e_jz_j+
\partial _{\overline{z}_j}\mathcal{Z}_0(|z _1|^2, ..., |z _n|^2) +
\partial _{  \overline{z} _j}   \mathcal{R}) \\  &
+  (1 +   \varpi  _j(|z_j|^2))[   \sum _{ (\mu , \nu )\in M }    \nu _j  \frac{z  ^{\mu }
 \overline{ {z }}^ { {\nu}  } }{\overline{z}_j}
\langle \eta  ,
  {G}_{\mu \nu }  \rangle      +  \sum _{(\mu ', \nu' )\in M}    \mu _j ' \frac{z  ^{\nu '}
 \overline{ {z }}^ { {\mu}'  } }{\overline{z}_j}
\langle \eta^*  ,
 {{G^*}}_{\mu' \nu' }  \rangle ]\\&
+(1 +   \varpi  _j(|z_j|^2))[\sum_{\mathbf m \in \mathcal M_j (2N+3)}|z_j|^2\mathbf Z^{\mathbf m}\<G_{j\mathbf m}',\eta\>+ z_j^2 \overline{\mathbf{Z}}^{\mathbf m}\<G_{j\mathbf m}^{\prime *}, \eta^*\>]  .
\end{aligned}  \end{equation}
We use \eqref{eq:def g}  to substitute   $\eta$ in the equations above getting, for $M_{min}$ defined as in \eqref{eq:setMhat},
\begin{equation}\label{eq:FGR02}
\begin{aligned}
\im \dot z _j -e_j z_j
&=     \varpi  _j(|z_j|^2))  e_j z_j+ (1 +   \varpi  _j(|z_j|^2))
\partial _{\overline{z}_j}\mathcal{Z}_0(|z _1|^2, ..., |z _n|^2)    +
   \mathcal{A} _j +\mathcal{B}_j  +X_j(M _{min}), \end{aligned}
\end{equation}
    where  for   $\widehat{M}\subset M$     we set
    \begin{equation}\label{eq:XN}
\begin{aligned}&
    X_j(\widehat{M}):=  -\sum _{ \substack{  (\mu , \nu )\in \widehat{M} \\
(\alpha , \beta )\in \widehat{M}}}     \nu _j  \frac{z  ^{\mu +\beta}
 \overline{ {z }}^ { {\nu} +\alpha } }{\overline{z}_j}
\langle  R_H^{+}( {\textbf{e} }     \cdot (\beta -
\alpha ))  G^*_{\alpha \beta}  ,
  {G}_{\mu \nu }  \rangle  \\&
-  \sum _{ \substack{  (\mu ', \nu' )\in \widehat{M} \\
(\alpha ', \beta ')\in \widehat{M}}}         \mu _j ' \frac{z  ^{\nu '+ \alpha '}
 \overline{ {z }}^ { {\mu}'+\beta '  } }{\overline{z}_j}
\langle  R_H^{-}( {\textbf{e} }     \cdot (\beta '-
\alpha' ))  {G}_{\alpha ' \beta '}  ,
 {{G}}^*_{\mu' \nu' }  \rangle,
\end{aligned}
\end{equation}
 with
 \begin{align} &
 \mathcal{A} _j  =(1 +   \varpi  _j(|z_j|^2))
\partial _{  \overline{z} _j}   \mathcal{R} + \varpi  _j(|z_j|^2) [   \sum _{ (\mu , \nu )\in M }    \nu _j  \frac{z  ^{\mu }
 \overline{ {z }}^ { {\nu}  } }{\overline{z}_j}
\langle \eta  ,
  {G}_{\mu \nu }  \rangle      +  \sum _{(\mu ', \nu' )\in M}    \mu _j ' \frac{z  ^{\nu '}
 \overline{ {z }}^ { {\mu}'  } }{\overline{z}_j}
\langle \eta^*  ,
 {G}^*_{\mu' \nu' }  \rangle  ]
\nonumber\\&+     \sum _{ (\mu , \nu )\in M }    \nu _j  \frac{z  ^{\mu }
 \overline{ {z }}^ { {\nu}  } }{\overline{z}_j}
\langle g  ,
  {G}_{\mu \nu }  \rangle      +  \sum _{(\mu ', \nu' )\in M}    \mu _j ' \frac{z  ^{\nu '}
 \overline{ {z }}^ { {\mu}'  } }{\overline{z}_j}
\langle g^*  ,
  {G}^*_{\mu' \nu' }  \rangle \label{eq:FGRrem1}\\&
+(1 +   \varpi  _j(|z_j|^2))[\sum_{\mathbf m \in \mathcal M_j (2N+3)}|z_j|^2\mathbf Z^{\mathbf m}\<G_{j\mathbf m}',\eta\>+ z_j^2 \overline{\mathbf{Z}}^{\mathbf m}\< G_{j\mathbf m}^{\prime*}, \eta^*\>]  \nonumber
\end{align}
and
\begin{equation}\label{eq:FGRX}
\begin{aligned} &  \mathcal{B} _j= X_j(M )-X_j(M _{min}).
\end{aligned}
\end{equation}
We notice that the r.h.s. of the identity \eqref{eq:XN} is well defined by Definition \ref{def:setM} in combination with Lemma \ref{lem:lemg9}. This observation allows also the introduction of the variable $\zeta$ defined by
\begin{equation}\label{eq:FGR21}  \begin{aligned}   &
 \zeta _j = z _j  + {T}_j (z) \text{ where}
\\&  {T}_j (z):=
 \sum _{ \substack{  (\mu , \nu )\in M_{min} \\
(\alpha , \beta )\in M_{min}}}       \frac{\nu _jz  ^{\mu +\beta}
 \overline{ {z }}^ { {\nu} +\alpha } }{((\mu - \nu)\cdot \mathbf{e}-(\alpha - \beta )\cdot \mathbf{e}  )\overline{z}_j}
\langle  R_H^{+}( {\textbf{e} }     \cdot (\beta -
\alpha ))   {G}^*_{\alpha \beta}  ,
  {G}_{\mu \nu }  \rangle  \\&   + \sum _{ \substack{  (\mu ', \nu' )\in M_{min} \\
(\alpha ', \beta ')\in M_{min}}}         \frac{\mu _j ' z  ^{\nu '+ \alpha '}
 \overline{ {z }}^ { {\mu}'+\beta '  } }{((\alpha ' - \beta ')\cdot \mathbf{e}-(\mu ' - \nu ' )\cdot \mathbf{e}  )\overline{z}_j}
\langle  R_H^{-}( {\textbf{e} }     \cdot (\beta '-
\alpha' ))  {G}_{\alpha ' \beta '}  ,
  {G}^*_{\mu' \nu' }  \rangle,
  \end{aligned}
\end{equation}
with the summation   performed over the pairs where the formula makes sense, that is $(\mu -\nu )\cdot \mathbf{e}\neq (\alpha -\beta )\cdot \mathbf{e}.$
We   have the following  lemma, (see also \cite{CM1}).

\begin{lemma}\label{lem:chcoo} Assume \eqref{L^2disbis}.  We have
\begin{equation}  \label{equation:FGR3} \begin{aligned}   & \| \zeta  -
 z  \| _{L^2(0,T)} \le  c(N,C_0) \epsilon ^2  \text{  and }  \| \zeta  -
 z \| _{L^\infty (0,T)} \le  c(N,C_0) \epsilon ^2
\end{aligned}
\end{equation}
and $\zeta$   equations
 \begin{equation}\label{eq:FGR251} \begin{aligned} &
\im \dot \zeta _j
=(1+\varpi(|z_j|^2))(e_j\zeta _j+\lambda_j'(|z_j|^2)\zeta_j+
\partial _{\overline{\zeta}_j}\mathcal{Z}_0(\zeta))\\& -   \sum _{ \substack{  (\mu , \nu )\in M _{min}}}     \nu _j  \frac{|\zeta  ^{\mu  }
 \overline{ {\zeta }}^ { {\nu}   }|^2 }{\overline{\zeta}_j}
\langle  R_H^{+}(   \mathbf{e}     \cdot (\nu -
\mu )) {G}^*_{\mu \nu}  ,
  {G}_{\mu \nu }  \rangle  \\&   -   \sum _{ \substack{  (\mu ', \nu' )\in M _{min}}}         \mu _j ' \frac{|\zeta  ^{\nu ' }
 \overline{ {\zeta }}^ { {\mu}'   } |^2}{\overline{\zeta}_j}
\langle  R_H^{-}( { \mathbf{{e}} }     \cdot (\nu  '-
\mu ' ))  {G}_{\mu ' \nu '}  ,
  {G}^*_{\mu' \nu' }  \rangle      +
    \mathcal{G} _j  ,
\end{aligned}  \end{equation}
where
there are fixed $c_4$  and $ \epsilon _0>0$ such that
 for $T>0$ and $\epsilon \in (0, \epsilon _0)$ we have
 \begin{equation}\label{4.33}
  \begin{aligned}  & \| \mathcal{G}_j \overline \zeta _j\| _{L^1 [0,T]}\le (1+C_0) c_4 \epsilon ^2   .
\end{aligned}
\end{equation}

\end{lemma}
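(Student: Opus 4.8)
The plan is to prove the two displayed assertions of Lemma \ref{lem:chcoo} separately and then to control the remainder $\mathcal{G}_j$; throughout I would use the a priori bounds \eqref{L^2disbis}, \eqref{Smootingradiation}, \eqref{L^inftydiscrete} available under the hypotheses of Prop. \ref{prop:mainbounds}, together with Lemma \ref{lem:bound g}. For \eqref{equation:FGR3} I observe that, by \eqref{eq:FGR21}, $\zeta_j-z_j=T_j(z)$ is a finite linear combination of monomials $\nu_j z^{\mu+\beta}\overline z^{\nu+\alpha}/\overline z_j$ (and of the second, conjugate-type family, handled identically) with $(\mu,\nu),(\alpha,\beta)\in M_{min}\subset M$, whose scalar coefficients $\langle R_H^{+}(\mathbf e\cdot(\beta-\alpha))G^*_{\alpha\beta},G_{\mu\nu}\rangle$ are finite because $|\mathbf e\cdot(\beta-\alpha)|>\mathscr M$ (as $(\alpha,\beta)\in M$), so $R_H^{+}$ is defined on weighted $L^2$ by Lemma \ref{lem:smooth1}, while $G_{\alpha\beta},G_{\mu\nu}$ are Schwartz. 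Since $\nu_j\ge 1$ whenever such a term does not vanish, $\overline z^{\nu+\alpha}/\overline z_j$ is an honest monomial; using $|\nu|=|\mu|+1$ and $|\beta|=|\alpha|+1$ (proof of Lemma \ref{lem:comb1}) I would factor off the $M$-monomial $z^\mu\overline z^\nu$, leaving a power of $z$ of degree $2|\alpha|\ge 2$ which is $\le(C_0\epsilon)^2$ pointwise by \eqref{L^inftydiscrete}, and apply \eqref{L^2disbis} to the remaining factor; this gives $\|T_j(z)\|_{L^2(0,T)}\le c(N,C_0)\epsilon^3$, and bounding all factors pointwise (each monomial has total $z$-degree $\ge 2|\mu|+2|\alpha|+1\ge 5$) gives $\|T_j(z)\|_{L^\infty(0,T)}\le c(N,C_0)\epsilon^5$, so \eqref{equation:FGR3} follows since $\epsilon<1$.

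For the $\zeta$-equation I would differentiate $\zeta_j=z_j+T_j(z)$ and, writing $r_k:=\im\dot z_k-e_k z_k$ (the right-hand side of \eqref{eq:FGR02}), use the chain rule to obtain
\[
\im\dot\zeta_j-e_j\zeta_j=(\im\dot z_j-e_j z_j)+\Big(\sum_k e_k\big(z_k\partial_{z_k}-\overline z_k\partial_{\overline z_k}\big)-e_j\Big)T_j+\sum_k\big(r_k\,\partial_{z_k}T_j-\overline{r_k}\,\partial_{\overline z_k}T_j\big).
\]
On a monomial $\nu_j z^{\mu+\beta}\overline z^{\nu+\alpha}/\overline z_j$ the operator $\sum_k e_k(z_k\partial_{z_k}-\overline z_k\partial_{\overline z_k})-e_j$ acts by multiplication by $(\mu-\nu)\cdot\mathbf e-(\alpha-\beta)\cdot\mathbf e$, which is precisely the denominator built into $T_j$ in \eqref{eq:FGR21}; hence the middle term reproduces, with the opposite sign, the off-diagonal part of $X_j(M_{min})$ occurring in \eqref{eq:FGR02} --- here ``off-diagonal'' refers to the pairs with $(\mu-\nu)\cdot\mathbf e\neq(\alpha-\beta)\cdot\mathbf e$, the complementary ones being exactly $(\mu,\nu)=(\alpha,\beta)$ by Lemma \ref{lem:comb-1}(2) and (H4) --- and these cancel. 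The surviving diagonal part of $X_j(M_{min})$, for which $z^{\mu+\nu}\overline z^{\mu+\nu}=|z^\mu\overline z^\nu|^2$ (cf. \eqref{eq:XN}), is exactly the two explicit sums of \eqref{eq:FGR251} written in the variable $z$; replacing $z$ by $\zeta$ there, and likewise rewriting the gauge-invariant terms $\varpi_j e_j z_j$ and $(1+\varpi_j)\partial_{\overline z_j}\mathcal Z_0$ of \eqref{eq:FGR02} as the first line of \eqref{eq:FGR251}, costs only errors to be absorbed into $\mathcal G_j$. Thus $\mathcal G_j$ is $\mathcal A_j+\mathcal B_j+\sum_k(r_k\partial_{z_k}T_j-\overline{r_k}\partial_{\overline z_k}T_j)$ plus those substitution errors, and \eqref{eq:FGR251} holds.

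It remains to prove \eqref{4.33}. Using \eqref{equation:FGR3} I would first replace $\overline\zeta_j$ by $\overline z_j$ in $\mathcal G_j\overline\zeta_j$ up to a product of two $O(\epsilon^2)$ factors. The point is that every monomial appearing in $\mathcal B_j\overline z_j$, in $\big(\sum_k r_k\partial_{z_k}T_j-\overline{r_k}\partial_{\overline z_k}T_j\big)\overline z_j$, and in the $z\mapsto\zeta$ substitution errors is --- after the elementary manipulations of the $\overline z_j$-denominators carried out as in Lemma 6.4 of \cite{CM1} --- a polynomial in $(z,\overline z)$ carrying as a factor a product $(z^{\mu_1}\overline z^{\nu_1})(z^{\mu_2}\overline z^{\nu_2})$ of (moduli of) $M$-monomials, times extra powers of $z$ (for $\mathcal B_j$ one uses that at least one index pair lies in $M\setminus M_{min}$ and is thus divisible, in the sense of \eqref{eq:setMhat}, by another $M$-pair); so Hölder in time with \eqref{L^2disbis} and \eqref{L^inftydiscrete} bounds its $L^1(0,T)$-norm by $c(N,C_0)\epsilon^3$. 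For the parts of $\mathcal A_j\overline z_j$ (see \eqref{eq:FGRrem1}) carrying an $\eta$-, $g$- or $\mathcal R$-factor one instead pairs the accompanying $M$-monomial $z^\mu\overline z^\nu$ (or its conjugate), bounded in $L^2(0,T)$ by $C_0\epsilon$ via \eqref{L^2disbis}, with the $L^2(0,T)$-norm of that factor: $\le C\epsilon$ by \eqref{Smootingradiation} for the $\eta$-pairings, $\le C(S)\epsilon$ for the $g$-pairings by Lemma \ref{lem:bound g} and \eqref{bound:auxiliary} with $S>9/2$ (so that $\|\langle g,G_{\mu\nu}\rangle\|_{L^2(0,T)}\le\|G_{\mu\nu}\|_{L^{2,S}}\|g\|_{L^2([0,T],L^{2,-S})}$), while $(1+\varpi_j)\partial_{\overline z_j}\mathcal R$ times $\overline z_j$ is controlled exactly as $\mathbb A$ in Lemma \ref{lem:rem1} and as in \eqref{4.6}; this yields a bound $\le C(C_0)\epsilon^2$ for these terms, linear in $C_0$. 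Summing the finitely many contributions and shrinking $\epsilon_0=\epsilon_0(C_0,N)$ so as to absorb the $\epsilon^3$-terms gives $\|\mathcal G_j\overline\zeta_j\|_{L^1[0,T]}\le(1+C_0)c_4\epsilon^2$, which is \eqref{4.33}.

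The step I expect to be the main obstacle is this last bookkeeping of $\mathcal G_j$. One must check that the homological cancellation of the off-diagonal part of $X_j(M_{min})$ is exact --- this rests on the precise shape of the denominators in \eqref{eq:FGR21}, on their non-vanishing off the diagonal, and on Lemma \ref{lem:comb-1}(2) --- and, more delicately, that \emph{every} surviving monomial of $\mathcal G_j\overline\zeta_j$ genuinely has the bilinear $L^2_t\times L^2_t$ (or $L^2_t\times(L^2_t\text{-integrable})$) structure used above, including a careful accounting of the $\overline z_j$-denominators, which always cancel thanks to the combinatorics of $M_{min}$ (Lemma \ref{lem:comb-1}). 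Without this structure no estimate uniform in $T$ would be available, since each such term is only $O(\epsilon^{\ge 3})$ pointwise while $T$ is arbitrary; it is exactly here that the care taken in building $X_j$ from $M$-monomials only, and in subtracting with $T_j$ only the off-diagonal ones, is used, as in \cite{CM1}.
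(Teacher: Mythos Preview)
Your proposal is correct and follows essentially the same route as the paper's proof: you derive \eqref{equation:FGR3} by factoring each monomial of $T_j(z)$ into an $M$-monomial times residual powers of $z$ and using \eqref{L^2disbis} and \eqref{L^inftydiscrete}; you obtain \eqref{eq:FGR251} by the same homological computation (your identity $\big(\sum_k e_k(z_k\partial_{z_k}-\overline z_k\partial_{\overline z_k})-e_j\big)T_j=-X_j^{\mathrm{off\text{-}diag}}(M_{min})$ is precisely the content of the paper's \eqref{eq:FGR2--51}, and your use of Lemma~\ref{lem:comb-1}(2) to reduce the surviving diagonal block to the form \eqref{eq:FGR251} is exactly the paper's passage from \eqref{eq:FGR2-51} to \eqref{eq:FGR251}); and your decomposition of $\mathcal G_j$ and the $L^2_t\times L^2_t$ bookkeeping for \eqref{4.33} match the paper's treatment of $\mathcal B_j$ in \eqref{eq:inB} and its reference to \cite{CM1} for $\mathcal G_j'$. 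If anything, your description of the cancellation as ``off-diagonal versus diagonal'' is cleaner than the paper's phrasing (which writes $-X_j(M_{min})$ in \eqref{eq:FGR2--51} where strictly only the off-diagonal part is produced), and you supply more explicit detail on the $\mathcal A_j$-terms than the paper, which defers these to \cite{CM1}.
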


\proof
We have   \begin{equation*}  \begin{aligned} &
\im \dot \zeta _j
=  \im \dot  {T}_j  -e_j  {T}_j +e_j \zeta _j +\text{r.h.s. of \eqref{eq:FGR02}.}
\end{aligned}  \end{equation*}
By elementary computation we have
\begin{equation}\label{eq:FGR2--51}    \begin{aligned} &
   \im \dot{T}_j  -e_j  {T}_j  =-X_j(M _{min})+ \mathbb{T}_j ,\text{  where}
    \\& \mathbb{T}_j :=
    \sum _l \left [\partial _{z_l}{T}_j (\im \dot z_l- {e}_lz_l)+\partial _{\overline{z}_l}{T}_j (\im \dot {\overline{z}}_l+ {e}_j\overline{z}_l)
 \right ] .
\end{aligned}\end{equation}
This leads to the cancellation of $X_j(M _{min})$  and, for a $\mathcal{G} _j$
which we write explicitly below, to
   \begin{equation}\label{eq:FGR2-51} \begin{aligned} &
\im \dot \zeta _j
=(1+\varpi(|z_j|^2))(e_j\zeta _j+
\partial _{\overline{j} }\mathcal{Z}_0(|\zeta _1|^2, ..., |\zeta _n|^2))\\& -    \sum _{ \substack{  (\mu , \nu ), (\alpha , \beta )\in M  _{min}\\
(\alpha - \beta )\cdot \mathbf{e}=(\mu - \nu )\cdot \mathbf{e  }}  }   \nu _j  \frac{\zeta  ^{\mu +\beta}
 \overline{ {\zeta }}^ { {\nu} +\alpha } }{\overline{\zeta}_j}
\langle  R_H^{+}( {\textbf{e} }     \cdot (\beta -
\alpha ))  G^*_{\alpha \beta}  ,
  {G}_{\mu \nu }  \rangle  \\&   -  \sum _{ \substack{  (\mu ' , \nu '), (\alpha ', \beta ')\in M  _{min}\\
(\alpha '- \beta ')\cdot \mathbf{e}=(\mu '- \nu ')\cdot \mathbf{e  }}  }      \mu _j ' \frac{\zeta  ^{\nu '+ \alpha '}
 \overline{ {\zeta }}^ { {\mu}'+\beta '  } }{\overline{\zeta}_j}
\langle  R_H^{-}( {\textbf{e} }     \cdot (\beta '-
\alpha' ))  {G}_{\alpha ' \beta '}  ,
  G^* _{\mu' \nu' }  \rangle      +
    \mathcal{G} _j .
\end{aligned}  \end{equation}
By Lemma \ref{lem:comb-1}
 we achieve that $(\alpha  - \beta  )\cdot \mathbf{e}=(\mu  - \nu  )\cdot \mathbf{e  } $
 implies $(\alpha  , \beta  )  =(\mu  ,\nu  ) $. Similarly $(\alpha  ', \beta  ')  =(\mu  ',\nu  ') $. Hence \eqref{eq:FGR2-51}  can be written as
\begin{equation*}%\label{eq:FGR251}
\begin{aligned} &
\im \dot \zeta _j
=(1+\varpi(|z_j|^2))(e_j\zeta _j+
\partial _{\overline{\zeta}_j}\mathcal{Z}_0(|\zeta _1|^2, ..., |\zeta _n|^2))\\& -    \sum _{    (\mu , \nu )\in M _{min} }     \nu _j  \frac{|\zeta  ^{\mu  }
 \overline{ {\zeta }}^ { {\nu}   } |^2}{\overline{\zeta}_j}
\langle  R_H^{+}( { \mathbf{{e}} }     \cdot (\nu  -
\mu ))  G^*_{\mu \nu}  ,
  {G}_{\mu \nu }  \rangle  \\&   -   \sum _{    (\mu  , \nu  )\in M _{min} }         \mu _j   \frac{|\zeta  ^{\mu  }
 \overline{ {\zeta }}^ { {\nu}   } |^2 }{\overline{\zeta}_j}
\langle  R_H^{-}( \mathbf{{e}}     \cdot (\nu -
\mu ))  {G}_{\mu \nu}  ,
 G^*_{\mu  \nu }  \rangle      +
    \mathcal{G} _j  ,
\end{aligned}  \end{equation*}
that is the equation \eqref{eq:FGR251}, where, recalling  $ \mathcal{A} _j $ as in \eqref{eq:FGRrem1} and $\mathcal{B}_j$ as in \eqref{eq:FGRX},
\begin{equation*}  \label{eq:restG}
\begin{aligned}
   \mathcal{G} _j &= \mathcal{B}_j+ \mathcal{G} _j ' ,\text{  where}\\ \mathcal{G} _j '&:=\mathcal{A} _j   +(1+\varpi(|z_j|^2))[  {\partial} _{ \overline{j}}\mathcal{Z}_0(|z_1|^2, ..., |z_n|^2) -{\partial} _{ \overline{j}}\mathcal{Z}_0(|\zeta _1|^2, ..., |\zeta _n|^2)]
-e_j \varpi(|z_j|^2)  {T}_j (z) + \mathbb{T}_j.
\end{aligned}
\end{equation*}
 The  proof of
  \eqref{equation:FGR3}, an easy consequence of the estimate \eqref{L^2disbis}
(or equivalently \eqref{L^2discrete}),  and the proof of   $\| \mathcal{G}_j '\overline \zeta _j\| _{L^1 [0,T]}\le (1+C_0) c_4 \epsilon ^2 $ are  in \cite{CM1}. The proof
 of \eqref{4.33} is then a consequence of
\begin{equation}\label{eq:inB}
  \begin{aligned}  & \|  \mathcal{B}_j \zeta _j\| _{L^1_t} \le  \|  \mathcal{B} _j z _j\| _{L^1_t}+ \|  \mathcal{B}_j\| _{L^ {2}_t} \| z_j- \zeta _j\| _{L^2_t} \le C(C_0) \epsilon^3.
\end{aligned}
\end{equation}
  To prove  \eqref{eq:inB} we use
\begin{equation*}
  \begin{aligned}  & \|  \mathcal{B} _j z _j\| _{L^1_t} \lesssim \sum _{ \substack{  (\mu , \nu )\in M \backslash M _{min}  \\
(\alpha , \beta )\in M   }} \|   z ^{\mu }   \overline{z}^{\nu } \| _{L^ {2}_t} \| z ^{\alpha }   \overline{z}^{\beta }\| _{L^2_t}  \le C (C_0)\epsilon^3,
\end{aligned}
\end{equation*}
by  the definition of $M_{min}$. In an analogous way it is possible to prove the
remaining estimates needed for  the second term on the r.h.s. of \eqref{eq:inB}.

\qed

By multiplying the identity \eqref{eq:FGR251} above by $ \overline{\zeta}_j$ and summing over the index $j$ we achieve, in like manner as in \cite{CM1},
\begin{align}\label{eq:FGR252}
   2 ^{-1} \sum _j\frac{d}{dt}   \  |  \zeta _j| ^2
= - \sum _j \Im \big [  \sum _{ \substack{  (\mu , \nu )\in M_{min}}}     \nu_j   \   |\zeta  ^{\mu  }
 \overline{ {\zeta }}^ { {\nu}   }|^2
\langle  R_H^{+}((\nu-\mu)\cdot \mathbf{e})  G^*_{\mu \nu}  ,
  {G}_{\mu \nu }  \rangle & \nonumber\\
  \\  +    \sum _{ \substack{  (\mu  , \nu )\in M_{min}}}         \mu _j   \ | \zeta  ^{\mu '}
 \overline{ {\zeta }}^ { {\nu}  }|^2
\langle  R_H^{-}( (\nu-\mu)\cdot \mathbf{e})  {G}_{\mu  \nu }  ,
  G^*_{\mu \nu }  \rangle    \big ] +  \sum _j   \Im [
  \mathcal{G} _j  {\overline\zeta} _j ] .
 \nonumber
  \end{align}
Thus by using the substitution, for any $(\nu-\mu)\cdot \mathbf{e}\in \Lambda$ (see the formulas \eqref{eq:plem} and \eqref{eq:pv} in Lemma \ref{plemeljDir} of the Appendix \ref{app:Pformula}),
\begin{equation}\label{eq:plemelj}
  \begin{aligned} &
  R_H^{\pm }( (\nu-\mu)\cdot  \mathbf{e}) = P.V. \frac{1}{H-(\nu-\mu)\cdot \mathbf{e}}\pm \im \pi \delta ( H-(\nu-\mu)\cdot \mathbf{e}),
\end{aligned}
\end{equation}
we can state the following lemma.
 \begin{lemma}
  \label{lem:can1}   For any $(\mu , \nu )\in M_{min}$,   we
  have
  \begin{equation} \label{eq:can2} \begin{aligned} &
\sum_j \Im [  \sum _{ \substack{  (\mu , \nu )\in M_{min}\\
 }}     \nu_j    \   \ |\zeta  ^{\mu  }
 \overline{ {\zeta }}^ { {\nu}   }|^2
\langle  P.V. \frac{1}{H-(\nu-\mu)\cdot \mathbf{e}}  G^*_{\mu\nu}  ,
  {G}_{\mu \nu }  \rangle  \\&   +    \sum _{ \substack{  (\mu  , \nu  )\in M_{min}}}         \mu_j     \   |\zeta  ^{\mu   }
 \overline{ {\zeta }}^ { {\nu }   }|^2\langle P.V. \frac{1}{H-(\nu-\mu)\cdot \mathbf{e}}  {G}_{\mu   \nu  }  ,
  G^*_{\mu  \nu  }  \rangle     ] =0.
\end{aligned}\end{equation}

\end{lemma}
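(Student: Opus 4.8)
The plan is to isolate the principal value operator appearing in \eqref{eq:can2} and to observe that every scalar it produces there is \emph{real}, so that the imaginary part of the whole (finite) sum is automatically zero. There is no real dissipation hidden in the principal value part — that lives in the $\delta(H-\lambda)$ term, which is separated off by \eqref{eq:plemelj} and treated afterwards — so Lemma~\ref{lem:can1} is purely a self-adjointness statement.

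First I would record the analytic set-up. For $(\mu,\nu)\in M_{min}\subset M$ put $\lambda:=(\nu-\mu)\cdot\mathbf{e}\in\R$; by the characterization of $M(r)$ recalled in the proof of Lemma~\ref{lem:comb-1} one has $|\lambda|>\mathscr{M}$, so $\lambda$ lies in the interior of the continuous spectrum of $H$, away from the thresholds $\pm\mathscr{M}$ and from $\sigma_p(H)$. Hence, by Lemma~\ref{lem:smooth1}, the boundary values $R_H^\pm(\lambda)\in B(L^{2,\tau},L^{2,-\tau})$ exist for any $\tau>1$, and $P.V.\frac{1}{H-\lambda}=\frac12\big(R_H^+(\lambda)+R_H^-(\lambda)\big)$ by \eqref{eq:plemelj}. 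Since $G_{\mu\nu},G_{\mu\nu}^*\in\Sigma_r\subset L^{2,\tau}$ (with $r$ as large as needed, by the construction of $\mathcal H$ in Proposition~\ref{th:main}) and $M_{min}$ is a finite set, every pairing in \eqref{eq:can2} is well defined.

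Next I would prove that the two scalars are real. Set $B:=P.V.\frac{1}{H-\lambda}$. Because $H$ is self-adjoint on $L^2(\R^3,\C^4)$ with respect to the Hermitian product $(f,g)\mapsto\langle f,g^*\rangle$ and $\lambda\in\R$, one has $R_H(\lambda+\im\varepsilon)^*=R_H(\lambda-\im\varepsilon)$, whence $\big(R_H^+(\lambda)\big)^*=R_H^-(\lambda)$ and $B^*=B$ for this Hermitian product; equivalently $\langle Bf,g^*\rangle=\langle f,(Bg)^*\rangle=\overline{\langle Bg,f^*\rangle}$ for all admissible $f,g$ (using that $\langle\cdot,\cdot\rangle$ is bilinear and $\langle a,b^*\rangle=\overline{\langle a^*,b\rangle}$). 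Taking $f=g=G_{\mu\nu}^*$, so that $f^*=g^*=G_{\mu\nu}$, gives $\langle B G_{\mu\nu}^*,G_{\mu\nu}\rangle=\overline{\langle B G_{\mu\nu}^*,G_{\mu\nu}\rangle}$; taking $f=g=G_{\mu\nu}$, so that $f^*=g^*=G_{\mu\nu}^*$, gives $\langle B G_{\mu\nu},G_{\mu\nu}^*\rangle=\overline{\langle B G_{\mu\nu},G_{\mu\nu}^*\rangle}$. Hence both $\langle P.V.\frac{1}{H-\lambda}G_{\mu\nu}^*,G_{\mu\nu}\rangle$ and $\langle P.V.\frac{1}{H-\lambda}G_{\mu\nu},G_{\mu\nu}^*\rangle$ are real.

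To finish, since $\nu_j,\mu_j\in\N_0$ and $|\zeta^\mu\overline\zeta^\nu|^2\ge0$ are real, each summand of the bracket in \eqref{eq:can2} is real, so the whole finite bracketed sum is real and its imaginary part vanishes, which is exactly \eqref{eq:can2}. I do not expect any genuine obstacle: the only point requiring care is choosing the weight $\tau>1$ and the regularity index $r$ consistently so that $R_H^\pm(\lambda)$ acts between the weighted spaces containing $G_{\mu\nu}$ and $G_{\mu\nu}^*$, and this is immediate from Lemma~\ref{lem:smooth1} together with $G_{\mu\nu}\in\Sigma_r$ for arbitrarily large $r$.
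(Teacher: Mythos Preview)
Your proof is correct and follows the same line as the paper's: both arguments reduce \eqref{eq:can2} to the observation that the scalars $\langle P.V.\,\frac{1}{H-\lambda}\,f,f^{*}\rangle$ are real for $f=G_{\mu\nu},G_{\mu\nu}^{*}$, after which the real coefficients $\nu_j,\mu_j,|\zeta^{\mu}\overline{\zeta}^{\nu}|^{2}$ finish the job. The only difference is in how the reality of these scalars is justified: the paper invokes the explicit integral representation \eqref{eq:pv} from Lemma~\ref{plemeljDir}, whereas you use the abstract self-adjointness of $B=\tfrac12(R_H^{+}(\lambda)+R_H^{-}(\lambda))$ with respect to the Hermitian pairing $\langle\cdot,(\cdot)^{*}\rangle$, which follows from $(R_H^{+}(\lambda))^{*}=R_H^{-}(\lambda)$; both routes are valid and equally short.
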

 \proof By formula \eqref{eq:pv} below
 the terms $\langle P.V. \frac{1}{H-(\nu-\mu)\cdot \mathbf{e}} f   ,
  f  ^* \rangle  $, for $f ={G}_{\mu   \nu  },G^*_{\mu  \nu  }$, are real valued.

 \qed

We have also the lemma below.
 \begin{lemma}
  \label{lem:pos1}   For any $(\mu , \nu )\in M_{min}$
 we have
  \begin{equation} \label{eq:pos3} \begin{aligned} &
\pi\sum_j \Im [ \im  \sum _{ \substack{  (\mu , \nu )\in  M_{min} }}     \nu_j  \,  |\zeta  ^{\mu }
 \overline{ {\zeta }}^ { {\nu}}|^2
\langle \delta ({H- (\nu-\mu)\cdot \mathbf{e}})  G^*_{\mu \nu}  ,
  {G}_{\mu \nu }  \rangle  \\&  -\im     \sum _{ \substack{  (\mu , \nu  )\in M_{min}}}         \mu_j     \,  |\zeta  ^{\mu   }
 \overline{ {\zeta }}^ { {\nu }}|^2
\langle \delta ({H- (\nu-\mu)\cdot \mathbf{e}})  {G}_{\mu   \nu  }  ,
  G^*_{\mu  \nu  }  \rangle     ]  \\& =
\pi   \sum _{ \substack{  (\mu , \nu )\in  M_{min} }} \,  |\zeta  ^{\mu }
 \overline{ {\zeta }}^ { {\nu}}|^2\langle \delta ({H-(\nu-\mu)\cdot \mathbf{e}}) G^*_{\mu \nu}  ,
    G_{\mu \nu}  \rangle  \ge 0.
\end{aligned}\end{equation}

\end{lemma}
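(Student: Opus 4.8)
The plan is to reduce \eqref{eq:pos3} to two ingredients: the combinatorial fact that $|\nu|=|\mu|+1$ for every $(\mu,\nu)\in M\supset M_{min}$, and the statement, coming from Lemma \ref{plemeljDir} in the Appendix, that the Plemelj density $\delta(H-\lambda)$ is nonnegative and invariant under conjugating its two arguments. The argument then parallels the one-line proof of Lemma \ref{lem:can1}.

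First I would check that the quantities in \eqref{eq:pos3} are well defined. For $(\mu,\nu)\in M_{min}\subset M$ we have $(\mu,\nu)\in M_k(2N+4)$ for some $k$, and by Definition \ref{def:setM} together with the identity $(\mu-\nu)\cdot\mathbf{e}=\sum_{a,b}m_{ab}(e_a-e_b)-e_k$ (as in the proof of Lemma \ref{lem:M0}) this forces $|(\nu-\mu)\cdot\mathbf{e}|>\mathscr{M}$. Hence $\lambda_{\mu\nu}:=(\nu-\mu)\cdot\mathbf{e}$ lies in the continuous spectrum, and $\delta(H-\lambda_{\mu\nu})$, defined through the Plemelj formula \eqref{eq:plemelj}, is bounded $L^{2,S}\to L^{2,-S}$ for $S>1$ by Lemma \ref{lem:lemg9} (and the limiting absorption principle of Lemma \ref{lem:smooth1}). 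Moreover the first line of the proof of Lemma \ref{lem:comb-1} gives $|\nu|=|\mu|+1$, so that $\sum_{j}\nu_j=|\mu|+1$ and $\sum_{j}\mu_j=|\mu|$.

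Next I would invoke Lemma \ref{plemeljDir} (formula \eqref{eq:plem}) to record that for every $(\mu,\nu)\in M_{min}$, writing $\lambda=\lambda_{\mu\nu}$,
\[
 \langle \delta(H-\lambda)G^*_{\mu\nu},\,G_{\mu\nu}\rangle \;=\; \langle \delta(H-\lambda)G_{\mu\nu},\,G^*_{\mu\nu}\rangle \;\in\;[0,\infty).
\]
Reality and nonnegativity hold because $\delta(H-\lambda)$ is the nonnegative density of the spectral measure of the self-adjoint operator $H$ and $G^*_{\mu\nu}=\overline{G_{\mu\nu}}$, so each member equals $\langle\delta(H-\lambda)f,f^*\rangle=(\delta(H-\lambda)f,f)_{L^2}\ge0$ with $f=G^*_{\mu\nu}$ resp. $f=G_{\mu\nu}$; the equality of the two is the symmetry encoded in \eqref{eq:plem}, which expresses $\langle\delta(H-\lambda)f,g^*\rangle$ as an integral over the sphere $S_{\mu\nu}$ of the product of the distorted Fourier transforms of $f$ and $g$, the substitution $\xi\mapsto-\xi$ (under which $S_{\mu\nu}$ is invariant and which exchanges the distorted transforms of $f$ and $\overline{f}$) interchanging the two sides. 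Then, using $\Im(\im w)=\Re w$ and the reality just recorded, the left-hand side of \eqref{eq:pos3} becomes
\[
 \pi\sum_{(\mu,\nu)\in M_{min}}|\zeta^{\mu}\overline{\zeta}^{\nu}|^2\Big(\big({\textstyle\sum_j}\nu_j\big)\langle\delta(H-\lambda_{\mu\nu})G^*_{\mu\nu},G_{\mu\nu}\rangle-\big({\textstyle\sum_j}\mu_j\big)\langle\delta(H-\lambda_{\mu\nu})G_{\mu\nu},G^*_{\mu\nu}\rangle\Big),
\]
and $\sum_j\nu_j=\sum_j\mu_j+1$ together with the displayed equality collapses the bracket to $\langle\delta(H-\lambda_{\mu\nu})G^*_{\mu\nu},G_{\mu\nu}\rangle$, yielding precisely the right-hand side of \eqref{eq:pos3}; nonnegativity of each summand finishes the proof.

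The only nontrivial point, and the one I expect to be the main obstacle, is the identity $\langle\delta(H-\lambda)G^*_{\mu\nu},G_{\mu\nu}\rangle=\langle\delta(H-\lambda)G_{\mu\nu},G^*_{\mu\nu}\rangle$, i.e. the invariance of the Plemelj $\delta$-form under simultaneous conjugation of both arguments. This does not follow from abstract self-adjointness: $D_{\mathscr{M}}$ is not a real operator and $V$ is an arbitrary self-adjoint Schwartz matrix potential, so the equality has to be read off from the explicit representation of $\delta(H-\lambda)$ through the distorted Fourier transform of $H$ (Lemma \ref{plemeljDir}), exploiting the $\xi\mapsto-\xi$ symmetry of the spheres $S_{\mu\nu}$ and the corresponding symmetry of the generalized eigenfunctions of $H$.
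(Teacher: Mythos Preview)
Your proposal is correct and follows essentially the same route as the paper's proof: both use the combinatorial identity $|\nu|-|\mu|=1$ for $(\mu,\nu)\in M_{min}$ together with the reality, nonnegativity, and conjugation-symmetry of $\langle\delta(H-\lambda)f,f^*\rangle$ coming from formula \eqref{eq:delta} in Lemma~\ref{plemeljDir}. The paper simply cites \eqref{eq:delta} to assert $\langle\delta(H-\lambda)G_{\mu\nu},G^*_{\mu\nu}\rangle=\langle\delta(H-\lambda)G^*_{\mu\nu},G_{\mu\nu}\rangle\ge 0$ in one line, whereas you spell out the $\xi\mapsto-\xi$ mechanism behind it; apart from that extra commentary and your well-definedness checks, the arguments coincide.
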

\proof
By \eqref{eq:delta} we have $ \langle \delta ({H- (\nu-\mu)\cdot \mathbf{e}})  {G}_{\mu   \nu  }  ,
  G^*_{\mu  \nu  }  \rangle =\langle \delta ({H- (\nu-\mu)\cdot \mathbf{e}})  {G}_{\mu   \nu  }  ^*,
  G _{\mu  \nu  }  \rangle \ge 0$.
Then, commuting the summations and by     $|\nu| -  |\mu | =1$  we get
 \begin{align} \label{eq:pos4} \text{l.h.s.\eqref{eq:pos3}}=
 \pi\sum_j\Re [    \sum _{ \substack{  (\mu , \nu )\in M_{min}}}    ( \nu_j  -\mu_j )     \,  |\zeta  ^{\mu   }
 \overline{ {\zeta }}^ { {\nu }}|^2
\langle \delta ({H-  (\nu-\mu)\cdot \mathbf{e}})  G^*_{\mu \nu}  ,
  {G}_{\mu \nu }  \rangle       ]& \nonumber \\
  =\pi \sum _{ \substack{  (\mu , \nu )\in M_{min}}}  \,\langle \delta ({H-(\nu-\mu)\cdot \mathbf{e}})  G^*_{\mu \nu}  ,
    G_{\mu \nu}   \rangle \ge 0  &. \nonumber
\end{align}

 \qed

By an application of Lemmas \ref{lem:can1} and \ref{lem:pos1} to the identity \eqref{eq:FGR252} we arrive  as in \cite{CM1} at the following.
\begin{corollary}\label{lem:eqzeta}
 We have
\begin{equation}\label{eq:FGR25} \begin{aligned} &
   2 ^{-1} \frac{d}{dt}\sum _j  \  |  \zeta _j| ^2
 =  \sum _j  \Im [
  \mathcal{G} _j  \overline{\zeta} _j ]
     -\pi \sum _{ \substack{  (\mu , \nu )\in  M_{min} }} \,  |\zeta  ^{\mu }
 \overline{ {\zeta }}^ { {\nu}}|^2 \langle \delta ({H-(\nu-\mu)\cdot \mathbf{e}})   G^*_{\mu \nu}  ,
    G_{\mu \nu}   \rangle       .
\end{aligned}\end{equation}
\end{corollary}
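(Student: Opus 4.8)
The plan is to start from the identity \eqref{eq:FGR252}, which was obtained by multiplying the $\zeta_j$-equation \eqref{eq:FGR251} by $\overline{\zeta}_j$, taking imaginary parts, and summing over $j$. The right-hand side of \eqref{eq:FGR252} consists of the remainder term $\sum_j\Im[\mathcal{G}_j\overline{\zeta}_j]$, which is exactly the term we want to keep (and which is controlled in $L^1_t$ by \eqref{4.33}), plus two sums over $M_{min}$ involving the resolvents $R_H^+((\nu-\mu)\cdot\mathbf{e})$ and $R_H^-((\nu-\mu)\cdot\mathbf{e})$ paired against $G^*_{\mu\nu}$ and $G_{\mu\nu}$. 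So it remains to show that these two resolvent sums combine to give $-\pi\sum_{(\mu,\nu)\in M_{min}}|\zeta^\mu\overline{\zeta}^\nu|^2\langle\delta(H-(\nu-\mu)\cdot\mathbf{e})G^*_{\mu\nu},G_{\mu\nu}\rangle$.

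Next I would invoke the Plemelj formula \eqref{eq:plemelj}, which is applicable since for every $(\mu,\nu)\in M_{min}\subset M$ we have $|(\nu-\mu)\cdot\mathbf{e}|>\mathscr{M}$ by the definition of $M$, so the spectral parameter lies in $(-\infty,-\mathscr{M})\cup(\mathscr{M},\infty)$ and in particular in a compact subset $\Lambda$ on which $R_H^\pm$ is well defined (Lemma \ref{lem:smooth1}); this is also where Lemma \ref{plemeljDir} of the Appendix is used. Substituting $R_H^\pm = \mathrm{P.V.}\,(H-(\nu-\mu)\cdot\mathbf{e})^{-1}\pm\im\pi\delta(H-(\nu-\mu)\cdot\mathbf{e})$ into \eqref{eq:FGR252} splits each of the two resolvent sums into a principal value part and a delta part.

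The principal value part is precisely the left-hand side of \eqref{eq:can2}, hence vanishes by Lemma \ref{lem:can1}. The delta part, after factoring out $\pm\im\pi$, is exactly the expression treated in Lemma \ref{lem:pos1}: by \eqref{eq:pos3} it equals $-\pi\sum_{(\mu,\nu)\in M_{min}}|\zeta^\mu\overline{\zeta}^\nu|^2\langle\delta(H-(\nu-\mu)\cdot\mathbf{e})G^*_{\mu\nu},G_{\mu\nu}\rangle$, the overall minus sign coming from the $-\sum_j\Im[\cdots]$ in front of the resolvent sums in \eqref{eq:FGR252} together with the $\Im[\im(\cdots)]=\Re(\cdots)$ rearrangement carried out in Lemma \ref{lem:pos1}. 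Collecting the three pieces — the untouched $\sum_j\Im[\mathcal{G}_j\overline{\zeta}_j]$, the vanishing principal value part, and the delta part — yields \eqref{eq:FGR25}, which is the assertion.

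The only genuine bookkeeping point, and the place where a sign slip is most likely, is tracking the real and imaginary parts through the substitution: one must check that $\langle\mathrm{P.V.}\,(H-(\nu-\mu)\cdot\mathbf{e})^{-1}f,f^*\rangle$ is real-valued (so it disappears under $\Im$, as recorded in the proof of Lemma \ref{lem:can1} via \eqref{eq:pv}), while the delta contribution comes out with the correct sign and with the nonnegativity $\langle\delta(H-(\nu-\mu)\cdot\mathbf{e})G^*_{\mu\nu},G_{\mu\nu}\rangle\ge 0$ supplied by \eqref{eq:delta}. Beyond this the argument is a direct application of Lemmas \ref{lem:can1} and \ref{lem:pos1} to the identity \eqref{eq:FGR252}, exactly as in the corresponding step of \cite{CM1}.
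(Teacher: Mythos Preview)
Your proposal is correct and follows exactly the paper's approach: the corollary is obtained by substituting the Plemelj decomposition \eqref{eq:plemelj} into \eqref{eq:FGR252}, then applying Lemma \ref{lem:can1} to kill the principal-value contribution and Lemma \ref{lem:pos1} to reduce the $\delta$-contribution to the single sum in \eqref{eq:FGR25}. Your additional remarks on why \eqref{eq:plemelj} is applicable and on the sign bookkeeping are accurate elaborations of what the paper leaves implicit.
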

\qed

By Lemma \ref{plemeljDir} in the Appendix \ref{app:Pformula} we
have \begin{equation}\label{eq:restrDir}     \begin{aligned} &
\langle  \delta (H-(\nu -\mu )\cdot \mathbf{e})  G^*_{\mu  \nu}  ,
  {G}_{\mu \nu }  \rangle  = \frac {  |(\nu -\mu )\cdot \mathbf{e}| }{\sqrt{((\nu -\mu )\cdot \mathbf{e})^2-\mathscr{M}^2}} \int _{S_{\mu \nu}}|\widehat{{G}}_{\mu \nu }(\xi)|^2 dS(\xi),
\end{aligned}\end{equation}
for $S_{\mu \nu}=\{ \xi \in \R^3: |\xi |^2 +\mathscr M^2=|(\nu - \mu )\cdot \mathbf{e} |^2 \}$ and for $\widehat{{G}}_{\mu \nu }(\xi) = \mathcal{F}_{V,+}({{G}}_{\mu \nu })(\xi )$ the distorted Fourier
transform in \eqref{distortedfourier}. The  $ \widehat{{G}}_{\mu \nu }$  are continuous functions
by the fact that the ${{G}}_{\mu \nu }$ are rather regular
and quite rapidly decreasing. Since   by
(H5) we have $ \widehat{G }_{\mu \nu}| _{S_{\mu \nu}}\neq 0$
for all $(\mu , \nu )\in M_{min}$, it follows   that there exists a $\Gamma >0$ such that
\begin{equation} \label{eq:poss5}   \begin{aligned} &
\langle  \delta (H-(\nu -\mu )\cdot \mathbf{e})  G^*_{\mu  \nu}  ,
  {G}_{\mu \nu }  \rangle > \Gamma >0, \,  \text{ for all \, $(\mu , \nu )\in M _{min}$}.
\end{aligned}\end{equation}
Now we complete the proof of Proposition \ref{prop:mainbounds}.
 By the inequalities
\eqref{Strichartzradiation}--\eqref{L^inftydiscrete} and \eqref{4.33} in Lemma \ref{lem:chcoo},
integrating \eqref{eq:FGR25} and using \eqref{eq:poss5}
we get for, any $t\in [0,T]$ and for a fixed $c_2$,
\begin{equation} \label{eq:crunch}\begin{aligned}&  \sum _j   | z
_j(t)|^2 +\sum _{ ( \mu , \nu )
   \in M}  \| z^{\mu +\nu } \| _{L^2(0,t)}^2\le c_2(
\epsilon ^2+ C_0\epsilon ^2).\end{aligned}
\end{equation}
Here we have used the inequalities \eqref{equation:FGR3} in Lemma \ref{lem:chcoo}, which allow
  to switch from inequalities on  $\zeta$ to inequalities on $z$.
  The function $|\dot z_j (t)|$  can be estimated by  using   \eqref{eq:FGR02}.

\noindent In particular \eqref{eq:crunch}  yields \eqref{eq:crunch0}.   This completes the proof of Proposition
\ref{prop:mainbounds}.

\subsection{Proof   of the asymptotics \eqref{eq:small en31}} \label{subsec:prop1}

We write \eqref{eq:eq f} in the form $\im \dot \eta =    D_\mathscr{M} \eta  +V \eta + \mathbb{B}$ with, see \eqref{eq:eq f},
\begin{equation*}  \begin{aligned} &  \mathbb{B} =  \sum _{(\mu , \nu )\in {M}  }    \overline{z} ^{\mu} {z}  ^{\nu}G^* _{\mu \nu} +   \mathbb{A}  . \end{aligned}
\end{equation*}
Then $\partial _t(e^{\im D_\mathscr{M} t}\eta )=-\im e^{\im D_\mathscr{M} t}(V \eta + \mathbb{B})$ and so
\begin{equation*}  \begin{aligned} &   e^{\im  D_\mathscr{M}  t_2}\eta (t_2) -e^{\im  D_\mathscr{M}  t_1}\eta (t_1) =-\im  \int _{t_1} ^{t_2}  e^{\im  D_\mathscr{M}  t} (V \eta  (t) +\mathbb{B} (t))  dt  \text{  for $t_1<t_2$}.\end{aligned}
\end{equation*}
Then for a fixed $c_2$ by  Lemma \ref{lem:conditional4.21}, and specifically \eqref{eq:421}
  for $k=4$, $p=\infty$, $q=2$  and using $B^{  k  } _{2,2}=H^4$ we have
\begin{equation}\label{eq:compl1}  \begin{aligned} &  \|  e^{\im H t_2}\eta (t_2) -e^{\im H t_1}\eta (t_1) \| _{H^4 }\le  c_2  \|  V \eta (t) + \mathbb{B} (t)   \|  _{L^1([{t_1} ,{t_2}], H^4) + L^2([{t_1} ,{t_2}], H ^{4, 10}))}.\end{aligned}
\end{equation}
By \eqref{Strichartzradiation}, valid now in $[0,\infty)$ and     for a fixed $C$, we have
\begin{equation*}  \begin{aligned} &    \|  V \eta (t)     \|  _{L^2([{t_1} ,{t_2}], H ^{4, S})} \le   c_1  \|   \eta      \|  _{L^2(\R _+, H ^{4, -10})}\le C \epsilon  .\end{aligned}
\end{equation*}
By Proposition \ref{th:main} and \eqref{L^2disbis} we similarly   have
\begin{equation*}  \begin{aligned} &    \|   \sum _{(\mu , \nu )\in {M}  }    \overline{z} ^{\mu} {z}  ^{\nu}  {G}^* _{\mu \nu}  \|  _{  L^2(\R _+,H ^{4, 10} )} \le C '   \sum _{(\mu , \nu )\in {M}  }  \|   \overline{z} ^{\mu} {z}  ^{\nu}     \|  _{  L^2(\R _+,\C )} \le C \epsilon .
\end{aligned}
\end{equation*}
We also have \eqref{eq:eq A} for $I_T=\R _+$.
 We then conclude that   there exists an $\eta _+\in   H^4\cap  \mathcal{H}_{c}[0]$ with
\begin{equation*}  \begin{aligned} & \lim _{t\nearrow \infty}    e^{ \im D_\mathscr{M} t }\eta (t )
  = \eta _+   \text{   in $ H^4$  and with $\| \eta _+ \| _{H^4(\R ^3)}\le C \epsilon $.}   \end{aligned}
\end{equation*}

\noindent Now we prove the existence of $\rho _+$  and the facts about it in  Theorem \ref{thm:mainbounds}. First of all  we have \begin{equation*}  \begin{aligned} &
 \frac{1}{2}  \sum _j \frac{d}{dt} | z _j|^2 =
 \sum _j \Im \big [
\partial _{   \overline{j}}   \mathcal{R} \overline{z} _j   +     \sum _{ (\mu , \nu )\in M }    \nu _j   z  ^{\mu }
 \overline{  z }^ {  \nu   }
\langle \eta  ,
  {G}_{\mu \nu }  \rangle      +  \sum _{(\mu ', \nu' )\in M}    \mu _j '   z  ^{\nu '}
 \overline{  z  }^ {  \mu  '    }
\langle  {\eta} ^* ,
   {{G}}_{\mu' \nu' }^*  \rangle  \big ]      .
\end{aligned}  \end{equation*}
Since the r.h.s. has $L^1(0,\infty )$ norm  bounded by $C \epsilon ^2$ for a fixed $C$,
we conclude that   the limit
\begin{equation*} \begin{aligned} &
\lim _{t\nearrow \infty} ( | z _1(t)|  , ...   , | z _n(t)|)=  ( \rho _{1+}  ,  ...   , \rho _{n+})
\end{aligned}  \end{equation*}
exists, with  $| \rho  _+  | \le C  \| u (0)\| _{H^4}   $.
By $\lim _{t\nearrow \infty} z  ^{\mu }
 \overline{  z }^ {  \nu   } (t)=0$ for all $(    {\mu },
  {  \nu   })\in M$,  we can conclude that all but at most one
of the $\rho _{j+}$  are equal to 0.

\appendix

\section{Appendix:  proof of the formula \eqref{eq:plemelj} }
\label{app:Pformula}

This section is devoted to prove the Plemelj formula \eqref{eq:plemelj} associated to the resolvent of
the operator \eqref{Eq:op}. With this aim we need to rely now on the following facts. Borrowed by \cite{boussaid} we first introduce the matrix functions $\psi_{0}(x,\xi)\in M_4(\C)$ with vector column given by
\begin{align}\label{eq:plwav0}
\psi^{j}_{0}(x,\xi)=e^{\rmi x\cdot \xi}v(\xi)e_j, \, \, \, \,  j=1,...4,
\end{align}
with $L(\xi)=\sqrt{|\xi|^2+\mathscr{M}^2},$
\begin{align}\label{unit}
v(\xi)=\frac{(L(\xi)+\mathscr{M})I_4-\beta \alpha \cdot \xi}{\sqrt{2L(\xi)((L(\xi)+\mathscr{M}))}},
\end{align}
is a unitary matrix and $e_j $ are vectors of the canonical basis of $\C^4$ (for more details see \cite{Thaller}, Sect. 1). We recall that the transformation
$$v(\xi)\mathcal{F}(u)(\xi)= \frac 1{(2\pi )^{\frac{3}{2}}}\int _{\R ^3}\psi_{0}(x,\xi)^*u(x) dx,$$
 diagonalizes the free Dirac operator $D_\mathscr{M}$ as follows
$v^*(\xi)\mathcal{F} D_\mathscr{M} \mathcal{F}^{*}v(\xi)=L(\xi)\beta$ (we recall that  $\mathcal{F}$ denotes the classical Fourier transform with inverse  $\mathcal{F}^{*}).$
Consequently we can define the distorted plane wave functions
$\psi^{\pm}_{V}(x,\xi)\in M_4(\C)$ associated to the
continuous spectrum of $H$ (see for instance \cite{Agmon} and \cite{bambusicuccagna} for the cases of Schr\"ondinger and Klein-Gordon) as follows,
\begin{align}\label{eq:plwav}
\psi^{j,\pm}_{V}(x,\xi)=\psi^{j}_{0}(x,\xi)-\Lambda ^{j,\pm}_{V}(x,\xi),  \, \, \, \,  j=1,...4,
\end{align}
where
\begin{equation}\label{eq.al}
\Lambda ^{j,\pm}_{V}(x,\xi)=
\begin{cases}
\lim_{\varepsilon \searrow 0} R_{H}(L(\xi)\pm\rmi \varepsilon)V\psi^{j}_{0}(x,\xi) \ \ \ \  \  \  \text{for} \ \ \ j\in \{1,2\},\\
\lim_{\varepsilon \searrow 0} R_{H}(-L(\xi)\pm\rmi \varepsilon)V\psi^{j}_{0}(x,\xi)\ \ \ \ \ \ \text{for} \ \ \ j\in \{3,4\}.
\end{cases}
\end{equation}
We recall also that the distorted plane wave associated to the perturbed Dirac operator \eqref{Eq:op}, here denoted by $\psi^{+}_{V}(x,\xi),$
satisfies the equation
\begin{equation}
H\psi^+_{V}(x,\xi)=\pm L(\xi) \psi^+_{V}(x,\xi),
\end{equation}
(the same equation holds for $\psi^-_{V}(x,\xi)$). By this, for any $g\in \Sc(\R^3, \C^4),$ we have the distorted Fourier transform associated to $H$ (in the sense of Sect. 3.3 in \cite{boussaid}).
\begin{equation}\label{distortedfourier}
\mathcal{F}_{V,\pm}(g)(\xi )= \frac 1{(2\pi )^{\frac{3}{2}}}\int _{\R ^3}\psi^{\pm}_{V}(x,\xi)
^*g(x) dx,
\end{equation}
is a bounded linear operator from $\mathcal H_c [0]$ in $L^2(\R^3, \C^4)$ and with inverse
%\begin{equation}\label{distortedfourierinv}
%\mathcal{F}^{-1}_{V,\pm}(u)(x)= \frac 1{(2\pi )^{\frac{3}{2}}}\int _{\R ^3}\psi^{\pm}_{V}(x,\xi)
%\mathcal{F}_{V,\pm}(v)(\xi )d\xi,
%\end{equation}
$\mathcal{F}^{*}_{V,\pm}$ defined as in Theorem 3.2 of \cite{boussaid}.
Notice that we have also the relation $\mathcal{F}(g)(\xi)=v^{*}(\xi)\mathcal{F}_{0,\pm}(g)(\xi).$
Motivated by this we state the following lemma

\begin{lemma}
\label{plemeljDir}
Let be $\lambda\in \R\backslash[-\mathscr{M}, \mathscr{M}],$ then we have the following
representation of the resolvent $R_H^{\pm }( \lambda)$ of the perturbed Dirac operator $H$ defined as in Lemma \ref{lem:smooth1},
\begin{equation}\label{eq:plem}   \begin{aligned} &
  R_H^{\pm }( \lambda) = P.V. \frac{1}{H-\lambda}\pm \im \pi \delta ( H-\lambda),
\end{aligned}
\end{equation}
characterized by
\begin{align}\label{eq:delta}
  \pi \im \langle\delta(H-\lambda)f, f^*\rangle=\frac 12\langle R_H^{+ }( \lambda)-R_H^{-}( \lambda)f,f^*\rangle &
\nonumber \\
 =\frac {\pi \im |\lambda| }{\sqrt{\lambda^2-\mathscr{M}^2}} \int _{|\xi|=\sqrt{\lambda^2-\mathscr{M}^2}}|\mathcal{F}_{V,+}(f)|^2 d\xi&,
\end{align}
and
\begin{align}\label{eq:pv}
 \langle P.V. \frac{1}{H-\lambda}f, f^*\rangle=\frac 12\langle R_H^{+ }( \lambda)+R_H^{-}( \lambda)f,f^*\rangle &
\nonumber \\
 =\lim_{\epsilon \searrow 0} \int _{\left | |\xi|-\sqrt{\lambda^2-\mathscr{M}^2}\right|\geq \epsilon}\frac{L(\xi)\beta+\lambda I_{\C^4}}{|\xi|^2+\mathscr{M}^2-\lambda^2}
 |\mathcal{F}_{V,+}(f)|^2 d\xi&,
\end{align}
for any function $f \in \Sc(\R^3, \C^4)\cap \mathcal H [0].$
\end{lemma}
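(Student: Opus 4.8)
\emph{Proof plan.} The idea is to conjugate everything by the distorted Fourier transform $\mathcal{F}_{V,+}$ of \eqref{distortedfourier}, which by Theorem~3.2 of \cite{boussaid} is unitary from $\mathcal{H}_c[0]$ onto $L^2(\R^3,\C^4)$ and, by the eigenfunction identity $H\psi^{+}_{V}(x,\xi)=\pm L(\xi)\psi^{+}_{V}(x,\xi)$ recalled above, intertwines $H$ with multiplication by the matrix symbol $L(\xi)\beta$. Consequently, for $z\in\C\setminus\R$ the operator $\mathcal{F}_{V,+}R_H(z)\mathcal{F}_{V,+}^{*}$ is multiplication by $(L(\xi)\beta-z)^{-1}=(L(\xi)^2-z^2)^{-1}(L(\xi)\beta+z\,I_4)=(|\xi|^2+\mathscr{M}^2-z^2)^{-1}(L(\xi)\beta+z\,I_4)$, where we used $\beta^2=I_4$. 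Since $\langle f,g^{*}\rangle$ is the Hermitian $L^2$ pairing, Parseval gives, for $f\in\mathcal{S}(\R^3,\C^4)\cap\mathcal{H}_c[0]$ and $\widehat f:=\mathcal{F}_{V,+}(f)$,
\[
\langle R_H(z)f,f^{*}\rangle=\int_{\R^3}\frac{\bigl(L(\xi)\beta+z\bigr)\widehat f(\xi)\cdot\overline{\widehat f(\xi)}}{|\xi|^2+\mathscr{M}^2-z^2}\,d\xi .
\]

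Next I would take $z=\lambda\pm\rmi\varepsilon$, $\lambda\in\R\setminus[-\mathscr{M},\mathscr{M}]$, and let $\varepsilon\searrow0$. The left-hand side converges to $\langle R_H^{\pm}(\lambda)f,f^{*}\rangle$ by the limiting absorption principle of Lemma~\ref{lem:smooth1}(1). On the right, since $f$ is Schwartz and lies in $\mathcal{H}_c[0]$ the density $N(\xi):=\bigl(L(\xi)\beta+\lambda\bigr)\widehat f(\xi)\cdot\overline{\widehat f(\xi)}$ is continuous, rapidly decaying, and H\"older continuous across the sphere $S_\lambda:=\{|\xi|=\sqrt{\lambda^2-\mathscr{M}^2}\}$ (the only zero set of the denominator), so the scalar Sokhotski--Plemelj formula in the radial variable, applied to $|\xi|^2+\mathscr{M}^2-(\lambda\pm\rmi\varepsilon)^2\to(|\xi|^2+\mathscr{M}^2-\lambda^2)\mp\rmi0\cdot\operatorname{sgn}\lambda$, yields
\[
\langle R_H^{\pm}(\lambda)f,f^{*}\rangle=\lim_{\epsilon\searrow0}\int_{\left|\,|\xi|-\sqrt{\lambda^2-\mathscr{M}^2}\,\right|\ge\epsilon}\frac{N(\xi)}{|\xi|^2+\mathscr{M}^2-\lambda^2}\,d\xi\ \pm\ \rmi\pi\operatorname{sgn}\lambda\int_{\R^3}N(\xi)\,\delta\bigl(|\xi|^2+\mathscr{M}^2-\lambda^2\bigr)\,d\xi .
\]
Defining $P.V.\,\tfrac1{H-\lambda}:=\tfrac12\bigl(R_H^{+}(\lambda)+R_H^{-}(\lambda)\bigr)$ and $\rmi\pi\,\delta(H-\lambda):=\tfrac12\bigl(R_H^{+}(\lambda)-R_H^{-}(\lambda)\bigr)$, this is exactly \eqref{eq:plem}: the symmetric part gives \eqref{eq:pv} (with the matrix weight understood in the quadratic-form sense), and the antisymmetric part gives \eqref{eq:delta} once the $\delta$-integral is evaluated. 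For the latter I would use the coarea formula, $\delta\bigl(|\xi|^2+\mathscr{M}^2-\lambda^2\bigr)=\tfrac1{2\sqrt{\lambda^2-\mathscr{M}^2}}\,dS(\xi)$ on $S_\lambda$, on which $L(\xi)=|\lambda|$ so that $\operatorname{sgn}\lambda\,(L(\xi)\beta+\lambda)=|\lambda|\,(\operatorname{sgn}\lambda\,\beta+1)$; restricting to the spectral sector carried by $S_\lambda$, only the energy-$\lambda$ spinor components of $\widehat f$ survive, $(\operatorname{sgn}\lambda\,\beta+1)\widehat f\cdot\overline{\widehat f}=2|\mathcal{F}_{V,+}(f)|^2$ there, and one is left with the prefactor $\dfrac{\pi\rmi\,|\lambda|}{\sqrt{\lambda^2-\mathscr{M}^2}}$ and the surface integral of $|\mathcal{F}_{V,+}(f)|^2$ in \eqref{eq:delta}; the P.V. integrand being real and, on $S_\lambda$, $|\mathcal{F}_{V,+}(f)|^2\ge0$, this then gives for free the reality of $\langle P.V.\,\tfrac1{H-\lambda}f,f^{*}\rangle$ and the nonnegativity of $\langle\delta(H-\lambda)f,f^{*}\rangle$ used in Lemmas~\ref{lem:can1} and \ref{lem:pos1}.

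The case $\lambda<-\mathscr{M}$ runs identically, the singular contribution then coming from the block of $L(\xi)\beta$ with symbol $-L(\xi)$, or can be reduced to $\lambda>\mathscr{M}$ by the charge-conjugation symmetry of $D_\mathscr{M}$. I expect the only delicate points to be the two facts imported from \cite{boussaid}: that $\mathcal{F}_{V,+}$ genuinely diagonalizes $H$ on $\mathcal{H}_c[0]$ (so that the conjugated resolvent is precisely the multiplication operator above, with boundary values in the weighted $L^2$ spaces matching those from Lemma~\ref{lem:smooth1}), and that $\mathcal{F}_{V,+}(f)$ is regular enough near $S_\lambda$ for the Sokhotski--Plemelj limit to produce exactly the surface measure; granted these, the remainder is the scalar distributional computation indicated.
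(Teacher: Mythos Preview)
Your proof is correct and follows essentially the same route as the paper: diagonalize $H$ on $\mathcal{H}_c[0]$ via the distorted Fourier transform and then apply the scalar Sokhotski--Plemelj formula to the resulting integral, evaluating the $\delta$-contribution on the sphere $S_\lambda$. The only cosmetic difference is that the paper works with the $z$-dependent transform $f(\xi,z)$ of \eqref{eq:general} (from the Yamada/Agmon representation of $\mathcal{F}(R_H(z)f)$) and lets $z\to\lambda+\rmi 0$ simultaneously in the transform and in the kernel, whereas you invoke directly the unitarity and intertwining property of $\mathcal{F}_{V,+}$ from \cite{boussaid} and take the limit only in the scalar denominator; both paths yield the same surface integral and the same P.V.\ formula.
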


\proof
We deal with the proof of the formulas \eqref{eq:delta} and \eqref{eq:pv} for $R_H^{+ }( \lambda)$ because the one for
$R_H^{- }( \lambda)$ is similar. Select a $f \in \Sc(\R^3, \C^4),$ then by transposing the arguments of \cite{Agmon} and following \cite{Yamada73} one can see that, for any $z\in \C\backslash\sigma(H),$
%L\in \R\backslash(-\mathscr{M}, \mathscr{M})$ and $\varepsilon>0$
the following identity is fulfilled
\begin{equation}\label{eq:diag0}
\mathcal{F}(R_{H}(z)f)(\xi)=\mathcal{F}\left(\frac 1{H-z}f\right)(\xi)=v(\xi)\frac{L(\xi)\beta+zI_{\C^4}}{|\xi|^2+\mathscr{M}^2-z^2} f (\xi,z),
\end{equation}
where we set
\begin{equation*}
\frac{(L(\xi)\beta+zI_{\C^4})}{|\xi|^2+\mathscr{M}^2-z^2}=
 \begin{pmatrix}
 \frac {1}{L(\xi)-z }I _{\C^2} &
0 \\
0 & -  \frac {1}{L(\xi)+z } I _{\C^2}
 \end{pmatrix},
\end{equation*}
(see once again \cite{Thaller}) and with the vector valued function $f(\xi,z)$ having the form
\begin{align}\label{eq:general}
 f (\xi,z)=\frac 1{(2\pi )^{\frac{3}{2}}}\int _{\R ^3}(\psi_{0}(x,\xi)-R_{H}(z^*)V\psi_{0}(x,\xi))^*f(x)dx.
\end{align}

Moreover by a use of  $R_{ H} (z ) - R_{ H} ( {z}^*  ) =2\im R_{ H} (z )   R_{ H} ( {z}^* )\Im z$ and $R_{ H} ( {z}^*  )=(R_{ H} (z ))^*$ for any $z\in \C \backslash \R,$ in connection with the Parseval identity and with the fact that $v(\xi)v^{*}(\xi)=v^{*}(\xi)v(\xi)=I_{\C^4},$  one gets
\begin{align}\label{eq:pars}
\frac12 \langle [R_{H}(z)-R_{H}(z^*)]f, f^*\rangle=\rmi \int _{\R ^3}\Im\frac{(\lambda(\xi)\beta+zI_{\C^4})}{|\xi|^2+\mathscr{M}^2-z^2}  f (\xi,z) f (\xi,z)^* d\xi,
\end{align}

with the matrix

\begin{equation*}
\Im\frac{(L(\xi)\beta+zI_{\C^4})}{|\xi|^2+\mathscr{M}^2-z^2} =
 \begin{pmatrix}
 \frac {\Im z}{(L(\xi)-z)(L(\xi)-z^*) }I _{\C^2} &
0 \\
0 &  \frac {\Im z}{(L(\xi)+z)(L(\xi)+z^*)  } I _{\C^2}
 \end{pmatrix}.
\end{equation*}
Pick   now $z=\lambda+\rmi \varepsilon, $ then we are allowed by the trace Lemma \ref{lem:smooth1} to take the limit $\varepsilon \searrow 0$ (see also \cite{Agmon}).  Combining this step with an application of the Plemelj formula $\frac{1}{x\mp
i0}=PV\frac{1}{x}\pm\im \pi \delta (x),$ we obtain from \eqref{eq:pars} the identity
\begin{align}\label{eq:pars1}
\frac12 \langle [R^+_{H}(\lambda)-R^-_{H}(\lambda)]f, f^*\rangle=\pi \rmi  \int _{\R ^3}\varXi(L(\xi), \lambda) \mathcal{F}_{V,+}(f)\mathcal{F}_{V,+}(f)^* d\xi,
\end{align}
with
\begin{equation*}
\varXi(L(\xi), \lambda) =
 \begin{pmatrix}
\delta(L(\xi)-\lambda)I _{\C^2} &
0 \\
0 &  \delta(-L(\xi)-\lambda)I _{\C^2}
 \end{pmatrix}.
\end{equation*}
At this point, an application of the identities (in $\Sc'(\R^3, \C^4)$, see for example \cite{GelShi}, Chap. II, Sec. 2.5 or Chap III for a more general theory)
\begin{align}
\label{eq:deltarel1}
\delta(L(\xi)-\lambda)=\frac{\lambda}{\sqrt{\lambda^2-\mathscr{M}^2}}\delta(|\xi|-\sqrt{\lambda^2-\mathscr{M}^2}), \ \ \ \text{for} \  \ \ \lambda>\mathscr{M},
\\
\label{eq:deltarel2}
\delta(-L(\xi)-\lambda)=\frac{-\lambda}{\sqrt{\lambda^2-\mathscr{M}^2}}\delta(|\xi|-\sqrt{\lambda^2-\mathscr{M}^2}), \ \ \ \text{for} \  \ \ \lambda<-\mathscr{M},
\end{align}
shows that the r.h.s. of   identity  \eqref{eq:pars1} is equal to
%\begin{equation}
\begin{numcases} {}
 \frac{\pi \rmi \lambda}{\sqrt{\lambda^2-\mathscr{M}^2}} \int _{|\xi|=\sqrt{\lambda^2-\mathscr{M}^2}}\sum_{i=1}^2|\mathcal{F}^i_{V,+}(f)|^2 d\xi, \ \ \ \ \ \ \text{for} \ \ \ \lambda>\mathscr{M},\label{eq:delta3p}\\
 \frac{-\pi \rmi \lambda}{\sqrt{\lambda^2-\mathscr{M}^2}} \int _{|\xi|=\sqrt{\lambda^2-\mathscr{M}^2}}\sum_{i=3}^4|\mathcal{F}^i_{V,+}(f)|^2 d\xi,\ \ \ \ \ \ \text{for} \ \ \ \lambda<-\mathscr{M},\label{eq:delta3n}
\end{numcases}
%\end{equation}
which in turn implies the identity \eqref{eq:delta}. A similar discussion (actually easier) yields

\begin{align}%\label{eq:pv0}
\frac 12\langle R_H^{+ }( \lambda)+R_H^{-}( \lambda)f,f^*\rangle
 =P.V. \int \frac{L(\xi)\beta+\lambda I_{\C^4}}{|\xi|^2+\mathscr{M}^2-\lambda^2}
 |\mathcal{F}_{V,+}(f)|^2 d\xi &,
 \nonumber
\end{align}
that is the identity \eqref{eq:pv}. This completes the proof of the lemma.

\qed

\begin{remark}
All the convergence arguments are well defined because the Lemma \ref{lem:smooth1}. Moreover
in the proof  we used functions in  $f\in \Sc(\R^3, \C^4).$ One can easily extend   to $ f\in H^{k, s}(\R ^3, \C^4)\cap \mathcal{H}[0]  $    for $k\ge 0$ and $s>1/2$   (which implies $\mathcal{F}_{V,\pm}f\in H^{  s} _{loc}(\R ^3, \C^4) $  so that
   the restriction    to    spheres  makes sense)     by a density argument.
\end{remark}

\section*{Acknowledgments}  The authors  were   funded    by the grant FIRB 2012 (Dinamiche Dispersive)  from MIUR,  the Italian Ministry of Education,
University and Research. S.C. was supported also by the grant FIRB 2013 from the University of Trieste.
The authors  wish to thank Professor Masaya Maeda for
help with the proof of  Proposition \ref{prop:EnExp}.

\bibliographystyle{amsplain}

\begin{thebibliography}{CP03}

\bibitem {Agmon}
S.Agmon, {\em  Spectral properties of Schr\"odinger operators and scattering theory},
  An. Sc. N. Pisa   {  2} (1975),151--218.



\bibitem {bambusicuccagna}
  D.Bambusi, S.Cuccagna, {\em On dispersion of
small energy solutions of the nonlinear Klein Gordon equation with a
potential},   Amer. Math. Jour.    {    133} (2011), 1421--1468.

\bibitem {BejHerr} I.Bejenaru, S.Herr, {\em The cubic Dirac equation: Small initial data in $H^1(\R ^3)$}, Com. Math. Phys. {  335} (2014), 1--40.

\bibitem {com}  A.Berkolaiko, A.Comech, {\em On spectral stability of solitary waves of nonlinear Dirac equation on a
line}, Mathematical Modelling of Natural Phenomena {  7} (2017), 13--31.


\bibitem {BerthierGeorgescu}
A.M.Berthier, V.Georgescu, {\em On the point spectrum of {D}irac operators},
  J. Fun. Anal.   {    71} (2011), 309--338.



\bibitem{boussaid}
  N.Boussaid,   {\em  Stable directions for small nonlinear {D}irac standing waves},    Com. Math. Phys.  {  268 } (2006),   757--817.

 \bibitem{Boussaid2}
N.Boussaid,
 {\em  On the asymptotic stability of small nonlinear {D}irac standing waves
  in a resonant case},
  SIAM J. Math. Anal.,     {  40 } (2008),  1621--1670.

\bibitem{boussaidcuccagna}
  N.Boussaid, S.Cuccagna,  {\em  On stability of standing waves of nonlinear Dirac equations},    Comm. in Partial Diff. Eq.  {  37 } (2012),   1001--1056.


\bibitem{Fanelli2}
  N.Boussaid, P.D'Ancona, L.Fanelli,  {\em  Virial identity and weak dispersion for the magnetic Dirac equation},    J. Math. Pures Appl.  { 95 } (2011),   137--150.



\bibitem{BP2}
V.Buslaev, G.Perelman, {\em On the stability of solitary waves for
nonlinear Schr\"odinger equations},   Nonlinear evolution
equations, editor N.N. Uraltseva, Transl. Ser. 2, 164, Amer. Math.
Soc.,
   pp.  75--98, Amer. Math. Soc., Providence (1995).

\bibitem{Candy}
T.Candy,  {\em   Global existence for an $L^2$ critical nonlinear Dirac equation in one dimension},
  Adv. Diff. Eq.  {  16 }  (2011), 643--666.


  \bibitem{Cacciafesta}
F.Cacciafesta, P.D'Ancona {\em   Endpoint estimates and global existence for the nonlinear Dirac equation with potential},
  Jour. Diff. Eq.  {  254 }  (2013), 2233--2260.

\bibitem {CL}   T.Cazenave, P.L.Lions, {\em Orbital stability of
standing waves for  nonlinear Schr\"odinger equations},  Comm.
Math. Phys.  85  (1982),  549--561.

\bibitem{ChristKiselev} M.Chirst, A.Kiselev, {\em Maximal functions associated to filtrations}, J.
Funct. Anal. 179(2) (2001), 409--425.


\bibitem{Cohen} C.Cohen-Tannoudji,  J.Dupont-Roc, G.Grynberg, {\em Atom-Photon Interactions:
Basic Processes and Applications}, Wiley, New York, 1992.



\bibitem{Comech}
A.Comech, T.V.Phan, A.Stefanov,  {\em   Asymptotic stability of solitary waves in generalized Gross--Neveu model},
  arXiv:1407.0606v2,  to appear in Annales de l'Institute H. Poincar\'e (Analyse non lin.)  .

  


\bibitem{Jensen}
  H.D.Cornean, A.Jensen, G.Nenciu,  {\em Metastable States When the Fermi Golden Rule Constant
Vanishes}, Comm. Math. Physics
  {  334}  (2015), 1189--1218.

	
 			
\bibitem{Cu2}
  S.Cuccagna,  {\em The Hamiltonian structure of the nonlinear
Schr\"odinger equation and   the  asymptotic stability of its
ground states}, Comm. Math. Physics
  {   305}  (2011),  279--331.
	
\bibitem{Cu3} S.Cuccagna, {\em On scattering of small energy solutions of non autonomous hamiltonian nonlinear Schr\"odinger equations}, J. Differential Equations {  250} (2011), no. 5, 2347--2371.	
		

\bibitem{Cu0}
  S.Cuccagna,  {\em On the Darboux and Birkhoff   steps in the asymptotic
  stability of    solitons},  Rend. Istit. Mat. Univ. Trieste   {   44} (2012), 197--257.


\bibitem{CM1} S.Cuccagna, M.Maeda, {\em On   small energy stabilization    in the NLS with a trapping potential}, 	Analysis  \& PDE  Vol. 8 (2015), No. 6, 1289--1349.
    
    \bibitem{CMT} S.Cuccagna, M.Maeda, T.V.Phan {\em On  small energy stabilization in the NLKG with a trapping potential}, 	 arXiv:1511.04672.
    
    

\bibitem{Fanelli1}
  P.D'Ancona, L.Fanelli,  {\em Decay estimates for the wave and Dirac equations with a magnetic potential},  Comm. Pure Appl. Math.   {   60} (2007), 357--392.





\bibitem{Dias}
J.P.Dias, M.Figueira,
{\em
Time decay for the solutions of a nonlinear Dirac equation in one space dimension
},
Ric. Mat.   (1986) {  35},   309--316.

\bibitem{Du} J.Duoandikoetxea, {\em Fourier Analysis},
American Mathematical Society Books Series, Providence, 2000.

\bibitem{EGS}
 M.B.Erdo\u{g}an, M.Goldberg,   W.Schlag,  {\em Strichartz and smoothing estimates for {S}chr{\"o}dinger operators
  with almost critical magnetic potentials in three and higher dimensions}, Forum Math (2009) {  21}, 687--722.


\bibitem{Escobedo}
 M.Escobedo, L.Vega,  {\em A semilinear Dirac equation in $H^s(\R ^3)$ for $s>1$}, SIAM J. Math. Anal (1997) {  28}, 338--362.





\bibitem{GelShi}
A.Gelfand, G.Shilov {\em Generalized Functions}, Vol.1, Academic Press, 1964.

\bibitem{GSS1}   M.Grillakis, J.Shatah, W.Strauss, {\em Stability
of solitary waves in the presence of symmetries, I },  Jour. Funct.
An.   {74} (1987),   160--197.


\bibitem{GP}
S.Gustafson,    T.V.Phan,  {\em Stable directions for degenerate excited states of nonlinear  Schr\"odinger equations}, SIAM J. Math. Anal.    43  (2011) , 1716--1758.

\bibitem{GNT}
S.Gustafson,   K.Nakanishi, T.P.Tsai,  {\em Asymptotic stability and completeness in the energy space for nonlinear Schr\"odinger equations with small solitary waves}, Int. Math. Res. Not.  2004 (2004) no.  {\bf  66}, 3559--3584

\bibitem{Kevrekidis} P.G.Kevrekidis, D.E.Pelinovsky, A.Saxena, {\em When Does Linear Stability Not Exclude Nonlinear Instability ?}, Physical Review Letters 114, 214101 (6 pages) (2015).







\bibitem{Hofer}
H.Hofer, E.Zehnder,
 {\em Symplectic invariants and Hamiltonian dynamics},
  Birkh\"auser Verlag, Basel, 1994.

\bibitem{MacNakOz} S.Machihara, K.Nakanishi, T.Ozawa, {\em Small global solutions and the nonrelativistic limit for the nonlinear Dirac equation}, Rev. Mat. Iberoamericana   19  (2003), no. 1, 179--194.




\bibitem{Maeda}  M.Maeda, {\em Existence and asymptotic stability of quasi-periodic solution of discrete NLS with potential in $\Z$},   	 arXiv:1412.3213.


\bibitem{M1}  T.Mizumachi, {\em Asymptotic stability of small
solitons to 1D NLS with potential}, Jour. of Math.   Kyoto
University,    48  (2008),   471--497.






\bibitem {NPT} K.Nakanishi, T.V.Phan, T.P.Tsai, {\em Small solutions of nonlinear Schr\"odinger equations near first excited states}, Jour. Funct. Analysis  {\bf  263}  (2012), 703--781.

\bibitem {pego} R.L.Pego, M.I.Weinstein,
{\em Convective Linear Stability of Solitary Waves for Boussinesq Equations}, Studies in Appl. Math.     99   (1997), 311--375.


\bibitem {Pelinovsky}
D.Pelinovsky,  {\em
Survey on global existence in the nonlinear Dirac equations in one spatial dimension},  Harmonic analysis and nonlinear partial differential equations, 37--50,
  B26, Res. Inst. Math. Sci. (RIMS), Kyoto, 2011.



\bibitem {PelinovskyStefanov}
D.Pelinovsky,  A.Stefanov,  {\em Asymptotic stability of small gap solitons in the nonlinear Dirac
  equations},   J. Math. Phys.    53   (2012),   073705, 27 pp.

\bibitem {pel1}
D.Pelinovsky,   Y.Shimabukuro,  {\em $L^2$ orbital stability of Dirac solitons},     	  Letters in Mathematical Physics 104 (2014), 21--41.



\bibitem{PiW}
C.A.Pillet, C.E.Wayne, {\em Invariant manifolds for a class of
dispersive, Hamiltonian partial differential equations} J. Diff. Eq.
  141 (1997), pp. 310--326.

\bibitem {RW} H.A.Rose, M.I.Weinstein, {\em  On the bound states of the nonlinear Schr\"odinger equation with a linear potential\/}, Physica D, 30
(1988), pp. 207--218

\bibitem {SmithSogge}
H.Smith, C.Sogge, {\em
  {S}trichartz estimates for nontrapping perturbations of the
  {L}aplacian},  Com. Part. Diff. Eq.  {  25}   (2000),2171--2183.

\bibitem {SW1} A.Soffer, M.I.Weinstein, {\em  Multichannel nonlinear
scattering for nonintegrable equations \/}, Comm. Math. Phys., 133
(1990), pp. 116--146






\bibitem {SW2}
A.Soffer, M.I.Weinstein, {\em  Multichannel nonlinear scattering II.
The case of anisotropic potentials and data \/},  J. Diff. Eq., 98
 (1992), pp.
  376--390.





\bibitem{SW4}    A.Soffer, M.I.Weinstein,
{\em Selection of the ground state for nonlinear Schr\"odinger
equations}, Rev. Math. Phys.      {  16}  (2004),   977--1071.



\bibitem{SW3}
A.Soffer, M.I.Weinstein, {\em Resonances, radiation damping and
instability in Hamiltonian nonlinear wave equations},  Invent.
Math.   {   136}
 (1999),
  9--74.


\bibitem {So}
C.D.Sogge,
  {\em {Lectures on nonlinear wave equations}},
  International Press Boston, 1995.

\bibitem {StraussVazquez}
W.Strauss, L.V{\'a}zquez,
{\em Stability under dilations of nonlinear spinor fields},
  Phys. Rev. D.,   {   34} (1986),641--643.




\bibitem {taylor}
M.E.Taylor, {\em Partial Differential Equations}, volumes 115--117 of App. Math. Sci., Springer, New York (1996).

\bibitem{Thaller}
B.Thaller.
\newblock {\em The {D}irac equation}.
\newblock Texts and Monographs in Physics. Springer-Verlag, Berlin, 1992.

\bibitem {TY1}
  T.P.Tsai, H.T.Yau, {\em Asymptotic dynamics of nonlinear
Schr\"odinger equations: resonance dominated and radiation dominated
solutions}, Comm. Pure Appl. Math.  {  55}  (2002),   153--216.

\bibitem {TY2}
  T.P.Tsai, H.T.Yau, {\em Relaxation of excited states in
nonlinear Schr\"odinger equations}, Int. Math. Res. Not.  {  31}
(2002),   1629--1673.

\bibitem {TY3}
{  T.P.Tsai, H.T.Yau}, {\em Classification of asymptotic profiles
for nonlinear Schr\"odinger equations with small initial data}, Adv.
Theor. Math. Phys.  {  6} (2002),   107--139.


\bibitem {TY4}
{  T.P.Tsai, H.T.Yau}, {\em Stable directions for
excited states of nonlinear Schr\"odinger equations},   Comm.
P.D.E.     {  27} (2002),  2363--2402.




\bibitem{W1}  M.I.Weinstein, {\em Lyapunov stability of ground
states of nonlinear dispersive equations},  Comm. Pure Appl. Math.
 39  (1986),    51--68.

\bibitem{Yamada73} O.Yamada, {\em On the principle of limiting absorption for the Dirac operators}, Publ. Res. Inst. Math. Sci. 8 (1972/73), 557-577. 		

	\bibitem {Zhang}
{  Y.Zhang}, {\em Global strong solution to a nonlinear Dirac type equation in one dimension},   Nonlin. Analysis: Theory, Methods \& Applications     {  80} (2013),  150--155.	
		









	\end{thebibliography}

Department of Mathematics and Geosciences,  University
of Trieste, via Valerio  12/1,  Trieste 34127,  Italy

{\it E-mail Address}: {\tt scuccagna@units.it}
\\

Department of Mathematics, University of Pisa, Largo Bruno Pontecorvo 5,
Pisa 56127,  Italy

{\it E-mail Address}: {\tt tarulli@mail.dm.unipi.it}

\end{document}